\documentclass[a4paper,12pt]{amsart}

\usepackage{tikz-cd}

\usepackage{times}

\usepackage[latin1]{inputenc} 
\usepackage{graphicx}
\usepackage[matrix,arrow,tips,curve]{xy}
\usepackage[english]{babel}
\usepackage{amsmath}
\usepackage{amssymb}

\usepackage{mathrsfs}

\usepackage{enumerate,bm}

\newtheorem{thm}{Theorem}[section]

\newtheorem{lemma}[thm]{Lemma}
\newtheorem{thmdef}[thm]{Theorem - Definition}
\newtheorem{corollary}[thm]{Corollary}
\newtheorem{proposition}[thm]{Proposition}

\newtheorem{question}[thm]{Question}

\newtheorem*{thm*}{Theorem}

\theoremstyle{definition}
\newtheorem{definition}[thm]{Definition}

\newtheorem{example}[thm]{Example}
\newtheorem{remark}[thm]{Remark}
\newtheorem{prg}[thm]{}

\renewcommand{\theequation}{\thethm}
\numberwithin{figure}{section}
\numberwithin{table}{section}

\newcommand{\ph}{\varphi}
\newcommand{\w}{\widetilde}
\newcommand{\ma}{\mathcal}
\newcommand{\la}{\longrightarrow}
\newcommand{\ol}{\mathcal{O}}
\newcommand{\wi}{\widehat}
\newcommand{\pr}{\mathbb{P}}
\newcommand{\Q}{\mathbb{Q}}
\newcommand{\C}{\mathbb{C}}
\newcommand{\R}{\mathbb{R}}
\newcommand{\Z}{\mathbb{Z}}
\newcommand{\N}{\mathcal{N}_1}
\newcommand{\Nu}{\mathcal{N}^1}
\newcommand{\Gr}{\operatorname{Gr}}
\newcommand{\dom}{\operatorname{dom}}
\newcommand{\Tors}{\operatorname{Tors}}

\newcommand{\Sing}{\operatorname{Sing}}
\newcommand{\rk}{\operatorname{rk}}
\newcommand{\Pic}{\operatorname{Pic}}

\newcommand{\NE}{\operatorname{NE}}
\newcommand{\Exc}{\operatorname{Exc}}

\newcommand{\Lo}{\operatorname{Locus}}
\newcommand{\codim}{\operatorname{codim}}
\newcommand{\Eff}{\operatorname{Eff}}
\newcommand{\Nef}{\operatorname{Nef}}

\newcommand{\Mov}{\operatorname{Mov}}

\newcommand{\Bl}{\operatorname{Bl}}
\newcommand{\Br}{\operatorname{Br}}
\newcommand{\Bs}{\operatorname{Bs}}

\newcommand{\Cl}{\operatorname{Cl}}
\newcommand{\pts}{\text{\it pts}}
\newcommand{\sm}{\text{\it sm}}
\newcommand{\pt}{\text{\it pt}}
\newcommand{\reg}{\text{\it reg}}
\newcommand{\intr}{\text{\it intr}}
\newcommand{\s}{\scriptscriptstyle}

\usepackage{etoolbox}
\patchcmd{\section}{\normalfont}{\normalfont\large}{}{}
\patchcmd{\subsection}{\bfseries}{\scshape\centering}{}{}
\patchcmd{\subsection}{-.5em}{.5em}{}{}

\makeatletter
\@namedef{subjclassname@2020}{\textup{2020} Mathematics Subject Classification}
\makeatother

\setlength{\textwidth}{1.15\textwidth}
\calclayout

\title{Classifying Fano $4$-folds with a rational fibration onto a $3$-fold}
\author{C.~Casagrande}
\address{Universit\`a di Torino,
Dipartimento di Matematica,
via Carlo Alberto 10,
10123 Torino - Italy}
\email{cinzia.casagrande@unito.it}
\author{S.A.~Secci}
\address{Universit\`a di Milano,
Dipartimento di Matematica,
via Saldini 50,
20133 Milano - Italy}
\email{ saverio.secci@unimi.it}
\subjclass[2020]{14J45,14J35,14E30}

\setlength{\textwidth}{1.15\textwidth}
\calclayout 


\begin{document}
 \maketitle
\setcounter{tocdepth}{1}
\renewcommand{\theequation}{\thethm}
\begin{abstract}
We study smooth, complex Fano $4$-folds $X$ with a rational contraction onto a $3$-fold, 
namely a rational map $X\dasharrow Y$ that factors as a sequence of flips $X\dasharrow \w{X}$ followed by a surjective morphism $\w{X}\to Y$ with connected fibers, where $Y$ is normal, projective, and $\dim Y=3$.

We show that if $X$ has a rational contraction onto a $3$-fold and $X$ is not a product of del Pezzo surfaces, then the Picard number $\rho_X$ of $X$ is at most $9$; this bound is sharp.

As an application, we show that every Fano $4$-fold $X$ with $\rho_X=12$ is isomorphic to a product of surfaces, thus improving the result by the first named author that shows the same for $\rho_X>12$.

We also give a classification result for Fano $4$-folds $X$,  not products of surfaces, having a ``special'' rational contraction $X\dasharrow Y$ with $\dim Y=3$, $\rho_X-\rho_Y=2$, and $\rho_X\geq 7$; we show that there are only three possible families. Then we prove that the first family exists if  $\rho_X=7$, and that the second family exists if and only if $\rho_X=7$. This provides the first examples of Fano $4$-folds with $\rho_X\geq 7$ different from
 products of del Pezzo surfaces and from the Fano models of $\Bl_{\pts}\pr^4$. We also construct three new families with $\rho_X=6$.

Finally we show that if a Fano $4$-fold $X$ has Lefschetz defect $\delta_X=2$, then $\rho_X\leq 6$;  this bound is again sharp.
\end{abstract}  
\section{Introduction}
\noindent 
Let $X$ be a smooth, complex Fano $4$-fold, and $\rho_X$ its Picard number. Recall that since $X$ is Fano, $\rho_X$ coincides with the second Betti number of $X$, therefore it is a topological invariant and is constant in smooth families. It is well known that there are finitely many families of Fano $4$-folds, and it is a recent result that when the Picard number is large, $X$ must be a product of surfaces: 
\begin{thm}[\cite{32}, Th.~1.1]\label{inv}
Let $X$ be a smooth Fano $4$-fold. If $\rho_X>12$, then $X\cong S_1\times S_2$, where $S_i$ are del Pezzo surfaces.
\end{thm}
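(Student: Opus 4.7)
The plan is to study $X$ via its Mori cone and the \emph{Lefschetz defect}. For a prime divisor $D\subset X$, let $N_1(D,X)$ denote the image of $N_1(D)$ in $N_1(X)$, and set
\[
\delta_X:=\max_{D}\bigl(\rho_X-\dim N_1(D,X)\bigr),
\]
where the maximum is taken over all prime divisors $D\subset X$. This invariant measures how ``product-like'' $X$ is: a large value of $\delta_X$ means that many numerical classes of curves are not representable inside any single prime divisor, whereas small $\delta_X$ forces every prime divisor to ``see'' almost all of $N_1(X)$.

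The first step I would take is to invoke the splitting theorem for Fano manifolds of large Lefschetz defect: if $\delta_X\geq 4$, then $X\cong S\times T$, where $S$ is a del Pezzo surface with $\rho_S=\delta_X+1$ and $T$ is a smooth Fano manifold. The underlying idea is that a prime divisor $D$ realizing the maximum in the definition of $\delta_X$ supports a covering family of rational curves whose classes span a subspace of $N_1(X)$ complementary to $N_1(D,X)$; combining this family with $D$ produces a Mori fibration that rigidifies to a genuine product projection onto $S$.

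The main obstacle is the converse numerical bound: every Fano $4$-fold $X$ with $\delta_X\leq 3$ satisfies $\rho_X\leq 12$. The hypothesis says exactly that each prime divisor $D\subset X$ contributes a subspace of codimension at most $3$ inside $N_1(X)$, a strong restriction on $\NE(X)$. I would analyze all the possible types of elementary contractions of $X$---divisorial, small (run a sequence of flips to resolve them), and Mori fibrations onto bases of dimension $1,2,3$---together with their exceptional loci, and use the Lefschetz-defect constraint to limit how many ``independent'' extremal rays can coexist. In the spirit of the present paper, the case of a rational fibration onto a $3$-fold $Y$ (after passing to a small modification $\widetilde{X}$) should be pivotal, because the fibers are curves and most numerical relations among divisors can be read off $Y$; the remaining cases are hopefully amenable to similar bookkeeping, and together they should push $\rho_X$ down to at most $12$.

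Combining the two steps: if $\rho_X>12$, contraposition of the second step gives $\delta_X\geq 4$, hence the splitting step produces $X\cong S\times T$ with $S$ a del Pezzo surface and $T$ a smooth Fano surface, which is automatically del Pezzo; the desired conclusion follows with $\rho_X=\rho_S+\rho_T\leq 18$.
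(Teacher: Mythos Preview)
The paper does not contain a proof of this statement: Theorem~\ref{inv} is quoted from \cite{32} (Casagrande, \emph{Fano 4-folds with $b_2>12$ are products of surfaces}, Invent.\ Math.\ 2024) and is used as background input, not proved here. So there is no ``paper's own proof'' to compare your proposal against.

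That said, your high-level strategy is the correct one and matches the architecture of \cite{32}: the splitting theorem for $\delta_X\geq 4$ is exactly Theorem~\ref{deltageq4} (from \cite{codim}), and the contrapositive reduction to bounding $\rho_X$ when $\delta_X\leq 3$ is indeed the crux. However, your second step is not a proof but a sketch of intentions. The sentence ``I would analyze all the possible types of elementary contractions \ldots\ and together they should push $\rho_X$ down to at most $12$'' does not engage with any of the actual difficulties: the case analysis in \cite{32} is long and delicate, and in particular the case $\delta_X\leq 2$ together with a rational fibration onto a $3$-fold---which you single out as ``pivotal''---is precisely the case that \cite{32} could \emph{not} settle sharply (it left open $\rho_X\in\{10,11,12\}$; see Theorem~\ref{addendum}). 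The present paper improves that residual case to $\rho_X\leq 9$ (Theorem~\ref{general}), and even this refinement takes the bulk of the paper. So while your outline is pointed in the right direction, the substantive content lies entirely in the step you have not attempted.
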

Let us point out that all known examples of Fano $4$-folds that are not products have $\rho\leq 9$, hence it is not known whether for $\rho=10,11,12$ there are only products of surfaces. For the case $\rho=12$, there is the following partial result.
\begin{thm}[\cite{32}, Th.~4.3]\label{addendum}
  Let $X$ be a smooth Fano $4$-fold. If $\rho_X=12$ and $X$  is not isomorphic to a product of surfaces, then there is a rational contraction $X\dasharrow Y$ with $\dim Y=3$.
\end{thm}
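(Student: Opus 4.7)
The plan is to argue by contradiction, leveraging the Lefschetz-defect toolkit that underlies Casagrande's proof of Theorem~\ref{inv}. Suppose $X$ is a Fano $4$-fold with $\rho_X=12$, not isomorphic to a product of surfaces, and admitting no rational contraction $X\dasharrow Y$ with $\dim Y=3$; the goal is a contradiction.

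I would first control the Lefschetz defect $\delta_X:=\max_D(\rho_X-\rho_D)$, taken over prime divisors $D\subset X$, where $\rho_D$ is the rank of the image of $\N(D)\to\N(X)$. Casagrande's structure theorem asserts that $\delta_X\geq 4$ forces $X$ to be a product of a del Pezzo surface and another surface, which is excluded here; so $\delta_X\leq 3$. Choose a prime divisor $D$ realizing $\delta_X$: the kernel of the restriction $\Nu(X)\to\Nu(\w D)$, on a desingularization $\w D\to D$, is a $\delta_X$-dimensional subspace whose annihilator in $\N(X)$ contains every class of a curve contained in $D$, and thus singles out a codimension-$\delta_X$ face-like structure on $\NE(X)$ transverse to the curves swept out by $D$.

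Next, since $X$ is Fano and hence a Mori dream space, I would run an MMP on $X$ steered by the divisor $D$ and its associated face. The process consists of finitely many $K_X$-negative flips $X\dasharrow\w X$ (termination for Mori dream spaces), followed by a fiber-type extremal contraction $\w X\to Y$ onto a normal projective variety with $\rho_X-\rho_Y\leq\delta_X\leq 3$. Standard Mori-theoretic bounds give $\dim Y\in\{1,2,3\}$, and the crux is to pin down $\dim Y=3$, which would directly contradict our assumption and close the argument.

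The main obstacle is ruling out $\dim Y\in\{1,2\}$: when the fiber-type contraction lands on a curve or a surface, one must combine the hypothesis $\rho_X=12$ with the non-product assumption to either extract an alternative rational contraction with $3$-dimensional target (for instance by intersecting the data from $D$ with the relative Mori cone $\NE(\w X/Y)$, which in the low-$\dim Y$ case has high rank) or to force an actual product decomposition of $X$, using Fano-type restrictions on the fibers of $\w X\to Y$ together with Picard-number bounds available for Fano $4$-folds of low Lefschetz defect and low-dimensional fibrations. It is precisely the tension between the large value $\rho_X=12$, the bound $\delta_X\leq 3$, and the absence of a product structure that should leave no room for $\dim Y<3$ and drive the contradiction.
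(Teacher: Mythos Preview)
First, a framing point: Theorem~\ref{addendum} is not proved in this paper at all. It is quoted verbatim from \cite{32}, Th.~4.3, and used here only as input (together with Theorem~\ref{general}) to obtain the corollary that $\rho_X=12$ forces a product. So there is no ``paper's own proof'' to compare against in this document; any comparison is with the argument in \cite{32}.

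On the substance of your proposal, there is a genuine gap. You correctly reduce to $\delta_X\leq 3$ via Theorem~\ref{deltageq4}, but then the argument stalls. The step ``run an MMP steered by $D$ to produce a fiber-type contraction $\w X\to Y$ with $\rho_X-\rho_Y\leq\delta_X$'' is not a theorem: running a $D$-MMP produces a model on which the transform of $D$ is nef (or is contracted), not a fiber-type contraction of bounded relative Picard number. The link between $\delta_X$ and the existence of specific fibrations is provided by structural theorems such as \cite[Th.~5.2]{codimtwo} (for $\delta_X=2$) and the classification for $\delta_X=3$ (Theorem~\ref{delta=3}), not by a generic MMP. In particular, for $\rho_X=12$ and $X$ not a product, Theorem~\ref{delta=3} already forces $\delta_X\leq 2$, which you do not use.

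More seriously, you yourself identify the heart of the matter --- ruling out $\dim Y\in\{1,2\}$ --- and then do not do it. The phrases ``should leave no room'' and ``drive the contradiction'' are not arguments. The actual work in \cite{32} for $\rho_X=12$ is a fine case analysis built on the classification of fixed prime divisors and their interactions (the machinery summarized in \S\ref{2.2} here), together with the structure results from \cite{codimtwo,fibrations}; it is this analysis that forces, in every residual case, the existence of a rational contraction onto a $3$-fold. Your outline does not engage with any of that machinery, so as it stands it is a plan rather than a proof.
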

A \emph{rational contraction} is a rational map $X\dasharrow Y$ that factors as $X\stackrel{\xi}{\dasharrow} \w{X}\stackrel{f}{\to} Y$ where $\xi$ is a sequence of flips, $f$ is surjective with connected fibers, and $Y$ is normal and projective (namely $f$ is a contraction).

In this paper we study Fano $4$-folds $X$ having a rational contraction $X\dasharrow Y$ with $\dim Y=3$, and such that $X$ is not isomorphic to a product of surfaces, as above.
We have different motivations and goals: first, to describe the geometry of $X$, especially when $\rho_X$ is large, and possibly to give classification results; then to determine a sharp bound on $\rho_X$, and to use it to show that Fano $4$-folds with Picard number 12 are products of surfaces; to construct new families of Fano $4$-folds with large Picard number (at least 6); and finally to give a sharp bound on the Picard number of Fano $4$-folds with Lefschetz defect $2$ (see below).
As an outcome we get results in different directions, let us explain them separately.

\medskip

\noindent{\bf Bounding the Picard number.} Our first goal is to bound the Picard number of Fano $4$-folds with a rational contraction onto a $3$-fold, and our result is the following.
\begin{thm}\label{general}
  Let $X$ be a smooth Fano $4$-fold  that is not isomorphic to a product of surfaces, and having a rational contraction
 $X\dasharrow Y$ with $\dim Y=3$. Then $\rho_X\leq 9$.
\end{thm}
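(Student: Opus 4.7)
The plan is to exploit the decomposition $X \stackrel{\xi}{\dasharrow} \w{X} \stackrel{f}{\to} Y$ of the rational contraction, where $\xi$ is a sequence of flips and $f$ is a contraction with connected fibers. Since flips preserve the Picard number,
\[
\rho_X \;=\; \rho_{\w{X}} \;=\; \rho_Y + \rho_{\w{X}/Y},
\]
so it suffices to bound $\rho_Y$ and the relative Picard number $\rho_{\w{X}/Y}$ and control their sum. The general fiber of $f$ is a rational curve, since $\dim \w{X} - \dim Y = 1$ and $X$ is rationally connected.

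First I would bound $\rho_Y$. The Fano condition on $X$ is transported through the flips $\xi$ and pushed forward by $f$, producing a positivity condition on a natural divisor on $Y$: concretely, one expects $Y$ to inherit the structure of a (weak, log) Fano $3$-fold with mild singularities. Combining classical boundedness results for such $3$-folds with the known structure of their Mori cones should yield an explicit bound $\rho_Y \leq C_1$.

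Next I would bound $\rho_{\w{X}/Y}$ by analyzing $\NE(\w{X}/Y)$. Because fibers of $f$ are $1$-dimensional, the fiber-type rays are accounted for by $f$ itself, while the remaining divisorial and small rays correspond to exceptional divisors of $f$ contracted over curves or points of $Y$. The hypothesis that $X$ is not a product of surfaces, coupled with existing bounds involving the Lefschetz defect (Theorem \ref{inv} covering $\rho_X > 12$, and the paper's companion bound $\delta_X=2 \Rightarrow \rho_X \leq 6$), should rule out enough configurations to yield a second bound $\rho_{\w{X}/Y} \leq C_2$ with $C_1+C_2 \leq 9$.

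The main obstacle is that neither $\w{X}$ nor $Y$ is itself Fano, so the Fano condition on $X$ has to be tracked carefully through the birational maps rather than applied directly. A naive separate bound on $\rho_Y$ and $\rho_{\w{X}/Y}$ would almost certainly lose too much; the argument will likely demand a \emph{simultaneous} analysis, organized by the types of extremal rays appearing in $\xi$ and in the relative MMP for $f$, and by the value of $\delta_X$. Dealing with a possibly non-$\Q$-factorial, singular $Y$, and ensuring that the partial bounds are compatible without overcounting the contributions of exceptional divisors of $f$ that are already ``absorbed'' by flips of $\xi$, is where the real subtlety lies; the sharpness of $\rho_X \leq 9$ suggests that explicit extremal configurations (relatives of the Fano models of $\Bl_{\pts}\pr^4$) should guide the case split.
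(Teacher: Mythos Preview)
Your outline has the right decomposition $\rho_X = \rho_Y + (\rho_X-\rho_Y)$, but it misses the two concrete mechanisms the paper uses to bound each piece, and your instinct that a ``naive separate bound would lose too much'' is actually backwards: the paper \emph{does} bound them separately, and cleanly.

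The first gap is the reduction to a \emph{special} rational contraction. You work with the given $X\dasharrow Y$, but the paper immediately replaces it (Prop.~\ref{factor}) by a special one, meaning the resolved morphism $f\colon\w X\to Y$ has no divisor mapping to codimension $\geq 2$. This is what makes the relative Picard number tractable: for a special $K$-negative contraction with $\dim Y=3$, the components $B_1,\dots,B_m$ of the discriminant with $f^*B_i$ reducible are pairwise disjoint (Th.~\ref{special}, Lemma~\ref{conicbdl}), and each contributes a divisor $E_i'\subset X$ with $\N(E_1',X)\subseteq (E_2')^\perp\cap\cdots\cap(E_m')^\perp\cap(\wi E_m')^\perp$. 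This forces $\delta_X\geq m=\rho_X-\rho_Y-1$ whenever $m\geq 2$ (Lemma~\ref{rhodelta}). Since $X$ not a product gives $\delta_X\leq 2$ (and in fact Cor.~\ref{poznan} handles $\delta_X\geq 2$ with $\rho_X\leq 6$), one reduces to $\delta_X\leq 1$ and hence $\rho_X-\rho_Y\in\{1,2\}$. Your proposed ``analysis of $\NE(\w X/Y)$'' never isolates this disjointness-of-discriminant argument, which is the entire content of the relative bound.

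The second gap is how $\rho_Y$ is bounded. You invoke ``classical boundedness for weak/log Fano $3$-folds'', but there is no off-the-shelf bound $\rho_Y\leq C_1$ being applied. Instead the paper shows (Lemmas~\ref{small}, \ref{divisorial}, \ref{blowups} and their analogues in the proof of Th.~\ref{elem3fold}) that every birational elementary contraction of $Y$ is either $K$-trivial small or the blow-up of a smooth point; running these down gives $Y\to Y_0$ a blow-up of $r$ points with $\rho_{Y_0}\leq 2$, and then the numerical inequality $0<-K_Y^3=-K_{Y_0}^3-8r$ combined with degree bounds for $Y_0$ (Lemmas~\ref{bound}, \ref{conti}) forces $r\leq 7$, hence $\rho_Y\leq 8$ in the elementary case. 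The relative-Picard-$2$ case requires the much longer classification of Section~\ref{relrho2}. None of this is a direct Picard bound for $Y$; it is an anticanonical-degree count after reducing $Y_0$ to a short explicit list.
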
  
This improves the previous bound $\rho\leq 12$ obtained in \cite[Th.~1.2]{fibrations}, and it is sharp,
as the Fano model of $\Bl_{8 \pts}\pr^4$ is a smooth Fano $4$-fold with $\rho=9$ and a rational contraction onto $\Bl_{7 \pts}\pr^3$, see Ex.~\ref{Fanomodel}.

\medskip

\noindent{\bf Fano $4$-folds with $\rho=12$.}
As a straightforward consequence of Th.~\ref{addendum} and \ref{general} we improve the bound in Th.~\ref{inv}.
 \begin{corollary}
Let $X$ be a smooth Fano $4$-fold with $\rho_X=12$. Then $X\cong S_1\times S_2$ where $S_i$ are del Pezzo surfaces.
\end{corollary}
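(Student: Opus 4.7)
The plan is to argue by contradiction using the two theorems that this corollary is advertised as combining. Suppose that $X$ is a smooth Fano $4$-fold with $\rho_X=12$ which is not isomorphic to a product of surfaces. I want to derive a contradiction between the structural dichotomy provided by Theorem~\ref{addendum} and the Picard number bound of Theorem~\ref{general}.

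First I would apply Theorem~\ref{addendum}: since $\rho_X=12$ and $X$ is assumed not to be a product of surfaces, that result produces a rational contraction $X\dasharrow Y$ with $\dim Y=3$. Then I would feed this data into Theorem~\ref{general}: the hypotheses are now exactly satisfied, namely $X$ is a smooth Fano $4$-fold not isomorphic to a product of surfaces, and it admits a rational contraction onto a $3$-fold. The conclusion of Theorem~\ref{general} is that $\rho_X\leq 9$, which directly contradicts $\rho_X=12$.

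This contradiction forces $X$ to be a product of surfaces, say $X\cong S_1\times S_2$. It remains only to identify $S_1$ and $S_2$ as del Pezzo surfaces, which is immediate: since $-K_X$ is ample and $-K_X=p_1^*(-K_{S_1})+p_2^*(-K_{S_2})$ for the two projections $p_i$, each $-K_{S_i}$ must be ample, so each factor is a smooth del Pezzo surface.

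There is no genuine obstacle here: the proof is a one-line combination, and the content of the corollary is entirely concentrated in Theorems~\ref{addendum} and~\ref{general}. The only mild subtlety to be careful about is that Theorem~\ref{inv} only handles $\rho_X>12$, so it cannot be invoked directly for the borderline value $\rho_X=12$; the role of Theorem~\ref{general} is precisely to close this last gap by sharpening the bound $\rho\leq 12$ of \cite[Th.~1.2]{fibrations} down to $\rho\leq 9$ in the non-product case.
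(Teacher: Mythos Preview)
Your proof is correct and follows exactly the approach indicated in the paper: the corollary is stated there as ``a straightforward consequence of Th.~\ref{addendum} and \ref{general}'', with no further details given, and your argument by contradiction is precisely the intended one-line combination of those two results.
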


\noindent {\bf Fano $4$-folds with Lefschetz defect $2$.}
The Lefschetz defect of a Fano variety $X$ is an invariant defined as follows. Let $\N(X)$ be the real vector space of one-cycles in $X$, with real coefficients, up to numerical equivalence. For any prime divisor $\iota\colon D\hookrightarrow X$, the push-forward gives a linear map $\iota_*\colon\N(D)\to\N(X)$, and we consider the image
$$\N(D,X):=\iota_*(\N(D))\subseteq\N(X),$$
so that $\N(D,X)$ is the linear subspace of $\N(X)$ spanned by classes of curves in $D$. The \emph{Lefschetz defect} of $X$ is defined as
\stepcounter{thm}
\begin{equation}\label{defidelta}
  \delta_X:=\max\{\codim\N(D,X)\,|\,D\text{ a prime divisor in }X\}.
  \end{equation}
We refer the reader to \cite{rendiconti} for a survey on $\delta_X$, and recall the 
following result.
\begin{thm}[\cite{codim}, Th.~1.1]\label{deltageq4}
Let $X$ be a smooth Fano variety. If $\delta_X\geq 4$, then $X\cong S\times Z$, where $S$ is a del Pezzo surface with $\rho_S=\delta_X+1$.
\end{thm}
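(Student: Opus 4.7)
The plan is to produce a product decomposition $X\cong S\times Z$ from $\delta_X\geq 4$, by constructing a Mori contraction $\pi\colon X\to S$ onto a del Pezzo surface and then globally trivialising it.

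Fix a prime divisor $D\subset X$ realizing the maximum in \eqref{defidelta}, so $W:=\N(D,X)$ has codimension $\delta_X\geq 4$ in $\N(X)$; dually, $W^\perp\subset\Nu(X)$, the kernel of the restriction map $\Nu(X)\to\Nu(D)$, has dimension $\delta_X$. If the conclusion $X=S\times Z$, $D=C\times Z$, is to hold, the projection $\pi_S\colon X\to S$ is characterised by its pullback subspace $\pi_S^*\Nu(S)=W^\perp+\R[D]$, of dimension $\delta_X+1$, and its contracted face in $\NE(X)$ is $F:=\NE(X)\cap W\cap[D]^\perp$ of complementary dimension $\rho_X-\delta_X-1$. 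Thus the strategy is to prove that $F$ is an extremal face and that its Mori contraction $\pi\colon X\to S$ has base $S$ a smooth del Pezzo surface with $\rho_S=\delta_X+1$.

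The extremality of $F$ should follow from the Fano hypothesis: $\NE(X)$ is rational polyhedral, spanned by classes of rational curves, and the large codimension $\codim W\geq 4$ rigidly organises its extremal rays into those lying in $F$ and those transverse to it. Once the Mori contraction $\pi\colon X\to S$ exists, one reads off $\rho_S=\rho_X-\dim F=\delta_X+1$, and one must argue that $\dim S=2$; this would come from combining the fact that $D$ is (after possibly replacing by a general member of its numerical class) a union of $\pi$-fibers over a divisor $\pi(D)\subset S$, with the dimension count $\dim\pi(D)=\dim S-1$, and the classification-theoretic upper bound $\rho\leq 9$ valid for del Pezzo surfaces. The smoothness and the del Pezzo property of $S$ then descend from the Fano structure on $X$ via the general-fiber analysis.

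The last and most delicate step is upgrading the Mori fibration $\pi\colon X\to S$ to a global product $X\cong S\times Z$. I would invoke a splitting theorem for Fano Mori fibrations of the Occhetta--Wi\'sniewski--Sol\'a Conde or Araujo--Druel type, whose key input is the transverse structure supplied by the $W^\perp$-parameter family of deformations of $D$ and the Mori-dream-space property of $X$. Verifying the isotriviality and the existence of a section-like structure compatible with these deformations is the main obstacle, since the numerical hypothesis $\codim\N(D,X)\geq 4$ must be converted into a genuinely geometric trivialisation of the fibration. Once the product structure is in place, the identification $\rho_S=\delta_X+1$ is a one-line calculation: from $\N(X)=\N(S)\oplus\N(Z)$ and $\N(C\times Z,X)=\R[C]\oplus\N(Z)$ one reads off $\codim\N(C\times Z,X)=\rho_S-1$, and a short case analysis on the prime divisors of a product (each being a pullback from one factor or dominating both) shows this codimension is maximal, yielding $\delta_X=\rho_S-1$.
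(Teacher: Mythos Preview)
This theorem is not proved in the present paper; it is quoted from \cite{codim} and used as a black box. So there is no ``paper's own proof'' to compare against here. That said, your proposal is a plausible-sounding outline but contains genuine gaps at each of the three main steps.

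First, the claim that $F=\NE(X)\cap W\cap[D]^\perp$ is an extremal face is the heart of the matter and you give no argument for it beyond ``should follow from the Fano hypothesis''. Intersecting $\NE(X)$ with a linear subspace does not in general produce a face, and the polyhedrality of $\NE(X)$ alone is far from sufficient. In the actual proof in \cite{codim} the contraction to the surface is not obtained this way; one instead studies carefully the extremal rays $R$ with $D\cdot R>0$, shows that under $\delta_X\geq 4$ they are all of a very restricted type (small or divisorial sending the exceptional divisor to a point), and builds the fibration from this structural information together with an inductive argument on $\rho_X$.

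Second, your argument for $\dim S=2$ is circular: you invoke the bound $\rho_S\leq 9$ ``valid for del Pezzo surfaces'', but you have not yet shown that $S$ is a surface, let alone a del Pezzo surface. The equality $\dim\pi(D)=\dim S-1$ would require knowing that $D$ maps to a divisor in $S$, which again needs justification.

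Third, and most seriously, you correctly identify the splitting step as ``the main obstacle'' and then do not resolve it. Invoking an unspecified theorem ``of Occhetta--Wi\'sniewski--Sol\'a~Conde or Araujo--Druel type'' is not a proof; those results have precise hypotheses (typically on the VMRT or on the existence of many sections) that you would need to verify, and it is not clear that the numerical condition $\codim\N(D,X)\geq 4$ supplies them. In \cite{codim} the product structure is obtained by a hands-on argument specific to this situation, producing an explicit smooth fibration onto a del Pezzo surface and then trivialising it using the rich supply of disjoint prime divisors coming from the large $\delta_X$.
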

Therefore when $X$ is not a product, we have $\delta_X\leq 3$.
Fano varieties with $\delta_X=3$ do not need to be products, but  for them
 a structure theorem is given in \cite{delta3}, which in dimension $4$ yields the following.
\begin{thm}[\cite{delta3}, Prop.~1.5]\label{delta=3}
  Smooth Fano $4$-folds with $\delta_X=3$ are classified. 
  Either $X\cong S_1\times S_2$ where $S_i$ are del Pezzo surfaces, or $\rho_X\in\{5,6\}$. There are 6 families for $\rho_X=5$, and 11 for $\rho_X=6$.
\end{thm}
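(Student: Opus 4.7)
The plan is to invoke the structure theorem for Fano varieties with $\delta_X=3$ proved in \cite{delta3} and specialize it to dimension $4$. That general result associates, to any prime divisor $D\subset X$ realizing $\codim\N(D,X)=3$, a distinguished rational contraction of $X$ of prescribed type, together with constraints on its base and general fiber; in dimension $4$ these constraints are rigid enough to yield a full classification.

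First, if $X$ already splits as a product of surfaces, then the Fano condition forces $X\cong S_1\times S_2$ with each $S_i$ a del Pezzo surface, giving the first alternative of the statement. Assume henceforth that $X$ is not a product of surfaces. I would fix a prime divisor $D\subset X$ with $\codim\N(D,X)=\delta_X=3$ and apply the structure theorem to produce a rational contraction $\varphi\colon X\dasharrow Y$ with $\dim Y<4$. Combining the structure theorem with the Fano condition on $X$ to bound $\rho_Y$ and $\rho_X-\rho_Y$ separately should yield $\rho_X\leq 6$; here one uses that $Y$ has small dimension and is of Fano type, so its Picard number is controlled by the known classifications of del Pezzo surfaces and of Fano $3$-folds of low Picard rank, while $\rho_X-\rho_Y$ is bounded in terms of the (low-dimensional) generic fiber of $\varphi$ and the flips involved.

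Second, for each value $\rho_X\in\{5,6\}$, I would carry out the explicit enumeration: list all admissible targets $Y$ (Mori fiber spaces of small dimension and small Picard number), all admissible general fibers of $\varphi$, all admissible birational modifications $X\dasharrow\widetilde X$, and all ways these data assemble into a smooth Fano $4$-fold with $\delta_X=3$ and the prescribed Picard number. For each item, check existence by a concrete construction -- typically as a projective bundle over, or a sequence of blow-ups of, a simpler variety, possibly followed by a small modification -- and rule out isomorphisms among the items by comparing numerical invariants of their Mori cones.

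I expect the main obstacle to be this enumeration step. The conceptual input (the structure theorem of \cite{delta3}) reduces the problem to a bounded combinatorial search, but actually executing that search -- proving that the list is both exhaustive and free of repetitions, and arriving at the exact counts $6$ for $\rho_X=5$ and $11$ for $\rho_X=6$ -- requires a lengthy case-by-case analysis of extremal rays of $\NE(X)$ and of their compatibilities, rather than a single conceptual argument. In practice this is where most of the work of \cite{delta3} lies, and producing the precise numerical outcome depends on a careful bookkeeping of flips, of the intersection behavior of $D$, and of the geometry of each candidate base $Y$.
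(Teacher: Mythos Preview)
This statement is not proved in the present paper: it is simply quoted from \cite{delta3}, Prop.~1.5, as a known result, with no argument given here. There is therefore no ``paper's own proof'' to compare your proposal against.

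That said, your sketch is a reasonable high-level outline of how the result is actually obtained in \cite{delta3}. The structure theorem there (Th.~1.1 and its specialization Prop.~1.5) does produce, from a divisor realizing $\delta_X=3$, a very specific fibration: in dimension $4$ one gets a smooth fibration $X\to T$ onto a del Pezzo surface $T$ with $\rho_T\le 2$, whose fibers are del Pezzo surfaces with $\rho=4$, and in fact $X$ is obtained from a $\pr^2$-bundle over $T$ by blowing up three pairwise disjoint smooth surfaces that are finite covers of $T$. Your description of $\varphi$ as a general ``rational contraction $X\dasharrow Y$'' is vaguer than what \cite{delta3} actually proves, and your bound $\rho_X\le 6$ would more precisely come from $\rho_X=\rho_T+4$ with $\rho_T\in\{1,2\}$ (not from separate bounds on $\rho_Y$ and $\rho_X-\rho_Y$ via Fano $3$-folds). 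The enumeration step is indeed a case analysis, but it is organized around classifying the admissible $\pr^2$-bundles and the three blown-up surfaces, rather than a general search over Mori fiber spaces; the companion paper \cite{delta3_4folds} carries this out explicitly and is where the counts $6$ and $11$ are established.
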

We believe that, to study Fano varieties with large $\rho_X$,  it is important to investigate the next case $\delta_X=2$, by looking for a special geometrical structure, as started in \cite{codimtwo}. 
In dimension $4$, as an application of our results, we prove the following.
\begin{thm}\label{delta2}
Let $X$ be a smooth Fano $4$-fold with $\delta_X=2$. Then $\rho_X\leq 6$.
  \end{thm}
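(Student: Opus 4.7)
The plan is to combine Theorem~\ref{general} with the paper's classification of ``special'' rational contractions $X\dasharrow Y$ with $\dim Y=3$ and $\rho_X-\rho_Y=2$, after first handling the case when $X$ is a product of surfaces.

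If $X\cong S_1\times S_2$ with $S_i$ del Pezzo surfaces, a direct computation of $\N(D,X)$ for prime divisors $D\subset X$ shows that the maximum codimension is realized by divisors of the form $C_1\times S_2$ or $S_1\times C_2$ with $C_i$ a curve: for $D=S_1\times C_2$ one has $\N(D,X)=\N(S_1)\oplus\mathbb{R}[C_2]$, so $\codim \N(D,X)=\rho_{S_2}-1$. Any divisor surjecting onto both factors contributes both $\N(S_1)\oplus 0$ and $0\oplus\N(S_2)$ to $\N(D,X)$, hence $\N(D,X)=\N(X)$ in that case. So $\delta_X=\max(\rho_{S_1},\rho_{S_2})-1$, consistently with Theorem~\ref{deltageq4}, and $\delta_X=2$ forces $\rho_{S_1},\rho_{S_2}\leq 3$, yielding $\rho_X\leq 6$.

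Assume now that $X$ is not a product and let $D$ be a prime divisor with $\codim \N(D,X)=2$. I would associate to $D$ a rational contraction $\ph\colon X\dasharrow Y$ with $\rho_X-\rho_Y=2$ using the techniques of \cite{codimtwo}: the $2$-dimensional subspace $\N(D,X)^{\perp}\subset\Nu(X)$ cuts out a codimension-$2$ face of $\Mov(X)$, and a suitable sequence of flips followed by a fiber-type contraction transverse to this face produces $\ph$. One argues that $\dim Y=3$ under the present hypotheses (the cases $\dim Y\leq 2$ being incompatible with $X$ non-product, via Theorems~\ref{deltageq4} and~\ref{delta=3}). Theorem~\ref{general} then yields $\rho_X\leq 9$, and to push this bound down to $6$ I would invoke the paper's classification of special rational contractions with $\rho_X\geq 7$ (only three families) and verify, family by family, that none of them admits a prime divisor with $\codim \N(D,X)=2$, thus excluding $\rho_X\in\{7,8,9\}$.

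The main obstacle is the middle step: constructing $\ph$ from the abstract codimension condition on $D$ and ensuring it qualifies as ``special'' in the sense of the classification. The product case is a direct computation, and the exclusion of $\rho_X=7,8,9$ in the non-product case reduces to a finite inspection of the three classified families. Sharpness of the bound $\rho_X\leq 6$ is realized in both cases: for products, by $S_1\times S_2$ with $\rho_{S_1}=\rho_{S_2}=3$, and for non-products, by one of the new $\rho_X=6$ families constructed elsewhere in the paper.
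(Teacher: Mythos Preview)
Your outline has the right shape but contains two real gaps and one circularity.

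First, the dichotomy from \cite{codimtwo} does not give only a rational contraction to a $3$-fold. Theorem~5.2 of \cite{codimtwo} (as used in the paper's proof of Th.~\ref{delta2detail}) yields \emph{either} a special rational contraction $X\dasharrow Y$ with $\dim Y=3$ and $\rho_X-\rho_Y=2$, \emph{or} a quasi-elementary contraction $X\to S$ onto a surface with $\rho_X-\rho_S=3$. Your dismissal of the surface case via Theorems~\ref{deltageq4} and~\ref{delta=3} is wrong: those results concern $\delta_X\geq 4$ and $\delta_X=3$, not surface fibrations. In the paper this case is handled by \cite[Th.~1.1]{fanos}, which forces $X\cong S\times \Bl_{2\pts}\pr^2$ once $\rho_S\geq 3$; for $X$ not a product one then gets $\rho_S\leq 2$ and $\rho_X\leq 5$.

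Second, invoking Theorem~\ref{general} is circular: the proof of Th.~\ref{general} begins by applying Cor.~\ref{poznan}, which in turn uses Th.~\ref{delta2}. You should go directly to Th.~\ref{B} (equivalently Th.~\ref{Bintro}), whose proof only needs $\delta_X\leq 2$, not the bound you are trying to establish.

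Third, your ``family by family'' exclusion of $\rho_X\in\{7,8,9\}$ is vaguer than necessary. The paper does not compute $\delta_X$ for each of the three families. Instead it uses a single structural fact proved along the way to Th.~\ref{B} (see \ref{nodelta2}): the blown-up surface $S\subset\w{W}$ satisfies $\N(S,\w{W})=\N(\w{W})$. Combined with the extra feature supplied by \cite{codimtwo} in case~(1), namely that the witnessing divisor $D$ satisfies $f(D)=Y$, one derives $\N(S,\w{W})\subsetneq\N(\w{W})$, a contradiction. This avoids any case-by-case inspection.

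Finally, your remark on sharpness for non-products is not supported: the paper explicitly notes that the only known example with $\delta_X=2$ and $\rho_X=6$ is the product $(\Bl_{2\pts}\pr^2)^2$.
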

  We note that this bound is sharp too, as it is achieved by $(\Bl_{2 \pts}\pr^2)^2$ (see Rem.~\ref{deltaproduct}). However we are not aware of other examples with $\rho=6$, while we provide several (known and new) examples of Fano $4$-folds $X$ with $\delta_X=2$ and $\rho_X\leq 5$, see \S\ref{ex_delta2} and \S\ref{other}.

  \medskip

\noindent{\bf New families of Fano $4$-folds with $\rho=7$.} 
We already recalled that all known examples of Fano $4$-folds that are not products of surfaces have $\rho\leq 9$. More precisely,
for $\rho=7,8,9$ there is only one known family, given by the Fano model of $\Bl_{r\,\pts}\pr^4$ for $r=6,7,8$, see Ex.~\ref{Fanomodel}. We point out that also for $\rho=6$ there are very few known families (again excluding products): $6$ toric \cite{bat2},  $2$ non toric with $\delta_X=3$ \cite[Prop.~7.1]{delta3}, and one in \cite{manivel4fold}.

We construct two new families with $\rho=7$,  which are the first examples of Fano $4$-folds with $\rho\geq 7$ different from the Fano model of $\Bl_{\pts}\pr^4$ and products of del Pezzo surfaces, as follows (see \S\ref{newa} and \S\ref{newb}).
\begin{proposition}\label{newexintro}
  Let $r\in\{0,\dotsc,4\}$ and let $W$ be the Fano model of $\Bl_{q_0,\dotsc,q_r}\pr^4$ (see Ex.~\ref{Fanomodel}), with $q_0,\dotsc,q_r\in\pr^4$ general points.
  Let $A\subset\pr^4$ be one of the following:
  \begin{enumerate}[$(i)$]
\item
  a general cubic rational normal scroll containing
  $q_0,\dotsc,q_r$;
  \item a general  sextic (singular) K3 surface with $\Sing(A)=\{q_0,\dotsc,q_r\}$,
having rational double points of type $A_1$ or $A_2$ in $q_i$ for every $i$, and contained in a smooth quadric hypersurface.
\end{enumerate}
Let $S\subset W$ be the transform of $A$, and $X\to W$ the blow-up of $S$.
Then $X$ is a smooth Fano $4$-fold
  with $\rho_X=r+3\in\{3,\dotsc,7\}$.  
\end{proposition}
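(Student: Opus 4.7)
The strategy is to lift the problem to $\w{\pr}^4:=\Bl_{q_0,\dotsc,q_r}\pr^4$, where the geometry is transparent, and then to trace the surface $A$ and the anticanonical class through the flipping sequence $\w{\pr}^4\dasharrow W$ produced in Ex.~\ref{Fanomodel}.

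First I would verify that the strict transform $\w A\subset \w{\pr}^4$ is smooth. In case (i) this is immediate, since the general cubic scroll $A$ is smooth and $\w A$ is its blow-up at $r+1$ general points. In case (ii) a local analytic computation shows that blowing up the ambient $\pr^4$ at a rational double point of type $A_n$ of an embedded surface produces a smooth strict transform precisely when $n\leq 2$; this is exactly the hypothesis, so together with the general position of the $q_i$ it yields smoothness of $\w A$. Now $S\subset W$ agrees with $\w A$ away from the flipping loci of $\w{\pr}^4\dasharrow W$ (strict transforms of linear subspaces through subsets of the $q_i$), and a transversality argument using the generality of $A$ shows that $S$ remains smooth in $W$.

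The Picard number then follows at once: $\rho_W=r+2$ since flips preserve $\rho$ and $\rho_{\w{\pr}^4}=r+2$, while $\rho_X=\rho_W+1$ because $X\to W$ is the blow-up of a smooth surface in a smooth $4$-fold; hence $\rho_X=r+3$.

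The main obstacle is proving ampleness of $-K_X$. Writing $-K_X=\pi^*(-K_W)-E$ with $E$ the exceptional divisor of $\pi\colon X\to W$, the plan is to apply Kleiman's criterion after a complete description of $\NE(X)$. Its extremal rays should split into two families: those induced from $\NE(W)$, for which positivity of $-K_X$ follows from ampleness of $-K_W$ (Ex.~\ref{Fanomodel}) together with a bound on $E\cdot\gamma$ coming from the degree of $A$; and rays supported in $E$, controlled by the normal bundle $N_{S/W}$ and the intrinsic geometry of $S$. The technical crux will be to verify, case by case, that the genericity of $q_0,\dotsc,q_r$ and of $A$ rules out any $K_X$-trivial or $K_X$-positive extremal ray, especially any coming from curves whose image in $W$ meets a flipping locus of $\w{\pr}^4\dasharrow W$. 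The quantitative estimates required are rather different in cases (i) and (ii), which is why the two families demand separate constructions.
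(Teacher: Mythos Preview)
Your treatment of smoothness of $S$ and of the Picard number is essentially correct and matches the paper's approach.

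The real gap is in the ampleness argument. You propose to obtain a ``complete description of $\NE(X)$'' and then to split the extremal rays into those coming from $\NE(W)$ and those supported on $E$. This dichotomy is too coarse to be workable: the dangerous curves are exactly those $C\subset X$ that are neither contained in $E$ nor disjoint from it, i.e.\ whose image in $W$ meets $S$ properly. For such a curve you need $-K_W\cdot\alpha(C)>E\cdot C$, and there is no a priori bound on $E\cdot C$ ``coming from the degree of $A$''---a curve can meet $S$ in arbitrarily many points. Moreover, computing $\NE(X)$ outright would require already knowing the answer; the paper never does this.

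What the paper does instead is to find, for each $i=0,\dotsc,r$, an auxiliary effective divisor $\w{T}_i\subset X$: the transform of the cone in $\pr^4$ over $A$ with vertex $q_i$. These divisors, together with $E$ and the pullback $H_X$ of a hyperplane class, enter an explicit numerical identity of the form
\[
(r+2)(-K_X)=(4-r)H_X+3\sum_{i=0}^r\w{T}_i+(2r+1)E
\]
in case $(i)$, and an analogous one in case $(ii)$. The point of this decomposition is that each summand is effective and its restriction to any curve not contained in its support is non-negative; so for any irreducible curve $C$ not lying in $E$ or in some $\w{T}_i$, one gets $-K_X\cdot C>0$ immediately (the only subtlety is $H_X\cdot C\geq 0$, which holds once $C$ meets $S$ and hence avoids the indeterminacy of $W\dasharrow\wi{W}$). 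The remaining work is then localized: one must check that $-K_{X|E}$ and each $-K_{X|\w{T}_i}$ are ample. This is done by identifying $E$ and $\w{T}_i$ as projective bundles over $S$ (resp.\ over $B\subset Y$) and computing the relevant tautological classes explicitly (Lemmas \ref{Ta}, \ref{Ea}, \ref{normal}, \ref{Tb}); it is here that the restriction $r\leq 4$ appears, as certain line bundles on the del Pezzo surface $B$ cease to be nef for larger $r$. Finally, strict nefness plus $K_X^4>0$ gives ampleness via the base point free theorem.

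Your outline is missing precisely this structural input---the divisors $\w{T}_i$ and the resulting effective decomposition of $-K_X$---which is what makes the verification finite and tractable. Without it, the proposed case analysis on extremal rays has no foothold.
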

We note that this gives also two new families with $\rho=6$, and besides these we construct an additional new family with $\rho=6$ (\S \ref{newel}). These new families all have a rational contraction onto a $3$-fold, and their
 construction has been suggested by
 our study of Fano $4$-folds with such a rational contraction, as we explain below.

There are a few cases in our constructions that we leave open, and could lead to more examples, see Questions \ref{Q1}, \ref{Q2}, and \ref{Q3}.
 
 \medskip

 \noindent{\bf Special rational contractions.}
 Let $X$ be a smooth Fano $4$-fold and $f_{\s X}\colon X\dasharrow Y$ a rational contraction with $\dim Y=3$. We say that $f_{\s X}$ is special if $Y$ is $\Q$-factorial and, when we factor $f_{\s X}$ as $X\stackrel{\xi}{\dasharrow} \w{X}\stackrel{f}{\to} Y$ where $\xi$ is a sequence of flips and $f$ a contraction, we have that  $f$ has  at most isolated $2$-dimensional fibers (and no $3$-dimensional fibers). Special rational contractions have especially good properties, see \cite[\S 3 and \S 6]{fibrations}.
 
 It follows from \cite{fibrations} that, when $X$ has a rational contraction onto a $3$-fold, it also has  a \emph{special} rational contraction onto a (possibly different) $3$-fold. In our setting we show the following.
 \begin{thm}\label{summary}
  Let $X$ be a smooth Fano $4$-fold with $\rho_X\geq 7$, not isomorphic to a product of surfaces, and having a rational contraction onto a $3$-dimensional variety.

  Then there exists a special rational contraction $X\dasharrow Y$, where $Y$ is a weak Fano\footnote{$Y$ is weak Fano if $-K_Y$ is nef and big.} $3$-fold with at most isolated, locally factorial, and canonical singularities, and $\rho_X-\rho_Y\in\{1,2\}$. 
\end{thm}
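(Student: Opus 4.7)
The plan is to read Theorem \ref{summary} as a synthesis statement: all three conclusions (existence of the special rational contraction, weak Fano/singularity properties of $Y$, and the bound $\rho_X-\rho_Y\leq 2$) should be pieced together from results of \cite{fibrations}, with the extra hypotheses $\rho_X\geq 7$ and $X\not\cong S_1\times S_2$ used only in the last step to rule out the higher-relative-Picard-number cases.

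First I would invoke the existence result of \cite{fibrations} alluded to in the paragraph preceding the statement: from any rational contraction $X\dasharrow Y_0$ with $\dim Y_0=3$ one can produce a \emph{special} rational contraction $X\dasharrow Y$ (possibly onto a different threefold). Factor it as $X\stackrel{\xi}{\dasharrow}\w X\stackrel{f}{\to}Y$. Since $X$ is Fano and $\xi$ is a sequence of $K$-negative flips, $\w X$ has terminal $\Q$-factorial singularities with $-K_{\w X}$ nef and big, and $\rho_{\w X}=\rho_X$; moreover $Y$ is $\Q$-factorial by definition of "special". The morphism $f$ has connected fibers, general fiber of dimension $1$, and only finitely many $2$-dimensional fibers.

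Next I would extract the weak Fano and singularity statements by a local analysis of $f$. Because $-K_{\w X}$ is $f$-ample on the generic fiber and $f$-nef (being globally nef), the projection formula gives $-K_Y=f_*(-K_{\w X})$ nef and big, so $Y$ is weak Fano. The singular locus of $Y$ lies in the image of the $2$-dimensional fibers of $f$, hence is a finite set; the fact that these singularities are canonical and locally factorial is the standard output for contractions of (weak) Fano fourfolds whose general fibers are curves and whose exceptional fibers are isolated and $2$-dimensional, and is recorded in \cite[\S 3 and \S 6]{fibrations}. At this point the only remaining task is the bound on $\rho_X-\rho_Y$.

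The main obstacle, and the only place where $\rho_X\geq 7$ and "$X$ not a product" really enter, is bounding $\rho_X-\rho_Y$. My approach is to use the classification of special rational contractions $X\dasharrow Y$ with $\dim Y=3$ from \cite{fibrations}, which stratifies them by the type of the exceptional divisors contracted by $\xi^{-1}$ and the type of the (reducible or non-reduced) fibers of $f$; each "type" gives a lower bound on $\rho_X-\rho_Y$ together with either a lower bound on the Lefschetz defect $\delta_X$, a product structure on $X$, or a small upper bound on $\rho_X$. I would go through these types in turn and discard every case with $\rho_X-\rho_Y\geq 3$ by checking that it forces either $X\cong S_1\times S_2$ or $\rho_X\leq 6$, contradicting our hypotheses; conceptually one is using Theorem \ref{general} (so $\rho_X\leq 9$) and the structural results on high-$\delta_X$ Fano fourfolds (Theorems \ref{deltageq4} and \ref{delta=3}) to squeeze the list.

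The hardest part is thus not proving the theorem from scratch but running the case analysis cleanly; once the classification from \cite{fibrations} is unpacked, the rest is a bookkeeping exercise, and the weak Fano/singularity part is essentially formal from the definition of "special" and the fact that flips of a Fano variety preserve the weak Fano property of the small modification.
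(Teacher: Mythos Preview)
Your proposal has two genuine gaps.

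\textbf{The weak Fano argument is incorrect.} After the SQM $\xi\colon X\dasharrow\w X$, the anticanonical divisor $-K_{\w X}$ is \emph{not} nef: by Lemma~\ref{SQMFano} the indeterminacy locus of $\xi^{-1}$ consists of exceptional lines $\ell$ with $K_{\w X}\cdot\ell=1$. So your ``$-K_{\w X}$ nef and big'' is false, and in any case there is no projection formula expressing $-K_Y$ as $f_*(-K_{\w X})$. The paper establishes nefness of $-K_Y$ by analyzing every extremal ray of $\NE(Y)$: small rays are $K$-trivial (Lemma~\ref{small}), divisorial rays are blow-ups of smooth points hence $K$-negative (Lemma~\ref{divisorial}), and fiber-type rays are $K$-negative since $-K_Y$ is big. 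The proof of Lemma~\ref{divisorial} is not formal: it uses $\delta_X\leq 2$ and $\rho_X\geq 7$ to rule out type-$(3,2)$ fixed divisors in pullbacks and to force divisorial contractions of $Y$ to be blow-ups of smooth points. Note also that the weak Fano claim in the paper is proved \emph{after} the bound on $\rho_X-\rho_Y$ is known, not before; it is done separately in the elementary case (proof of Th.~\ref{elem3fold}) and in the relative-Picard-number-two case (\ref{weakFano} in the proof of Th.~\ref{B}).

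\textbf{The bound on $\rho_X-\rho_Y$ does not come from a case analysis in \cite{fibrations}.} There is no classification in \cite{fibrations} stratifying special contractions by ``type of exceptional divisors contracted by $\xi^{-1}$ and type of fibers'' as you describe. The actual mechanism is the Lefschetz defect: Lemma~\ref{rhodelta} shows directly, using the pairwise disjoint divisors $B_1,\dotsc,B_m\subset Y$ with reducible pullback (Th.~\ref{special}), that a special rational contraction with $\rho_X-\rho_Y\geq 3$ forces $\delta_X\geq\rho_X-\rho_Y-1\geq 2$. On the other hand, Cor.~\ref{poznan} gives $\delta_X\leq 1$ from the hypotheses $\rho_X\geq 7$ and $X$ not a product. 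These two together immediately yield $\rho_X-\rho_Y\in\{1,2\}$. The crucial point you are missing is that Cor.~\ref{poznan} rests on Th.~\ref{delta2}, whose proof in turn invokes the classification Th.~\ref{B}; so Th.~\ref{summary} is not merely a synthesis of \cite{fibrations} but genuinely depends on the paper's main results.
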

Therefore to study the geometry of $X$ as in Th.~\ref{summary} above we are reduced to study  separately the two cases where
$X$ has an elementary rational contraction onto a $3$-fold, and where 
there is a special rational contraction $X\dasharrow Y$  of relative Picard number two, with $\dim Y=3$.

\medskip

\noindent{\bf Classification results.}
In the case of relative Picard number two, in the range $\rho_X\geq 7$, we show that there are only three possibilities for $X$.
\begin{thm}\label{Bintro}
  Let $X$ be a smooth Fano $4$-fold with $\rho_X\geq 7$, not isomorphic to a product of surfaces, and having a special rational contraction 
$X\dasharrow Y$ with $\dim Y=3$ and $\rho_X-\rho_Y=2$.
Then $\rho_X\in\{7,8,9\}$ and
 $X$ is the blow-up of $W$ along a normal surface $S$,
where $W$ is the Fano model of $\Bl_{\pts}\pr^4$ (see Ex.~\ref{Fanomodel}), and $S\subset W$ is the transform of a surface $A\subset\pr^4$ containing the blown-up points, as in one of the following cases:
\begin{enumerate}[$(i)$]
\item $A$ is a cubic scroll;
\item $A$ is a sextic K3 surface, with rational double points of type $A_1$ or $A_2$ at the blown-up points, and $\rho_X=7$;
  \item $A$ is a cone over a twisted cubic.
  \end{enumerate}
  In cases $(i)$ and $(ii)$ the surface $S$ is smooth, while in $(iii)$ $S$ has one singular point, given by the vertex of the cone.
Moreover $Y$ is smooth, and up to flops $Y\cong\Bl_{\pts}\pr^3$.
\end{thm}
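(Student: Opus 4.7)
The plan is to reduce the situation to a smooth blow-up $X\to W$ along a surface, to identify $W$ with the Fano model of a blow-up of $\pr^4$, and then to classify the admissible surfaces $A\subset\pr^4$.

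First I would factor $f_{\s X}$ as $X\stackrel{\xi}{\dasharrow}\w X\stackrel{f}{\to} Y$ and analyze $\NE(\w X/Y)$, which is $2$-dimensional since $\rho_{\w X}-\rho_Y=2$. Because $f_{\s X}$ is special -- $f$ has at most isolated $2$-dimensional fibers and no $3$-dimensional fibers -- and $Y$ is $\Q$-factorial weak Fano with isolated, locally factorial, canonical singularities (Th.~\ref{summary}), the possible configurations of the two extremal rays of $\NE(\w X/Y)$ can be classified, and after possibly running a relative flop I expect to extract an elementary \emph{divisorial} contraction $\w X\to\w W$ that is a smooth blow-up along a surface. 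Transporting this through the flips $\xi$ and exploiting that $X$ is Fano of large Picard number, I expect to obtain a genuine morphism $\pi\colon X\to W$ realizing $X$ as the blow-up of a normal surface $S$, with $W$ a weak Fano $4$-fold equipped with a contraction $W\to Y$ of relative Picard number $1$.

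Next I would identify $W$. Since $X$ is a Fano $4$-fold with $\rho_X\geq 7$ whose rational contraction $X\dasharrow Y$ factors as the composition of an elementary blow-up $X\to W$ and an elementary contraction $W\to Y$, the classification of Fano $4$-folds admitting an elementary rational contraction onto a $3$-fold (developed earlier in the paper) together with the bound $\rho_X\leq 9$ of Th.~\ref{general} should force $W$ to be the Fano model of $\Bl_{q_0,\dotsc,q_r}\pr^4$ for general points $q_0,\dotsc,q_r\in\pr^4$ with $r=\rho_X-3\in\{4,5,6\}$, and $Y$ to be smooth and isomorphic up to flops to $\Bl_{\pts}\pr^3$. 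Under the natural birational map $W\dasharrow\pr^4$, the surface $S$ is then the proper transform of a surface $A\subset\pr^4$, and the incidence of $S$ with the exceptional divisors of $W$ over the $q_i$'s forces $A$ to contain every $q_i$.

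Finally I would classify $A$. From $-K_X=\pi^*(-K_W)-E$, ampleness of $-K_X$ translates into sharp numerical inequalities bounding $\deg A$, the multiplicity and singularity type of $A$ at each $q_i$, and the behavior of $A$ with respect to lines, conics and quadrics through the $q_i$'s. Combining these with the existence of the special rational contraction $X\dasharrow Y$ of relative Picard number $2$, a direct case analysis of low-degree surfaces in $\pr^4$ passing through $r+1\geq 5$ general points should leave exactly the three listed families: cubic rational normal scrolls, sextic K3 surfaces with $A_1$ or $A_2$ singularities at the $q_i$'s and contained in a smooth quadric, and cones over a twisted cubic. The smoothness of $S$ in cases $(i)$ and $(ii)$, and its single vertex singularity in case $(iii)$, read off from the explicit description of $A$; matching each family with the admissible values of $r$ yields $\rho_X\in\{7,8,9\}$. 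The main obstacle I anticipate is precisely this final step: ruling out all other candidate surfaces and, in particular, showing that case $(ii)$ is incompatible with $\rho_X\in\{8,9\}$, which will require delicate intersection-theoretic computations on $W$ together with careful control of linear systems through $r+1$ general points of $\pr^4$, using the special-contraction constraint to eliminate borderline configurations.
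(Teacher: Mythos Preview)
Your outline has the right overall shape---factor $f$, extract a divisorial contraction $X\to W$, identify $W$, then classify the surface---but it misjudges where the work lies and contains a logical gap.

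First, you invoke Th.~\ref{general} to obtain $\rho_X\leq 9$. In the paper, Th.~\ref{general} is proved \emph{after} and \emph{using} Th.~\ref{B} (the proof in Section~\ref{grenoble} says ``the statement follows from Th.~\ref{elem3fold} and~\ref{B}''). So this step is circular. The bound $\rho_X\leq 9$ must be obtained inside the proof of Th.~\ref{Bintro} itself; the paper gets it only at the very end (Lemma~\ref{bound9}), \emph{after} identifying $W$ with the Fano model of $\Bl_{r+1\,\pts}\pr^4$, by using that this model has no nontrivial fiber-type contraction when $\rho_W=9$.

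Second, and more seriously, you assume that ``the classification of Fano $4$-folds admitting an elementary rational contraction onto a $3$-fold'' will force $W$ to be the Fano model of $\Bl_{\pts}\pr^4$ and $Y$ to be $\Bl_{\pts}\pr^3$. No such classification exists in the paper. The elementary case (Th.~\ref{elem3fold}, Cor.~\ref{tableelem}) only bounds $\rho$ and lists six possible $Y_0$'s with $Y\cong\Bl_{\pts}Y_0$ up to flops, \emph{assuming $Y$ smooth}; it says nothing about $W$, and it does not apply directly to $W$ anyway since $W$ is a priori only nodal. This is the heart of the proof, and the paper spends most of Section~\ref{relrho2} on it. The actual route is: show $Y$ is weak Fano and $Y\cong\Bl_{\pts}Y_0$ with $\rho_{Y_0}\leq 2$; prove the crucial relation $-K_Y=\lambda B$ (Lemma~\ref{index}), which forces the discriminant to be exactly $B$ and pins down the geometry over singular points of $Y$; use a Brauer-group/rationality argument (Lemma~\ref{sm}) to show $Y$ is smooth; then narrow $Y_0$ to six candidates (Lemma~\ref{table}). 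Five of these are eliminated by a delicate parity argument (Lemmas~\ref{degree2}--\ref{4}) tracking the degree of $\det\ma{E}$ on rational curves through the blown-up points, where $W_0=\pr_{Y_0}(\ma{E})$ over the smooth locus. Only then does one get $Y_0\cong\pr^3$, show $\pi_0$ is a genuine $\pr^1$-bundle (Lemma~\ref{psi}), and invoke the Szurek--Wi\'sniewski classification of Fano $\pr^1$-bundles over $\pr^3$ to conclude $W_0\cong\Bl_{\pt}\pr^4$.

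By contrast, once $W$ is known, the classification of $A$ is short: the projection from $q_0$ sends $A$ to $B_0$ with $-K_{Y_0}=\lambda B_0$ and $\lambda\in\{1,2\}$, which immediately gives $\deg A\in\{3,6\}$ and the three cases. Your proposal inverts the difficulty: the ``final case analysis of low-degree surfaces'' is not the obstacle; the identification of $W$ is.
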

With Prop.~\ref{newexintro}
we show that cases $(i)$ and $(ii)$ do occur for $\rho_X\leq 7$, while we do not explore case $(iii)$.

\medskip

\noindent{\bf Techniques and strategy of proof.} Let $X$ be a  Fano $4$-fold and $f_{\s X}\colon X\dasharrow Y$ a rational contraction with $\dim Y=3$. First of all, by the results in \cite{fibrations}, we can assume that $f_{\s X}$ is special (see Prop.~\ref{factor}).

Recall that, when $\delta_X\geq 3$, either $X$ is a product of surfaces, or $\rho_X\in\{5,6\}$ and the possible $X$ are classified, by Th.~\ref{deltageq4} and \ref{delta=3}; therefore we can also suppose that
$\delta_X\leq 2$.

Let us consider a factorization of $f_{\s X}$ as $X\stackrel{\xi}{\dasharrow}\w{X}\stackrel{f}{\to}Y$; we can assume that $f$ is $K$-negative. Then $Y$ has at most isolated, locally factorial, and canonical singularities, and outside the (finitely many) $2$-dimensional fibers, $f$ is a conic bundle (see \ref{setting}). By studying the connected components of the discriminant divisor of $f$ in $Y$, 
we show that when $f$ is special and $\delta_X\leq 2$,
then either $\rho_X-\rho_Y\leq 2$, or $\delta_X=2$ (Lemma \ref{rhodelta}).

We first treat the case where $\rho_X-\rho_Y\leq 2$.
We assume that $\rho_Y\geq 5$ and follow the same approach as in \cite{eff} and \cite{fibrations}, that led to the bound $\rho_X\leq 12$. We show that $Y$ is weak Fano, and that (up to flops) there is a blow-up $Y\to Y_0$ of $r$ distinct smooth points, where $Y_0$ is weak Fano with $\rho_{Y_0}\leq 2$ (\ref{weakFano}, Lemma \ref{blowups}, proof of Th.~\ref{elem3fold}).

In order to bound $\rho_X$, we need to bound the number $r$ of blown-up points, and for this we use the anticanonical degree. Since $0<-K_Y^3=-K_{Y_0}^3-8r$, we give bounds on $-K_{Y_0}^3$ in our setting, using several results from the literature on singular Fano or weak Fano $3$-folds, see Section \ref{weakfano} and references therein.

In the elementary case (namely $\rho_X-\rho_Y=1$), this is enough to show that $\rho_X\leq 9$; if moreover $Y$ is smooth, then we show that (up to flops) there are only six possibilities
 for $Y_0$ and $Y$ (Cor.~\ref{tableelem}).

The case where $\rho_X-\rho_Y=2$ is studied in much more detail, as besides bounding the Picard number, we classify $X$ as stated in Th.~\ref{Bintro}. This
is done in  Section \ref{relrho2} which is the heart of the paper, and 
requires a long and articulated analysis that allows to prove first that $Y\cong\Bl_{\pts} \pr^3$, and then to identify $X$. We refer the reader to \ref{overview}
for an overview of the proof.

Finally let us consider
Fano $4$-folds $X$ with Lefschetz defect $\delta_X=2$. If $X$ is a product of surfaces, it is easy to see that $\rho_X\leq 6$. Otherwise, by \cite{codimtwo} either $X$ has a special rational contraction $X\dasharrow Y$ where $\dim Y=3$ and $\rho_X-\rho_Y=2$, or it has a flat contraction $X\to S$ onto a surface. In the second case we  get easily $\rho_X\leq 5$
by applying results from \cite{fanos}. In the first case we apply our previous results on the case of relative Picard number two; more precisely
we exclude that $\rho_X\geq 7$ thanks to  our classification (Thm.~\ref{Bintro}). 

\medskip

\noindent{\bf Outline of the paper.}
In Section \ref{prel} we set up the notation and give some preliminary results. In particular in \S\ref{2.2} we recall the properties of fixed prime divisors in Fano $4$-folds with $\rho\geq 7$, that will be crucial in the sequel.

Section \ref{secspecial} is about special rational contractions from smooth $4$-folds to $3$-folds. First we consider $K$-negative special contractions $f\colon X\to Y$ where $X$ is a smooth projective $4$-fold and $\dim Y=3$; then we introduce special rational contractions of Fano $4$-folds onto $3$-folds, and their properties.

Section \ref{weakfano} is an auxiliary section, where we present some results on Fano and weak Fano $3$-folds $Y$ with locally factorial and canonical singularities, in particular with respect to the 
anticanonical degree $-K_Y^3$; these will be needed in the rest of the paper.

In Section \ref{relrho2} we treat the case of relative Picard number $2$, and prove Th.~\ref{Bintro}.

Then in Section \ref{grenoble} we treat the elementary case and the case of Lefschetz defect $2$, proving Theorems \ref{general}, \ref{delta2}, and \ref{summary}.

Finally Section \ref{examples} is devoted to the construction of new families and examples, we prove Prop.~\ref{newexintro} and ask some open questions on possible further new examples.

{\small\tableofcontents}

   \section{Preliminaries}\label{prel}
   \subsection{Notations}
   \noindent We work over the field of complex numbers.
   We refer the reader to \cite{kollarmori} for the terminology and standard results in birational geometry, and to \cite{hukeel} for Mori dream spaces.

Let $X$ be a normal and $\Q$-factorial quasi-projective variety.
 A \emph{contraction} is a surjective, projective map $f\colon X\to Y$, with connected fibers, where $Y$ is normal and quasi-projective.
 We denote by $\N(X/Y)$ the vector space of one cycles in $X$, with coefficients in $\R$, contracted to points by $f$, up to numerical equivalence.
We say that $f$ is \emph{elementary} if $\dim\N(X/Y)=1$, and $f$ is \emph{$K$-negative} is $-K_X$ is $f$-ample.

Let $X$ be projective. As usual we denote by $\Nu(X)$ (respectively $\N(X)$)
the vector space of $\R$-divisors in $X$ (respectively  one cycles in $X$ with coefficients in $\R$) up to numerical equivalence. We denote by $[D]\in\Nu(X)$ the class of a divisor, and $\equiv$ stands for numerical equivalence.

An elementary contraction $f$ is \emph{of type $(a,b)$} if $\dim\Exc(f)=a$ and $\dim f(\Exc(f))=b$.
Additionally, when $\dim X=4$, we say that $f$ is of type
$(3,b)^{\sm}$, with $b\in\{0,1,2\}$, if $Y$ is smooth and $f$ is the blow-up of a smooth, irreducible subvariety of dimension $b$. Still when $\dim X=4$, we say that $f$ is of type $(3,0)^Q$ if $\Exc(f)\cong Q$ where $Q$ is a $3$-dimensional quadric, $f(Q)=\{pt\}$, and $\ma{N}_{\Exc(f)/Q}\cong \ol_Q(-1)$; then $Q$ is either smooth or the cone over a smooth quadric surface, see \cite[Lemma 2.19]{blowup}.

We denote by $\NE(f)$ the face of $\NE(X)$ generated by classes of curves contracted to points by $f$.

\medskip

Suppose that $X$ is a Mori dream space. An
extremal ray $R$ is  one dimensional face of $\NE(X)$.
Let $f\colon X\to Y$ be the associated elementary contraction (namely $R=\NE(f)$); we set $\Lo(R):=\Exc(f)$.
For a divisor  $D$ on $X$ we write
$D\cdot R>0, =0,<0$ if $D\cdot \gamma>0,=0,<0$ for $\gamma\in R$ non-zero. We also say that $R$ or $f$ are $D$-positive, $D$-trivial, $D$-negative respectively.

By a flip we mean the flip of a small extremal ray $R$, or equivalently of a small elementary contraction, in the sense of \cite[Def.~3.33]{kollarmori}. We say that the flip is
$D$-negative if $R$ is. In dimension $3$, as customary a flop is the flip of a $K$-trivial extremal ray.

We set $D^{\perp}:=\{\gamma\in\N(X)|D\cdot\gamma=0\}$.
In a real vector space $N$ of finite dimension, we denote by
$\langle \gamma_1,\dotsc,\gamma_r\rangle$ the convex cone spanned by $\gamma_1,\dotsc,\gamma_r$.

In $\Nu(X)$ we denote by $\Mov(X)$ (respectively $\Eff(X)$) the convex cone spanned by classes of movable (respectively effective) divisors. Since $X$ is a Mori dream space, both cones are polyhedral.

A \emph{fixed prime divisor} is a prime divisor $D$ such that $\Bs|mD|=D$ for every $m\in\Z_{>0}$.

 A \emph{small $\Q$-factorial modification (SQM)} of $X$ is a birational map $X\dasharrow X'$ that factors as a finite sequence of flips.
A \emph{rational contraction} is a rational map $f\colon X\dasharrow Y$ that factors as $X\stackrel{\ph}{\dasharrow} X'\stackrel{f'}{\to} Y$, where $\ph$ is a SQM and $f'$ is a contraction; $f$ is \emph{elementary} if $\rho_X-\rho_Y=1$.
An elementary rational contraction can be divisorial, small, or of fiber type, depending on the corresponding property of $f'$.

\medskip

Let $X$ be a normal and $\Q$-factorial projective variety. We say that $X$ is \emph{log Fano} if there exists an effective $\Q$-divisor $\Delta$ such that $(X,\Delta)$ has klt singularities and $-(K_X+\Delta)$ is ample; in particular $-K_X$ is big. If $X$ is log Fano, then $X$ is a Mori dream space, by \cite[Cor.~1.3.2]{BCHM}; moreover if $X\dasharrow Y$ is a rational contraction with $Y$ $\Q$-factorial, then $Y$ is still log Fano, by \cite[Lemma 2.8]{prokshok}.

   Let $X$ be a smooth projective $4$-fold. An \emph{exceptional plane} is a surface $L\subset X$ such that $L\cong\pr^2$ and $\ma{N}_{L/X}\cong\ol_{\pr^2}(-1)^{\oplus 2}$; we denote by $C_L\subset L$ a line in $L$, note that $-K_X\cdot C_L=1$.  An \emph{exceptional line} is a curve $\ell\subset X$ such that $\ell\cong\pr^1$ and $\ma{N}_{\ell/X}\cong\ol_{\pr^1}(-1)^{\oplus 3}$; note that $K_{X}\cdot\ell=1$.

A \emph{node} is an ordinary double point.

We denote by $\pr^2\bullet\pr^2$ the union of two planes in $\pr^4$ intersecting in one point.

If $\iota\colon Z\hookrightarrow X$ is a closed subset, we set $\N(Z,X):=\iota_*(\N(Z))\subseteq\N(X)$.

\medskip

We give a few preliminary results that are needed in the sequel; recall the definition of Lefschetz defect \eqref{defidelta}.
\begin{remark}[\cite{rendiconti}, Lemma 5]\label{deltaproduct}
Let $X=Y\times Z$ where $Y$ and $Z$ are smooth Fano varieties; then $\delta_X=\max\{\delta_Y,\delta_Z\}$. Moreover it follows from the definition of Lefschetz defect that a del Pezzo surface $S$ has $\delta_S=\rho_S-1$.
\end{remark}  
\begin{remark}\label{disjoint}
  Let $X$ be a normal and $\Q$-factorial projective variety, $Z\subset X$ a closed subset, and $D\subset X$ a prime divisor such that $Z\cap D=\emptyset$.
  Then $\N(Z,X)\subseteq D^{\perp}\subset\N(X)$.
  Indeed we have $D\cdot C=0$ for every curve $C\subset Z$.
\end{remark}
\begin{remark}\label{linalg}
  Let $X$ be a normal and $\Q$-factorial projective variety, $Z\subset X$ a closed subset, and $f\colon X\to Y$ a contraction.
  Then $\dim\N(Z,X)\leq\dim\N(f(Z),Y)+\rho_X-\rho_Y$.

  Indeed consider the pushforward $f_*\colon \N(X)\to\N(Y)$; we have $f_*(\N(Z,X))=\N(f(Z),Y)$ and $\dim\ker f_*=\rho_X-\rho_Y$.
\end{remark}  
\begin{lemma}\label{tautological}
  Let $\alpha\colon X\to W$ be the blow-up of a smooth projective $4$-fold along a smooth irreducible surface $S$, 
with exceptional divisor $E\subset X$.

  Then $\ol_X(-K_X)_{|E}$ is ample on $E$ if and only if the vector bundle $$\ma{N}_{S/W}^{\vee}\otimes\ol_W(-K_W)_{|S}\cong\ma{N}_{S/W}\otimes\ol_S(-K_S)$$
  is ample on $S$.
  Moreover we have the following:
  \begin{gather*}
  K_X^4
= K_W^4
- 3(K_{W|S})^2 - 2K_S \cdot K_{W|S} + c_2(\mathcal{N}_{S/W}) - K_S^2,\\
K_X^2\cdot c_2(X) = K_W^2\cdot c_2(W) - 12\chi(\ol_S) + 2K_S^2
- 2K_S \cdot K_{W|S}- 2c_2(\mathcal{N}_{W/Y}),\\
\chi(X,-K_X) = \chi(W,-K_W ) -\chi(\ol_S) -\frac{1}{2}\bigl(
(K_{W|S})^2 + K_S \cdot K_{W|S}\bigr).
\end{gather*}
\end{lemma}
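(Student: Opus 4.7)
The plan is to work throughout inside the $\pr^1$-bundle structure $\pi:=\alpha_{|E}\colon E\to S$, using the identification $E\cong\pr(\ma{N}_{S/W}^\vee)$ with tautological class $\xi:=c_1(\ol_E(1))$ satisfying $E_{|E}=-\xi$, together with the Segre class identities
\[\pi_*\xi=1,\qquad \pi_*\xi^2=-c_1(\ma{N}_{S/W}),\qquad \pi_*\xi^3=c_1(\ma{N}_{S/W})^2-c_2(\ma{N}_{S/W}).\]
Adjunction for the codimension-$2$ inclusion $S\hookrightarrow W$ gives $c_1(\ma{N}_{S/W})=K_S-K_{W|S}$; together with $K_X=\alpha^*K_W+E$ and the projection formula, these are the only ingredients required.

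For the ampleness assertion, restricting $K_X=\alpha^*K_W+E$ to $E$ yields
\[-K_{X|E}=\pi^*(-K_{W|S})+\xi=c_1\bigl(\ol_{\pr(\ma{N}_{S/W}^\vee\otimes\ol_W(-K_W)_{|S})}(1)\bigr),\]
which is ample if and only if the twisted bundle $\ma{N}_{S/W}^\vee\otimes\ol_W(-K_W)_{|S}$ is ample on $S$, by the standard ampleness criterion for projective bundles. The isomorphism with $\ma{N}_{S/W}\otimes\ol_S(-K_S)$ follows from the rank-$2$ identity $\ma{F}^\vee\cong\ma{F}\otimes(\det\ma{F})^{-1}$ combined once more with adjunction.

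For the identity for $K_X^4$, I would expand $(\alpha^*K_W+E)^4$ binomially. The term $(\alpha^*K_W)^4$ contributes $K_W^4$; the term $(\alpha^*K_W)^3\cdot E$ vanishes by projection formula since $\alpha_*E=0$. For $k\in\{2,3,4\}$, writing $E^k=j_*((-\xi)^{k-1})$ reduces the intersection to $\int_E\pi^*(K_{W|S})^{4-k}\cdot(-\xi)^{k-1}$, which via projection along $\pi$ is evaluated on $S$ through the Segre identities above; collecting the contributions with the binomial coefficients $1,6,4,1$ and substituting $c_1(\ma{N}_{S/W})=K_S-K_{W|S}$ yields the stated formula. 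The identity for $K_X^2\cdot c_2(X)$ proceeds analogously: first I would invoke the standard blow-up expression for the total Chern class to write $c_2(X)=\alpha^*c_2(W)+j_*\eta$ for an explicit class $\eta$ polynomial in $\xi$ and pull-backs of Chern classes of $S$ and of $\ma{N}_{S/W}$, expand $K_X^2\cdot c_2(X)$, and then use Noether's formula $c_2(T_S)=12\chi(\ol_S)-K_S^2$ to replace the $c_2(T_S)$ contribution by $\chi(\ol_S)$.

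The final identity for $\chi(X,-K_X)$ can be obtained either by applying Hirzebruch--Riemann--Roch on both $X$ and $W$ and substituting the previous two formulas to eliminate the Chern numbers of $X$, or by using the short exact sequence $0\to\ol_X(-K_X-E)\to\ol_X(-K_X)\to\ol_E(-K_{X|E})\to 0$, computing $R^\bullet\alpha_*$ along the $\pr^1$-bundle $E\to S$, and combining with Noether's formula on $S$. The main obstacle throughout is the coefficient and sign bookkeeping in the Segre expansions: the computations are routine but error-prone, and particular care is needed to track the distinction between $\ma{N}_{S/W}$ and $\ma{N}_{S/W}^\vee$ under the chosen convention for $\pr(\cdot)$.
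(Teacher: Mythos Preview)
Your proposal is correct and, for the ampleness assertion, follows essentially the same route as the paper: identify $E\cong\pr_S(\ma{N}_{S/W}^{\vee})$ with tautological class $\xi=-E_{|E}$, show that $-K_{X|E}=\xi+\pi^*(-K_{W|S})$ is the tautological class for the twist $\ma{N}_{S/W}^{\vee}\otimes\ol_W(-K_W)_{|S}$, and invoke the rank-$2$ identity $\ma{F}^{\vee}\cong\ma{F}\otimes(\det\ma{F})^{-1}$ together with adjunction for the displayed isomorphism.

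For the three numerical formulae the two approaches diverge only in presentation: the paper does not carry out the computation but simply cites \cite[Lemma~3.2]{delta3_4folds}, whereas you outline the direct derivation via the binomial expansion of $(\alpha^*K_W+E)^4$, the Segre identities $\pi_*\xi^k$, the blow-up expression for $c_2(X)$, and Noether's formula. Your sketch is the standard way such formulae are obtained (and is presumably what the cited reference does); it is correct, with the caveat you yourself flag that the sign and coefficient bookkeeping is delicate. Either route is acceptable here.
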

\begin{proof}
  We have $E\cong\pr_S(\ma{N}_{S/W}^{\vee})$ with tautological class $\eta\cong\ol_X(-E)_{|E}$. Moreover
  $$\ol_E(K_E)=\alpha_{|E}^*\bigl(\ol_S(K_S)\otimes\det\ma{N}_{S/W}^{\vee}\bigr)\otimes
 \eta^{\otimes(-2)},\quad \ol_W(-K_W)_{|S}\cong \ol_S(-K_S)\otimes\det\ma{N}_{S/W},$$
and $K_E=(K_X+E)_{|E} $.
 We get that
 $\ol_X(-K_X)_{|E}=\eta\otimes\alpha_{|E}^*(\ol_W(-K_W)_{|S})$ is the tautological class for $E=\pr_S(\ma{N}_{S/W}^{\vee}\otimes\ol_W(-K_W)_{|S})$.

 Finally we recall that for a vector two vector bundle $\ma{E}$ we have $\ma{E}\cong\ma{E}^{\vee}\otimes\det(\ma{E})$, and the formulae are from \cite[Lemma 3.2]{delta3_4folds}.
\end{proof}
\begin{remark}\label{sections}
  Let $Y$ be a smooth quasi-projective variety, $X:=\pr_Y(\ma{E})$ where $\ma{E}$ is a rank two vector bundle on $Y$, and $\pi\colon X\to Y$ the $\pr^1$-bundle. Set $L:=\det\ma{E}\in\Pic(Y)$.

  Let $C\subset Y$ be a smooth projective rational curve and set $S:=\pi^{-1}(C)\cong\mathbb{F}_e$, where $\mathbb{F}_e$ is the Hirzebruch surface and $e\geq 0$. Let $\Gamma^-$ and $\Gamma^+$ be respectively the negative section and a positive section of $\pi_{|S}\colon S\to C$ (namely $(\Gamma^-)^2=-e$ and $(\Gamma^+)^2=e$ in $S$). Then $L\cdot C\equiv e\mod 2$ and
  $$-K_X\cdot\Gamma^-=-K_Y\cdot C-e,\quad\quad -K_X\cdot\Gamma^+=-K_Y\cdot C+e.$$
\end{remark}
\begin{proof}
We have
 $-K_{X}=\pi^*(-K_{Y})-K_{\pi}$ and  $$-K_{X|S}=\pi^*(-K_{Y})_{|S}-K_{\pi|S}=
 \pi^*(-K_{Y})_{|S}-    K_S+    (\pi_{|S})^*K_{\pr^1}.$$
In particular if $\Gamma$ is $\Gamma^+$ or $\Gamma^-$
 we get
 $-K_{X}\cdot\Gamma=-K_{Y}\cdot C-K_S\cdot\Gamma-2$, which yields the formulae above since
 $-K_S\cdot\Gamma^-=2-e$ and $-K_S\cdot\Gamma^+=2+e$.

 To see that $L\cdot C\equiv e\mod 2$, write $\ma{E}_{|C}\cong\ol_{\pr^1}(a)\oplus\ol_{\pr^1}(b)$ with $a\leq b$. Then $L\cdot C=a+b$ and $e=b-a$.
\end{proof}
Finally recall some results on $K$-negative contractions.
\begin{thm}[\cite{AW}, Theorem on p.~256]\label{32}
Let $X$ be a smooth quasi-projective $4$-fold and $f\colon X\to Y$ a $K$-negative divisorial elementary contraction of type $(3,2)$. Then $f$ can have at most finitely many $2$-dimensional fibers over $y_1,\dotsc,y_r$, $\Sing(Y)\subseteq \{y_1,\dotsc,y_r\}$, and $Y$ is locally factorial and has at most nodes as singularities. Moreover over $Y\smallsetminus\{y_1,\dotsc,y_r\}$ $f$ is just the blow-up of a smooth, irreducible surface.
\end{thm}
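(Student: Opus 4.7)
The plan is to combine Wisniewski's fiber-dimension inequality with Ando's structure theorem for smooth divisorial contractions, applied away from a finite set where the fibers jump in dimension.

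First I would exploit length and $K$-negativity. Since $f$ is $K$-negative and elementary, the length $l$ of $\NE(f)$ satisfies $l\geq 1$, and Wisniewski's fiber inequality applied to any fiber $F\subseteq\Exc(f)$ gives
$$
\dim F+\dim\Exc(f)\geq \dim X+l-1\geq 4,
$$
so every fiber contained in $\Exc(f)$ has dimension at least $1$. Because $\dim f(\Exc(f))=2$ and $\dim\Exc(f)=3$, the generic fiber over $f(\Exc(f))$ has dimension exactly $1$; by upper semicontinuity the set $T:=\{y\in Y\mid\dim f^{-1}(y)\geq 2\}$ is closed in $Y$.

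Next I would show $T$ is finite. If $T$ contained an irreducible curve $C$, then $f^{-1}(C)\subseteq\Exc(f)$ would have dimension $\geq 1+2=3=\dim\Exc(f)$. Since $\Exc(f)$ is a prime divisor (as the exceptional locus of an elementary divisorial contraction), this would force $f^{-1}(C)=\Exc(f)$, hence $f(\Exc(f))=C$, contradicting $\dim f(\Exc(f))=2$. Thus $T=\{y_1,\dots,y_r\}$ is a finite set.

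Over $U:=Y\smallsetminus T$ every fiber has dimension exactly $1$, so $f_{|f^{-1}(U)}\colon f^{-1}(U)\to U$ is a $K$-negative elementary divisorial contraction of a smooth $4$-fold with equidimensional $1$-dimensional fibers. A length computation together with the adjunction formula forces the general fiber to be a reduced $\pr^1$ with normal bundle $\ol(-1)^{\oplus 2}\oplus\ol$, the trivial summand being tangent to the image surface. Ando's classification of smooth contractions in this situation then identifies $f_{|f^{-1}(U)}$ with the blow-up of $U$ along a smooth irreducible surface; in particular $U$ is smooth, so $\Sing(Y)\subseteq T$.

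The main obstacle will be the local analysis at each $y_i\in T$. I would start by pinning down the possible $2$-dimensional fibers $F_i=f^{-1}(y_i)$: combining the length bound $-K_X\cdot\gamma\geq 1$ for curves $\gamma\subset F_i$, the fact that $F_i$ sits inside the prime divisor $\Exc(f)$ with prescribed restriction of $\ol_X(\Exc(f))$, and the vanishing theorems available because $f$ is $K$-negative, $F_i$ is restricted to a very short list of possibilities (essentially a $\pr^2$ or a quadric surface with prescribed normal data in $X$). For each case I would compare $\Spec\widehat{\ol}_{Y,y_i}$ with a standard model built by contracting the model fiber inside the smooth $4$-fold $X$ along $\Exc(f)$; this identifies $(Y,y_i)$ with a germ of a $3$-dimensional ordinary double point sitting in the smooth $4$-fold germ of $X$, and local factoriality at $y_i$ follows by computing $\Cl(\widehat{\ol}_{Y,y_i})$ directly from the divisor class sequence associated with the small contraction from a $\Q$-factorialisation. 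The delicate technical point is excluding the degenerate fiber configurations that would produce non-nodal or non-factorial singularities, and this is where the fine normal-bundle constraints coming from $K$-negativity have to be used most carefully.
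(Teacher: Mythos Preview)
This theorem is not proved in the paper at all: it is quoted verbatim from Andreatta--Wi\'sniewski \cite{AW}, and the paper only \emph{uses} it (e.g.\ in Lemma~\ref{rays}, Th.~\ref{2dimfibers}, Lemma~\ref{b}). So there is no ``paper's own proof'' to compare against; what you have written is a sketch of how the original \cite{AW} argument might go.

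Your outline is broadly in the right spirit (finiteness of jumping fibers via semicontinuity, Ando's theorem over the open locus, then local analysis at the bad points), but two points deserve correction. First, the normal bundle of a general exceptional fiber $\pr^1\subset X$ is $\ol(-1)\oplus\ol^{\oplus 2}$, not $\ol(-1)^{\oplus 2}\oplus\ol$: there are \emph{two} trivial summands tangent to the blown-up surface and only one $\ol(-1)$ coming from $\ma{N}_{\Exc(f)/X}$. Second, the list of possible $2$-dimensional fibers is longer than you suggest---\cite[Prop.~4.11]{AW} gives $\pr^2$, $\mathbb{F}_1$, $\pr^1\times\pr^1$, $\pr^2\cup\mathbb{F}_1$, $\pr^2\cup(\pr^1\times\pr^1)$, and $\pr^2\bullet\pr^2$ (cf.\ the proof of Th.~\ref{2dimfibers} in this paper)---and the identification of the target singularity as an ordinary double point, together with local factoriality, is carried out in \cite{AW} by a case-by-case analysis of these fibers using relative vanishing and explicit deformation theory, not via a $\Q$-factorialisation argument as you propose. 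Your last paragraph is too vague to count as a proof of the hardest part of the statement.
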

\begin{thm}[\cite{gloria}, Th.~2.2]\label{gdn}
Let $X$ be a normal and locally factorial projective variety with canonical singularities, and with at most finitely many non-terminal points. Let  $f\colon X\to Y$ be a $K$-negative birational elementary contraction, with fibers of dimension $\leq 1$. Then $f$ is divisorial.
\end{thm}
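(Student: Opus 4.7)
The strategy is to argue by contradiction: suppose $f$ is small, and derive a contradiction by applying a Wisniewski-type length inequality to a $1$-dimensional fiber of $f$ chosen inside the terminal locus of $X$.

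First I would extract the consequences of the hypotheses on $\Exc(f)$. Since $f$ is birational and elementary with fibers of dimension $\leq 1$, the exceptional locus is swept by positive-dimensional (hence exactly $1$-dimensional) fibers, and each irreducible component $E\subseteq\Exc(f)$ satisfies $\dim E=\dim f(E)+1$. If $f$ were small, then $\dim E\leq\dim X-2$, so $\dim f(E)\leq\dim X-3$. Local factoriality of $X$ makes $-K_X$ Cartier, so for every irreducible curve $C$ contained in a fiber one has $-K_X\cdot C\in\Z_{>0}$, and the length of the contracted extremal ray satisfies $\ell(\NE(f))\geq 1$.

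Next, I would choose a component $E$ of $\Exc(f)$ together with an irreducible $1$-dimensional fiber $F\subseteq E$ that avoids the finitely many non-terminal points of $X$. Such an $F$ exists because those points are $0$-dimensional while the fibers of $f|_E$ form a positive-dimensional family. Along $F$ the variety $X$ is locally factorial and terminal, hence smooth in codimension $2$ in a neighbourhood of $F$, which is the setting in which the deformation-theoretic argument underlying Wisniewski's inequality applies to a minimal rational curve in $F$. This yields
\[
\dim E + \dim F \;\geq\; \dim X + \ell(\NE(f)) - 1 \;\geq\; \dim X.
\]
On the other hand the smallness assumption gives $\dim E+\dim F\leq (\dim X-2)+1=\dim X-1$, a contradiction. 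Hence $f$ cannot be small and must be divisorial.

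The main obstacle I anticipate is justifying the length inequality in the present singular framework. For smooth $X$ it is Wisniewski's classical result, proved by deforming a minimal rational curve and bounding the locus swept out by the deformations; obstructions can appear at singular points of $X$, which is precisely why the hypothesis that the non-terminal locus is finite matters: it lets one pick $F$ inside the locus where the rational curve can be deformed as in the smooth case. Making this reduction rigorous, either by a direct deformation-theoretic computation along $F$ or by quoting an established extension of Wisniewski's inequality to $\Q$-factorial canonical varieties with isolated non-terminal singularities (following work of Andreatta--Wisniewski and collaborators), is the delicate point; everything else is a dimension count.
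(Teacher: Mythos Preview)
The paper does not prove this statement; it is quoted verbatim from Della Noce's paper \cite{gloria}, Theorem~2.2, and used as a black box. So there is no ``paper's own proof'' to compare against.

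Your outline is the standard route to this result and is essentially what Della Noce carries out: contradiction via the Ionescu--Wi\'sniewski inequality applied to a component of the exceptional locus and a suitable one-dimensional fiber. You have correctly identified the genuine issue, namely that the inequality $\dim E + \dim F \geq \dim X + \ell(\NE(f)) - 1$ is classically stated for smooth $X$ and must be justified in the singular setting. One small imprecision: ``smooth in codimension~$2$'' along $F$ is not by itself enough to run the deformation argument; a curve avoiding the non-terminal locus can still pass through terminal singular points, and the deformation theory of rational curves through such points requires care. What makes the argument go through is local factoriality (so $K_X$ is Cartier and the length is a positive integer) together with the structure of terminal singularities, which allows one to control the normal sheaf of a minimal rational curve well enough to bound the dimension of its deformation space from below. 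Della Noce's paper supplies exactly this technical input; once you cite or reprove that extension, your dimension count finishes the proof.
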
  
\begin{thm}[\cite{kawsmall}]\label{kawsm}
  Let $X$ be a smooth, quasi-projective $4$-fold, and  $f\colon X\to Y$ a $K$-negative small elementary contraction. Then $\Exc(f)$ is a finite, disjoint union of exceptional planes.
\end{thm}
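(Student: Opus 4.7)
The plan is to combine a dimension count for the exceptional locus with a fiber-by-fiber analysis identifying each connected component with an exceptional plane.

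First I would pin down the dimensions. Since $f$ is small, $\dim\Exc(f)\leq 2$, and the image $f(\Exc(f))$ has dimension strictly less than $\dim\Exc(f)$. For any positive-dimensional fiber $F$ of $f$, Wisniewski's inequality gives $\dim F+\dim\Exc(f)\geq\dim X+\ell(R)-1\geq 4$, where $\ell(R)$ is the length of the elementary extremal ray $R$ contracted by $f$, and $\ell(R)\geq 1$ since $X$ is smooth and $f$ is $K$-negative. This immediately rules out $\dim\Exc(f)=1$, and forces $\dim\Exc(f)=2$, $\dim F=2$, and $\dim f(\Exc(f))=0$. Moreover the inequality is saturated, so $\ell(R)=1$. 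Hence $\Exc(f)$ breaks up as a finite disjoint union of two-dimensional fibers $F_1,\dotsc,F_r$, each contracted to a distinct point of $Y$.

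Next I would analyze a single component $F=F_i$. Since every curve in $F$ has class on the ray $R$, one has $\dim\N(F,X)=1$, and $-K_X$ restricts to an ample class on $F$ (as $f$ is $K$-negative and $F$ is proper over a point). The length bound $\ell(R)=1$ says that $F$ is covered by rational curves $C$ of minimal degree $-K_X\cdot C=1$; using standard bend-and-break and deformation theory for these minimal extremal rational curves on a smooth $4$-fold, together with the fact that their deformations stay inside the fiber $F$, one concludes that $F\cong\pr^2$ and that every such $C$ is a line. The normal bundle then drops out by adjunction: from $\omega_F\cong \omega_X|_F\otimes\det\ma{N}_{F/X}$ and $K_F\cdot C=-3$, $K_X\cdot C=-1$, we get $\deg(\det\ma{N}_{F/X})=-2$, so $\det\ma{N}_{F/X}\cong\ol_{\pr^2}(-2)$. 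Since $F$ is rigid in $X$, the rank-two bundle $\ma{N}_{F/X}$ has no nonnegative summand; combined with $\det\cong\ol(-2)$ this forces the splitting $\ma{N}_{F/X}\cong\ol_{\pr^2}(-1)^{\oplus 2}$, so $F$ is an exceptional plane.

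The main obstacle is the middle step: passing from the numerical information (dimension $2$, $\dim\N(F,X)=1$, ampleness of $-K_X|_F$, length one) to the geometric identification $F\cong\pr^2$. A priori $F$ could be singular, non-normal, or even reducible, so one really needs the precise classification of $2$-dimensional fibers of a length-one extremal contraction of a smooth $4$-fold. I would expect this to proceed by deforming a minimal rational curve $C$ in $F$ using that $h^0(C,\ma{N}_{C/X})-h^1\geq -K_X\cdot C+(\dim X-3)=2$, producing a $2$-parameter family of such curves that sweeps out precisely $F$ and forces $F\cong\pr^2$ with the $C$'s as lines; once that is in place, the normal bundle computation above is automatic.
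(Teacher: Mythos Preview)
The paper does not prove this theorem; it is quoted from Kawamata's paper \cite{kawsmall} as a black box, so there is no ``paper's own proof'' to compare against. Your outline follows the broad strategy one would expect (Wi\'sniewski's inequality to pin down dimensions and length, then fiberwise identification), but two steps are genuinely incomplete.

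First, as you yourself flag, the passage from the numerical data ($\dim F=2$, $\ell(R)=1$, $-K_X|_F$ ample) to $F\cong\pr^2$ is the heart of Kawamata's argument and is not a routine bend-and-break; a priori $F$ could be reducible or non-reduced, and ruling this out requires real work specific to the $4$-fold situation.

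Second, your normal bundle computation has a gap: you write ``the rank-two bundle $\ma{N}_{F/X}$ has no nonnegative summand; combined with $\det\cong\ol(-2)$ this forces the splitting $\ol(-1)^{\oplus 2}$'', but you have not shown that $\ma{N}_{F/X}$ splits at all---rank two bundles on $\pr^2$ need not be decomposable. Knowing only $c_1=-2$ and $H^0(\ma{N}_{F/X})=0$ does not pin down the bundle; one also needs $c_2$, which Kawamata extracts from a more careful local analysis. (Even the rigidity claim $H^0(\ma{N}_{F/X})=0$ deserves an argument: it follows because a nontrivial deformation of $F$ would have to lie in $\Exc(f)$, whose other components are disjoint from $F$.)
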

\begin{thm}[\cite{kachiflips}, Th.~1.1, Cor.~2.2 and references therein]\label{kachiflip}
  Let $X$ be a projective $4$-fold with at most locally factorial, terminal, isolated l.c.i.\ singularities. Let $f\colon X\to Y$ a $K$-negative small elementary contraction. Then for every irreducible component $L$ of $\Exc(f)$ we have $(L, -K_{X|L}) \cong(\pr^2, \ol_{\pr^2}(1))$.
\end{thm}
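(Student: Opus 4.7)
My approach has three stages: first I bound the length $\ell(R)$ of the extremal ray and constrain the dimensions of fibers of $f$; then I show that each irreducible component $L\subseteq\Exc(f)$ is itself a two-dimensional fiber of $f$; and finally I identify $(L,-K_{X|L})$ with $(\pr^2,\ol_{\pr^2}(1))$ by studying a covering family of minimal rational curves on $L$.

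\textbf{Length, fibers, and components.} Smallness of $f$ forces $\dim\Exc(f)\leq\dim X-2=2$, so every fiber of $f$ has dimension at most $2$. The Ionescu--Wiśniewski type inequality, which carries over to locally factorial, terminal, isolated l.c.i.\ fourfolds because $-K_X$ is Cartier there, yields
\[
\dim F+\dim\Exc(f)\geq\dim X+\ell(R)-1
\]
for every non-trivial fiber $F$, hence $\ell(R)\leq 1$. Since the length of a $K$-negative extremal ray on a terminal variety is at least $1$, I get $\ell(R)=1$ and every non-trivial fiber is pure of dimension $2$. (As a sanity check, if some fiber had dimension $\leq 1$, Theorem \ref{gdn} would force $f$ divisorial, contradicting smallness.) Now for an irreducible component $L\subseteq\Exc(f)$, elementariness and smallness of $f$ force $\dim f(L)<\dim L\leq 2$; combined with the fact that non-trivial fibers are purely two-dimensional, this means $L$ is itself an irreducible two-dimensional fiber component, $f(L)$ is a point, and $L$ carries a curve $C$ generating $R$ with $-K_X\cdot C=1$.

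\textbf{Identifying $L$.} I would now deform $C$ via standard Mori-theoretic techniques: the deformation space of $C$ in $X$ has expected dimension $\geq -K_X\cdot C+\dim X-3=2$, and all deformations have class in $R$, hence lie in $\Exc(f)$ and by connectedness within the component $L$. This yields a covering family of rational curves on $L$, all of $(-K_X)$-degree $1$. Since $\ell(R)=1$, no degeneration of such a curve can split off an $R$-curve of smaller $(-K_X)$-degree, so all members are irreducible; moreover two general members meet in at most one reduced point, as their intersection cycle has $(-K_X)$-degree equal to its length and is constrained to lie in $R$. These incidence and numerical data single out the projective plane among smooth Fano surfaces: $L\cong\pr^2$ with the covering family being the family of lines, and then $-K_{X|L}\cong\ol_{\pr^2}(1)$ follows from $-K_X\cdot(\text{line})=1$.

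\textbf{Main obstacle.} The delicate point, and the reason Kachi's refinement of Kawamata's original smooth-$X$ theorem is non-trivial, is carrying out the deformation/classification argument in the presence of the ambient singularities of $X$. One must verify that a general minimal curve $C\subseteq L$ meets $\Sing(X)$ only in finitely many mild points, so that the Mori-style bend-and-break analysis and the adjunction computations on $L$ remain valid; the locally factorial, terminal, isolated l.c.i.\ hypothesis (in particular the Cartier-ness of $-K_X$) is exactly what permits this reduction to the classical smooth case.
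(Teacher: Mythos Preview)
The paper does not prove this statement; it is quoted from Kachi's work with no argument given. So your proposal must be judged on its own merits against the literature it cites, not against anything in this paper.

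Your first two stages are essentially sound: the length/fiber-dimension analysis via the Ionescu--Wi\'sniewski inequality goes through because $-K_X$ is Cartier (local factoriality) and the singularities are terminal, and it correctly pins down $\ell(R)=1$ and forces each component $L$ of $\Exc(f)$ to be a two-dimensional fiber component over a point.

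The genuine gap is in your third stage, the identification of $(L,-K_{X|L})$. You write that the incidence and numerical data ``single out the projective plane among smooth Fano surfaces'', but you have established neither that $L$ is smooth nor that it is Fano. A priori $L$ may be singular (it could pass through $\Sing(X)$, and even away from $\Sing(X)$ it is only an irreducible component of a fiber, with no evident smoothness), and the restriction $-K_{X|L}$ is only a line bundle of degree $1$ on your covering curves, not obviously ample on $L$. Your intersection claim ``two general members meet in at most one reduced point'' is also not justified: you have no intersection form on $L$ to work with, and the argument you sketch (that a degree-$2$ intersection cycle would split off a smaller $R$-curve) confuses intersection multiplicity on $L$ with numerical class in $\N(X)$. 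The actual arguments in Kawamata's smooth case and Kachi's singular extension proceed quite differently: they control the conormal sheaf of $L$ in $X$ (showing it is $\ol_{\pr^2}(1)^{\oplus 2}$ after the fact), use relative vanishing and base-change to analyze $-K_{X|L}$ cohomologically, and invoke the classification of polarized surfaces of $\Delta$-genus zero or Fujita-type results to force $(L,-K_{X|L})\cong(\pr^2,\ol(1))$. Your ``Main obstacle'' paragraph correctly identifies that the singularities are the issue, but the resolution is not a reduction to the smooth case via generic position of $C$; it requires the cohomological machinery that Kachi develops.
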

\begin{proposition}[\cite{codimtwo}, Prop.~3.7(2)]\label{degreeone}
Let $X$ be a smooth Fano $4$-fold and $\ph\colon X\dasharrow X'$ a birational, rational contraction such that $X'$ is $\Q$-factorial. Set $P:= X'\smallsetminus\dom(\ph^{-1})$. Then for every irreducible curve $C\subset X'$ with $-K_{X'}\cdot C=1$ we have either $C\subset P$ or $C\cap P=\emptyset$.
\end{proposition}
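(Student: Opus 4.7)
The plan is to compare the anticanonical degree of a curve $C\subset X'$ with that of its strict transform in $X$, through a common resolution of $\ph$ and exploiting the Fano condition on $X$ together with the negativity lemma. Since $X$ is smooth and Fano, $-K_X$ is ample, so $-K_X\cdot\gamma\geq 1$ for every curve $\gamma\subset X$; the goal is to show that if $C\not\subset P$ meets $P$, then necessarily $-K_{X'}\cdot C\geq 2$, contradicting the hypothesis $-K_{X'}\cdot C=1$.

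Concretely, I would take a smooth common resolution $Z$ of $\ph$, with birational morphisms $p\colon Z\to X$ and $q\colon Z\to X'$ satisfying $q=\ph\circ p$ on the domain of $\ph$, and form the $\Q$-Cartier $\Q$-divisor $F:=q^*(-K_{X'})-p^*(-K_X)$. Since $-K_X$ is ample, $p^*(-K_X)$ is nef and in particular $q$-nef, while $q^*(-K_{X'})$ is $q$-trivial, so $-F$ is $q$-nef. On the other hand, since $\ph$ is a rational contraction no divisor of $X'$ is $\ph^{-1}$-extracted, and a standard computation on the open locus where $\ph$ is an isomorphism yields $q_*p^*(-K_X)\equiv-K_{X'}$ as Weil divisor classes, so $q_*F=0$. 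The negativity lemma then gives that $F$ is effective, that $\Supp(F)\subseteq\Exc(q)$ (hence is contained in $q^{-1}(P)$), and that for every $x'\in X'$ either $q^{-1}(x')\subset\Supp(F)$ or $q^{-1}(x')\cap\Supp(F)=\emptyset$.

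Let $\wi{C}\subset Z$ be the strict transform of $C$ via $q$; since $C\not\subset P$, the restriction $q|_{\wi C}$ is birational onto $C$. I first observe that $\w{C}:=p(\wi C)$ is a genuine curve in $X$ (not a point): otherwise $\wi C$ would lie in a $p$-exceptional divisor, which is automatically $q$-exceptional --- here I use that $\ph$ is a rational contraction from a smooth variety --- giving $C\subset P$, a contradiction. Moreover $\wi C$ is not contained in any component of $\Supp(F)$, since those components map into $P$ while $q(\wi C)=C\not\subset P$. Computing $F\cdot\wi C$ in two ways via the projection formula, I obtain $F\cdot\wi C\geq 0$ (because $F$ is effective and $\wi C\not\subset\Supp(F)$), and
\[
F\cdot\wi C=-K_{X'}\cdot C-d(-K_X\cdot\w C)=1-d(-K_X\cdot\w C)\leq 0,
\]
where $d:=\deg(p|_{\wi C})\geq 1$ and $-K_X\cdot\w C\geq 1$. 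Equality throughout forces $d=1$, $-K_X\cdot\w C=1$, and $\wi C\cap\Supp(F)=\emptyset$.

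To conclude, assume for contradiction that $x'\in C\cap P$: then $\wi C$ meets the fibre $q^{-1}(x')\subset\Exc(q)$. If I can show that $q^{-1}(x')$ meets $\Supp(F)$, the dichotomy forces $q^{-1}(x')\subset\Supp(F)$, contradicting $\wi C\cap\Supp(F)=\emptyset$. This last step is the main technical obstacle: I need to rule out that $\Supp(F)$ is properly contained in $q^{-1}(P)$, i.e.\ to prove that every $q$-exceptional prime divisor $E\subset Z$ belongs to $\Supp(F)$. The natural approach is to pick a generic $q$-fibre $\gamma\subset E$ with $p(\gamma)$ still a curve in $X$, which is automatic when $E$ is the strict transform of a $\ph$-exceptional divisor of $X$, and can be arranged in general by choosing $Z$ so that each $p$-exceptional component dominates some $\ph$-exceptional divisor; then $F\cdot\gamma=-p^*(-K_X)\cdot\gamma<0$, so by effectiveness of $F$ the curve $\gamma$ lies in $\Supp(F)$, and therefore $E\subset\Supp(F)$.
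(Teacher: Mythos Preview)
The paper does not prove this proposition; it is quoted verbatim from \cite{codimtwo} and used as a black box, so there is no proof in the present paper to compare your attempt against.

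As for the argument itself: the strategy via a common resolution and the negativity lemma is correct and standard, and your computation that $F\cdot\wi C=0$, forcing $\wi C\cap\Supp(F)=\emptyset$, is fine. The genuine gap is exactly where you flag it. Your proposed fix---arranging $Z$ so that every $p$-exceptional component dominates a $\ph$-exceptional divisor---does not work in general: when $\ph$ is a SQM there are no $\ph$-exceptional divisors at all, yet $P$ is nonempty (it is the union of exceptional lines of $X'$, cf.\ Lemma~\ref{SQMFano}); and even when such a divisor exists, dominating it says nothing about whether a generic curve in a $q$-fibre of $E$ is contracted by $p$.

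The clean way to close the gap avoids analysing the $q$-exceptional components individually. Set $V:=X'\smallsetminus q(\Supp(F))$. By the dichotomy, $q^{-1}(V)\cap\Supp(F)=\emptyset$, so $p^*(-K_X)=q^*(-K_{X'})$ on $q^{-1}(V)$; since $-K_X$ is ample this forces $p$ to contract every fibre of $q$ over $V$ to a point. The rigidity lemma then produces a morphism $V\to X$ extending $\ph^{-1}$, hence $V\subset\dom(\ph^{-1})$ and $P\subset q(\Supp(F))$. Now if $x'\in C\cap P$ then $q^{-1}(x')$ meets $\Supp(F)$, so by the dichotomy $q^{-1}(x')\subset\Supp(F)$; but $\wi C$ meets $q^{-1}(x')$, contradicting $\wi C\cap\Supp(F)=\emptyset$.
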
  
A \emph{conic bundle} is a projective morphism $f\colon X\to Y$ where $X$ is a quasi-projective variety with Gorenstein, log terminal singularities, $Y$ is smooth, and there exists a rank $3$ vector bundle $\ma{E}$ on $Y$ such that $X\subset\pr_Y(\ma{E})$, $f$ is the restriction of the projection $\pi\colon\pr_Y(\ma{E})\to Y$, and every fiber of $f$ is a plane conic in the corresponding fiber of $\pi$; see for instance \cite[Ch.~I]{beauville}.
\begin{thm}[\cite{AW}, Prop.~4.1 and references therein]\label{conic}
Let $X$ be a smooth quasi-projective variety and $f\colon X\to Y$ a $K$-negative contraction such that every fiber of $f$ has dimension one. Then $Y$ is smooth and $f$ is a conic bundle.
\end{thm}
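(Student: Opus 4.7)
The plan is to follow the standard strategy (as in Andreatta--Wi\'sniewski and the references therein): first establish smoothness of $Y$, then produce a rank $3$ locally free sheaf on $Y$ whose projectivization contains $X$, and finally recognize $X$ as a family of plane conics inside this projective bundle.

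First, I would prove that $Y$ is smooth. The key input is Ando's theorem on elementary $K$-negative extremal contractions with all fibers of dimension at most one: the target of such a contraction is smooth, and the morphism is either a conic bundle or the smooth blow-up of a codimension two subvariety. In our setting every fiber has dimension \emph{exactly} one, which rules out the blow-up alternative (whose general fibers would be points). When $f$ is not elementary, one runs a relative MMP for $K_X$ over $Y$; each intermediate extremal contraction is again $K$-negative with fibers of dimension $\leq 1$, so Ando applies at every step, and composing the smooth targets yields smoothness of $Y$.

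Once $Y$ is smooth, miracle flatness applies: $X$ is Cohen--Macaulay, $Y$ is regular, and all fibers are equidimensional of the expected dimension one, so $f$ is flat. Since $-K_X$ is $f$-ample, relative Kawamata--Viehweg vanishing gives $R^if_*\ol_X(-K_X)=0$ for $i>0$, and cohomology and base change then imply that $\ma{E}:=f_*\ol_X(-K_X)$ is locally free on $Y$ of rank equal to $h^0(F,\ol_F((-K_X)_{|F}))$ for a general fiber $F$. Genericity plus $f$-ampleness force $F\cong\pr^1$ with $(-K_X)\cdot F=2$, so $\ma{E}$ has rank $3$. The $f$-ampleness also makes the evaluation map $f^*\ma{E}\to\ol_X(-K_X)$ surjective, yielding a closed embedding $j\colon X\hookrightarrow\pr_Y(\ma{E})$ with $j^*\ol_{\pr_Y(\ma{E})}(1)\cong\ol_X(-K_X)$. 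For each $y\in Y$ the fiber $j(X_y)$ is a $1$-dimensional subscheme of $\pr^2_y$ of degree $(-K_X)\cdot X_y=2$, hence a plane conic, and $f$ is thereby realized as a conic bundle.

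The main obstacle is the first step, smoothness of $Y$: Ando's theorem resolves the elementary case cleanly, but for a general $f$ one needs a careful local analysis via the relative MMP to ensure that no singular points creep into $Y$ along the successive contractions; this is precisely where ``and references therein'' in the statement carries most of the weight.
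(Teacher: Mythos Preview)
The paper does not provide a proof of this statement; it is quoted as a known result from \cite{AW} (Andreatta--Wi\'sniewski, Prop.~4.1) and the references therein, with no argument given. So there is no ``paper's proof'' to compare your proposal against.

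Your sketch is a faithful outline of the standard argument. Two small remarks on phrasing. First, in Step~1 you write ``one runs a relative MMP for $K_X$ over $Y$'', but since $-K_X$ is already $f$-ample there is no MMP to run; what you mean is simply to factor $f$ through an extremal ray of $\NE(X/Y)$, apply Ando to the resulting elementary contraction $X\to X_1$ (getting $X_1$ smooth and the map either a conic bundle or a smooth blow-up), and iterate. The delicate point you correctly flag is checking that the residual map $X_1\to Y$ remains $K$-negative with one-dimensional fibers so that the induction continues; this is exactly where the ``references therein'' carry the load. Second, in Step~3 your claim that the general fiber is $\pr^1$ with $(-K_X)\cdot F=2$ follows from adjunction (the normal bundle of a general fiber is trivial, so $K_F=K_{X|F}$ and $F$ is a Fano curve), which you might state explicitly. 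Otherwise the argument is correct and is precisely the one in \cite{AW}.
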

\begin{proposition}[\cite{eleonoraflatness}, Prop.~1.3]\label{conicsing}
  Let $X$ be a Gorenstein quasi-projective variety with log terminal singularities and $f\colon X\to Y$ a $K$-negative contraction with $Y$ smooth, such that every fiber of $f$ has dimension one. Then
 $f$ is a conic bundle.
\end{proposition}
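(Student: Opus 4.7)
The plan is to reduce to the standard construction of a conic bundle from the relative anticanonical line bundle, performing the same argument as in Theorem \ref{conic} but replacing the smooth-fiber analysis by one that exploits flatness together with Gorenstein adjunction. The three main ingredients will be miracle flatness, relative Kawamata--Viehweg vanishing, and the description of Gorenstein curves of arithmetic genus zero with a degree-two anticanonical line bundle.

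First I would establish that $f$ is flat: since $X$ is Gorenstein (hence Cohen--Macaulay), $Y$ is smooth, and every fiber of $f$ has the same dimension $1=\dim X-\dim Y$, miracle flatness (Matsumura, \emph{Commutative Ring Theory}, Th.~23.1) applies. As a consequence $\omega_{X/Y}$ is a line bundle and adjunction gives $\omega_F\cong\omega_{X/Y}|_F$ for every fiber $F$. Next I would study a general fiber: since $X$ is normal, $\Sing(X)$ has codimension $\geq 2$ in $X$, so its image has codimension $\geq 1$ in $Y$ and there is a nonempty open $Y^{\circ}\subseteq Y$ over which $X$ is smooth. Theorem \ref{conic} applies to $f$ restricted over $Y^{\circ}$, so the general fiber is a plane conic; by flatness the Hilbert polynomial is constant, hence $p_a(F)=0$ and $-K_X\cdot F=2$ for \emph{every} fiber.

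With flatness in hand, I would set $\mathcal{E}:=f_*\omega_{X/Y}^{-1}$. Since $X$ is log terminal and $-K_X$ is $f$-ample, relative Kawamata--Viehweg vanishing applied to $K_X+(-2K_X)=-K_X$ together with the projection formula yields $R^if_*\omega_{X/Y}^{-1}=R^if_*\omega_X^{-1}\otimes\omega_Y=0$ for $i\geq 1$. By cohomology and base change combined with flatness, $\mathcal{E}$ is a rank $\chi(F,\omega_F^{-1})=3$ vector bundle on $Y$. The natural evaluation map $f^*\mathcal{E}\to\omega_{X/Y}^{-1}$ is surjective because on every fiber $F$ the sheaf $\omega_F^{-1}$ is globally generated (any degree-$2$ line bundle on a Gorenstein curve of arithmetic genus $0$ is base-point free), and it defines a $Y$-morphism $\iota\colon X\to\pr_Y(\mathcal{E})$ whose fiberwise restriction is the anticanonical map of a conic into $\pr^2$. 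Since both $X$ and $\pr_Y(\mathcal{E})$ are flat and proper over $Y$ and $\iota$ is a closed embedding on each fiber, $\iota$ itself is a closed embedding; the image has every fiber a plane conic, which is the conic bundle condition.

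The main obstacle I foresee is handling fibers over points of $Y$ that lie below $\Sing(X)$, where Theorem \ref{conic} does not apply directly: there it must be verified by hand that the Gorenstein, arithmetic-genus-$0$ curve $F$ with $\omega_F^{-1}$ of degree $2$ is genuinely a plane conic (smooth conic, pair of lines, or double line) and that $\iota|_F$ is a closed embedding. The cleanest route is to invoke flatness plus vanishing to force $h^0(F,\omega_F^{-1})=3$, observe that $|\omega_F^{-1}|$ is base-point free on such an $F$, and conclude that $\iota|_F$ is an unramified, injective morphism, hence a closed embedding; the global claim then follows from the flat-family criterion.
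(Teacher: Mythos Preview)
The paper does not give its own proof of this proposition; it simply cites \cite{eleonoraflatness}, Prop.~1.3. So there is no in-paper argument to compare against. Your outline is the standard one (and is essentially the argument in Romano's paper): miracle flatness from Cohen--Macaulay source and smooth equidimensional target, relative Kawamata--Viehweg vanishing for $-K_X=K_X+(-2K_X)$ with $-2K_X$ $f$-ample, cohomology and base change to make $\mathcal{E}=f_*\omega_{X/Y}^{-1}$ a rank-$3$ bundle, and then the anticanonical embedding into $\pr_Y(\mathcal{E})$.

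There is one point to tighten in your fiberwise analysis. You write that ``any degree-$2$ line bundle on a Gorenstein curve of arithmetic genus $0$ is base-point free'' and then deduce that $\iota|_F$ is an embedding. But base-point-freeness alone is not enough: a chain of three $\pr^1$'s is connected, Gorenstein, of arithmetic genus $0$, and $\omega_F^{-1}$ has total degree $2$, yet it restricts to $\ol(1),\ol,\ol(1)$ on the components, so the anticanonical map contracts the middle component. What rules this out is that $\omega_F^{-1}=\ol_X(-K_X)|_F$ is \emph{ample} (since $-K_X$ is $f$-ample), forcing positive degree on every component; together with $-K_X\cdot F=2$ this limits $F$ to at most two reduced components or a multiplicity-two structure on a single $\pr^1$. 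Combined with Gorenstein and $p_a(F)=0$ (no embedded points, only nodal intersections, and the unique genus-$0$ ribbon on $\pr^1$ is the double line), this pins $F$ down to one of the three plane conics, after which your very-ampleness and flat-family embedding arguments go through. Once you insert the ampleness of $\omega_F^{-1}$ at this step, the proof is complete.
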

\subsection{Fano $4$-folds and fixed divisors}\label{2.2}
\noindent  In this section we recall some results on the birational geometry of Fano $4$-folds. First of all, we describe in Lemma \ref{SQMFano} the structure of SQM's of a Fano $4$-fold. Then we recall a classification result for fixed prime divisors in Fano $4$-folds with Picard number $\geq 7$, there are only four possible types, and we describe the associated divisorial elementary rational contractions (Th.-Def.~\ref{fixed}). Finally we describe the possible relative positions of some pairs of fixed prime divisors that appear as exceptional divisors for the same birational map (Lemmas \ref{2dimfaces} - \ref{casesiiandiii}).
\begin{lemma}[\cite{eff}, Rem.~3.6]\label{SQMFano}
  Let  $X$ be a smooth Fano $4$-fold and $\ph\colon X\dasharrow \w{X}$ a SQM.
  We have the following:
  \begin{enumerate}[$(a)$]
 \item $\w{X}$ is smooth, $X\smallsetminus\dom(\ph)=L_1\cup\cdots\cup L_r$ where $L_i$ are pairwise disjoint exceptional planes,
and
$\w{X}\smallsetminus\dom(\ph^{-1})=\ell_1\cup\cdots\cup\ell_r$
where $\ell_i$ are pairwise disjoint exceptional lines; moreover $\ph$ factors as
$$\xymatrix{  X \ar@/^1pc/@{-->}[rr]^{\ph}&{\wi{X}}\ar[l]^f\ar[r]_g&{\w{X}}
}$$
where $f$ is the blow-up of $L_1\cup\cdots\cup L_r$ and $g$ is the blow-up of $\ell_1\cup\cdots\cup\ell_r$.
\item
  Let  $C\subset \w{X}$ be an irreducible curve, different from $\ell_1,\dotsc,\ell_r$, and intersecting $\ell_1\cup\cdots\cup\ell_r$ in $s\geq 0$ points; then $-K_{\w{X}}\cdot C\geq 1+s$.
  \item If $-K_{\w{X}}\cdot C= 1$, then  $C\cap(\ell_1\cup\cdots\cup\ell_r)=\emptyset$.
\end{enumerate}  
\end{lemma}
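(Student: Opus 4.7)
The plan is to derive (a) from Kawamata's theorem on small $K$-negative contractions of smooth $4$-folds (Theorem \ref{kawsm}), and then to obtain (b) and (c) by an intersection-theoretic computation on the common resolution.

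For (a), I would factor $\varphi$ as a sequence of $K$-negative flips
$$X=X_0\dasharrow X_1\dasharrow\cdots\dasharrow X_n=\widetilde X$$
(possible because $X$ is a Mori dream space with $-K_X$ ample, so a path in the movable cone starting from the nef chamber of $X$ can be chosen $K$-negative). Theorem \ref{kawsm} applied at each step implies that the flipping locus in $X_i$ is a disjoint union of exceptional planes, the flipped locus in $X_{i+1}$ is a disjoint union of exceptional lines, and every $X_i$ is smooth. Combining the information across all steps, $X\setminus\dom(\varphi)$ is a disjoint union of exceptional planes $L_1,\dots,L_r\subset X$ and $\widetilde X\setminus\dom(\varphi^{-1})$ is a disjoint union of exceptional lines $\ell_1,\dots,\ell_r\subset\widetilde X$. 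Then $\widehat X\to X$ is the blow-up of $\bigcup_i L_i$: each exceptional divisor is $E_i\cong\pr(\ol_{\pr^2}(1)^{\oplus 2})\cong\pr^2\times\pr^1$, whose second ruling can be contracted simultaneously, realizing $\widehat X\to\widetilde X$ as the blow-up of $\bigcup_i\ell_i$. The main obstacle is verifying that the strict transforms in $X$ of exceptional planes arising at successive steps remain pairwise disjoint exceptional planes; a priori, an exceptional line created at an early step could intersect an exceptional plane of a later step inside some $X_i$, preventing a clean simultaneous blow-up description, and ruling this out requires an intersection analysis against $K$-negative extremal rays using the normal bundles $\ol(-1)^{\oplus 3}$ and $\ol(-1)^{\oplus 2}$.

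Once (a) is in place, parts (b) and (c) follow from a short computation on $\widehat X$. The discrepancies of the two blow-ups (of centers of codimension $2$ and $3$) read
$$K_{\widehat X}=f^*K_X+\sum_{i=1}^r E_i=g^*K_{\widetilde X}+2\sum_{i=1}^r E_i,$$
so $g^*(-K_{\widetilde X})=f^*(-K_X)+\sum_i E_i$ on $\widehat X$. Let $\widehat C:=g^{-1}_*(C)\subset\widehat X$ be the strict transform of $C$. Since $C\neq\ell_i$ for every $i$, and since $\widehat C$ meets each fiber $\pr^2\subset E_i$ of $g|_{E_i}$ sitting over a point of $C\cap\ell_i$, one has $\widehat C\cdot\sum_i E_i\geq s$. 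The projection formula then gives
$$-K_{\widetilde X}\cdot C=g^*(-K_{\widetilde X})\cdot\widehat C=-K_X\cdot f_*\widehat C+\widehat C\cdot\sum_i E_i\geq -K_X\cdot f_*\widehat C+s.$$
The pushforward $f_*\widehat C$ is nonzero, because any curve in $\widehat X$ contracted by $f$ is a fiber $\pr^1$ of $E_i\to L_i$ and maps surjectively to $\ell_i$ under $g$, which would force $C=\ell_i$. Since $-K_X$ is ample and integer-valued on curves, $-K_X\cdot f_*\widehat C\geq 1$, proving (b). For (c), the equality $-K_{\widetilde X}\cdot C=1$ combined with the inequalities above forces $s=0$, i.e. $C\cap(\ell_1\cup\cdots\cup\ell_r)=\emptyset$.
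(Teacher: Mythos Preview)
The paper does not supply its own proof of this lemma; it is quoted verbatim from \cite[Rem.~3.6]{eff}, so there is no in-paper argument to compare against. Your approach---factor $\varphi$ as a sequence of $K$-negative flips, apply Kawamata's theorem (Th.~\ref{kawsm}) at each step, then read off (b) and (c) from the discrepancy formula on the common resolution---is exactly the argument given in \cite{eff}.

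Your proofs of (b) and (c) are complete and correct as written. The discrepancy computation $g^*(-K_{\widetilde X})=f^*(-K_X)+\sum_iE_i$, the lower bound $\widehat C\cdot\sum_iE_i\geq s$, and the observation that $f_*\widehat C\neq 0$ (since fibers of $f$ map onto the $\ell_i$ under $g$) are all valid and give exactly the inequality claimed.

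For (a) you have honestly flagged the one genuine subtlety: one must check that an exceptional line $\ell$ created at step $i$ cannot meet an exceptional plane $L$ flipped at a later step $j>i$ (and dually in $X$, that the transforms of the $L$'s stay pairwise disjoint exceptional planes). This is indeed handled in \cite[Rem.~3.6]{eff} by the kind of normal-bundle intersection analysis you allude to; the key computation is that pushing $\ell$ through the flip of $L$ and comparing anticanonical degrees via the discrepancy forces $\ell\cap L=\emptyset$. Since you identified the obstacle correctly and pointed to the right tool, there is no gap in your plan---only the bookkeeping of that inductive step remains to be written out.
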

\begin{lemma}[\cite{eff}, Rem.~3.7]\label{Kneg}
  Let  $X$ be a smooth Fano $4$-fold and $f\colon X\dasharrow Y$ a rational contraction. Then there exists a SQM $\xi\colon X\dasharrow \w{X}$ such that the composition $f\circ\xi^{-1}\colon\w{X}\to Y$ is regular and $K$-negative.
\end{lemma}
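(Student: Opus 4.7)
The plan is to use the Mori dream space (MDS) structure of $X$ together with a perturbation of the pull-back of an ample class from $Y$ by the ample class $[-K_X]$, so as to identify the correct SQM model among the finitely many available.

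\emph{Set-up.} Fix any factorization $X\overset{\ph}{\dasharrow} X'\overset{f'}{\to}Y$ of $f$, with $\ph$ a SQM and $f'$ a contraction, and an ample $\Q$-divisor $A$ on $Y$; set $H:=f'^*A$, a nef and semiample divisor on $X'$. Via the identification $\ph_*\colon\Nu(X)\xrightarrow{\sim}\Nu(X')$, we regard $[H]$ as a class in $\Mov(X)$. Since $X$ is Fano it is a MDS, so $\Mov(X)$ admits a finite polyhedral Mori chamber decomposition, each chamber corresponding to a SQM of $X$.

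\emph{Choosing the SQM.} Consider
$$\alpha_\epsilon := [H] + \epsilon[-K_X]\in\Nu(X),\qquad\epsilon>0.$$
As $[-K_X]$ is in the interior of $\Nef(X)\subseteq\Mov(X)$, $\alpha_\epsilon$ lies in the interior of $\Mov(X)$ for every $\epsilon>0$, and by finiteness of the chamber decomposition there exist $\epsilon_0>0$ and a single Mori chamber $\mathcal{C}^\star$ such that $\alpha_\epsilon\in\mathrm{int}\,\mathcal{C}^\star$ for $0<\epsilon<\epsilon_0$. Let $\xi\colon X\dasharrow\w{X}$ be the SQM attached to $\mathcal{C}^\star$; $\w{X}$ is smooth by Lemma \ref{SQMFano}. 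Identifying $\Nu(\w{X})=\Nu(X)$ via $\xi_*$, the class $\alpha_\epsilon$ is ample on $\w{X}$ for $0<\epsilon<\epsilon_0$ and, letting $\epsilon\to 0^+$, $[H]$ is nef on $\w{X}$, lying on a face of $\Nef(\w{X})$.

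\emph{Conclusion.} By the MDS property $[H]$ is semiample on $\w{X}$, hence defines a contraction $g\colon\w{X}\to Z$. Comparing the linear systems $|mH|$ on $\w{X}$ and on $X'$ via the natural isomorphism induced by the SQM $\w{X}\dasharrow X'$, and using that on $X'$ these realize, for $m\gg 0$, the Stein factorization of $f'\colon X'\to Y$, one obtains $Z\cong Y$ and $g=f\circ\xi^{-1}$, which is therefore a morphism. Finally, any curve $C\subset\w{X}$ contracted by $g$ satisfies $[H]\cdot C=0$, so ampleness of $\alpha_\epsilon$ yields
$$0<\alpha_\epsilon\cdot C=\epsilon\,(-K_{\w{X}})\cdot C,$$
giving $-K_{\w{X}}\cdot C>0$ and hence, by Kleiman's criterion, that $-K_{\w{X}}$ is $g$-ample. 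The main technical care goes into this last step, namely ensuring that the chamber selected by $\alpha_\epsilon$ has $[H]$ on its boundary and that the corresponding face of $\mathcal{C}^\star$ really produces the contraction \emph{onto} $Y$, rather than onto an intermediate normal variety properly dominating $Y$; this is precisely where it is used that $[H]$ is the pull-back of an ample class from $Y$.
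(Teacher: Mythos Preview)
The paper does not supply its own proof of this lemma; it simply cites \cite[Rem.~3.7]{eff}. Your argument is correct and is precisely the standard Mori-dream-space construction: perturb the pullback of an ample class from $Y$ into the interior of $\Mov(X)$ using the ample class $[-K_X]$, read off the SQM from the Mori chamber containing the perturbation, and deduce relative ampleness of $-K_{\w{X}}$ from ampleness of $\alpha_\epsilon$. This is exactly the kind of reasoning that underlies the cited remark.

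Two small remarks on presentation. First, the identification $Z\cong Y$ is most cleanly justified by noting that, since $\w{X}\dasharrow X'$ is an isomorphism in codimension one, the graded section rings of $H$ on the two models coincide, and $\operatorname{Proj}\bigoplus_m H^0(X',mH)=Y$ because $H=f'^*A$ with $A$ ample; your ``Stein factorization'' phrasing is slightly roundabout but fine. Second, for the $K$-negativity you do not need to invoke Kleiman's criterion separately: the fact that $g^*A+\epsilon(-K_{\w{X}})=\alpha_\epsilon$ is ample on $\w{X}$ for some $\epsilon>0$ is already the definition of $-K_{\w{X}}$ being $g$-ample. Your closing caveat about the face of $\mathcal{C}^\star$ producing a contraction onto $Y$ rather than an intermediate target is well taken, and the section-ring identification handles it cleanly.
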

\begin{lemma}\label{dim32}
Let $X$ be a smooth Fano $4$-fold with $\rho_X\geq 6$, and $D\subset X$  the exceptional divisor of a divisorial elementary contraction of type $(3,2)$. Let $X\dasharrow \w{X}$ be a SQM, and $\w{D}\subset\w{X}$ the transform of $D$. Then $D$ does not contain exceptional planes, and $\dim\N(D,X)=\dim\N(\w{D},\w{X})$.
\end{lemma}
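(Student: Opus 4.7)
The plan is to prove the two assertions in sequence, the first being the essential point and the second following from it by tracking cycle classes through the factorization of $\ph$ provided by Lemma~\ref{SQMFano}. For the first assertion I would argue by contradiction: assume $L\subset D$ is an exceptional plane, so $L\cong\pr^2$ and $\ma{N}_{L/X}\cong\ol_{\pr^2}(-1)^{\oplus 2}$. Since $\pr^2$ admits no nonconstant morphism to a curve, $f(L)$ is either a point or a surface. If $f(L)=\{y_0\}$, then by Theorem~\ref{32} the fibre $f^{-1}(y_0)$ has dimension at most $2$, so $L$ is an irreducible component of a $2$-dimensional fibre; every line $C_L\subset L$ is contracted by $f$, forcing $[C_L]\in R_f$, and $-K_X\cdot C_L=1=-K_X\cdot F$ (for a general smooth fibre $F$ of $f|_D$) gives $[C_L]=[F]$. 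One then derives a contradiction by matching the local model of $f$ near $L$ with the Andreatta--Wisniewski list of $2$-dimensional fibres for $K$-negative $(3,2)$ contractions, which does not admit a $\pr^2$ with normal bundle $\ol_{\pr^2}(-1)^{\oplus 2}$. If instead $\dim f(L)=2$, the normal bundle sequence $0\to\ma{N}_{L/D}\to\ma{N}_{L/X}\to\ma{N}_{D/X}|_L\to 0$ on $\pr^2$ splits (since $H^1(\pr^2,\ol(k))=0$), and the only line subbundle of $\ol_{\pr^2}(-1)^{\oplus 2}$ is $\ol_{\pr^2}(-1)$, so both summands are $\ol_{\pr^2}(-1)$. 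Hence $D\cdot C_L=-1$, and $[C_L]$ generates a small $K$-negative extremal ray distinct from $R_f$; I would flip this ray using Lemma~\ref{SQMFano} and show that the induced $(3,2)$ contraction on the flipped $4$-fold violates Theorem~\ref{32}, yielding the contradiction.

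For the second assertion, Lemma~\ref{SQMFano}(a) factors the SQM $\ph\colon X\dasharrow\w X$ as $X\xleftarrow{\alpha}\wi X\xrightarrow{\beta}\w X$, where $\alpha$ blows up the pairwise disjoint exceptional planes $L_1,\dots,L_r\subset X$ and $\beta$ blows up the pairwise disjoint exceptional lines $\ell_1,\dots,\ell_r\subset\w X$, with each common exceptional divisor $E_i\subset\wi X$ isomorphic to $\pr^2\times\pr^1$. By the first assertion $L_i\not\subset D$, hence each $C_i:=L_i\cap D$ is empty or a curve, the strict transform $\wi D$ equals $\alpha^*D$, and $\wi D\cap E_i=C_i\times\pr^1$. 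The morphisms $\alpha|_{\wi D}\colon\wi D\to D$ and $\beta|_{\wi D}\colon\wi D\to\w D$ are isomorphisms off $\bigcup_i C_i\times\pr^1$; pushforward of cycles then yields surjections $\alpha_*\colon\N(\wi D,\wi X)\twoheadrightarrow\N(D,X)$ and $\beta_*\colon\N(\wi D,\wi X)\twoheadrightarrow\N(\w D,\w X)$. Each $E_i$ with $C_i\neq\emptyset$ contributes exactly one class to each kernel (a $\pr^1$-fibre of $\alpha$, respectively a copy of $C_i$ contracted by $\beta$), and these contributions correspond bijectively via $E_i$, so both kernels have the same dimension and $\dim\N(D,X)=\dim\N(\w D,\w X)$.

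The main obstacle will be completing the proof of the first assertion: neither case admits an immediate purely local numerical contradiction, so the argument must genuinely invoke the global Fano structure of $X$ and the hypothesis $\rho_X\geq 6$, most likely through the fine classification of $K$-negative $(3,2)$ contractions (Theorem~\ref{32}) together with a flip-and-transport argument based on Lemma~\ref{SQMFano}.
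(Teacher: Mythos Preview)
The paper's proof is a two-line citation: \cite[Rem.~2.17(2)]{blowup} for the first assertion and \cite[Cor.~3.14]{eff} for the second. You are attempting to reprove both results from scratch, which is a legitimate and more self-contained route.

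Your argument for the second assertion is correct and is essentially the content of \cite[Cor.~3.14]{eff}. In particular your claims that $\wi D=\alpha^*D$ and $\wi D\cap E_i=C_i\times\pr^1$ are right: since $L_i\not\subset D$, the multiplicity of $D$ along $L_i$ is zero, so the pullback and strict transform coincide and the intersection with $E_i$ is the full preimage of $C_i$. The kernel computation then goes through, using that $\wi D$ is disjoint from $E_j$ whenever $C_j=\emptyset$, so $\N(\wi D,\wi X)\subset E_j^{\perp}$, which kills the $j$-th generator of $\ker\alpha_*$ (respectively $\ker\beta_*$).

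The genuine gap is in the first assertion. Your Case~1 (where $f(L)$ is a point) correctly reduces to the Andreatta--Wi\'sniewski classification of $2$-dimensional fibres of $(3,2)$ contractions, but you do not actually verify that a $\pr^2$ with normal bundle $\ol(-1)^{\oplus 2}$ is excluded; this requires going into \cite{AW} in detail. Your Case~2 is more problematic: the normal-bundle computation giving $D\cdot C_L=-1$ is fine, but the assertion that $[C_L]$ generates a small extremal ray is unjustified---the numerics $-K_X\cdot C_L=1$, $D\cdot C_L=-1$ do not by themselves force extremality in $\NE(X)$. Even granting extremality, the plan ``flip and contradict Theorem~\ref{32}'' is too vague: after the flip the ambient $4$-fold is no longer Fano, the transform of $D$ is still an exceptional divisor of a $(3,2)$ contraction, and you have not identified which hypothesis of Theorem~\ref{32} would fail. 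Finally, you never invoke $\rho_X\geq 6$; the argument in \cite{blowup} is embedded in a structure theory of fixed divisors developed under that hypothesis, and it is not clear your local approach can substitute for it. You honestly flag this at the end, but as written the first assertion remains unproven.
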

\begin{proof}
By \cite[Rem.~2.17(2)]{blowup} $D$ does not contain exceptional planes, thus the statement follows from \cite[Cor.~3.14]{eff}.
\end{proof}  
   \begin{thmdef}[\cite{blowup}, Th.~5.1, Cor.~5.2, Def.~5.3, Lemma 5.25, Def.~5.27]\label{fixed}
     Let $X$ be a smooth Fano $4$-fold with  $\rho_X\geq 7$, or $\rho_X=6$ and $\delta_X\leq 2$, and $D$ a fixed prime divisor in $X$.
      \begin{enumerate}[$(a)$]
      \item There exists a unique diagram:
        $$X\stackrel{\xi}{\dasharrow}\w{X}\stackrel{\sigma}{\la}Y$$
        where $\xi$ is a SQM, $\sigma$ is a divisorial  elementary contraction with exceptional divisor the transform $\w{D}$ of $D$, and $Y$ is Fano (possibly singular);
      \item $\sigma$ is of type $(3,0)^{\sm}$, $(3,0)^Q$, $(3,1)^{\sm}$, or $(3,2)$, and we define $D$ to be {\bf of type  $(3,0)^{\sm}$, $(3,0)^Q$, $(3,1)^{\sm}$, or $(3,2)$,} accordingly;
        \item if $D$ is of type $(3,2)$, then $X=\w{X}$. In the other cases $\xi$ factors as a sequence of $D$-negative and $K$-negative flips.
        \item We define $C_D\subset D\subset X$ to be the transform of a general irreducible curve $C_{\w{D}}\subset \w{D}\subset \w{X}$ contracted by $\sigma$, of minimal anticanonical degree. Then $C_D\cong\pr^1$, $D\cdot C_D=-1$, and $C_D\subset\dom(\xi)$.
        \item Given a SQM $X\dasharrow X'$ and a divisorial elementary  contraction $\sigma'\colon X'\to Y'$ with $\Exc(\sigma')$ the transform of $D$, there is a commutative diagram:          $$\xymatrix{X\ar@{-->}[r]^{\xi}&{\w{X}}\ar@{-->}[r]^{\psi_{\s X}}\ar[d]_{\sigma}&{X'}\ar[d]^{\sigma'}\\
            & Y\ar@{-->}[r]^{\psi_{\s Y}}&{Y'}
          }$$
          where $\psi_{\s X}$ and $\psi_{\s Y}$ are SQM's, $\w{D}\subset\dom(\psi_{\s X})$,
          and $\sigma(\w{D})\subset\dom(\psi_{\s Y})$.
      \end{enumerate}  
    \end{thmdef}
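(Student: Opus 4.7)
The plan is to exploit the Mori dream space structure of $X$ (guaranteed by $X$ being Fano and \cite{BCHM}) to construct the diagram via a suitable $D$-MMP, and then to invoke the classification of $K$-negative divisorial elementary contractions from smooth $4$-folds, using the Picard number hypothesis to rule out the exotic types in that classification.

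For part $(a)$, $\Eff(X)$ admits a finite Mori chamber decomposition, and $[D]\in\Eff(X)\smallsetminus\Mov(X)$ lies on a wall corresponding to a divisorial rational contraction. I would realize it by running a simultaneously $D$-negative and $K$-negative MMP from $X$: termination follows from the Mori dream space property, the flipping steps assemble into an SQM $\xi\colon X\dasharrow\w X$ (with $\w X$ smooth by Lemma~\ref{SQMFano}(a)), and the terminal step is an elementary divisorial contraction $\sigma\colon\w X\to Y$ with $\Exc(\sigma)=\w D$. The variety $Y$ is $\Q$-factorial by elementarity of $\sigma$, and the standard cone-theoretic argument for $K$-negative divisorial extremal contractions of varieties with $-K$ big, combined with the fact that $Y$ is itself a Mori dream space (hence $\NE(Y)$ is rational polyhedral), gives that $-K_Y$ is ample.

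For part $(b)$, since $\w X$ is smooth and $\sigma$ is $K$-negative, divisorial, and elementary, the possible structures are enumerated by the work of Andreatta--Wi\'sniewski (Theorem~\ref{32} for type $(3,2)$) and of Kawakita for types $(3,1)$ and $(3,0)$. This list contains the four types in the statement together with several exotic ones. The Picard hypothesis $\rho_X\geq 7$ (or $\rho_X=6$ with $\delta_X\leq 2$) rules out all exotic types: Lemma~\ref{dim32} and its analogues identify $\N(\w D,\w X)$ with $\N(D,X)$, and a type-by-type inspection forces $\codim\N(D,X)$ to be large, so $\delta_X\geq 3$ or $4$, contradicting either the hypothesis itself or Theorems~\ref{deltageq4}--\ref{delta=3}. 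This case-by-case elimination, carried out in \cite{blowup}, is the main technical obstacle of the proof.

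For $(c)$, a nontrivial $\xi$ in the type $(3,2)$ case would entail a $D$-negative, $K$-negative small (hence flipping) extremal contraction of $X$; by Theorem~\ref{kachiflip} its exceptional locus is a disjoint union of exceptional planes $L\cong\pr^2$, and $D\cdot C_L<0$ together with $D$ prime forces each such $L$ to sit inside $D$, which is ruled out by Lemma~\ref{dim32}. For $(d)$, taking $C_{\w D}$ a general fiber of $\sigma$ of minimal anticanonical degree, direct normal-bundle computations in each of the four types give $\w D\cdot C_{\w D}=-1$ and $C_{\w D}\cong\pr^1$; since $\sigma$ is $K$-negative while the exceptional lines $\ell_i\subset\w X\smallsetminus\dom(\xi^{-1})$ are $K$-positive, no $\ell_i$ is a fiber of $\sigma$, so a general $C_{\w D}$ avoids $\bigcup\ell_i$, $\xi^{-1}$ is regular on it, and the transform $C_D\subset\dom(\xi)\cap D$ inherits the stated properties. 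Finally $(e)$ is a Mori chamber argument: $\xi$ and $X\dasharrow X'$ are two SQMs of $X$ whose targets both admit a divisorial elementary contraction with exceptional divisor the transform of $D$, hence correspond to the same wall of the chamber decomposition of $\Eff(X)$; the induced birational map $\w X\dasharrow X'$ is therefore an SQM leaving $\w D$ in its domain, and descends through $\sigma$ and $\sigma'$ to an SQM $Y\dasharrow Y'$ with $\sigma(\w D)$ in its domain.
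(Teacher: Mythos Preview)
The paper does not give its own proof of this Theorem--Definition; it merely cites \cite{blowup}, Th.~5.1, Cor.~5.2, Def.~5.3, Lemma~5.25, Def.~5.27. Your sketch is a reasonable outline of the strategy carried out in that reference: run a $D$-negative $K$-negative MMP to reach a smooth SQM $\w X$ with a $K$-negative divisorial elementary contraction $\sigma$, invoke the classification of such contractions from smooth $4$-folds, and eliminate the exotic types via the bound on $\delta_X$.

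A few points are inaccurate or glossed over. First, in part $(c)$ you should cite Th.~\ref{kawsm} (Kawamata) rather than Th.~\ref{kachiflip}, since $X$ is smooth. Second, the attribution to Kawakita for the $(3,0)$ and $(3,1)$ classification is off: Kawakita's work is on $3$-folds; for smooth $4$-folds the relevant classification is due to Andreatta--Wi\'sniewski, Takagi, and Tsukioka, and this is what is used in \cite{blowup}. Third, your argument for $-K_Y$ ample (``standard cone-theoretic argument\dots'') is too vague: showing $Y$ is Fano is a nontrivial part of \cite[Th.~5.1]{blowup} and requires controlling the extremal rays of $Y$ via the specific types allowed for $\sigma$, not just bigness of $-K_Y$. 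Finally, in $(e)$ the claim that $\w D\subset\dom(\psi_{\s X})$ does not follow simply from the two contractions corresponding to the same wall; one needs the observation (used in \cite{blowup}) that the flips in $\psi_{\s X}$ can be taken $D$-trivial, so their loci are disjoint from $\w D$.
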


    Let $X$ be a smooth Fano $4$-fold, $\ph\colon X\dasharrow\wi{X}$ a SQM, and $E\subset \wi{X}$ a fixed prime divisor. We define {\bf the type of $E$} to be the type
of its transform $E_{\s X}\subset X$, and we define $C_E\subset E\subset \wi{X}$ to be the transform of $C_{E_{\s X}}\subset X$. Note that, since $\dim(\wi{X}\smallsetminus\dom(\ph^{-1}))=1$ (see Lemma \ref{SQMFano}$(a)$), we have $C_E\subset\dom(\ph^{-1})$.

We say that two fixed prime divisors $D,E$ are {\bf adjacent}\label{padj} if $[D],[E]\in\Nu(X)$ generate a two dimensional face of $\Eff(X)$, and moreover 
$\langle [D],[E]\rangle\cap\Mov(X)=\{0\}$. 
   \begin{lemma}\label{2dimfaces}
     Let $X$ be a smooth Fano $4$-fold with $\rho_X\geq 7$ and let $D,E$ be adjacent fixed prime divisors, $E$ of type $(3,2)$, and $D$ of type $(3,1)^{\sm}$ or $(3,0)^Q$, such that $D\cap E\neq\emptyset$. Then $D\cdot C_E=0$ and one of the following holds:
     \begin{enumerate}[$(i)$]
     \item $D$ is of type $(3,1)^{\sm}$, $E\cdot C_D=1$, and $E\cap L=\emptyset$ for every exceptional plane $L\subset D$;
     \item $D$ is of type $(3,1)^{\sm}$, $E\cdot C_D=0$, there exists an exceptional plane $L_0\subset D$ such that $D\cdot C_{L_0}=-1$, $E\cdot C_{L_0}=1$, $C_D\equiv C_E+C_{L_0}$, and
       $E\cap L=\emptyset$ for every exceptional plane $L\subset D$ with $C_L\not\equiv C_{L_0}$;
       \item $D$ is of type $(3,0)^Q$, $E\cdot C_D=1$, there exists an exceptional plane $L_0\subset D$ such that $D\cdot C_{L_0}=-1$, $E\cdot C_{L_0}=2$, $C_D\equiv C_E+C_{L_0}$, and
       $E\cap L=\emptyset$ for every exceptional plane $L\subset D$ with $C_L\not\equiv C_{L_0}$.
     \end{enumerate}
   \end{lemma}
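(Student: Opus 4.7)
I would work in the model $\wi{X} := \wi{X}_D$: by Theorem-Definition \ref{fixed}(e) applied to $E$, the transform $\wi{E}$ remains a fixed prime divisor of type $(3,2)$, so on $\wi{X}$ there are two $K$-negative elementary divisorial contractions, $\sigma\colon \wi{X}\to Y_D$ contracting $\wi{D}$, and $\wi{\pi}_E\colon \wi{X}\to Y_E'$ contracting $\wi{E}$. By Lemma \ref{SQMFano}(c), general representatives of $C_{\wi{E}}$ (which has $-K_{\wi{X}}\cdot C_{\wi{E}} = 1$) avoid the exceptional locus of $\xi^{-1}$, and by Theorem-Definition \ref{fixed}(d) the analogous statement holds for $C_{\wi{D}}$. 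Hence $D\cdot C_E = \wi{D}\cdot C_{\wi{E}}$ and $E\cdot C_D = \wi{E}\cdot C_{\wi{D}}$, and the face $\langle [D],[E]\rangle$ coincides in $\Eff(X) = \Eff(\wi{X})$.

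\textbf{Step 1 ($D\cdot C_E = 0$).} Since $\wi{D}\neq\wi{E}$, a general $C_{\wi{E}}$ is not contained in $\wi{D}$, giving $\wi{D}\cdot C_{\wi{E}}\geq 0$. Writing $\wi{\pi}_E^*\wi{D}' = \wi{D} + m\wi{E}$ where $\wi{D}' := \wi{\pi}_{E,\ast}\wi{D}$ and $m := \wi{D}\cdot C_{\wi{E}}\geq 0$, I would show $m = 0$ by contradiction, combining the adjacency hypothesis $\langle [D],[E]\rangle\cap\Mov(X) = \{0\}$ with the Mori dream space structure of the singular Fano $Y_E'$: a positive $m$ would force the pullback class $[\wi{\pi}_E^*\wi{D}'] = [D] + m[E]$ into the interior of the face $\langle [D],[E]\rangle$, and its pullback origin from $Y_E'$ would produce a movable representative, contradicting adjacency. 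This is the technical heart and the main obstacle of the proof.

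\textbf{Step 2 (Classification by $d := E\cdot C_D$).} A general $C_{\wi{D}}$ is a line in a general fiber $F$ of $\sigma|_{\wi{D}}$, with $F\cong\pr^2$ if $D$ is of type $(3,1)^{\sm}$ and $F = \wi{D}\cong Q$ (a smooth quadric threefold) if $D$ is of type $(3,0)^{Q}$. Thus $d = \deg(\wi{E}\cap F)\geq 0$ in $F$. Combining adjacency with $-K_X\cdot C_D = 2$ and $-K_X\cdot C_E = 1$, I would bound $d$ to small values: $d\in\{0,1\}$ in type $(3,1)^{\sm}$, and $d = 1$ in type $(3,0)^{Q}$. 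The values $d = 1$ yield cases (i) and (iii) respectively, while $d = 0$ in type $(3,1)^{\sm}$ yields case (ii). In cases (ii) and (iii), the nonempty $\wi{E}\cap\wi{D}$ not accounted for by generic fibers is concentrated on special fibers of $\sigma|_{\wi{D}}$, which I would identify with exceptional lines $\ell_0\subset\wi{D}$ corresponding to exceptional planes $L_0\subset D$ in $X$; computing $\wi{E}\cdot\ell_0$ gives $E\cdot C_{L_0} = 1$ in (ii) and $E\cdot C_{L_0} = 2$ in (iii). The numerical equivalence $C_D\equiv C_E + C_{L_0}$ in $\N(X)$ follows by comparing intersections with $D$, $E$, and $-K_X$. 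Finally, for any other exceptional plane $L\subset D$ with $C_L\not\equiv C_{L_0}$, an intersection $E\cap L\neq\emptyset$ would force a contribution $E\cdot C_L\geq 1$ incompatible with the computed value of $d$, so $E\cap L = \emptyset$.
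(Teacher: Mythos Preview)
Your plan has the right overall architecture, but there are genuine gaps at the two key steps that would prevent the argument from going through as written.

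\textbf{Step 1.} The assertion that there is a $(3,2)$ contraction $\wi{\pi}_E\colon \wi{X}\to Y_E'$ on the model $\wi{X}=\wi{X}_D$ is not justified by Theorem--Definition~\ref{fixed}(e): that statement concerns contractions of the transform of the \emph{same} divisor $D$ on other models, not of $E$. Since $E$ is of type $(3,2)$, by part~(c) the contraction with exceptional divisor $E$ lives on $X$ itself, and on $\wi{X}$ the transform $\wi{E}$ need not be contractible; indeed in cases~(ii) and~(iii) the exceptional plane $L_0\subset D$ meets $E$, so the SQM $X\dasharrow\wi{X}$ is \emph{not} an isomorphism near $E$. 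More seriously, the movability claim fails: writing $\alpha^*\alpha_*D=D+mE$ for the contraction $\alpha\colon X\to W$ of $E$, the class $[D]+m[E]=\alpha^*[\alpha_*D]$ is movable only if $\alpha_*D$ is movable on $W$, which you have not shown; and even when $\alpha_*D$ is movable on $W$, the pull-back of a codimension-$2$ base locus containing the center $S=\alpha(E)$ acquires $E$ as a fixed component. The paper obtains $D\cdot C_E=0$ by invoking \cite[Lemma~4.9]{fibrations}, after first observing via Lemma~\ref{dim32} that $E$ contains no exceptional planes; that external lemma uses a different mechanism than the one you sketch.

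\textbf{Step 2.} Two issues. First, you do not explain how adjacency together with the anticanonical degrees forces $d\le 1$ (and $d=1$ in the $(3,0)^Q$ case); this is not a formal consequence and in the paper it comes from \cite[Lemma~4.23]{small} and \cite[\S6]{small}. Second, the numerical equivalence $C_D\equiv C_E+C_{L_0}$ cannot be verified by intersecting with three divisor classes $D,E,-K_X$ when $\rho_X\ge 7$: three linear forms do not separate points of $\N(X)$. The actual argument (in \cite[Prop.~6.1, 6.4, Cor.~6.10]{small}) is geometric, tracking how the fibre of $\sigma$ through the image of $L_0$ degenerates, rather than purely numerical.

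In short, the paper's proof is essentially a sequence of citations to \cite{fibrations,small}; your direct approach is in the right spirit but would require substantially more input at both steps than what you have outlined.
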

   \begin{proof}
     Recall that $E$ does not contain exceptional planes (Lemma \ref{dim32}); then
     $D\cdot C_E=0$ by \cite[Lemma 4.9]{fibrations}.
If $D$ is of type $(3,1)^{\sm}$ and $E\cdot C_D>0$, then we have $(i)$ by \cite[Lemma 4.23]{small}.

Suppose that, if $D$ is of type $(3,1)^{\sm}$, we have  $E\cdot C_D=0$. Then we apply \cite[Lemma 6.9, Prop.~6.1, Prop.~6.4, and Cor.~6.10]{small}, and get $(ii)$ or $(iii)$.
   \end{proof}  
   Given two adjacent fixed prime divisors $D$ and $E$ in $X$, up to a SQM we can contract both of them with divisorial elementary contractions. However if $D$ and $E$ intersect, in general the SQM and the  type of divisorial elementary contractions may depend on the order with which we contract the two divisors. In the next lemmas we describe this situation for the cases given by Lemma \ref{2dimfaces}; this will be used in Section \ref{relrho2}. The vertical arrows (labeled by $\alpha$) are divisorial elementary contractions with exceptional divisor $E$ or its transforms, while the horizontal arrows (labeled by $\sigma$) are divisorial elementary contractions with exceptional divisor $D$ or its transforms.
   \begin{lemma}[\cite{small}, Lemma 4.23 and its proof]\label{casei}
 Let $X$ be a smooth Fano $4$-fold with $\rho_X\geq 7$ and let $D,E$ be adjacent fixed prime divisors as in Lemma \ref{2dimfaces}$(i)$.
  Then we have a diagram $$\xymatrix{X\ar@{-->}[r]^{\xi}\ar[d]_{\alpha}&{\w{X}}\ar[d]^{\tilde\alpha}
       \ar[r]^{\sigma}&{X_0}\ar[d]^{\alpha_0}\\
       W\ar@{-->}[r]^{\xi_{\scriptscriptstyle W}}&{\w{W}}\ar[r]^{\sigma_{\scriptscriptstyle W}}&{W_0}
     }$$
     where:
     \begin{enumerate}[$\bullet$]
        \item  
        $\alpha$, $\tilde\alpha$, and $\alpha_0$ are divisorial elementary contractions of type $(3,2)$ with exceptional divisor $E$ or its transforms;
     \item $\xi$ and $\sigma$ are as in Th.-Def.~\ref{fixed} $(a)$ for $D$,  $\sigma$ blows-up a fiber $C\cong\pr^1$ of $\alpha_0$, and $\Exc(\sigma)\cong\pr_{\pr^1}(\ol\oplus\ol(1))\cong\Bl_{\text{\em line}}\pr^3$;
     \item  $\Exc(\sigma_{\scriptscriptstyle W})\subset\w{W}_{\reg}$, $\Exc(\sigma_{\scriptscriptstyle W})=\tilde\alpha(\Exc(\sigma))\cong\pr^3$, and $\sigma_{\scriptscriptstyle W}$ is the blow-up of the smooth point $\alpha_0(C)\in W_0$;
         \item $\xi$ and $\xi_{\scriptscriptstyle W}$ are SQM's.
          \end{enumerate}   
   \end{lemma}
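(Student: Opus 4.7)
My plan is to lift the contraction diagram for $D$ (from Th.-Def.~\ref{fixed}$(a)$) down through the divisorial contraction $\alpha\colon X\to W$ of $E$. The starting point is that, since $E$ is of type $(3,2)$, Th.-Def.~\ref{fixed}$(c)$ gives $\alpha$ directly as a morphism on $X$, and Theorem \ref{32} ensures that $W$ is locally factorial with at worst nodal singularities. For $D$ of type $(3,1)^{\sm}$, Th.-Def.~\ref{fixed}$(a)$--$(c)$ together with Lemma \ref{SQMFano}$(a)$ decompose $\xi$ into $D$-negative, $K$-negative flips of pairwise disjoint exceptional planes $L_1,\dotsc,L_r\subset D\subset X$, followed by the divisorial elementary contraction $\sigma\colon\w{X}\to X_0$ blowing up a smooth curve with exceptional divisor $\w{D}$. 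The critical input from Lemma \ref{2dimfaces}$(i)$ is that $L_i\cap E=\emptyset$ for every $i$.

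This disjointness makes $\alpha$ a local isomorphism near each $L_i$, so $\alpha(L_i)\subset W_{\reg}$ remains an exceptional plane with $\ma{N}_{\alpha(L_i)/W}\cong\ol_{\pr^2}(-1)^{\oplus 2}$. Performing the analogous flips in $W$ produces a SQM $\xi_{\scriptscriptstyle W}\colon W\dasharrow\w{W}$ and a lifted morphism $\tilde\alpha\colon\w{X}\to\w{W}$ making the left square commute; since $\tilde\alpha$ agrees with $\alpha$ off $E$ and contracts the transform of $E$ to a surface, it is divisorial elementary of type $(3,2)$. For the right square, I show that the adjacency of $D$ and $E$, combined with Lemmas \ref{2dimfaces}$(i)$ and \ref{dim32}, forces the rays spanned by $[C_E]$ and $[C_{\w{D}}]$ to form a $2$-dimensional $K$-negative face of $\NE(\w{X})$; the associated contraction $\w{X}\to W_0$ factors as $\alpha_0\circ\sigma=\sigma_{\scriptscriptstyle W}\circ\tilde\alpha$, so both $\alpha_0\colon X_0\to W_0$ and $\sigma_{\scriptscriptstyle W}\colon\w{W}\to W_0$ are elementary, and commutativity forces $\alpha_0$ to be of type $(3,2)$ with exceptional divisor $\sigma(E)$.

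It remains to identify the fiber structures. The equality $E\cdot C_D=1$ implies that a general $\pr^2$-fiber of $\sigma|_{\w{D}}$ cuts $E$ in a line, so $E\cap\w{D}$ is ruled by lines over $\sigma(\w{D})$; the blown-up curve $C:=\sigma(\w{D})$ is therefore smooth rational and, via the commutative diagram, a fiber of $\alpha_0$. A normal bundle computation from the conormal sequences, exploiting $E\cdot C_D=1$, $D\cdot C_D=-1$ and $D\cdot C_E=0$, pins down the splitting of $\ma{N}_{C/X_0}$ and yields $\Exc(\sigma)\cong\pr_{\pr^1}(\ol\oplus\ol(1))\cong\Bl_{\text{\em line}}\pr^3$; since $\tilde\alpha$ restricts to the blow-down of the exceptional divisor $E\cap\Exc(\sigma)\cong\pr^1\times\pr^1$ of this blow-up, $\Exc(\sigma_{\scriptscriptstyle W})=\tilde\alpha(\Exc(\sigma))\cong\pr^3$ and $\sigma_{\scriptscriptstyle W}$ is the blow-up of the smooth point $\alpha_0(C)\in W_0$, with $\Exc(\sigma_{\scriptscriptstyle W})$ sitting in the smooth locus of $\w{W}$. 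The main obstacle is this final normal bundle computation, which pins down the precise projective bundle structure of $\Exc(\sigma)$; the rest of the argument is essentially a transport of known SQM/contraction structures through $\alpha$, possible only because the flipping planes are disjoint from $E$.
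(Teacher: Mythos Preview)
The paper does not give its own proof of this lemma; it is quoted directly from \cite{small}, Lemma~4.23 and its proof, so there is no in-paper argument to compare against beyond that citation. Your outline is a faithful reconstruction of the expected argument, and the mechanism you single out---that the exceptional planes $L_i\subset D$ flipped by $\xi$ are disjoint from $E$, so the SQM descends through $\alpha$ to produce $\xi_{\scriptscriptstyle W}$ and $\tilde\alpha$---is precisely the content of Lemma~\ref{2dimfaces}$(i)$ and is the heart of the matter.

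Two places in your sketch deserve more care. First, the inference ``$E\cap\w D$ is ruled by lines over $C$, therefore $C$ is smooth rational'' is not valid on its own; what makes it work is a \emph{second} ruling. Since $\w D\cdot C_{\w E}=0$, the restriction $\w D|_{\w E}$ is pulled back along $\tilde\alpha|_{\w E}$, so $\w D\cap\w E$ is also swept out by fibers of $\tilde\alpha$. Thus $\w D\cap\w E$ is an irreducible surface carrying two transversal $\pr^1$-rulings with distinct numerical classes $[C_{\w D}]$ and $[C_{\w E}]$, hence $\w D\cap\w E\cong\pr^1\times\pr^1$; this forces $C\cong\pr^1$ and simultaneously shows, via the commutative square, that $\alpha_0(C)$ is a point. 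Second, the assertion that $[C_{\w E}]$ and $[C_{\w D}]$ span a $2$-dimensional face of $\NE(\w X)$ is not automatic from adjacency alone; one justifies it for instance by pushing forward to the Fano variety $X_0$ and checking that $\sigma_*[C_{\w E}]$ generates an $E_0$-negative extremal ray there (with $E_0=\sigma(\w E)$ of type $(3,2)$), or by invoking Th.-Def.~\ref{fixed}$(e)$. With these two points made explicit your argument goes through, and your identification of the normal-bundle computation for $\ma{N}_{C/X_0}$ as the residual technical step is accurate.
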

  \begin{lemma}\label{casesiiandiii}
    Let $X$ be a smooth Fano $4$-fold with $\rho_X\geq 7$ and let $D,E$ be adjacent fixed prime divisors as in Lemma \ref{2dimfaces}$(ii)$ or $(iii)$. Then
we have a diagram $$\xymatrix{X\ar@{-->}@/^1pc/[rr]^{\xi}\ar@{-->}[r]_{\xi_1}\ar[d]_{\alpha}&{\w{X}}\ar@{-->}[r]_{\xi_2}\ar[d]^{\tilde\alpha}&\ar@{-->}[r]{\wi{X}}
       \ar[r]^{\sigma}&{X_0}\ar[d]^{\alpha_0}\\
       W\ar@{-->}[r]^{\xi_{\scriptscriptstyle W}}&{\w{W}}\ar[rr]^{\sigma_{\scriptscriptstyle W}}&&{W_0}
     }$$
     where:
      \begin{enumerate}[$\bullet$]
        \item 
          $\alpha$, $\tilde\alpha$, and $\alpha_0$ are divisorial elementary contractions of type $(3,2)$ with exceptional divisor $E$ or its transforms;
        \item $\xi$ and $\sigma$ are as in Th.-Def.~\ref{fixed} $(a)$ for $D$;
        \item  $\xi_1$ is a sequence of $D$-negative and $K$-negative flips, and $E\subset\dom(\xi_1)$;
          \item 
            $\xi_2$ is the flip of the small extremal ray generated by $[C_{L_0}]$ (see  Lemma \ref{2dimfaces});           
        \item $\sigma_{\scriptscriptstyle W}$ is a divisorial elementary contraction with $\Exc(\sigma_{\scriptscriptstyle W})$ the transform of $D$,  and $\Exc(\sigma_{\scriptscriptstyle W})\subset\w{W}_{\reg}$;
      \item  $\xi_{\scriptscriptstyle W}$ is a SQM;
        \item if  $D$ is of type $(3,1)^{\sm}$ and $\sigma$ blows-up a curve $\Gamma\subset X_0$, then $\Gamma\cdot\Exc(\alpha_0)>0$, and
          $\sigma_{\scriptscriptstyle W}$ is of type $(3,1)^{\sm}$ and blows-up the curve $\alpha_0(\Gamma)\subset W_0$;
          \item 
            if $\sigma$ is of type $(3,0)^Q$, then 
            $\sigma_{\scriptscriptstyle W}$ is of type $(3,0)^{\sm}$.
          \end{enumerate}
        \end{lemma}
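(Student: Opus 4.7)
The plan is to construct the diagram by running the contraction associated with $D$ from Th.-Def.~\ref{fixed} and inserting the divisorial contraction of $E$ as an intermediate step. Since $E$ is of type $(3,2)$, Th.-Def.~\ref{fixed}$(c)$ yields that $\alpha\colon X\to W$ is already a regular divisorial elementary contraction with $\Exc(\alpha)=E$, and by Th.~\ref{32} the $3$-fold $W$ is locally factorial with at most isolated nodal singularities. Independently, Th.-Def.~\ref{fixed}$(a)$ applied to $D$ supplies the SQM $\xi\colon X\dasharrow\wi{X}$, a composition of $D$-negative and $K$-negative flips, together with the divisorial elementary contraction $\sigma\colon\wi{X}\to X_0$ whose exceptional divisor is the transform of $D$.

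The next step is to factor $\xi=\xi_2\circ\xi_1$. The flipping loci in $\xi$ are the exceptional planes $L\subset D$ with $D\cdot C_L<0$, which by Lemma \ref{2dimfaces} split into planes $L$ with $[C_L]=[C_{L_0}]$ (containing the distinguished $L_0$ with $E\cdot C_{L_0}\in\{1,2\}$) and planes with $[C_L]\neq[C_{L_0}]$, each disjoint from $E$. All these planes are mutually disjoint by Lemma \ref{SQMFano}$(a)$, so the flips of the distinct small extremal rays commute and I may reorder so that the ray $[C_{L_0}]$ is flipped last; I then define $\xi_2$ to be this final flip of $[C_{L_0}]$, taking $\w{X}$ to $\wi{X}$, and $\xi_1\colon X\dasharrow\w{X}$ to be the composition of the earlier flips. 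Since every plane flipped by $\xi_1$ is disjoint from $E$, the divisor $E$ is preserved and lies in $\dom(\xi_1)$, as required.

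To build the bottom row I consider the transform $\w{E}\subset\w{X}$ of $E$. Because $\xi_1$ is an isomorphism in a neighborhood of $E$, $\w{E}$ is a fixed prime divisor of type $(3,2)$ in $\w{X}$, and its divisorial contraction $\tilde\alpha\colon\w{X}\to\w{W}$ exists; Th.-Def.~\ref{fixed}$(e)$ then supplies a SQM $\xi_{\scriptscriptstyle W}\colon W\dasharrow\w{W}$ making the left portion of the diagram commute. Since $E\cdot C_D=0$ in cases $(ii)$ and $(iii)$ (Lemma \ref{2dimfaces}), the transform of $E$ in $\wi{X}$ meets no curve contracted by $\sigma$ and hence descends to a fixed prime divisor of type $(3,2)$ in $X_0$, yielding $\alpha_0\colon X_0\to W_0$. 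Finally, $\sigma_{\scriptscriptstyle W}\colon\w{W}\to W_0$ is defined as the elementary contraction of the transform of $D$ in $\w{W}$ (which exists by the MMP for log Fano varieties), and commutativity with the other arrows completes the diagram.

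The main obstacle will be to identify the type of $\sigma_{\scriptscriptstyle W}$ and to verify $\Exc(\sigma_{\scriptscriptstyle W})\subset\w{W}_{\reg}$. In case $(ii)$, $\sigma$ is the blow-up of a smooth curve $\Gamma\subset X_0$; the numerical equivalence $C_D\equiv C_E+C_{L_0}$ combined with $E\cdot C_{L_0}=1$ translates, under $\xi_2$ and $\alpha_0$, into $\Gamma$ meeting $\Exc(\alpha_0)$ transversally at a single smooth point, so that $\alpha_0(\Gamma)$ is a smooth curve in $W_0$ and $\sigma_{\scriptscriptstyle W}$ is of type $(3,1)^{\sm}$ blowing it up. In case $(iii)$, $\sigma$ contracts a quadric with $E\cdot C_{L_0}=2$; a careful analysis of how this quadric meets the image of $E$ along the conic cut out on $L_0$ shows, after contracting $E$, that $\sigma_{\scriptscriptstyle W}(\Exc(\sigma_{\scriptscriptstyle W}))$ reduces to a single smooth point of $W_0$, making $\sigma_{\scriptscriptstyle W}$ of type $(3,0)^{\sm}$. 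The detailed intersection-theoretic bookkeeping needed to verify $\Gamma\cdot\Exc(\alpha_0)>0$ and the regularity of $\w{W}$ along $\Exc(\sigma_{\scriptscriptstyle W})$ is the technical heart of the argument.
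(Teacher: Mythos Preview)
Your overall framework---factor $\xi$ so that the $E$-trivial flips come first, then contract $E$ and its transforms at the three stages---matches the paper's approach. However, there are two genuine problems.

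First, a factual error: you claim $E\cdot C_D=0$ in both cases $(ii)$ and $(iii)$, but in case $(iii)$ Lemma~\ref{2dimfaces} gives $E\cdot C_D=1$. Hence your justification for why the transform of $E$ descends cleanly to $X_0$ (``meets no curve contracted by $\sigma$'') is wrong in that case. Relatedly, in case $(ii)$ you assert that $\Gamma$ meets $\Exc(\alpha_0)$ transversally at a \emph{single} point, but Lemma~\ref{2dimfaces} only says there exists an $L_0$ with the stated numerics; there may be several planes $L_0^1,\dotsc,L_0^d$ with $C_{L_0^i}\equiv C_{L_0}$, and correspondingly $\Gamma$ meets $\Exc(\alpha_0)$ in $d$ fibers.

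Second, and more seriously, you have not actually established the type of $\sigma_{\scriptscriptstyle W}$ nor that it is a \emph{regular} morphism from $\w W$ (rather than requiring further flips); invoking ``the MMP for log Fano varieties'' does not give this. The paper handles this by an explicit geometric computation: it identifies $\w D$ inside $\w X$ (as $\Bl_{\text{pt,line}}\pr^3$ in case $(ii)$, $\Bl_{\text{conic}}\pr^3$ in case $(iii)$), computes $\w E\cap\w D$ concretely (a union of $\mathbb F_1$'s or an $\mathbb F_2$), and then reads off that $\tilde\alpha(\w D)\subset\w W$ is a $\pr^2$-bundle over $\Gamma$ in case $(ii)$ and $\pr^3$ in case $(iii)$. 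From this description the type of $\sigma_{\scriptscriptstyle W}$ and the smoothness of $\w W$ along $\Exc(\sigma_{\scriptscriptstyle W})$ are immediate. Your last paragraph acknowledges that this ``intersection-theoretic bookkeeping'' is the heart of the matter but does not carry it out; without it the proof is incomplete.
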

        \begin{proof}
          We order the sequence of $D$-negative flips in $\xi$ by performing first all $E$-trivial flips; by Lemma \ref{2dimfaces} the loci of these flips are exceptional planes disjoint from $E$, so that $E\subset X$ and its transform $\w{E}\subset \w{X}$ are contained in the open subsets where $\xi_1$ is an isomorphism.  
               Then $\NE(\w{X})$ must have a unique $\w{E}$-negative extremal ray, which gives the contraction $\tilde\alpha\colon \w{X}\to \w{W}$ of type $(3,2)$.
          
      The transform $\w{D}\subset \w{X}$ of $D$ contains exceptional planes $L^1_0:=L_0,L_0^2,\dotsc,L_0^d$ such that $C_{L_0^i}\equiv C_{L_0}$ for every $i=1,\dotsc,d$, and they are the indeterminacy locus of the last flip $\xi_2$. Moreover $(\xi_2)_{|\w{D}}$ is regular, and $(\xi_2)_{|\w{D}}\colon \w{D}\to \wi{D}$ is the blow-up of $d$ smooth points, with exceptional divisors $L_0^1,\dotsc,L_0^d$. 

          If $D$ is of type $(3,1)^{\sm}$, then $\wi{D}$ is a $\pr^1$-bundle over a curve $\Gamma$, and the composite map $\w{D}\to \Gamma$ has $d$ singular fibers given by $L_0^i\cup S_i$, where $S_i\cong\mathbb{F}_1$ is the transform of the corresponding fiber of $\wi{D}\to\Gamma$. Moreover
          $\w{E}\cap \w{D}=S_1\cup\cdots\cup S_d$
       and  $(\tilde\alpha)_{|\w{D}}\colon \w{D}\to \tilde\alpha(\w{D})$ contracts $S_i$ to curves, and $\tilde\alpha(\w{D})$ is another $\pr^2$-bundle over $\Gamma$. There are exceptional lines $\ell_0^i\subset\wi{X}$ corresponding to $L_0^i$ (see Lemma \ref{SQMFano}$(a)$), and if $\wi{E}\subset\wi{X}$ is the transform of $E$, we have $\wi{E}\cdot\ell_0^i=-1$. In $X_0$, the images $\sigma(\ell_0^i)$ for $i=1,\dotsc,d$ are the fibers of $\alpha_0$ that meet $\Gamma$.

       If $D$ is of type $(3,0)^Q$, then $d=1$ by \cite[Prop.~6.4 and its proof]{small}, and
       $\w{D}\cong\Bl_{\pt}\wi{D}\cong\Bl_{\text{\em conic}}\pr^3$.
If $\wi{D}$ is a smooth quadric, then 
the conic is smooth, and $\w{E}\cap \w{D}\cong\mathbb{F}_2$. 
If $\wi{D}$ is the cone over $\pr^1\times\pr^1$, then the conic is reducible, and   $\w{E}\cap \w{D}$ has two irreducible components, both isomorphic to $\mathbb{F}_1$. Finally $\alpha_0(\w{D})\cong\pr^3$.
        \end{proof}  
        \section{Special rational contractions}\label{secspecial}
        \noindent  In this section we introduce special contractions and rational contractions. First in \S \ref{secspecial1} we consider $K$-negative special contractions $f\colon X\to Y$ where $X$ is a smooth projective $4$-fold and $\dim Y=3$. We recall some results from \cite{fibrations}, in particular concerning the discriminant divisor $\Delta\subset Y$, and define the \emph{intrinsic discriminant} $\Delta_{\intr}$ of $f$ as the union of the irreducible components $\Delta_0$ of $\Delta$ such that $f^*(\Delta_0)$ is irreducible.
        
        Using the classification of the possible $2$-dimensional fibers of $f$ by Andreatta-Wi\'sniewski and Kachi, we show that if $y_0\in Y$ is a singular point, then either $y_0\in\Delta_{\intr}$, or $y_0$ is a node and $f^{-1}(y_0)$ is the union of two copies of $\pr^2$ meeting transversally at one point, and the lines in the two $\pr^2$'s are numerically equivalent in $X$ (Th.~\ref{2dimfibers}). We will use this in Section \ref{relrho2} to relate (in our setting) the presence of these special fibers over nodes to the non-rationality of $Y$ (Lemma \ref{sm}).

        Then in \S \ref{secspecial2} we turn to special \emph{rational} contractions, and present the results needed to show that if a Fano $4$-fold $X$ has a rational contraction onto a $3$-fold and $\delta_X\leq 1$, then there is also a special rational contraction $X\dasharrow Y$ with $\dim Y=3$ and $\rho_X-\rho_Y\in\{1,2\}$ (Prop.~\ref{factor} and Lemma \ref{rhodelta}).
        \subsection{Special, $K$-negative contractions from a $4$-fold to a $3$-fold}\label{secspecial1}
        \begin{definition}\label{defspecial}
          Let $X$ be a normal and $\Q$-factorial projective variety, and a Mori dream space.
  A contraction of fiber type $f\colon X\to Y$ is special if $Y$ is $\Q$-factorial and, for every prime divisor $D\subset X$, either $f(D)=Y$, or $f(D)$ is a prime divisor in $Y$.

  Equivalently, when $\dim X=4$ and $\dim Y=3$, a contraction $f\colon X\to Y$ is {\bf special} if $Y$ is $\Q$-factorial and $f$ has at most isolated $2$-dimensional fibers and no $3$-dimensional fiber.
\end{definition}
Special contractions of Mori dream spaces were introduced and studied in \cite{fibrations}, to which we refer the interested reader for more details. 
\begin{thm}[\cite{fibrations}]\label{special}
  Let $X$ be a smooth projective $4$-fold and a Mori dream space, and let $f\colon X\to Y$ be a special, $K$-negative contraction with $\dim Y=3$. Set $m:=\rho_X-\rho_Y-1$. We have the following:
  \begin{enumerate}[$(a)$]
  \item $Y$ can have at most isolated, locally factorial, canonical singularities, contained in the images of the $2$-dimensional fibers of $f$;
  \item there are $m$ prime divisors $B_1,\dotsc,B_m\subset Y$ such that $f^*B_i$ is reducible for every $i=1,\dotsc,m$, and $f^*D$ is irreducible for every prime divisor $D$ different from $B_1,\dotsc,B_m$;
  \item   $B_1,\dotsc,B_m$ are pairwise disjoint;
  \item  $f^*B_i$ has two irreducible components $E_i$ and $\wi{E}_i$.
    The general fiber of $f$ over $B_i$ is $e_i+\hat{e}_i$, where $e_i,\hat{e}_i$ are integral curves with $E_i\cdot e_i<0$, $\wi{E}_i\cdot\hat{e}_i<0$, and $-K_X\cdot e_i=-K_X\cdot\hat{e}_i=1$, for every $i=1,\dotsc,m$.
    \end{enumerate}
  \end{thm}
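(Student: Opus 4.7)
The plan is to analyze $f$ via its generic conic bundle structure and the finitely many isolated higher-dimensional fibers. Let $\Sigma := \{y \in Y \mid \dim f^{-1}(y) \geq 2\}$; by specialness $\Sigma$ is finite, and over $Y_0 := Y \setminus \Sigma$ every fiber has dimension one. Since $-K_X$ is $f$-ample, Theorem \ref{conic} gives that $Y_0$ is smooth and the restriction of $f$ over $Y_0$ is a conic bundle; in particular $\Sing(Y) \subseteq \Sigma$ is finite. The remaining local factoriality and canonicity claims in (a) I would establish by analyzing $f$ locally near each two-dimensional fiber over a point $y \in \Sigma$: factor the restriction of $f$ into $K$-negative elementary pieces, apply Theorem \ref{32} at each $(3,2)$-divisorial step (producing at worst nodal, locally factorial, canonical singularities on the intermediate base) together with analogous classification results of Andreatta--Wi\'sniewski and Kachi for the other elementary configurations, and transfer the resulting local data to $Y$.

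For (b) and (d), let $\Delta \subset Y_0$ be the discriminant divisor of the generic conic bundle structure. For each irreducible component $\Delta_0 \subset \Delta$, the preimage $f^{-1}(\Delta_0)$ is either irreducible or splits into exactly two prime divisors, depending on whether the monodromy on the two rulings of a general reducible conic fiber is nontrivial or trivial. Let $B_1,\dotsc,B_s$ be the closures in $Y$ of those components where the split occurs, and write $f^*B_i = E_i + \tilde E_i$. A general fiber over $B_i$ is a reducible conic $e_i + \hat e_i$ with $e_i \subset E_i$ and $\hat e_i \subset \tilde E_i$ meeting transversally at one node; since a general fiber of $f$ has $-K_X$-degree $2$ and $K$-negativity forces $-K_X \cdot e_i$ and $-K_X \cdot \hat e_i$ to be positive integers, both equal $1$. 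The sign $E_i \cdot e_i < 0$ follows from $f^*B_i \cdot e_i = 0$ together with $\tilde E_i \cdot e_i > 0$. To obtain $s = \rho_X - \rho_Y - 1 = m$ I would exhibit $s+1$ linearly independent classes spanning $\Nu(X/Y)$, namely $[E_1], \dotsc, [E_s]$ together with a divisor positive on the general fiber, with independence verified via intersections against the $e_j$'s and the general fiber, and spanning deduced from the elementary nature of the conic bundle away from $\bigcup_i B_i$.

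The main obstacle I expect is (c), the pairwise disjointness. My plan is to exploit the polyhedrality of $\NE(f)$ (from the Mori dream space hypothesis): its extremal rays are generated by the general fiber class and the $[e_i]$'s, and the elementary contraction associated to $\R_{\geq 0}[e_i]$ over $Y$ is a $K$-negative divisorial contraction $X \to X_i$ with exceptional divisor $E_i$. If $B_i \cap B_j \neq \emptyset$ for some $i \neq j$ and $y \in B_i \cap B_j \cap Y_0$, the reducible fiber over $y$ lies in $(E_i \cup \tilde E_i) \cap (E_j \cup \tilde E_j)$; tracing through the $E_i$-contraction yields a contradiction with either irreducibility of the image of $E_j$ or $\tilde E_j$, or with the numerical independence established in (b). At points of $\Sigma$, the classification of two-dimensional fibers of $K$-negative elementary contractions (Theorems \ref{32} and \ref{kachiflip}) severely restricts the possible incidence of two $B_i$'s, handled case by case.
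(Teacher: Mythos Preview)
The paper does not actually prove this statement: it is imported from \cite{fibrations}, and the ``proof'' in the paper consists of citations to \cite[Prop.~2.20]{fibrations} for (a) and \cite[Lemmas 3.4 and 3.5]{fibrations} for (b)--(d). So there is no approach in the paper to compare against; your sketch is an attempt to reconstruct the argument from scratch.

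Your outline for (a), (b), (d) is broadly sound: the conic bundle structure over $Y_0$ via Th.~\ref{conic}, the discriminant decomposition, and the counting of the split components against $\rho_X-\rho_Y$ are the right ingredients.

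The main weakness is your treatment of (c). First, your assertion that the extremal rays of $\NE(f)$ are generated by the general fiber class and the $[e_i]$'s is neither justified nor needed; there can be other extremal rays supported on the isolated $2$-dimensional fibers, and you do not use this structure in any essential way afterwards. Second, the phrase ``tracing through the $E_i$-contraction yields a contradiction'' is too vague to count as an argument, and your plan to handle incidences inside $\Sigma$ case by case via Th.~\ref{32} and \ref{kachiflip} is both imprecise and unnecessary.

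A much cleaner route, which the paper in fact carries out in Lemma~\ref{conicbdl}, is the following. Over $Y_0$ the conic bundle can have no double line over $B_i$: if $F$ were such a fiber, then $\Gamma:=F_{\text{red}}$ would lie in $E_i\cap\wi E_i$ and satisfy $\Gamma\equiv e_i$ and $\Gamma\equiv\hat e_i$, contradicting $e_i\not\equiv\hat e_i$ (which you already established in (d)). Hence the singular locus of the discriminant does not meet $B_i\cap Y_0$, so $B_i\cap Y_0$ is smooth and cannot meet any other irreducible component of $\Delta$ there. Finally, since $Y$ is locally factorial by (a), two prime divisors in $Y$ cannot intersect in a finite set, so $B_i\cap B_j\subset\Sigma$ forces $B_i\cap B_j=\emptyset$. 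This disposes of (c) without any analysis of extremal rays or of the individual $2$-dimensional fibers.
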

  \begin{proof}
Statement $(a)$ follows from \cite[Prop.~2.20]{fibrations} and Th.~\ref{conic}, and the other statements from
\cite[Lemmas 3.4 and 3.5]{fibrations}.
\end{proof}
\begin{prg}\label{setting}
  Let $X$ be a smooth projective $4$-fold and a Mori dream space, and $f\colon X\to Y$ a special, $K$-negative contraction with $\dim Y=3$, as in Th.~\ref{special}.
  
If $F_1,\dotsc,F_r\subset X$ are the $2$-dimensional fibers of $f$, and $X_0:=X\smallsetminus(F_1\cup\cdots\cup F_r)$, then
$f_{|X_0}\colon X_0\to Y\smallsetminus(f(F_1)\cup\cdots\cup f(F_r))$ is a conic bundle (see Th.~\ref{conic}). 
We denote by $\Delta\subset Y$ the closure of the discriminant divisor of $f_{|X_0}$, and we  still refer to $\Delta$ as the {\bf discriminant divisor of $f$}. Note that $f(F_i)$ may or may not be in $\Delta$.

By Th.~\ref{special}$(d)$, $B_1,\dotsc,B_m$ are irreducible components of the discriminant divisor $\Delta$.
We define the {\bf intrinsic discriminant divisor} $\Delta_{\intr}$ of $f$ to be the union of the irreducible components of $\Delta$ different from $B_1,\dotsc,B_m$, namely a component $\Delta_0$ of the discriminant divisor is in $\Delta_{\intr}$ if and only if $f^*\Delta_0$ is irreducible. We have $\Delta=\Delta_{\intr}$ if and only if $m=0$, equivalently $f$ is elementary.
\end{prg}
\begin{lemma}\label{conicbdl}
   In the setting of \ref{setting},  for every $i=1,\dotsc,m$
   we have $B_i\cap\Delta_{\intr}=\emptyset$, $B_i$ is a connected component of $\Delta$,
      and $B_i$ is  smooth outside the
   images of the $2$-dimensional fibers. 
\end{lemma}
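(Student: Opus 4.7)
The plan is to exploit the conic bundle structure of $f$ on the open set $U := Y\setminus\{f(F_1),\dots,f(F_r)\}$: by Th.~\ref{special}$(a)$ and Th.~\ref{conic}, $f|_V: V\to U$ (with $V:=f^{-1}(U)$) is a conic bundle between smooth varieties. To such a conic bundle one associates a natural degree-$2$ cover $\widetilde{\Delta}\to \Delta\cap U$ whose fiber over $y$ parametrizes the irreducible components of the rank-$\le 2$ plane conic $f^{-1}(y)$; this cover is \'etale over the rank-$2$ locus (two distinct lines) and ramified over the rank-$1$ locus (a double line), and, crucially, its connected components correspond bijectively to the irreducible components of $(f|_V)^{-1}(\Delta\cap U)$. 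By Th.~\ref{special}$(d)$, over $B_i\cap U$ this cover splits as two disjoint sheets (corresponding to $E_i\cap V$ and $\widetilde{E}_i\cap V$), while over any component $\Delta_0\subset\Delta_{\intr}\cap U$ it is connected.

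The main step will be to show $B_i\cap\Delta_{\intr}\cap U=\emptyset$ by contradiction. If $y_0\in B_i\cap\Delta_0\cap U$ for some $\Delta_0\subset\Delta_{\intr}$, then near the preimage of $y_0$ the two disjoint sheets over $B_i$ must attach to the connected sheet over $\Delta_0$, placing all three in one connected component of $\widetilde\Delta$; this would force $E_i$, $\widetilde{E}_i$ and $f^*\Delta_0$ to correspond to a single irreducible component of $(f|_V)^{-1}(\Delta\cap U)$, contradicting $E_i\ne\widetilde{E}_i$. The delicate part here is justifying the ``connected-component merging'' step rigorously; I would formalize it by identifying $\widetilde\Delta$ with the Stein factorization of $(f|_V)^{-1}(\Delta\cap U)\to\Delta\cap U$, so that connected components on the target correspond exactly to irreducible components on the source.

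To pass from $U$ to $Y$, I would use that $B_i$ and any $\Delta_0\subset\Delta_{\intr}$ are distinct irreducible surfaces in the $3$-fold $Y$: a nonempty intersection would be at least $1$-dimensional, but by the previous step it is contained in the finite set $Y\setminus U$, hence empty. Together with the pairwise disjointness from Th.~\ref{special}$(c)$, this yields that $B_i$ is a connected component of $\Delta$.

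For the smoothness of $B_i$ outside the $2$-dimensional fiber images, I would observe that the splitting of $\widetilde\Delta$ over $B_i\cap U$ into two disjoint sheets forces the cover to be \'etale there, so no rank-$1$ fiber occurs over $B_i\cap U$. Since the determinantal hypersurface of a family of $3\times 3$ symmetric matrices is smooth at points where the matrix has rank exactly $2$, $B_i$ is smooth on $U$.
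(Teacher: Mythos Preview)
Your smoothness argument in the last paragraph is correct and is essentially the paper's argument. But your disjointness argument via the double cover has a genuine gap.

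First, the proposed identification of $\widetilde\Delta$ with the Stein factorization of $(f|_V)^{-1}(\Delta\cap U)\to\Delta\cap U$ is wrong: every fiber of this map is a (singular) plane conic, hence connected, so the Stein factorization is just $\Delta\cap U$ itself. The double cover $\widetilde\Delta$ parametrizing lines in the conics is a different object. Second, even with the correct $\widetilde\Delta$, your claimed bijection between \emph{connected} components of $\widetilde\Delta$ and \emph{irreducible} components of $(f|_V)^{-1}(\Delta\cap U)$ is exactly what fails if $B_i$ meets $\Delta_0$: the three irreducible components of $\widetilde\Delta$ (two sheets over $B_i$, one over $\Delta_0$) would all pass through the fiber over $y_0$ and lie in a single connected component, while still corresponding to the three distinct irreducible divisors $E_i$, $\widehat E_i$, $f^*\Delta_0$. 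So the ``merging'' you observe is not a contradiction; it is precisely the behaviour you would expect at an intersection point, and your bijection was never valid there.

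The paper's route is shorter and avoids this. It first proves there are no double-line fibers over $B_i\cap U$: if $F$ were one with $\Gamma=F_{\mathrm{red}}$, then $\Gamma\subset E_i\cap\widehat E_i$ forces $\Gamma\equiv e_i$ and $\Gamma\equiv\hat e_i$, contradicting $e_i\not\equiv\hat e_i$ from Th.~\ref{special}$(d)$. By Beauville's result (the singular locus of the discriminant equals the rank-$1$ locus when the total space is smooth), this gives $B_i\cap U\subset(\Delta\cap U)_{\mathrm{sm}}$, and smoothness of $\Delta$ along $B_i\cap U$ immediately forces $B_i\cap\Delta_0\cap U=\emptyset$ for any other component $\Delta_0$. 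Note that your own last paragraph already contains this smoothness statement; you should use it to get disjointness, rather than the double-cover argument.

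One more small point: in passing from $U$ to $Y$ you assert that two distinct irreducible surfaces in a $3$-fold cannot meet in finitely many points. This is false in general; you need that $Y$ is locally factorial (Th.~\ref{special}$(a)$), so that both are Cartier and their intersection, if nonempty, is purely $1$-dimensional.
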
  
\begin{proof}
  Let $i\in\{1,\dotsc,m\}$. Outside the $2$-dimensional fibers, $f$ is a conic bundle,  and $B_i$ is an irreducible component of the discriminant divisor. Since $X$ is smooth, where $f$ is a conic bundle, the singularities of the discriminant divisor correspond to double lines (see \cite[Prop.~1.2]{beauville}). On the other hand there cannot be a double line $F$ over $B_i$, because
if $\Gamma=F_{\text{\it red}}$, we would have $\Gamma\subset E_i\cap\wi{E}_i$ and $\Gamma\equiv e_i$, $\Gamma\equiv \hat{e}_i$, but $e_i\not\equiv\hat{e}_i$ by 
 Th.~\ref{special}$(d)$. Thus $B_i$ is smooth outside the images of the $2$-dimensional fibers and cannot meet other irreducible components of the discriminant divisor (note that, being $Y$ locally factorial, two prime divisors cannot intersect in finitely many points).
\end{proof}
\begin{remark}\label{fiber}
 In the setting of \ref{setting},  let $F:=f^{-1}(y)$ be a fiber of $f$. Then $\dim\N(F,X)>1$ if and only if $y\in B_1\cup\cdots\cup B_m$.
\end{remark}
\begin{proof}
  Suppose that $y\in B_1$. By Th.~\ref{special}$(d)$, $F\cap E_1$ must contain a one-cycle $\Gamma_1$ which is a degeneration of the curve $e_1$, in particular $\Gamma_1\equiv e_1$; similarly $F\cap\wi{E}_1$ contains a one cycle $\wi{\Gamma}_1$ numerically equivalent to $\hat{e}_1$. Therefore $[e_1],[\hat{e}_1]\in\N(F,X)$, and this classes are linearly independent because $E_1\cdot e_1<0$ while $E_1\cdot\hat{e}_1\geq 0$. We conclude that  
  $\dim\N(F,X)>1$.
  
Conversely if $y\not\in B_1\cup\cdots\cup B_m$, then
$F\cap E_i=\emptyset$ for $i=1,\dotsc,m$, hence 
$\N(F,X)\subseteq(\ker f_*)\cap E_1^{\perp}\cap\cdots\cap E_m^{\perp}$, where $f_*\colon \N(X)\to\N(Y)$ is the pushforward (see Rem.~\ref{disjoint}).
Moreover $\ker f_*$ is generated by the classes $[e_1],\dotsc,[e_m],[F_0]$ where $F_0$ is a general fiber of $f$, and one can easily check that $(\ker f_*)\cap E_1^{\perp}\cap\cdots\cap E_m^{\perp}=\R[F_0]$.
We conclude that $\dim\N(F,X)=1$.
\end{proof}
We recall that $\pr^2\bullet\pr^2$ is the union of two planes in $\pr^4$ intersecting in one point.
\begin{lemma}[\cite{kachi}]\label{bullet}
   In the setting of \ref{setting}, 
let $F:=f^{-1}(y_0)$ be a fiber such that $F\cong\pr^2\bullet\pr^2$. Then $y_0\not\in\Delta$ and $\dim\N(F,X)=1$.
\end{lemma}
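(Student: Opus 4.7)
My plan is to extract the conclusion directly from Kachi's local classification of $2$-dimensional fibers of $K$-negative contractions from a smooth $4$-fold to a $3$-fold, specialized to the case $F\cong\pr^2\bullet\pr^2$. Write $F=L_1\cup L_2$ with $L_i\cong\pr^2$ and $L_1\cap L_2=\{p\}$, and denote by $\ell_i\subset L_i$ a line. The core input from Kachi is that this particular fiber type pins down the normal bundles $\ma{N}_{L_i/X}$ and the intersection numbers $L_i\cdot\ell_j$; in particular $-K_X\cdot\ell_1=-K_X\cdot\ell_2=1$, and the classes $[\ell_1],[\ell_2]\in\N(X)$ coincide, each being ``half'' the class of a smooth nearby fiber of $f$.

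Once this is in hand, $\dim\N(F,X)=1$ is immediate: every irreducible curve contained in $F=L_1\cup L_2$ must sit entirely in $L_1$ or in $L_2$, and any curve in $\pr^2\cong L_i$ is numerically a multiple of $\ell_i$. Thus $\N(F,X)$ is spanned by $[\ell_1]$ and $[\ell_2]$, which are equal in $\N(X)$, so this image is one-dimensional.

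For $y_0\notin\Delta$ I would proceed in two steps. By Remark~\ref{fiber}, the equality $\dim\N(F,X)=1$ already excludes $y_0\in B_1\cup\cdots\cup B_m$, so only $y_0\in\Delta_{\intr}$ remains to be ruled out. Suppose for contradiction that $y_0$ lies on an irreducible component $\Delta_0\subseteq\Delta_{\intr}$; by the definition of $\Delta_{\intr}$, $E:=f^{-1}(\Delta_0)$ is an irreducible divisor on $X$, and over a general $y\in\Delta_0$ close to $y_0$ the fiber of $f$ is a reducible conic $e+\hat e$ with both components in $E$. Since $F\subset E$, both $L_1$ and $L_2$ are contained in the single irreducible divisor $E$; restricting $E$ to $L_i$ gives a line bundle on $\pr^2$ which is not ample (because $E$ is $f$-trivial on fibers of $f$), and a computation of $E\cdot\ell_i$ using Kachi's normal-bundle data for $\ma{N}_{L_i/X}$ yields numbers incompatible with the requirement that $E$ sweeps out the codimension-one divisor $\Delta_0\subset Y$.

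The hard part, as signalled in the previous paragraph, is this last step: one must unpack Kachi's local normal-form of $f$ in a neighbourhood of $F$ sufficiently to forbid the existence of an irreducible divisor on $X$ that simultaneously contains $L_1\cup L_2$ and dominates a divisor of $Y$ passing through $y_0$. If Kachi's statement directly gives a local model in which the generic fiber of $f$ near $y_0$ is a smooth conic, then $y_0\notin\Delta$ is immediate; otherwise the argument is the deformation-theoretic one just sketched, and the numerical incompatibility provides the required contradiction.
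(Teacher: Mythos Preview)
Your argument has the logical order reversed compared to the paper, and this reversal creates a genuine circularity. You try to establish $\dim\N(F,X)=1$ first by invoking Kachi's normal--bundle computations to conclude $[\ell_1]=[\ell_2]$ in $\N(X)$. But Kachi's analysis is carried out under the hypothesis that $f$ is \emph{elementary}; in that setting $\ker f_*$ is one--dimensional, so $[\ell_1]$ and $[\ell_2]$ are automatically proportional --- the equality is an assumption, not an output. In our setting $\rho_X-\rho_Y\geq 1$ may be strict, and a priori nothing rules out $F\cong\pr^2\bullet\pr^2$ occurring over some $B_i$ with $[\ell_1],[\ell_2]$ linearly independent. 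So your first step does not stand on its own.

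The paper proceeds in the opposite order, and the key input you are missing is the precise statement from \cite{kachi} that does the work: Kachi's Theorem~3.1 gives a criterion in terms of \emph{limit conics}. A limit conic in $F=L_1\cup L_2$ is a degeneration of nearby conics, hence a union of a line in $L_1$ and a line in $L_2$, both through the point $L_1\cap L_2$. Two general points of $L_1$ cannot lie on any such limit conic, so $F$ is not connected by limit conics, and Kachi's criterion yields $y_0\notin\Delta$ directly. The paper then notes that although \cite{kachi} assumes $f$ elementary, the proof of Theorem~3.1 is local around $F$ and only uses that $F$ is an isolated $2$--dimensional fiber, so it applies here. With $y_0\notin\Delta$ in hand, $y_0\notin B_1\cup\cdots\cup B_m$, and Remark~\ref{fiber} immediately gives $\dim\N(F,X)=1$.

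Your ``hard part'' --- excluding $y_0\in\Delta_{\intr}$ via an ad hoc divisor restriction argument --- is precisely what Kachi's limit--conic criterion handles in one stroke; the sketch you give is not a proof, and you yourself acknowledge this. The fix is to invoke the correct theorem from \cite{kachi} and to argue in the paper's order.
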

\begin{proof}
  The fact that $y_0\not\in\Delta$ follows from \cite[Th.~3.1]{kachi}; note that, in the terminology of \cite{kachi}, a limit conic in $F=F_1\cup F_2$ is a union of  a line in $F_1$ and a line in $F_2$, both containing the point $F_1\cap F_2$; hence $F$ is not connected by limit conics, because for general $x,y\in F_1$ there is no limit conic containing both of them. We also note that,
  even if in \cite{kachi} the contraction $f$ is assumed to be elementary, the proof of Th.~3.1 is local around $F$, and only needs that $F$ is an isolated $2$-dimensional fiber.

  Finally $\dim\N(F,X)=1$ follows from Rem.~\ref{fiber}.
\end{proof}
\begin{thm}[\cite{AW,kachi}]\label{2dimfibers}
   In the setting of \ref{setting}, 
      let $F:=f^{-1}(y_0)$ be a $2$-dimensional fiber of $f$. 
Then one of the following holds:
\begin{enumerate}[$(i)$]
\item $Y$ is smooth at $y_0$;
    \item $\dim\N(F,X)=1$ and $y_0\in\Delta_{\intr}$;
  \item $\dim\N(F,X)=1$, $F\cong\pr^2\bullet\pr^2$, $Y$ has a node at $y_0$, and $y_0\not\in\Delta$.
   \end{enumerate}
\end{thm}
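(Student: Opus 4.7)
The plan is to combine the classification of $2$-dimensional fibers of $K$-negative contractions from smooth $4$-folds to $3$-folds, due to Andreatta--Wi\'sniewski \cite{AW} and Kachi \cite{kachi}, with Remark \ref{fiber} and Lemma \ref{bullet}. If $Y$ is smooth at $y_0$ we are already in case $(i)$, so from now on I assume $y_0\in\Sing(Y)$ and aim to establish $(ii)$ or $(iii)$.

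First I would split according to the value of $\dim\N(F,X)$. If $\dim\N(F,X)=1$, then Remark \ref{fiber} gives $y_0\notin B_1\cup\cdots\cup B_m$, so by the definition of $\Delta_{\intr}$ in \ref{setting} the two subcases $y_0\in\Delta$ and $y_0\notin\Delta$ coincide with $y_0\in\Delta_{\intr}$ and $y_0\notin\Delta$ respectively. The subcase $y_0\in\Delta_{\intr}$ is exactly $(ii)$. For the remaining subcase $y_0\notin\Delta$ I would invoke Kachi's classification \cite{kachi} of isolated $2$-dimensional fibers of a conic bundle structure lying outside the discriminant (which applies to $f|_{X_0}$, as in \ref{setting}): among such fibers whose image is a singular point of $Y$, the only possibility is $F\cong\pr^2\bullet\pr^2$ with $Y$ acquiring a node at $y_0$; combined with Lemma \ref{bullet} this gives $(iii)$.

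It remains to exclude $\dim\N(F,X)>1$ under the standing assumption $y_0\in\Sing(Y)$. By Remark \ref{fiber} this case means $y_0\in B_i$ for some $i\in\{1,\dotsc,m\}$, and the task is to derive a contradiction by showing that $Y$ is in fact smooth at $y_0$. I would exploit that $Y$ is locally factorial by Theorem \ref{special}$(a)$, so $B_i$ is Cartier, and Theorem \ref{special}$(d)$ provides a splitting $f^{*}B_i=E_i+\wi{E}_i$ with $E_i,\wi{E}_i$ prime Cartier divisors; locally around $y_0$ this lets one factor $f$ as two successive divisorial steps, each of which falls within the scope of \cite{AW,kachi}. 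The $2$-dimensional fiber $F$ is then a transverse union of fibers of these two steps, and the classification forces $y_0$ to be a smooth point of $Y$, the desired contradiction.

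The hard part is precisely this last step --- making the local decomposition of $f$ near a point of $B_i$ rigorous, and extracting from \cite{AW,kachi} the smoothness of $Y$ at $y_0$ once one takes into account the reducible-conic structure coming from $f^{*}B_i=E_i+\wi{E}_i$. Once this is done, combining Remark \ref{fiber}, Lemma \ref{bullet}, Theorem \ref{special}$(a)$ and the Andreatta--Wi\'sniewski/Kachi classification delivers the trichotomy $(i)$--$(iii)$.
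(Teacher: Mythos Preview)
Your overall architecture matches the paper's: split on $\dim\N(F,X)$, handle the one-dimensional case via Kachi's classification, and in the higher-dimensional case factor $f$ locally to force smoothness of $Y$ at $y_0$. The treatment of $\dim\N(F,X)=1$ is essentially right; the paper organizes it slightly differently, running through Kachi's Types A, B, C directly (Th.~0.6--0.8 in \cite{kachi}) rather than splitting on $y_0\in\Delta$ versus $y_0\notin\Delta$, but the content is the same.

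The gap is in your description of the case $\dim\N(F,X)>1$. First, the phrase ``two successive divisorial steps'' is not what happens: $f$ goes from a $4$-fold to a $3$-fold, so any factorization into elementary contractions must contain a fiber-type step. What the paper does is take a neighborhood $Y_0\ni y_0$ with $\dim\N(X_0/Y_0)=2$ and factor $f_0=\beta\circ\alpha$; then it shows $\alpha$ cannot be of fiber type (else $\beta$ would be birational with $\dim Y_0=3$, contradicting local factoriality of $Y$ together with $f$ being special) and cannot be small (no exceptional planes in $F$), so $\alpha$ is divisorial of type $(3,2)$, and $\beta$ is the remaining $K$-negative fiber-type map.

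Second, and more importantly, you are missing the ingredient that makes this local factorization tractable: before factoring, the paper invokes \cite[Prop.~4.11]{AW} (together with Lemma~\ref{bullet} to exclude $\pr^2\bullet\pr^2$) to pin down the possible isomorphism types of $F$, namely $\mathbb{F}_1$, $\pr^1\times\pr^1$, $\pr^2\cup(\pr^1\times\pr^1)$, or $\pr^2\cup\mathbb{F}_1$. This both gives $\dim\N(F,X)=2$ (so the local relative Picard number really is $2$) and guarantees $F$ contains no exceptional planes (needed to rule out $\alpha$ small). One then analyzes $\alpha_{|F}$ case by case: for irreducible $F$ one arranges (possibly swapping the two factorizations) that $\alpha_{|F}$ is a $\pr^1$-bundle, so $\alpha(F)=\beta^{-1}(y_0)$ is a curve and Th.~\ref{conic} gives smoothness; for reducible $F$ one gets $\alpha(F)\cong\pr^2$ and concludes via \cite[Th.~5.9.6]{AW}. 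Your sentence ``$F$ is then a transverse union of fibers of these two steps'' does not capture this, and without the AW classification of $F$ you have no handle on what $\alpha$ does to the fiber.
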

\begin{proof}
  If $\dim\N(F,X)=1$, then  there is 
an open neighborhood $Y_0$ of $y_0$ such that, if $X_0:=f^{-1}(Y_0)$, then $f_{|X_0}\colon  X_0\to Y_0$ is elementary, and we apply the results in 
\cite{kachi}. In particular we see that for type A \cite[Th.~0.6]{kachi} we have $(i)$; for type B  \cite[Th.~0.7]{kachi} we have $(i)$ or $(iii)$; for type C \cite[Th.~0.8]{kachi} we have $y_0\in\Delta$, and then $(ii)$ by Rem.~\ref{fiber}.

Suppose instead that $\dim\N(F,X)>1$. The possible $F$ are classified in \cite[Prop.~4.11]{AW}, and moreover $F\not\cong\pr^2\bullet\pr^2$ by Lemma \ref{bullet}, 
thus we have the  possibilities:
$\mathbb{F}_1$, $\pr^1\times\pr^1$, $\pr^2\cup(\pr^1\times\pr^1)$, or $\pr^2\cup \mathbb{F}_1$, where in the two reducible cases,   the components meet along a curve which is a line in $\pr^2$ and a line of the ruling  in $\pr^1\times\pr^1$, or the $(-1)$-curve in $\mathbb{F}_1$.

In all cases we have $\dim\N(F,X)=2$ and $F$ does not contain exceptional planes, so we can choose 
an open neighborhood $Y_0$ of $y_0$ such that, if $X_0:=f^{-1}(Y_0)$, then $\dim\N(X_0/Y_0)=2$, and no fiber of $f$ over $Y_0$ contains an exceptional plane. Set $f_0:=f_{|X_0}\colon  X_0\to Y_0$, and
let us consider a factorization of $f_0$ in elementary steps:
$$X_0\stackrel{\alpha}{\la} W_0\stackrel{\beta}{\la}Y_0.$$

If $\alpha$ is of fiber type, then $\dim W_0=3$ and $\beta$ is birational. However $\beta$ cannot be small because $Y$ is locally factorial (Th.~\ref{special}$(a)$), and it cannot be divisorial otherwise there would be a divisor in $X_0$ sent by $f_0$ to a closed subset of codimension $\geq 2$ in $Y_0$, impossible because $f$ is special. Therefore $\alpha$ is birational and $\dim W_0=4$.

Since $f$ is $K$-negative, $\alpha$ is $K$-negative too. Hence $\alpha$ cannot be small, otherwise $\Exc(\alpha)$ 
would be a union of exceptional planes contained in fibers of $f_0$ (see Th.~\ref{kawsm}).
Then $\alpha$ is divisorial,  $\dim\alpha(\Exc(\alpha))\leq 2$ and $\dim f_0(\Exc(\alpha))\leq 2$. On the other hand $f$ is special, so 
$f_0(\Exc(\alpha))$ must be a divisor in $Y_0$. We conclude that $\alpha$ is
of type $(3,2)$, and $f_0(\Exc(\alpha))=Y_0\cap B_i$ for some $i$.

We also note that the general fiber $\Gamma$ of $\beta$ is a curve disjoint from $\alpha(\Exc(\alpha))$, and $f$ is $K$-negative, thus $K_{W_0}\cdot\Gamma<0$. Since $\dim\N(W_0/Y_0)=1$, this means that $\beta$ is $K$-negative.

We have
$$F\stackrel{\alpha}{\la}\alpha(F)=\beta^{-1}(y_0)\stackrel{\beta}{\la}y_0$$
and  $y_0\in f_0(\Exc(\alpha))$
by Rem.~\ref{fiber}, thus $F\cap\Exc(\alpha)\neq\emptyset$, and $\alpha_{|F}$ is not an isomorphism.

If $F$ is irreducible, then $F\cong \mathbb{F}_1$ or $\pr^1\times\pr^1$, and $\alpha_{|F}$ is either a $\pr^1$-bundle, or the contraction of the $(-1)$-curve in $\mathbb{F}_1$. Up to exchanging this factorization of $f_0$ with the other one, we can assume that $\alpha_{|F}$ is a $\pr^1$-bundle. Then $\alpha$ has one-dimensional fibers over $\alpha(F)$, therefore
$\alpha(F)\subset (W_0)_{\reg}$ by Th.~\ref{32}. Moreover
$\alpha(F)=\beta^{-1}(y_0)$ is a one-dimensional fiber of $\beta$, 
and we have $(i)$ by Th.~\ref{conic}.

If instead $F$ is reducible, then $F=F_1\cup F_2$ with $F_1\cong\pr^2$ and $F_2\cong \mathbb{F}_1$ or $\pr^1\times\pr^1$. Again 
up to exchanging this factorization of $f_0$ with the other one, we can assume that $\alpha_{|F_1}$ is an isomorphism and $\alpha_{|F_2}$ is a $\pr^{1}$-bundle, so that $\alpha(F)\cong\pr^2$ and
$\alpha(F)\cap\alpha(\Exc(\alpha))$ is a curve.
Again
$\alpha$ has one-dimensional fibers over $\alpha(F)\cap\alpha(\Exc(\alpha))$, hence 
$\alpha(F)\subset (W_0)_{\reg}$ by Th.~\ref{32}. In this case $\alpha(F)=\beta^{-1}(y_0)\cong\pr^2$ is a $2$-dimensional fiber of $\beta$,
and we get $(i)$ by \cite[Th.~5.9.6]{AW}.
\end{proof}
  \begin{example}\label{artinmum}
    We give an example of a smooth Fano $4$-fold $X$ with $\rho_X=2$ with an elementary, $K$-negative contraction $f\colon X\to Y$ with $\dim Y=3$, where $Y$ has 10 nodes $y_i$, 
 $f^{-1}(y_i)\cong\pr^2\bullet\pr^2$ for every $i$, and $f$ is smooth with fiber $\pr^1$ over $Y\smallsetminus\{y_1,\dotsc,y_{10}\}$, so that $\Delta=\emptyset$.
 The resolution of the base $Y$ is the well-known Artin and Mumford's $3$-fold \cite{artinmumford};
we outline the description of $X$ and refer the reader to
 \cite[\S 9]{beaurational} for more details.

 Let $G$ be the grassmannian of lines in $\pr^3$, and $R\subset G$
the subvariety given by the lines contained in a fixed pencil of quadrics in $\pr^3$. Then 
$R$  is an Enriques surface, called
 Reye congruence.
  Let $\sigma\colon X\to G$ be the blow-up along $R$; then $X$ is a smooth projective $4$-fold with $\rho_X=2$. Moreover there is an elementary contraction $f\colon X\to Y$, where $Y$ is a locally factorial $3$-fold with precisely $10$ nodes, and $f$ is smooth with fiber $\pr^1$ outside the nodes. In particular $f$ and $\sigma$ are $K$-negative, and $X$ is Fano.
  The fibers of $f$ over the nodes are isomorphic to $\pr^2\bullet\pr^2$.
\end{example}
\begin{remark}\label{conto}
  Let $f\colon X\to Y$ be a conic bundle where $Y$ is smooth, quasi-projective and $X$ has at most isolated singularities, and let $\Delta\subset Y$ be the discriminant divisor.

  Let $p\in\Delta$ be such that $f^{-1}(p)$ has two components, and let $q\in f^{-1}(p)$ be the singular point of the fiber.
  Then $X$ is singular at $q$ if and only if $\Delta$ is singular at $p$.
\end{remark}
\begin{proof}
  The statement being local on $Y$, we can assume that $Y$ is affine and that $X\subset Y\times\pr^2$ is defined by the equation
  $$F=\sum_{i,j=0}^2a_{ij}(y)x_ix_j=0,$$
  where $y=(y_j)$ are local coordinates on $Y$ at $p$, $a_i\in\ol(Y)$, and $(x_0:x_1:x_2)$ are coordinates on $\pr^2$. We can also assume that $f^{-1}(p)$ has equation $x_1^2+x_2^2=0$, so that $a_{11}(p)=a_{22}(p)=1$ and $a_{ij}(p)=0$ for all other indices. Then $f^{-1}(p)$ is singular at $q_0=(1:0:0)$, and $q=(p,q_0)$.

   We have $\frac{\partial F}{\partial x_i}(q)=0$ for $i=0,1,2$, and $\frac{\partial F}{\partial y_j}(q)=\frac{\partial a_{00}}{\partial y_j}(p)$ for every $j$.

 On the other hand, the local equation of $\Delta$ at $p$ is given by $D=\det(a_{ij})$. The determinant is given by a sum (with signs) of products of three elements of the matrix $(a_{ij})$ lying in different rows and columns. Each such product can be expressed as a product of two functions vanishing at $p$, except $a_{00}a_{11}a_{22}$. We conclude that
  $$\frac{\partial D}{\partial y_j}(p)=\frac{\partial (a_{00}a_{11}a_{22})}{\partial y_j}(p)=\frac{\partial a_{00}}{\partial y_j}(p)=\frac{\partial F}{\partial y_j}(q)$$
and the statement follows.
 See also \cite[Prop.~1.2]{beauville}.
\end{proof}
\subsection{Special rational contractions from a Fano $4$-fold to a $3$-fold}\label{secspecial2}
 \begin{definition}\label{specialrat}
          Let $X$ be a normal and $\Q$-factorial projective variety, and a Mori dream space.
          A rational contraction of fiber type $f\colon X\dasharrow Y$ is {\bf special}
                    if there is a SQM $X\stackrel{\xi}{\dasharrow} \w{X}$ such that the composition $f\circ \xi^{-1}\colon \w{X}\to Y$ is regular and special; this does not depend on the choice of the SQM $\xi$.
\end{definition}
We will need the following properties.
\begin{proposition}[\cite{fibrations}, Prop.~2.13]\label{factor}
  Let $X$ be a normal and $\Q$-factorial projective variety, and a Mori dream space. Let $f\colon X\dasharrow Y$ be a  rational contraction of fiber type. Then $f$ can be factored as $X\stackrel{f'}{\dasharrow} Y'\stackrel{g}{\to}Y$ where $f'$ is a special rational contraction and $g$ is birational.
\end{proposition}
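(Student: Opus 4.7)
The plan is to construct $Y'$ as a suitable birational modification of $Y$, produced via a small perturbation inside the Mori chamber decomposition of $X$. Since $X$ is a Mori dream space, by the definition of rational contraction we may apply an SQM $\xi\colon X\dasharrow X_1$ to reduce to the case where $f$ is represented by a morphism $f_1\colon X_1\to Y$. The obstructions to $f_1$ being special are two: first, $Y$ may fail to be $\Q$-factorial; second, there may exist finitely many prime divisors $D_1,\dots,D_r\subset X_1$ contracted by $f_1$ to subvarieties of codimension $\geq 2$ in $Y$. If neither occurs, $f_1$ is already special and we take $Y'=Y$; otherwise proceed as follows.

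Fix an ample divisor $H$ on $Y$ and set $L:=f_1^{*}H$, a semi-ample class on $X_1$ lying on some face of $\Nef(X_1)\cap\Mov(X_1)$. For a small rational $\epsilon>0$ consider
\[
L_\epsilon:=L+\epsilon\sum_{i=1}^{r}[D_i]\in\Eff(X_1).
\]
By the Hu--Keel theory of Mori dream spaces \cite{hukeel}, $\Eff(X_1)$ admits a finite Mori chamber decomposition, and for $\epsilon$ sufficiently small $L_\epsilon$ lies in the relative interior of a single chamber, whose associated rational contraction we denote $f'\colon X_1\dasharrow Y'$. The model $Y'$ is automatically $\Q$-factorial, and the inclusion of the ray $\R_{\geq 0}L$ in the closure of the chamber containing $L_\epsilon$ yields a birational morphism $g\colon Y'\to Y$ with $g\circ f'=f_1$: intuitively $g$ extracts exactly the divisors in $Y'$ lying over the loci $f_1(D_i)\subset Y$ of codimension $\geq 2$ and over the non-$\Q$-factorial locus of $Y$.

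It then remains to verify that $f'$ is a special rational contraction. The $\Q$-factoriality of $Y'$ is built in; the key is that no prime divisor $D\subset X_1$ is sent by $f'$ to a subvariety of codimension $\geq 2$ in $Y'$. For the original ``bad'' divisors $D_i$, the positive coefficient of $[D_i]$ in $L_\epsilon$ forces $f'(D_i)$ to be a prime divisor on $Y'$. For any other prime divisor $D\subset X_1$, either $f_1(D)=Y$ and hence $f'(D)=Y'$, or $f_1(D)$ is already a prime divisor on $Y$ whose birational transform remains a prime divisor on $Y'$. Composing with the SQM $\xi$ gives the desired factorization $X\stackrel{f'}{\dasharrow} Y'\stackrel{g}{\to}Y$.

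The main obstacle is the middle step: justifying that the Mori chamber containing $L_\epsilon$ really does correspond to a rational contraction dominating $f_1$ and extracting precisely the prescribed divisors. This is a routine but delicate consequence of the Hu--Keel correspondence between Mori chambers of $\Eff(X_1)$ inside $\Mov(X_1)$ and pairs (SQM of $X_1$, contraction to a $\Q$-factorial variety); the bookkeeping, identifying which chamber $L_\epsilon$ lands in and which divisors are extracted by $g$, is where the argument needs to be executed with care.
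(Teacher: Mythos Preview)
The paper does not prove this statement; Proposition~\ref{factor} is cited from \cite[Prop.~2.13]{fibrations} without argument, so there is no proof here to compare against.

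Evaluating your sketch on its own terms, there is a genuine gap exactly where you yourself locate ``the main obstacle.'' The class $L=f_1^{*}H$ is not big, since $f_1$ is of fiber type; and each $D_i$ is contracted by $f_1$, hence restricts trivially to a general fiber $F$. Therefore $L_\epsilon=L+\epsilon\sum_i D_i$ also restricts trivially to $F$ and is not big either. Thus $L_\epsilon$ lies on the boundary of $\Eff(X_1)$, not in the interior of a Mori chamber, and in particular not inside $\Mov(X_1)$. The Hu--Keel correspondence you invoke---between chambers \emph{inside} $\Mov(X_1)$ and pairs (SQM, contraction to a $\Q$-factorial target)---simply does not apply to $L_\epsilon$. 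On proper faces of the Mori fan the associated target need not be $\Q$-factorial (any small contraction of an SQM already gives a counterexample), so your assertion that ``the model $Y'$ is automatically $\Q$-factorial'' is unsupported. Similarly, a positive coefficient of $[D_i]$ in the numerical class $L_\epsilon$ does not by itself force $f'(D_i)$ to be a divisor on $Y'$; that would require an argument about sections or stable base loci, not just numerics.

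To salvage the perturbation idea you would have to identify precisely which face of the Mori fan $L_\epsilon$ lands in and argue directly that the corresponding model is $\Q$-factorial with the $D_i$ mapping to divisors. This is not ``routine bookkeeping'' but the substance of the proof, and you have not carried it out. The argument in \cite{fibrations} proceeds differently, working more directly with the relative structure over $Y$ rather than by perturbing inside $\Nu(X_1)$.
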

\begin{lemma}\label{rhodelta}
  Let $X$ be a smooth Fano $4$-fold and $X\dasharrow Y$ a special rational contraction with $\dim Y=3$. 
  If $\rho_X-\rho_Y\geq 3$, then $\delta_X\geq\rho_X-\rho_Y-1\geq 2$.
\end{lemma}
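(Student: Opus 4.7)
The plan is to exhibit a prime divisor $D_1\subset X$ with $\codim\N(D_1,X)\geq m$, where $m:=\rho_X-\rho_Y-1\geq 2$. By Definition~\ref{specialrat} and Lemma~\ref{Kneg}, I choose a SQM $\xi\colon X\dasharrow\w{X}$ so that $f\colon\w{X}\to Y$ is regular, special, and $K$-negative. Applying Theorem~\ref{special} produces $m$ pairwise disjoint prime divisors $B_1,\ldots,B_m\subset Y$ with $f^*B_i=E_i+\w{E}_i$, the components $E_i,\w{E}_i\subset\w{X}$ being pairwise disjoint across different indices. I take $D_1\subset X$ to be the transform of $E_1$, and aim to transfer the analysis across the SQM.

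The linear algebra input: in $\Nu(\w{X})$ the span $V:=\langle [E_i],[\w{E}_i]:2\leq i\leq m\rangle$ has dimension at least $m$. The $m-1$ classes $[E_2],\ldots,[E_m]$ are linearly independent, since the matrix $(E_i\cdot e_j)_{i,j\geq 2}$ is diagonal with nonzero diagonal entries (by Theorem~\ref{special}(c)-(d) and the disjointness). Using $[\w{E}_i]=f^*[B_i]-[E_i]$, to raise $\dim V$ to $m$ it suffices to note that $f^*[B_2]$ is independent of $\langle [E_2],\ldots,[E_m]\rangle$: indeed $f^*[B_2]\neq 0$ in $\Nu(\w{X})$ since $B_2$ is a nonzero effective divisor and $f^*$ is injective, while $f^*[B_2]$ vanishes against the general fiber of $f$ but no nonzero combination of $[E_i]$'s does (each detects its own $e_i$). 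Via the SQM identification $\Nu(X)\cong\Nu(\w{X})$, $V$ corresponds to a subspace $V'\subset\Nu(X)$ of the same dimension, spanned by the classes $[D_i]$ and $[D_i+\w{D}_i]$ for $i\geq 2$ (with $D_i,\w{D}_i$ the transforms of $E_i,\w{E}_i$).

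The final and most delicate step is to show $\N(D_1,X)\subseteq V'^{\perp}$. A curve $C\subset D_1$ whose generic point lies in $\dom(\xi)$ maps to a curve $\xi(C)\subset E_1$, disjoint from $E_i\cup\w{E}_i$ for $i\geq 2$, so the required vanishings of intersection numbers follow. A curve $C$ contained in an exceptional plane $L_j$ of the SQM has class proportional to $[C_{L_j}]$, which corresponds under the SQM identification to $-[\ell_j]$; a common-resolution argument shows that $L_j\subset D_1$ in $X$ forces $\ell_j\cap E_1\neq\emptyset$ in $\w{X}$, and one verifies $[E_i]\cdot[\ell_j]=[f^*B_i]\cdot[\ell_j]=0$ for $i\geq 2$ by combining Lemma~\ref{SQMFano} (which constrains exceptional lines), the pairwise disjointness of the $B_i$'s, and the $K$-negative special structure of $f$. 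This last vanishing is the main technical obstacle: it amounts to ruling out that the $K$-positive curve $f(\ell_j)\subset Y$ (which must meet $B_1$) also crosses another $B_i$ in a way that spoils the orthogonality, and uses in a fine way the structure of exceptional lines arising from the SQM of a Fano $4$-fold. Granting this, $\codim\N(D_1,X)\geq\dim V'\geq m$, hence $\delta_X\geq m=\rho_X-\rho_Y-1$.
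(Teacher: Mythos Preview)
Your proposal has a genuine gap that you yourself flag with ``granting this.'' The missing idea is the following: each $E_i$ and $\wi{E}_i$ in $\w{X}$ is \emph{covered by irreducible curves of anticanonical degree one}. Indeed, through any point $p\in E_i$ there is an effective one-cycle $\Gamma\equiv e_i$ (a flat degeneration of $e_i$); since $f$ is $K$-negative and $-K_{\w X}\cdot\Gamma=1$, the cycle $\Gamma$ must be integral. By Lemma~\ref{SQMFano}(c), a curve of anticanonical degree one cannot meet any exceptional line, so $E_i\cup\wi{E}_i$ is disjoint from every $\ell_j$, i.e.\ $E_i\cup\wi{E}_i\subset\dom(\xi^{-1})$.

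Once you have this, your ``most delicate step'' dissolves: the case ``$L_j\subset D_1$'' is vacuous, because the SQM is an isomorphism along $E_1$, hence $D_1=E_1'$ contains no exceptional plane at all. More to the point, the disjointness of the $E_i\cup\wi{E}_i$ for different $i$ transfers verbatim to $X$, and Remark~\ref{disjoint} immediately gives $\N(E_1',X)\subseteq (E_2')^\perp\cap\cdots\cap(E_m')^\perp\cap(\wi{E}_m')^\perp$. Checking that $[E_2'],\ldots,[E_m'],[\wi{E}_m']$ are linearly independent (test against $e_2,\ldots,e_m,\hat{e}_m$) finishes the proof. Your attempted route --- pushing $f(\ell_j)$ down to $Y$ and arguing it cannot cross another $B_i$ --- does not work as stated: nothing prevents a curve in a $3$-fold from meeting two disjoint surfaces, so the orthogonality you need cannot be read off the base.
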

\begin{proof}
  The argument is similar to \cite[proof of Lemma 3.10]{eleonora}.
  We consider a factorization
  $$X\stackrel{\xi}{\dasharrow}\w{X}\stackrel{f}{\la}Y$$
  where $\xi$ is a SQM and $f$ is a $K$-negative special contraction (see Lemma \ref{Kneg}),
and set $m:=\rho_X-\rho_Y-1$. By Th.~\ref{special}
there are pairwise disjoint prime divisors $B_1,\dotsc,B_m\subset Y$ such that $f^*(B_i)$ has two irreducible components $E_i$ and $\wi{E}_i$. Moreover the general fiber of $f$ over $B_i$ is $e_i+\hat{e}_i$ where $E_i\cdot e_i<0$, $\wi{E}_i\cdot\hat{e}_i<0$, and $-K_{\w{X}}\cdot e_i=-K_{\w{X}}\cdot \hat{e}_i=1$.

We show that $E_i$ and $\wi{E}_i$ are covered by irreducible curves of anticanonical degree one. Indeed if $p\in E_i$, there must be an effective one-cycle $\Gamma$ which is a degeneration of $e_i$ and containing $p$ in its support. Then $\Gamma\equiv e_i$, thus every irreducible component of $\Gamma$ is contracted by $f$, which is $K$-negative. Since $-K_{\w{X}}\cdot\Gamma=1$, $\Gamma$ must be an integral curve. Similarly for $\wi{E}_i$.

By  Lemma \ref{SQMFano}$(c)$ this implies that $E_i\cup\wi{E}_i\subset\dom(\xi^{-1})$. Let $E_i',\wi{E}_i'\subset X$ be the transforms
of $E_i,\wi{E}_i$ respectively; we have $(E_i'\cup\wi{E}_i')\cap (E_j'\cup\wi{E}'_j)=\emptyset$ for every $i\neq j$.

Since  $\rho_X-\rho_Y\geq 3$, we have
$m\geq 2$.
  The divisor $E'_1$ is disjoint from $E'_2,\dotsc,E'_m,\wi{E}'_m$, thus
  $$\N(E'_1,X)\subseteq (E_2')^{\perp}\cap\cdots\cap (E_m')^{\perp}\cap (\wi{E}_m')^{\perp}$$
  (see Rem.~\ref{disjoint}).
  Moreover intersecting with (the transforms of) the curves $e_i$, $\hat{e}_m$ we see that the classes  $[E'_2],\dotsc,[E'_m],[\wi{E}'_m]\in\Nu(X)$ are linearly independent, and this gives $\codim\N(E'_1,X)\geq m$, hence $\delta_X\geq m=\rho_X-\rho_Y-1$. 
\end{proof}
\section{Weak Fano $3$-folds}\label{weakfano}
\noindent We recall that a normal and $\Q$-factorial projective variety $Y$ is \emph{weak Fano} if $-K_Y$ is nef and big. This is an auxiliary
section where we present some results on Fano and weak Fano $3$-folds $Y$ with locally factorial and canonical singularities; in particular we are interested in  bounding $-K_Y^3$. These results will be applied in the rest of paper to study the base of a special rational contraction $X\dasharrow Y$ where $X$ is a Fano $4$-fold and $\dim Y=3$. The reader may skip this section and come back to it when needed.

Let $Y$ be a weak Fano $3$-fold with locally factorial, canonical singularities. Then $Y$ is log Fano, and $|-mK_Y|$ for $m\gg 0$ defines a birational map $\ph\colon Y\to Z$, that we call the \emph{anticanonical map} of $Y$. Moreover $Z$ is a Gorenstein Fano $3$-fold, the \emph{anticanonical model} of $Y$.
\begin{lemma}[\cite{prok,karz}]\label{bound}
  Let $Y$ be a Fano $3$-fold with at most locally factorial and canonical singularities.
  Then $-K_Y^3\leq 64$.
\end{lemma}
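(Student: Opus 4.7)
My plan is to follow the approach used by Prokhorov, which reduces the problem to a degree bound for an anticanonical K3 section. Since $Y$ is locally factorial with canonical singularities, $Y$ is Gorenstein, so $-K_Y$ is a Cartier ample divisor; the statement is the optimal degree bound for Gorenstein Fano $3$-folds with locally factorial canonical singularities, with equality attained by $\pr^3$ (where $-K_{\pr^3}=\ol(4)$ gives $-K^3=64$).

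The first step is to apply Shokurov-Reid to produce an irreducible $S\in|-K_Y|$ which is a normal surface with at worst Du Val singularities, so that its minimal resolution $\tilde{S}\to S$ is a K3 surface. By adjunction $H:=(-K_Y)_{|S}$ satisfies $K_S\sim 0$ and $H^2=-K_Y^3$, and the pullback of $H$ to $\tilde{S}$ is a big and nef Cartier divisor of the same square. Riemann-Roch on the K3 surface gives $h^0(\tilde{S},H)=\tfrac12 H^2+2$, which combined with Kawamata-Viehweg vanishing on $Y$ yields $h^0(Y,-K_Y)=\tfrac12(-K_Y^3)+3$. Hence the anticanonical map of $Y$ lands in $\pr^{N}$ with $N=\tfrac12(-K_Y^3)+2$, and by standard base-point freeness for Gorenstein canonical Fano threefolds (Shin-Mella) it is birational onto its image for $-K_Y^3$ in the range of interest.

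The second step is the decisive one: one studies the image $W:=\Phi_{|-K_Y|}(Y)\subseteq\pr^N$, an irreducible non-degenerate threefold of degree at most $-K_Y^3$, and its anticanonical K3 hyperplane section. A Castelnuovo-type inequality for varieties close to minimal degree, together with the description of varieties of minimal degree (Del Pezzo-Bertini), bounds the degree as soon as one knows that $W$ is not a special scroll or cone structure. The local factoriality of $Y$ is what rules out the potential non-$\Q$-factorial examples of degrees $66,70,72$ which appear in Prokhorov's treatment of the general Gorenstein canonical case: any small $\Q$-factorialization would introduce exceptional divisors and so contradict local factoriality of $Y$ itself.

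The main obstacle, and what makes this result non-trivial, is precisely this borderline analysis: one must rule out degrees $65,\dotsc,72$ by showing that each such case produces, after a suitable small modification, a non-factorial singularity on $Y$. This case-by-case combinatorial argument is the core of \cite{prok,karz}, and I would invoke those results directly rather than reprove the classification of borderline Gorenstein Fano $3$-folds.
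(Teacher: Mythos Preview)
Your overall strategy---cite the classification results of \cite{prok,karz} and then use local factoriality to exclude the finitely many borderline cases---is exactly what the paper does. However, your execution has two problems.

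First, your Steps 1 and 2 (K3 section, Riemann--Roch, Castelnuovo-type bounds) are not part of the proof of this lemma at all; they are the internal machinery of \cite{prok,karz}. Since you yourself say you will ``invoke those results directly,'' these steps should simply be deleted. The paper's proof begins by quoting \cite[Th.~1.5]{prok}: $-K_Y^3\leq 72$, with equality only for $\pr(1,1,1,3)$ or $\pr(1,1,4,6)$; and then \cite[Th.~1.5]{karz}: if $64<-K_Y^3<72$ then $-K_Y^3\in\{66,70\}$, with explicit descriptions. So there are only three values to exclude, not the whole range $65,\dotsc,72$.

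Second, and more seriously, your exclusion argument is wrong. You write that ``any small $\Q$-factorialization would introduce exceptional divisors and so contradict local factoriality,'' but a small $\Q$-factorialization is by definition an isomorphism in codimension one and introduces no exceptional divisors. The paper's actual arguments are case-specific and different in nature: for $-K_Y^3=72$ the two weighted projective spaces are visibly not locally factorial; for $-K_Y^3=70$, \cite{karz} produces a birational morphism $\tau\colon W\to Y$ whose exceptional locus is an irreducible \emph{curve} contracted to the unique singular point, and then one checks directly that the image of a general very ample divisor on $W$ is a Weil divisor on $Y$ which cannot be $\Q$-Cartier (its pullback would have to be a divisor, but the inverse image is the divisor together with the exceptional curve); for $-K_Y^3=66$, \cite{karz} shows $Y$ is toric and singular, hence not locally factorial. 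You need these concrete arguments, not a vague appeal to $\Q$-factorialization.
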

\begin{proof}
We use the results in \cite{prok,karz} on the anticanonical degree of Fano $3$-folds with Gorenstein canonical singularities.
  
  By \cite[Th.~1.5]{prok} we have $-K_Y^3\leq 72$, and if $-K_Y^3= 72$
then $Y$ should be
$\pr(1,1,1,3)$ or $\pr(1,1,4,6)$, but these two varieties are not locally factorial.

  Then \cite[Th.~1.5]{karz} shows that, if $64<-K_Y^3<72$, then there are two possibilities for $Y$, with $-K_Y^3=66$ or $-K_Y^3=70$.

  If $-K_Y^3=70$, then by \cite[\S 3, in particular p.\ 1226]{karz} there is a birational morphism $\tau\colon W\to Y$ with $\Exc(\tau)$ an irreducible curve, which is contracted to the unique singular point of $Y$. Then $Y$ is not even $\Q$-factorial, indeed let $H\subset W$ be a general very ample divisor, so that $H$ intersects $\Exc(\tau)$ in points. Then $\tau(H)\subset Y$ is a prime Weil divisor which cannot be $\Q$-Cartier, because $\tau^{-1}(\tau(H))=H\cup\Exc(\tau)$ is not a divisor, hence the pullback of $m\tau(H)$ does not exist for any $m\in\Z_{>0}$.

  If $-K_Y^3=66$, then by \cite[Th.~1.5, Prop.~5.2]{karz} $Y$ is toric and singular, thus again $Y$ cannot be locally factorial.
\end{proof}
\begin{lemma}[\cite{ou}]\label{ou}
  Let $Y$ be a Fano $3$-fold with at most isolated, locally factorial, and canonical singularities.
  Assume also that $-K_Y^3> 24$, $\rho_Y=2$, and that $Y$ has two distinct elementary contractions of fiber type. Then $Y$ is smooth and rational, and one of the following holds:
  \begin{enumerate}[$(i)$]
  \item $Y\cong\pr^1\times\pr^2$ and $-K_Y^3=54$;
  \item  $Y\cong\pr_{\pr^2}(T_{\pr^2})$ and $-K_Y^3=48$;
\item $Y\subset\pr^2\times\pr^2$ is a divisor of degree $(1,2)$, and $-K_Y^3=30$.
\end{enumerate}
\end{lemma}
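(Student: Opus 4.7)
The plan is to first prove that $Y$ must be smooth, and then invoke the Mori--Mukai classification of smooth Fano $3$-folds with $\rho=2$. I would let $f_1\colon Y\to Z_1$ and $f_2\colon Y\to Z_2$ be the two elementary contractions; both are $K$-negative of fiber type since $Y$ is Fano, so $\dim Z_i\in\{1,2\}$ and each $Z_i$ is normal projective. If $\dim Z_i=1$, then $Z_i\cong\pr^1$ and $f_i$ is a del Pezzo fibration with smooth generic fiber. If $\dim Z_i=2$, then since $Y$ is Gorenstein and log terminal (as it is canonical), Prop.~\ref{conicsing} applied over the open locus where $f_i$ has only $1$-dimensional fibers yields that $Z_i$ is smooth there and $f_i$ restricts to a conic bundle.

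The heart of the argument is the reduction to the smooth case, treated according to $(\dim Z_1,\dim Z_2)\in\{(1,1),(1,2),(2,2)\}$. In each configuration, the bound $-K_Y^3>24$ sharply restricts the discriminant of the conic bundle(s) or the degree of the del Pezzo fibers, and a local analysis of the possible isolated $2$-dimensional fibers via Th.~\ref{2dimfibers} and of the singular conic fibers via Rem.~\ref{conto} forces the total space $Y$ to be smooth. This is essentially the content of Ou's classification~\cite{ou} of singular Fano $3$-folds of Picard number $2$ with two elementary contractions of fiber type.

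Once $Y$ is smooth, the Mori--Mukai classification produces a short finite list of smooth Fano $3$-folds with $\rho_Y=2$ and two fiber-type contractions; keeping only those with $-K_Y^3>24$ leaves exactly the three families $(i),(ii),(iii)$. All three are rational: $(i)$ trivially; $(ii)$ via its description as the incidence variety $\{(p,\ell)\in\pr^2\times(\pr^2)^\vee : p\in\ell\}$, which is a $\pr^1$-bundle over $\pr^2$ in two ways; and $(iii)$ via either projection to $\pr^2$, which is a conic bundle with rational generic fiber. The main obstacle is the smoothness reduction, which requires a delicate case analysis of canonical Gorenstein singularities compatible with two fiber-type contractions, and is the reason we cite~\cite{ou} rather than doing it from scratch.
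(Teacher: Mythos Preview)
Your proposal and the paper's proof both ultimately rest on Ou's classification~\cite{ou}, but the paper uses it more directly: it cites \cite[Th.~1.2]{ou}, which already classifies all (possibly singular) Fano $3$-folds with $\rho=2$, isolated locally factorial canonical singularities, and two fiber-type contractions into seven families; filtering by $-K_Y^3>24$ leaves exactly $(i)$--$(iii)$, and smoothness and rationality are read off from the resulting descriptions (for $(iii)$ the paper observes that the first projection is a $\pr^1$-bundle). Your route---prove smoothness first, then invoke Mori--Mukai---is a detour that ends up citing the same source for the hard step.

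One point to flag: the tools you mention for the smoothness reduction do not apply here. Th.~\ref{2dimfibers} is stated for a $K$-negative special contraction from a smooth $4$-fold to a $3$-fold, not for a $3$-fold fibered over a surface or curve; and Rem.~\ref{conto} concerns singularities of the total space of a conic bundle over a \emph{smooth} base, which does not by itself give you smoothness of $Y$ from properties of the discriminant in this setting. Since you concede that the smoothness step ``is the reason we cite~\cite{ou} rather than doing it from scratch,'' these references are decorative rather than load-bearing---but they should be dropped, and the argument streamlined to the paper's version: cite Ou's list, compute degrees, done.
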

\begin{proof}
 Without the assumption on $-K_Y^3$, the possible $Y$'s are classified in
  \cite[Th.~1.2]{ou}; there are $7$ families, and they are all degenerations of smooth Fano $3$-folds. By checking the anticanonical degree we get the statement. Note that in case $(iii)$, the first projection from $\pr^2\times\pr^2$ realizes $Y$ as a $\pr^1$-bundle over $\pr^2$, thus $Y$ is smooth and rational.   
  \end{proof}
\begin{proposition}\label{sing}
    Let $Y$ be a weak Fano $3$-fold, not Fano, with at most isolated, canonical, and locally factorial singularities. Assume also that $\rho_Y=2$,  that $Y$ has two distinct elementary rational contractions of fiber type, and that the anticanonical map of $Y$ is small with exceptional locus contained in $Y_{\reg}$.  Then one of the following holds:
    \begin{enumerate}[$(i)$]
    \item $Y\cong\pr_{\pr^1}(\ol\oplus\ol(1)^{\oplus 2})$ and $-K_Y^3=54$;
  \item up to flops, $Y$ is as in \cite[Th.~3.6(1)]{jahnkepeternell} and $-K_Y^3=40$;
       \item $-K_Y^3\leq 32$.
       \end{enumerate}
     \end{proposition}
     Before proving this proposition, we need some auxiliary results. The techniques used below are standard in the study of weak Fano threefolds, see for instance \cite{fanoEMS,jahnkepeternell,JPRII,prok,prokratII}.

Let $Y$ be as in Prop.~\ref{sing}, and set $Y_1:=Y$.
  Since $\rho_{Y_1}=2$ and $-K_{Y_1}$ is nef and big but not ample, $Y_1$ has two elementary contractions, one $g_1\colon Y_1\to W_1$ which is $K$-negative, and another one which is $K$-trivial.  This last one is the anticanonical map $\ph\colon Y_1\to Z$, so it is small by assumption, with exceptional locus contained in $(Y_1)_{\reg}$; let $\xi\colon Y_1\dasharrow Y_2$ be its flop. Then $Y_2$  is still weak Fano, and since terminal flops preserve the singularity type (see \cite[Th.~6.15]{kollarmori}), also the indeterminacy locus of $\xi^{-1}$ is contained in $(Y_2)_{\reg}$, and $Y_2$ has the same singularities as $Y_1$; note that $-K_{Y_1}^3=-K_{Y_2}^3$. The second elementary contraction $g_2\colon Y_2\to W_2$ is $K$-negative.
  $$\xymatrix{{Y_1}\ar[d]_{g_1}\ar@{-->}[r]^>>>>>{\xi}&{Y_2}\ar[d]^{g_2}\\
{W_1}&{W_2}
}$$

We note that $g_i$ cannot be small by Th.~\ref{gdn}, therefore $\Mov(Y_1)=\Nef(Y_1)\cup \xi^*(\Nef(Y_2))$.
Since $Y_1$ has two elementary rational contractions of fiber type, these must be $g_1$ and $g_2\circ\xi$, namely $g_1$ and $g_2$ must be of fiber type.

Let $i\in\{1,2\}$. If $\dim W_i=1$, then $W_i\cong\pr^1$ and $g_i$ is a fibration in del Pezzo surfaces.
If  $\dim W_i=2$, then the surface $W_i$ is smooth (see \cite[Lemma 5.5]{ou}), and it is  rational  with $\rho_{W_i}=1$, thus $W_i\cong\pr^2$; in this case $g_i$ is a conic bundle (see Prop.~\ref{conicsing}).

Let $D_i\subset Y_i$ be a general fiber of $g_i$ if $W_i\cong\pr^1$, or the pullback of a general line if $W_i\cong\pr^2$. Let also 
 $\w{D}_2\subset Y_1$ be the transform of $D_2\subset Y_2$.
  Then $[D_1],[\w{D}_2]\in\Nu(Y_1)$ generate the cone of effective divisors, and since $Y_1$ is locally factorial, we have
  \stepcounter{thm}
  \begin{equation}\label{freccia}
    m(-K_{Y_1})=a_1D_1+a_2\w{D}_2
    \end{equation}
with $m,a_1,a_2$ positive integers such that $\gcd(m,a_1,a_2)=1$.
\begin{lemma}\label{P2bundle}
If $W_1\cong\pr^1$ and $D_1\cong\pr^2$, then 
$Y_1\cong Y_2\cong\pr_{\pr^1}(\ol\oplus\ol(1)^{\oplus 2})$ and $-K_Y^3=54$.
\end{lemma}
\begin{proof}
We have $Y_1=\pr_{\pr^1}(\ma{F})$ where $\ma{F}$ is a rank $3$ vector bundle on $\pr^1$, because   $Y_1$ is locally factorial (see \cite[Rem.~3.2]{hoeringnovelli} and references therein); moreover $\ma{F}$ is decomposable. Since $Y_1$ is weak Fano, not Fano, and it has small anticanonical map, the only possibility is $Y_1\cong\pr_{\pr^1}(\ol\oplus\ol(1)^{\oplus 2})$. Then $-K_{Y_1}^3=54$ and $Y_2\cong Y_1$, and we have the statement.
\end{proof}  
\begin{lemma}\label{P1bundle}
  If $W_1\cong\pr^2$ and $g_1$ is smooth, then one of the following holds:
  \begin{enumerate}[$(i)$]
  \item $Y$ is as in \cite[Th.~3.6(1)]{jahnkepeternell} and $-K_Y^3=40$;
       \item $-K_Y^3\leq 24$.
       \end{enumerate}
     \end{lemma}
     \begin{proof}
We have $Y=\pr_{\pr^2}(\ma{F})$ with $\ma{F}$ a rank $2$ vector bundle on $\pr^2$. The possible $\ma{F}$ such that $Y$ is weak Fano have been classified in \cite{langer} when $\ma{F}$ has odd degree, and \cite{yasutake} when $\ma{F}$ has even degree.

When $\ma{F}$ has even degree, we can assume that $c_1(\ma{F})=0$, and since $Y$ is not Fano, by \cite[Prop.~2.10 and 2.11]{yasutake}  we have $c_2(\ma{F})\in\{4,5,6\}$ and $-K_Y^3=54-8c_2(\ma{F})\in\{6,14,22\}$.

When $\ma{F}$ has odd degree, we can assume that $c_1(\ma{F})=-1$, then $-K_Y^3=8(7-c_2(\ma{F}))$. The possibilities for $\ma{F}$ are given in \cite[Th.~3.2]{langer}; moreover
in this case 
$Y=\pr_{\pr^2}(\ma{F})$ has automatically index\footnote{The index of a weak Fano variety is the divisibility of $-K$ in the Picard group, as for Fano varieties.} 2; when the anticanonical map is small 
these threefolds have been classified also in \cite[Th.~3.6]{jahnkepeternell}.

Let us assume that $-K_Y^3> 24$, equivalently that  $c_2(\ma{F})<4$.
Since $Y$ is not Fano, by \cite[Th.~3.2]{langer} we have
$c_2(\ma{F})\in\{-2,1,2,3\}$.
Moreover, since the anticanonical map is small, we have 
$c_2(\ma{F})\in\{2,3\}$ by \cite[Th.~3.6]{jahnkepeternell}.
If   $c_2(\ma{F})=2$ then $-K_Y^3=40$ and $Y$ is as in \cite[Th.~3.2.5]{langer}, which is the same as \cite[Th.~3.6(1)]{jahnkepeternell} (and also \cite[2.13(iv)]{JPRII}).  Finally, if
$c_2(\ma{F})=3$, $Y$ should be as in \cite[Th.~3.6(2)]{jahnkepeternell} but in this case  $g_2\colon Y_2\to W_2$ is birational, which is  excluded by our assumptions.
\end{proof}
\begin{lemma}\label{conicdP}
  Suppose that $g_i$ contracts some irreducible curve $\Gamma_i\subset Y_i$ with $-K_{Y_i}\cdot\Gamma_i=1$ for $i=1,2$.
  Then $-K_Y^3\leq 24$.
  \end{lemma}
\begin{proof}
  Intersecting \eqref{freccia} with $\Gamma_1$ we get $m=a_2b$ with $b:=\w{D}_2\cdot\Gamma_1$, thus $a_2(b(-K_{Y_1})-\w{D}_2)\sim a_1 D_1$. This implies that $\ol_{Y_1}(b(-K_{Y_1})-\w{D}_2)\in (g_1)^*\Pic(W_1)=\Z \ol_{Y_1}(D_1)$ (see \cite[Th.~3.7(4)]{kollarmori}), therefore $a_2|a_1$. Working in $Y_2$ and intersecting with $\Gamma_2$, we get $a_1|a_2$ and hence $a_1=a_2$. On the other hand $a_2|m$ and $\gcd(a_2,m)=1$, and we conclude that $a_1=a_2=1$ and $m(-K_{Y_1})=D_1+\w{D}_2$, so that:
  \stepcounter{thm}
  \begin{equation}\label{verylast}
  -K_{Y_1}^3\leq (-K_{Y_1})^2\cdot (-mK_{Y_1})=
  (-K_{Y_1})^2\cdot D_1+(-K_{Y_1})^2\cdot\w{D}_2.\end{equation}
  
Let $i\in\{1,2\}$. If $W_i\cong\pr^1$, then $D_i$ is a del Pezzo surface, thus $(-K_{Y_i})^2\cdot D_i=(-K_{D_i})^2\leq 9$.
If instead $W_i\cong\pr^2$, then $g_i$ is a conic bundle; the surface $D_i$ is  smooth, rational, and has a conic bundle over $\pr^1$ with $d_i$ singular fibers, where $d_i$ is the degree of the discriminant divisor of $g_i$. An elementary computation gives
$(-K_{Y_i})^2\cdot D_i=12-d_i\leq 12$.

Finally we have $(-K_{Y_1})^2\cdot \w{D}_2=(-K_{Y_2})^2\cdot D_2$; this is a well-known fact for flops, that can be seen as follows. Consider  a locally factorial resolution of $\xi\colon Y_1\dasharrow Y_2$:
$$\xymatrix{&{\wi{Y}}\ar[dl]_{\alpha_1}\ar[dr]^{\alpha_2}&\\
Y_1\ar@{-->}[rr]^{\xi}&&{Y_2}
  }$$
  We have $\alpha_1^*K_{Y_1}=\alpha_2^*K_{Y_2}$; recall that $\ph\colon Y_1\to Z$ is the anticanonical map. If $E\subset \wi{Y}$ is an irreducible exceptional divisor of $\alpha_1$, we have $(\ph\circ\alpha_1)(E)=\{\pt\}$, and $\alpha_1^*K_{Y_1}=(\ph\circ\alpha_1)^*K_Z$, thus $(\alpha_1^*K_{Y_1})_{|E}\equiv 0$. This implies that, for every pair of divisors $B,B'$ in $Y_1$, if $\w{B},\w{B}'\subset Y_2$ are their transforms, we have  $K_{Y_1}\cdot B\cdot B'=  K_{Y_2}\cdot \w{B}\cdot \w{B}'$.

From \eqref{verylast}
 we conclude that $-K_{Y_1}^3\leq 24$.
\end{proof}
\begin{lemma}\label{quadric}
  Suppose that $W_1\cong \pr^1$, $D_1\cong\pr^1\times\pr^1$,  $D_2\not\cong\pr^2$, and that if $W_2\cong\pr^2$, then $g_2$ is not smooth.
  Then $-K_Y^3\leq 32$.
\end{lemma}
\begin{proof}
  We proceed similarly to the proof of Lemma \ref{conicdP}. Let $\Gamma_1\subset D_1$ be
a line in the quadric, and note that $(-K_{D_1})^2=8$. Suppose first that there exists an irreducible curve $\Gamma_2\subset Y_2$ contracted by $g_2$ such that $-K_{Y_2}\cdot\Gamma_2=1$; as before we have $(-K_{Y_2})^2\cdot D_2\leq 12$.
Intersecting \eqref{freccia} with $\Gamma_1$ we get $2m=a_2b$ with $b:=\w{D}_2\cdot\Gamma_1$, thus $a_2(b(-K_{Y_1})-2\w{D}_2)=2a_1D_1$ and $a_2|2a_1$;
intersecting with $\Gamma_2$ we get $a_1|a_2$. This implies that either $a_1=a_2$ or $2a_1=a_2$. Then using that $a_1|m$ and $\gcd(a_1,a_2,m)=1$ we deduce that $a_1=1$,
$a_2\leq 2$, and $-K_Y^3\leq 8a_1+12a_2\leq 32$.

Suppose now $g_2$ does not contract curves of anticanonical degree one. Then, by our assumptions, it must be $W_2\cong\pr^1$ and $D_2\cong\pr^1\times\pr^1$. By taking $\Gamma_2$ a line in the quadric $D_2$ and proceeding as above, we get $a_i\leq 2$ for $i=1,2$, $(-K_{D_2})^2=8$, and $-K_Y^3\leq 8a_1+8a_2\leq 32$.
\end{proof}
\begin{proof}[Proof of Prop.\ \ref{sing}]
  If for some $i\in\{1,2\}$ we have $W_i\cong\pr^1$ and $D_i\cong\pr^2$, the statement follows from Lemma \ref{P2bundle}. Similarly, if for some $i\in\{1,2\}$ we have $W_i\cong\pr^2$ and $g_i$ is smooth,   the statement follows from Lemma \ref{P1bundle}.  Therefore we can assume that, for $i=1,2$, either $W_i\cong\pr^2$ and $g_i$ is singular, or  $W_i\cong\pr^1$ and $D_i\not\cong\pr^2$.

  Now if  for $i=1,2$ $g_i$ contracts some irreducible curve of anticanonical degree one, we apply Lemma \ref{conicdP}. Otherwise, up to switching $Y_1$ and $Y_2$ we can assume that $W_1\cong\pr^1$ and $D_1\cong\pr^1\times\pr^1$, and we apply Lemma \ref{quadric}.
\end{proof}  
 \begin{lemma}\label{quadricsm}
  In the setting of Lemma \ref{quadric}, suppose moreover that $Y$ is smooth.
   Then one of the following holds:
  \begin{enumerate}[$(i)$]
  \item $Y$ is as in \cite[Th.~3.5(3)]{jahnkepeternell} and $-K_Y^3=32$;
       \item $-K_Y^3\leq 24$.
       \end{enumerate}
\end{lemma}
\begin{proof}
  Smooth weak Fano threefolds with a del Pezzo fibration, and having small anticanonical map, have been classified in \cite{JPRII} and \cite{takeuchi}. In our setting $Y_1$  has a quadric fibration and $g_2$ is either a singular conic bundle, or another del Pezzo fibration. Then, by \cite[Th.~2.3]{takeuchi}, if  $-K_Y^3> 24$ the only possibility for $Y_1$ is \cite[(2.3.4)]{takeuchi}; then $Y_1$ has index two, and it is the same as \cite[Th.~3.5(3)]{jahnkepeternell} (and also  \cite[2.13(1.iii)]{JPRII}).
\end{proof}
\begin{proposition}\label{smoothwfrho2}
Let $Y$ be a smooth weak Fano $3$-fold with $\rho_Y=2$ and with two distinct elementary rational contractions of fiber type. Assume moreover that $Y$ is not Fano, and that the anticanonical map of $Y$ is small. 
Then
 one of the following holds:
  \begin{enumerate}[$(i)$]
  \item
    $Y\cong\pr_{\pr^1}(\ol\oplus\ol(1)^{\oplus 2})$ and $-K_Y^3=54$;
  \item up to flops $Y$ is as in \cite[Th.~3.6(1)]{jahnkepeternell} and $-K_Y^3=40$;
     \item $Y$ is as in  \cite[Th.~3.5(3)]{jahnkepeternell} and $-K_Y^3=32$;
       \item $-K_Y^3\leq 24$.
       \end{enumerate}
     \end{proposition}
     \begin{proof}
We proceed as in the proof of Prop.~\ref{sing}, just applying Lemma \ref{quadricsm} instead of Lemma \ref{quadric}.
     \end{proof}  
\section{The  case of relative Picard number two}\label{relrho2}
\noindent In this section we study Fano $4$-folds having a special rational contraction onto a $3$-dimensional target, with relative Picard number two, and we prove Th.~\ref{Bintro} from the Introduction; for the reader's convenience, we report the statement here.
\begin{thm}\label{B}
  Let $X$ be a smooth Fano $4$-fold that not isomorphic to a product of surfaces, and having a special rational contraction 
$f_{\s X}\colon X\dasharrow Y$ with $\dim Y=3$ and $\rho_X-\rho_Y=2$.  
Then $\rho_X\leq 9$.

Moreover, if $\rho_X\geq 7$, then $X$ is the blow-up of $W$ along a normal surface $S$,
where $W$ is the Fano model of the blow-up of $\pr^4$ at $\rho_X-2$  points (see Ex.~\ref{Fanomodel}), and $S\subset W$ is the transform of a surface $A\subset\pr^4$ containing the blown-up points, as follows:
\begin{enumerate}[$(i)$]
\item $A$ is a cubic scroll;
\item $A$ is a cone over a twisted cubic;
\item $A$ is a sextic (singular) K3 surface, with rational double points of type $A_1$ or $A_2$ at the blown-up points, and  $\rho_X=7$.
\end{enumerate}
In cases $(i)$ and $(iii)$ the surface $S$ is smooth, while in $(ii)$ $S$ has one singular point, given by the vertex of the cone.
Moreover $Y$ is smooth, and up to flops $Y\cong\Bl_{\pts}\pr^3$.
\end{thm}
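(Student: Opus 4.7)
By Lemma \ref{Kneg} I factor $f_{\s X}$ as $X\stackrel{\xi}{\dasharrow}\w{X}\stackrel{f}{\to}Y$ with $\xi$ a SQM and $f$ a $K$-negative special contraction. Since $\rho_X-\rho_Y=2$, Theorem \ref{special} forces $m=1$: there is a unique prime divisor $B\subset Y$ with reducible pullback $f^*B=E+\wi{E}$, together with numerically distinct fiber components $e,\hat{e}$ of anticanonical degree one, satisfying $E\cdot e<0$ and $\wi{E}\cdot\hat{e}<0$. The divisors $E,\wi{E}$ are swept out by curves of anticanonical degree one, and Lemma \ref{SQMFano}$(c)$ ensures that their transforms in $X$ are again prime divisors covered by such curves, so both are fixed prime divisors of $X$. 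Theorem--Definition \ref{fixed}, applicable because $\rho_X\geq 7$, then restricts their types to $(3,0)^{\sm}$, $(3,0)^Q$, $(3,1)^{\sm}$, or $(3,2)$.

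\medskip

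The next step is to show that, up to swapping $E$ and $\wi{E}$, the divisor $\wi{E}$ is of type $(3,2)$. Using the adjacency results for pairs of fixed prime divisors (Lemma \ref{2dimfaces} together with the resulting diagrams in Lemmas \ref{casei}, \ref{casesiiandiii}), I eliminate the possibility that both divisors are of types different from $(3,2)$, essentially by comparing the two-dimensional face of $\Eff(X)$ spanned by $[E],[\wi{E}]$ with the numerical structure of the fiber $f^*B$. Once $\wi{E}$ is of type $(3,2)$, its divisorial contraction yields a blow-up $\alpha\colon X\to W$ of a smooth Fano $4$-fold $W$ along a normal surface $S\subset W$, with $\rho_W=\rho_X-1$. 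The diagrams of Lemmas \ref{casei}, \ref{casesiiandiii} further show that $E$ descends, possibly after an SQM, to a fixed prime divisor on $W$, and that $f$ is induced by an elementary rational contraction $g\colon W\dasharrow Y$. The three remaining possible types of $E$ correspond to the three cases $(i),(ii),(iii)$ of the statement: in each case, Lemma \ref{tautological} combined with the explicit structure of the divisorial contraction of $E$ determines the isomorphism class of the transform $A\subset\pr^4$ of $S$ under the realization of $W$ as the Fano model of a blow-up of $\pr^4$ (cubic scroll, cone over a twisted cubic, or sextic K3 respectively).

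\medskip

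To identify $Y$ and prove $\rho_X\leq 9$, I analyze $g\colon W\dasharrow Y$ using the machinery of Section \ref{weakfano}. By the analysis of elementary rational contractions from Fano $4$-folds onto weak Fano $3$-folds developed in \cite{eff,fibrations}, $Y$ is weak Fano with isolated, locally factorial, canonical singularities, and up to flops there is a sequence of blow-ups $Y\to Y_0$ at smooth points, with $Y_0$ weak Fano of Picard number at most two. The inequality $0<-K_Y^3=-K_{Y_0}^3-8r$, together with the upper bounds on $-K_{Y_0}^3$ supplied by Lemma \ref{bound}, Lemma \ref{ou}, Lemma \ref{conti}, and Remark \ref{smoothwfrho2}, forces $r\leq 7$, hence $\rho_Y\leq 7$ and $\rho_X\leq 9$. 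To promote $Y_0$ to $\pr^3$, I use Theorem \ref{2dimfibers} to show that $Y$ is smooth after flops: the $\pr^2\bullet\pr^2$-fibers of Example \ref{artinmum} are incompatible with the geometry of our setting (they would obstruct rationality of $Y$, whereas $Y$ inherits a rational structure from $W$). An elementary matching of the remaining candidates for $Y_0$ against the fiber-type structure of $g$ singles out $Y_0\cong\pr^3$, so $Y\cong\Bl_{r\,\pts}\pr^3$ up to flops, and tracking the blown-up points through the diagram identifies $W$ as the Fano model of $\Bl_{r\,\pts}\pr^4$.

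\medskip

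The main obstacle is case $(iii)$, where $E$ has type $(3,0)^Q$: here one must both identify $S$ as the transform of a sextic K3 surface with prescribed rational double points contained in a smooth quadric, and establish the sharp bound $\rho_X=7$. The isomorphism class of $A$ as a sextic K3 in a smooth quadric emerges from the $(3,0)^Q$ structure of $E$ (whose contraction produces a quadric threefold collapsed to a point on $W$) combined with the blow-up description of $W\to\pr^4$; the bound $\rho_X=7$ then follows by combining the ampleness criterion of Lemma \ref{tautological} for $\ol_X(-K_X)_{|E}$ with the anticanonical-degree constraints on $Y_0\cong\pr^3$, which together rule out more than five blown-up points on $A$ and hence more blown-up points on $W$ than are compatible with the K3 structure.
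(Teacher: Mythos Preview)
Your outline has the right shape---factor $f_{\s X}$, show $Y$ is weak Fano, blow $Y$ down to a $Y_0$ of small Picard number, and identify $W$---but two of your key mechanisms are misidentified, and the bound argument as stated is off by one.

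\medskip

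\textbf{Both $E$ and $\wi{E}$ are of type $(3,2)$.} The paper cites \cite[Lemma~4.10]{fibrations} to conclude immediately that the two components $E_1,E_2$ of $f^*B$ are fixed prime divisors of type $(3,2)$; this is what allows the contraction $\alpha\colon X\to W$ to be a genuine blow-up of a surface and $W$ to be Fano (Lemma~\ref{rays}). The three cases $(i),(ii),(iii)$ do \emph{not} arise from different possible types of $\wi{E}$. They arise much later, from the dichotomy $-K_Y=2B$ versus $-K_Y=B$ (Lemma~\ref{aeb}), which in turn controls whether the divisors $D_i:=f^*G_i$ (pullbacks of the exceptional divisors of $k\colon Y\to Y_0$) are of type $(3,1)^{\sm}$ or $(3,0)^Q$. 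It is the pairs $(D_i,E_j)$, not $(E,\wi{E})$, to which Lemmas~\ref{2dimfaces}--\ref{casesiiandiii} are applied (see Lemma~\ref{bigdiagram}). So your proposed source of the trichotomy, and your proposed use of the adjacency lemmas, are both misplaced.

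\medskip

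\textbf{The degree bound alone gives only $\rho_X\leq 10$.} With $\rho_X-\rho_Y=2$, the inequality $0<-K_Y^3=-K_{Y_0}^3-8r$ and $-K_{Y_0}^3\leq 64$ yield $r\leq 7$, hence $\rho_Y\leq 8$ and $\rho_X\leq 10$ (your ``$\rho_Y\leq 7$'' is an arithmetic slip). The paper obtains the sharp $\rho_X\leq 9$ only \emph{after} proving $Y_0\cong\pr^3$ and $W_0\cong\Bl_{q_0}\pr^4$, so that $W$ is the Fano model of $\Bl_{r+1\,\pts}\pr^4$; then one uses that this Fano model with $\rho_W=9$ admits no non-trivial fiber-type contraction, while $W\to Z$ is one (Lemma~\ref{bound9}). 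Your argument as written does not reach $9$.

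\medskip

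\textbf{``Elementary matching'' for $Y_0\cong\pr^3$ hides the hardest part.} Ruling out the other five candidates in Lemma~\ref{table} is the core of the section: it requires the factorization $f_0=\pi_0\circ\alpha_0$, showing $\pi_0$ is a $\pr^1$-bundle off finitely many points, and then delicate parity arguments on $L\cdot C$ for degree-$2$ and degree-$4$ rational curves through the blown-up points (Lemmas~\ref{degree2}--\ref{4}). Likewise the bound $\rho_X=7$ in case $(iii)$ is not an ampleness computation via Lemma~\ref{tautological}; it is a curve-counting argument on the sextic K3 using a rational normal quartic through six of the $q_i$ (Lemma~\ref{final}). These steps need to be made explicit; as written, the proposal does not contain them.
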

\begin{prg}[Outline of the proof]\label{overview}
The proof of Th.~\ref{B} is quite long and articulated, and it will take the whole section; let us outline the strategy. We assume that $\rho_X\geq 7$, so that $\rho_Y\geq 5$.

We consider a SQM $\xi\colon X\dasharrow\w{X}$ such that $f:=f_{\s X}\circ\xi^{-1}\colon\w{X}\to Y$ is regular and $K$-negative.
There is a unique prime divisor $B\subset Y$ such that $f^*(B)=E_1+E_2$ is reducible, and it is a connected component of the discriminant divisor of $f$ (\ref{discrf}).

We show that $f$ factors as $\w{X}\stackrel{\tilde\alpha}{\to} \w{W}\stackrel{\pi}{\to}Y$, where $\tilde{\alpha}$ is a divisorial elementary contraction of type $(3,2)$ with exceptional divisor $E_1$, and that there is a divisorial elementary contraction $\alpha\colon X\to W$ of type $(3,2)$ with exceptional divisor the transform of $E_1$, so that there is a SQM $W\dasharrow\w{W}$ (see diagram \eqref{diagramW_0} below).

The first part  of the proof follows the same lines as \cite{eff,fibrations}: we show that $Y$ is weak Fano  and that, up to flops, there is a blow-up $k\colon Y\to Y_0$ of $r$ smooth, distinct points $p_1,\dotsc,p_r\in Y_0$, where $Y_0$ is
a weak Fano $3$-fold $Y_0$ with $\rho_{Y_0}\leq 2$  (\ref{weakFano}, Lemma \ref{blowups}).
Then we show, in sequence:
\begin{enumerate}[$\bullet$]
\item the anticanonical map $\ph\colon Y\to Z$ of $Y$ is small, and $\rho_Z=1$ (\ref{properties}, Lemma \ref{rhoZ});
\item $-K_Y=\lambda B$ for some $\lambda\in\Q_{>0}$ (Lemma \ref{index});
\item the composition $\ph\circ  f_{\s X}\colon X\to Z$ is regular and factors through $\alpha$, so it gives a contraction of fiber type $W\to Z$ (\ref{regular} - see diagram \eqref{diagramW_0} below); 
  \item $B$ is the discriminant of $f$, and $\Delta_{\intr}=\emptyset$ (Lemma \ref{discriminant}, see \ref{setting} for $\Delta_{\intr}$); 
\item $Y$ and $Y_0$ are nodal (\ref{nodes}).
\end{enumerate}

Finally we show that $Y$ and $Y_0$ are smooth and rational, and thanks to the constraints given by our setting,  that (up to flops) there are only six possibilities for $Y_0$ (Lemma \ref{table}). We mention here that we use the rationality of $Y$ to deduce smoothness, because the fibers of our special contraction over the nodes are unions of two copies of $\pr^2$ intersecting transversally at one point, and the lines in the two $\pr^2$'s are numerically equivalent; this gives an obstruction to rationality (Lemma \ref{sm}).

Then we show that $f$ descends to a special contraction $f_0\colon X_0\to Y_0$ with $\rho_{X_0}-\rho_{Y_0}=2$,
where $X\dasharrow X_0$ is a birational (rational) contraction, and we can describe it using the classification of fixed prime divisors of $X$ (Lemma \ref{Xr}).
Also the factorization of $f$ descends to a
factorization of $f_0$ as $X_0\stackrel{\alpha_0}{\to} W_0\stackrel{\pi_0}{\to} Y_0$, where $\alpha_0$ is a divisorial elementary contraction of type $(3,2)$ with exceptional divisor the transform of $E_1$ (\ref{facto}). Moreover $\pi_0$
is a $\pr^1$-bundle outside possibly finitely many $2$-dimensional fibers, and the fibers $\pi_0^{-1}(p_i)$ over the blown-up points are isomorphic to $\pr^1$ (Lemma \ref{smooth}).

Using the properties of fixed prime divisors in $X$, we show that the birational map $\w{W}\dasharrow W_0$ factors as a SQM $\w{W}\dasharrow\wi{W}$ followed by a divisorial contraction $\sigma_{\s W}\colon \wi{W}\to W_0$, where for each $i=1,\dotsc,r$ $\sigma_{\s W}$ blows-up either the fiber $\pi_0^{-1}(p_i)$, or a point in that fiber (Lemma \ref{bigdiagram}).
\stepcounter{thm}
\begin{equation}
\label{diagramW_0}
\xymatrix{X\ar@{-->}[r]^{\xi}\ar[d]_{\alpha}&{\w{X}}\ar@/_1pc/[dd]_<<<<<<<{f}\ar[d]^{\tilde{\alpha}}\ar@{-->}[r]&{\wi{X}}\ar[r]^{\sigma}&{X_0}\ar[d]_{\alpha_0}\ar@/^1pc/[dd]^{f_0}\\
 W\ar[d]\ar@{-->}[r]^{\xi_{\s W}} &{\w{W}}\ar[d]^{\pi}\ar@{-->}[r]&{\wi{W}}\ar[r]^{\sigma_{\scriptscriptstyle W}}&{W_0}\ar[d]_{\pi_0}\\
 Z &Y\ar[rr]^k\ar[l]^{\ph}&&{Y_0}
}\end{equation}

Now we use the constraints on the possible anticanonical degrees of curves in $W_0$ and $\wi{W}$, together with the $\pr^1$-bundle structure of $\pi_0$ outside finitely many points of $Y_0$, to exclude five out of six possibilities for $Y_0$, and conclude that $Y_0\cong\pr^3$ and
$Y\cong\Bl_{r\,\pts}\pr^3$ (Lemma \ref{1} -- \ref{last}).

We also show that $W_0$ is Fano (\ref{last}) and that, up to contracting $E_2$ instead of $E_1$ in the factorizations of $f$ and $f_0$, $\sigma_{\s W}$ blows-up one point in each fiber $\pi_0^{-1}(p_i)$ for $i=1,\dotsc,r$ (\ref{switch}),
and $\pi_0$ is a $\pr^1$-bundle over $\pr^3$ (Lemma \ref{psi}).

Fano $4$-folds with a $\pr^1$-bundle structure over $\pr^3$ are classified, and in our setting the only possibility is
$W_0\cong\Bl_{\pt}\pr^4$; this implies that $\wi{W}=\Bl_{r+1\,\pts}\pr^4$ and that $W$ is the Fano model of $ \Bl_{r+1\,\pts}\pr^4$ (Lemma \ref{Wr}).
This, together with the fact that there is a non-trivial contraction of fiber type $W\to Z$, implies that $\rho_W\leq 8$ and hence $\rho_X\leq 9$ (Lemma \ref{bound9}), so we get the bound on $\rho_X$.

Finally we identify the surface $S\subset W$ which is blown-up by $\alpha\colon X\to W$ as the transform of a suitable surface $A\subset\pr^4$, and we get the three possibilities given in the statement.
\end{prg}
\begin{proof}[Proof of Th.~\ref{B}]
\begin{prg}\label{logFano}
  We follow \cite[proof of Th.~6.1]{fibrations}.
  We assume that $\rho_X\geq 7$, so that $\rho_Y\geq 5$. Since $X$ is not a product of surfaces,
  Th.~\ref{deltageq4} and \ref{delta=3}  imply that $X$ has Leschetz defect $\delta_X\leq 2$.
  
Consider a $K$-negative resolution of $f_{\scriptscriptstyle X}$ (see Lemma \ref{Kneg}):
$$\xymatrix{
  X\ar@{-->}[r]_{\xi}\ar@{-->}@/^1pc/[rr]^{f_{\scriptscriptstyle X}}&{\w{X}}\ar[r]_{f}&Y.
}$$
Then $Y$ can have at most isolated, locally factorial, canonical singularities, contained in the images of the $2$-dimensional fibers of $f$ (Th.~\ref{special}$(a)$). Moreover $Y$ is log Fano, hence $-K_Y$ is big, and $-K_Y\cdot\NE(g)>0$ for every elementary contraction of fiber type $g\colon Y\to Y_0$ (see \cite[Th.~11.4.19]{lazII}).
\end{prg}
\begin{prg}\label{discrf}
By Th.~\ref{special} and \cite[Lemma 4.10]{fibrations}, there exists a unique prime divisor $B\subset Y$ such that ${f}^*B$ is reducible, and ${f}^*B=E_1+E_2$ with $E_i\subset\w{X}$ fixed prime divisors of type $(3,2)$ (see Th.-Def.~\ref{fixed}).

As in the proof of Lemma \ref{rhodelta} we see that
$(E_1\cup E_2)\cap\ell =\emptyset$ for every exceptional line $\ell\subset\w{X}$, therefore $E_1\cup E_2\subset\dom(\xi^{-1})$. We denote by $E_1',E_2'\subset X$ the transforms of $E_1,E_2\subset\w{X}$, so that $E_1'\cup E_2'\subset\dom(\xi)$.

We also note that $B$ is a connected component of the discriminant of $f$, and that $B$ is smooth outside (possibly) the images of the $2$-dimensional fibers of ${f}$, by Lemma \ref{conicbdl}.
\end{prg}
\begin{lemma}\label{rays}
  The cone $\NE(f)$ has two extremal rays, both of type $(3,2)$, with exceptional divisors $E_1$ and $E_2$. Let $\tilde\alpha\colon\w{X}\to\w{W}$ be the elementary contraction with exceptional divisor $E_1$. We have a diagram:
  \stepcounter{thm}
  \begin{equation}\label{diagramrays}
  \xymatrix{
    X\ar@{-->}[r]^{\xi}\ar[d]_{\alpha} & {\w{X}} \ar[d]_{\tilde{\alpha}}\ar[dr]^f&\\
    W\ar@{-->}[r]^{\xi_{\s W}}&{\w{W}}\ar[r]^{\pi}&Y
  }
  \end{equation}
  where $\alpha\colon X\to W$ is a divisorial elementary contraction of type $(3,2)$ with $\Exc(\alpha)=E_1'$, and $\xi_{\s W}\colon W\dasharrow\w{W}$ is a SQM.
Finally $W$ and $\w{W}$ are locally factorial and have at most nodes, at the images of some $2$-dimensional fibers of $\alpha$ and $\tilde{\alpha}$; moreover $W$ is Fano.  
\end{lemma}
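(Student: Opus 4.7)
The plan is to first identify the two extremal rays of $\NE(f)$ via the two components of the reducible fiber over $B$, then realize the contraction on the side of $X$ through Theorem-Definition \ref{fixed}, and finally derive the structural properties of $W$ and $\w{W}$ from the classification of $K$-negative divisorial contractions of type $(3,2)$.

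Since $\xi$ is a SQM, the relative Picard number of $f$ equals $\rho_X-\rho_Y=2$, so $\NE(f)$ is a $2$-dimensional cone. From $f^*B=E_1+E_2$ and Theorem \ref{special}(d) I would obtain $E_2\cdot e_1=-E_1\cdot e_1>0$ and $E_1\cdot e_2=-E_2\cdot e_2>0$, so $[e_1]$ and $[e_2]$ are linearly independent in $\N(\w{X}/Y)$ and span the two extremal rays $R_1,R_2$ of $\NE(f)$, both $K$-negative. I would rule out that $R_i$ is of fiber type: otherwise a general fiber, being a positive multiple of $[e_i]$, would have negative intersection with $E_i$ and thus be forced inside $E_i$, which is impossible since such fibers cover $\w{X}$. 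I would rule out smallness via Theorem \ref{kawsm}, which says that the exceptional locus of a $K$-negative small elementary contraction from a smooth $4$-fold is a disjoint union of exceptional planes; but $E_i$ is a $3$-dimensional divisor covered by deformations of $e_i$ and hence contained in that exceptional locus. Therefore both contractions are divisorial, with exceptional divisors $E_1$ and $E_2$. To see that $\tilde\alpha$ is of type $(3,2)$, I would factor $f=\pi\circ\tilde\alpha$: over a general $p\in B$ the curve $e_2\subset f^{-1}(p)$ maps finitely into $\pi^{-1}(p)$ while $e_1$ collapses to a point, so $\tilde\alpha(E_1)$ maps birationally onto $B$ and has dimension $2$.

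For the diagram, the transform $E_1'\subset X$ of $E_1$ is a fixed prime divisor of type $(3,2)$, so by Theorem-Definition \ref{fixed}(a) and (c), without any preliminary SQM, there is a divisorial elementary contraction $\alpha\colon X\to W$ with $\Exc(\alpha)=E_1'$. Applying Theorem-Definition \ref{fixed}(e) to the SQM $\xi$ and the contraction $\tilde\alpha$ (whose exceptional divisor is precisely the transform of $E_1'$) then yields a SQM $\xi_{\s W}\colon W\dasharrow\w{W}$ fitting into the commutative diagram \eqref{diagramrays}. Since $\alpha$ and $\tilde\alpha$ are both $K$-negative divisorial elementary contractions of type $(3,2)$ from smooth $4$-folds, Theorem \ref{32} shows that $W$ and $\w{W}$ are locally factorial with at most nodes as singularities, located at the images of the finitely many $2$-dimensional fibers of $\alpha$ and $\tilde\alpha$. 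Finally, since $X$ is Fano and $\alpha$ contracts a $K$-negative extremal ray, $W$ is Fano by Mori theory.

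The main expected obstacle is the type $(3,2)$ verification, which requires carefully exploiting the structure of the reducible fibers $e_1+e_2$ over $B$ to see that $\tilde\alpha$ correctly separates the two components and that $\tilde\alpha(E_1)$ has dimension exactly $2$; the remaining points are then essentially bookkeeping on top of Theorem-Definition \ref{fixed} and Theorem \ref{32}.
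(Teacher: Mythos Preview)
Your overall strategy matches the paper's, but there is a genuine gap in the first paragraph and one incorrect justification at the end.

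The gap is the assertion that $[e_1]$ and $[e_2]$ span the two extremal rays of $\NE(f)$. Linear independence in a $2$-dimensional cone does not imply extremality, and both your exclusion of fiber type and your exclusion of smallness rest on this: you use that $E_i$ is covered by curves with class in $R_i$, i.e.\ $E_i\subseteq\Lo(R_i)$, which presupposes $[e_i]\in R_i$. The paper runs the inclusion the other way. It takes $R$ to be an $E_1$-negative extremal ray of $\NE(f)$ (which exists because $E_1\cdot e_1<0$), so that automatically $\Lo(R)\subseteq E_1$; this rules out fiber type immediately. To rule out smallness one then needs that $E_1$ contains no exceptional planes, and this is where the paper invokes the fact (Lemma~\ref{dim32}, via \cite[Rem.~2.17(2)]{blowup}) that a fixed prime divisor of type $(3,2)$ contains no exceptional planes. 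Your dimension argument cannot replace this step, because the inclusion goes $\Lo(R)\subseteq E_1$, not $E_1\subseteq\Lo(R)$.

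Second, your justification that $W$ is Fano is not valid: ``$X$ Fano and $\alpha$ contracts a $K$-negative extremal ray'' does not force the target to be Fano in general. The correct reason is already available to you: it is part of the conclusion of Theorem--Definition~\ref{fixed}(a), which you invoked for the existence of $\alpha$.

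Once these two points are repaired, the rest of your argument agrees with the paper: the type-$(3,2)$ check via $\dim f(E_1)=2$, the construction of $\xi_{\s W}$ through Theorem--Definition~\ref{fixed}(e), and the appeal to Theorem~\ref{32} for the singularities of $W$ and $\w W$ are all correct and are exactly what the paper does.
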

We set $S:=\alpha(E_1')\subset W$. Then $S\subset\dom(\xi_{\s W})$, and with a sligh abuse of notation we still denote by $S$ its transform in $\w{W}$, which is $\tilde\alpha(E_1)$. Note that $\pi(S)=B$.
\begin{proof}
  Since $E_1'\subset X$ is a fixed prime divisor of type $(3,2)$, by Th.-Def.~\ref{fixed} there exists $\alpha\colon X\to W$ with the properties above, and $W$ is Fano. Moreover
  $E_1'$ does not contain exceptional planes by \cite[Rem.~2.17(2)]{blowup}, and neither does $E_1$, as they are contained in the open subsets where $\xi$ is an isomorphism.

  We know by Th.~\ref{special}$(d)$ that $f$ contracts a curve $e_1$ with $E_1\cdot e_1<0$, thus $\NE(f)$ has an $E_1$-negative extremal ray $R$; moreover $R$ is $K$-negative, because $f$ is. Since $\Lo(R)\subseteq E_1$ and $E_1$ does not contain exceptional planes, $R$ cannot be small (see Th.~\ref{kawsm}), thus it is divisorial with $\Lo(R)=E_1$. Since $\dim f(E_1)=2$, also the image of $E_1$ under the contraction of $R$ must be a surface, and 
$R$ is of type $(3,2)$. The same argument holds for $E_2$, and the rest of the statement follows from Th.~\ref{32}.
\end{proof}
\begin{lemma}\label{small}
  Let $g\colon Y\to Y_0$ be a birational contraction with $\dim\Exc(g)=1$.
Then $B\cap\Exc(g)=\emptyset$ and $\Exc(g)$ is the disjoint union of smooth rational curves $C$ contained in $Y_{\reg}$, with normal bundle $\ol_{\pr^1}(-1)^{\oplus 2}$; in particular
 $\NE(g)\subset K_Y^{\perp}$. 
Moreover  $f^{-1}(C)\cong\mathbb{F}_1$, and the $(-1)$-curve $\ell\subset f^{-1}(C)$ is an exceptional line in $\w{X}$. 
\end{lemma}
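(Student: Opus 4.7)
The strategy combines three facts: the non-existence of $K$-negative small elementary contractions of $Y$ (Theorem \ref{gdn}), the lower bound on anticanonical degrees of irreducible curves in $\w{X}$ (Lemma \ref{SQMFano}$(b)$), and the local $\pr^1$-bundle structure of $\pi\colon\w{W}\to Y$ from Lemma \ref{rays}.

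First, I would establish $-K_Y\cdot C\leq 0$ for every irreducible component $C\subset\Exc(g)$. If not, $\NE(g)\subset\NE(Y)$ would contain a $K$-negative extremal ray whose elementary contraction is $K$-negative, birational, with fibers of dimension at most one; since $Y$ is locally factorial with only isolated canonical (hence only isolated non-terminal) singularities, Theorem \ref{gdn} would force it to be divisorial, contradicting smallness of $g$. Next, fixing such a $C$, I would prove $C\cap B=\emptyset$ and that $C$ avoids the images of the finitely many isolated $2$-dimensional fibers of $\pi$, via a careful analysis of the ruling of $\pi^{-1}(C)$ over $C$ and the constraints from $\pi^{-1}(B)=E_2'$ combined with the anticanonical bounds of Lemma \ref{SQMFano}. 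Granting this, $\pi$ is a genuine $\pr^1$-bundle over an open neighborhood $U$ of $C$ in $Y_{\reg}$, and $\tilde\alpha$ is an isomorphism over $\pi^{-1}(U)$, so $f^{-1}(C)\cong\pi^{-1}(C)$ and the blow-up formula gives $K_{\w{X}}\cdot\Gamma=K_{\w{W}}\cdot\Gamma$ for every curve $\Gamma\subset f^{-1}(C)$.

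With these reductions in place, Remark \ref{sections} yields $f^{-1}(C)\cong\mathbb{F}_e$ together with $-K_{\w{X}}\cdot\Gamma^-=-K_Y\cdot C-e\leq -e$ for the negative section $\Gamma^-$. Lemma \ref{SQMFano}$(b)$ then forces $\Gamma^-$ to be an exceptional line of $\w{X}$ (any other irreducible curve in $\w{X}$ has $-K_{\w{X}}$-degree at least $1$), and a parallel analysis of the positive section $\Gamma^+$, which moves in a family and hence cannot be an exceptional line, rules out $e=0$ and forces $e=1$ and $-K_Y\cdot C=0$. The normal bundle $\ma{N}_{C/Y}\cong\ol_{\pr^1}(-1)^{\oplus 2}$, together with $C\cong\pr^1$ and $C\subset Y_{\reg}$, is then read off from the short exact sequence of $\Gamma^-\subset f^{-1}(C)\subset\w{X}$ using $\ma{N}_{\Gamma^-/\w{X}}\cong\ol_{\pr^1}(-1)^{\oplus 3}$ and the $\pr^1$-bundle identification $\ma{N}_{f^{-1}(C)/\w{X}}|_{\Gamma^-}\cong f^*\ma{N}_{C/Y}|_{\Gamma^-}$; disjointness of the irreducible components of $\Exc(g)$ follows from a similar local analysis together with Lemma \ref{SQMFano}$(a)$. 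The most delicate step will be ruling out $C\cap B\neq\emptyset$, which requires careful bookkeeping of the intersections of the section $\Gamma^-$ with $E_1,E_2$ and with the exceptional lines of $\w{X}$.
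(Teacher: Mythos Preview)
Your Step 1 and the endgame (Steps 4--6) are correct: the use of Theorem \ref{gdn} to get $-K_Y\cdot C\leq 0$, the application of Remark \ref{sections} once a $\pr^1$-bundle is in hand, and the elimination of $e\neq 1$ via Lemma \ref{SQMFano}$(b)$ all work as you describe.

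The gap is Step 2, and it is a genuine circularity. Your route to $f^{-1}(C)\cong\mathbb{F}_e$ passes through the factorization $f=\pi\circ\tilde\alpha$ of Lemma \ref{rays}: you need $\tilde\alpha$ to be an isomorphism over $\pi^{-1}(C)$, hence $C\cap B=\emptyset$, \emph{before} you can identify $f^{-1}(C)$ with $\pi^{-1}(C)$ and invoke Remark \ref{sections}. But you supply no independent argument for $C\cap B=\emptyset$; the promised ``bookkeeping of intersections of $\Gamma^-$ with $E_1,E_2$'' already presupposes the existence of $\Gamma^-$ as the negative section of a Hirzebruch surface, which is precisely what you are trying to set up. There is a second, related difficulty: even granting $C\cap B=\emptyset$, Remark \ref{sections} requires an honest $\pr^1$-bundle over $C$, yet at this stage $\Delta_{\intr}$ is not known to be empty (that is Lemma \ref{discriminant}, which comes later and uses the present lemma), so $f$ could a priori have double-line fibers over $C\cap\Delta_{\intr}$; you must also exclude $2$-dimensional fibers of $f$ (not only of $\pi$) over $C$ to place $C$ in $Y_{\reg}$.

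The paper avoids this by reversing the order. Following \cite[Lemma 4.5]{eff}, it works directly with $f$ over an open neighbourhood $U\subset Y_0$ of $g(C)$: setting $\w{U}:=f^{-1}(g^{-1}(U))$, one chooses an extremal ray $\tau$ of the relative cone $\NE(\w{U}/U)$ not contained in $\NE(f_{|\w{U}})$, and the analysis of its contraction yields $f^{-1}(C)\cong\mathbb{F}_1$ and the exceptional line $\ell$ directly, with no prior hypothesis on $C\cap B$. The disjointness $B\cap\Exc(g)=\emptyset$ is then a \emph{consequence}: since $\ell$ is a section of $f^{-1}(C)\to C$ and $f^{-1}(B)=E_1\cup E_2$ set-theoretically, any point of $C\cap B$ would force $\ell$ to meet $E_1\cup E_2$, contradicting \ref{discrf}. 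So the fix is not a new idea but a change of order: establish the $\mathbb{F}_1$ structure via the relative Mori cone of $g\circ f$, and derive $C\cap B=\emptyset$ afterwards.
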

\begin{proof}
  The same proof as the one of \cite[Lemma 4.5]{eff} applies, with the only difference that, in the notation of \cite[Lemma 4.5]{eff}, $\dim\N(\w{U}/U)$ could be bigger than $2$. We take $\tau$ to be any extremal ray of $\NE(\w{U}/U)$ not contained in $\NE(f_{|\w{U}})$. 
  Finally we have
$B\cap\Exc(g)=\emptyset$ because $\ell\cap (E_1\cup E_2)=\emptyset$  (see \ref{discrf}).
\end{proof}
\begin{lemma}\label{divisorial}
  Let $g\colon Y\to Y_0$ be a divisorial elementary contraction, and set $G:=\Exc(g)$. Then
   $g$ is the blow-up of a smooth point $p\in Y_0$, and
  $D:={f}^{*}G$ is a fixed prime divisor, not of type $(3,2)$.
\end{lemma}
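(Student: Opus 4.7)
The plan is to establish four things in order: that $g$ is $K$-negative with $G\neq B$; that $D=f^*G$ is a prime fixed divisor; that $g$ is the blow-up of a smooth point; and that $D$ is not of type $(3,2)$. Since $Y$ is log Fano (\ref{logFano}), every elementary contraction of $Y$ is $K$-negative, so $-K_Y\cdot\NE(g)>0$. To show $G\neq B$, I would argue by contradiction: if $G=B$, the pullback $f^*G=E_1+E_2$ is reducible by Th.~\ref{special}$(d)$, so $g\circ f\colon\w{X}\to Y_0$ has $\Exc(g\circ f)=E_1\cup E_2$ and $\NE(\w{X}/Y_0)$ of dimension $3$, containing the two type-$(3,2)$ rays of $\NE(f)$ from Lemma \ref{rays}. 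Taking the third extremal ray and analyzing its exceptional locus (contained in $E_1\cup E_2$) via the classification of $K$-negative small contractions (Th.~\ref{kawsm}) and divisorial ones (Th.~\ref{gdn}), I expect to derive a contradiction with the $(3,2)$-structures of $\tilde{\alpha}$ and $\alpha$.

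With $G\neq B$ in place, Th.~\ref{special}$(b)$ together with the flatness of $f$ away from the $2$-dimensional fibers gives that $D=f^*G$ is a prime divisor. For $D$ fixed: since $f$ has connected fibers and $D=f^*G$, the projection formula yields $f_*\mathcal{O}_{\w{X}}(mD)=\mathcal{O}_Y(mG)$, so $H^0(\w{X},mD)\cong H^0(Y,mG)$ for every $m\geq 1$. Because $g$ is an elementary divisorial contraction and $Y_0$ is projective, $g_*\mathcal{O}_Y(mG)=\mathcal{O}_{Y_0}$, hence $h^0(Y,mG)=1$ and $|mG|=\{mG\}$, giving $\Bs|mG|=G$; pulling back, $\Bs|mD|=f^{-1}(G)=D$ for every $m>0$.

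To classify $g$, I would invoke the Mori theory of $K$-negative divisorial elementary contractions of locally factorial Gorenstein canonical $3$-folds: $g$ is either the blow-up of a smooth point ($G\cong\pr^2$), the blow-up of a smooth curve contained in $Y_{\reg}$ ($G$ a ruled surface), or a contraction with singular image. The singular-image cases should be excluded because local factoriality is preserved under $K$-negative 3-fold Mori contractions in our setting (cf.~\cite{prok}). To rule out the curve-blow-up case, I would lift a general fiber $\gamma\cong\pr^1$ of $g|_G$ via $f$: the preimage $f^{-1}(\gamma)$ is a surface contracted by $g\circ f$ but not by $f$, whose class generates an extra extremal ray of $\NE(\w{X}/Y_0)$ whose contraction, when compared against the two $(3,2)$-rays of Lemma \ref{rays}, contradicts $\rho_{\w{X}}-\rho_Y=2$.

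Finally, if $D$ were of type $(3,2)$, its defining contraction would make $D$ a $\pr^1$-bundle over a surface $T$ outside finitely many special fibers (Th.~\ref{32}), with a general fiber $C_D$ of $-K_{\w{X}}$-degree $1$ and $D\cdot C_D=-1$ (Th.-Def.~\ref{fixed}$(d)$). On the other hand, $f|_D\colon D\to G\cong\pr^2$ is a conic bundle with smooth fibers of $-K_{\w{X}}$-degree $2$; using $D\cdot C_D=G\cdot f_*(C_D)$ together with $\mathcal{O}_Y(G)|_G\cong\mathcal{O}_{\pr^2}(-1)$ forces $f_*(C_D)$ to be a line in $G$, and then Rem.~\ref{sections} applied to the Hirzebruch surface $f^{-1}(f(C_D))\cong\mathbb{F}_1$ identifies $C_D$ as its negative section. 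I would then compare the two ruling structures of $D$ (the $\pr^1$-bundle one over $T$ and the conic-bundle one over $\pr^2$) to produce a numerical contradiction, for instance by showing that $D$ would have to contain an exceptional plane, violating Lemma \ref{dim32}. The main obstacle is the curve-blow-up case for $g$, since ruling it out requires carefully dovetailing the 3-fold Mori-theoretic classification with the rigid extremal structure on $\NE(\w{X}/Y_0)$ imposed by Lemma \ref{rays}.
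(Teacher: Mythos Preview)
Your proposal has a structural gap: it never invokes the hypotheses $\rho_X\geq 7$ and $\delta_X\leq 2$, yet the Remark immediately following the lemma exhibits a counterexample with $\rho_X=6$ (namely $X=S\times S$, $S=\Bl_{2\,\pts}\pr^2$, with $Y=\pr^1\times S$ admitting a divisorial contraction that blows up a smooth \emph{curve}). So any argument independent of these bounds cannot succeed. Concretely:
\begin{itemize}
\item The opening claim ``since $Y$ is log Fano, every elementary contraction of $Y$ is $K$-negative'' is false. Paragraph~\ref{logFano} asserts $-K_Y\cdot\NE(g)>0$ only for contractions \emph{of fiber type}; Lemma~\ref{small} in fact shows small contractions of $Y$ are $K$-trivial. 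The $K$-negativity of a divisorial $g$ is part of the \emph{conclusion} here (via the blow-up description), not an input.
\item Your exclusion of type $(2,1)$ for $g$ via ``an extra extremal ray contradicts $\rho_{\w X}-\rho_Y=2$'' is not a contradiction: since $g$ is elementary, $\dim\NE(\w X/Y_0)=3$, so three extremal rays are expected. In the product counterexample $g$ is genuinely of type $(2,1)$.
\item Your argument that $D$ cannot be of type $(3,2)$ by comparing two ruling structures is precisely where the counterexample lands: in that example $D=f^*G$ \emph{is} of type $(3,2)$.
\end{itemize}

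The paper's route is quite different and hinges on the Lefschetz defect. One first bounds $\dim\N(f^{-1}(G),\w X)\leq 4$ via Rem.~\ref{linalg} (since $\dim\N(G,Y)\leq 2$ and $\rho_X-\rho_Y=2$). If some component of $f^{-1}(G)$ were of type $(3,2)$, Lemma~\ref{dim32} transfers this bound to $X$, contradicting $\rho_X\geq 7$ with $\delta_X\leq 2$. This single estimate simultaneously yields $G\neq B$, the primeness of $D=f^*G$, and that $D$ is not of type $(3,2)$. To exclude $g$ of type $(2,1)$, one shows (as in \cite[proof of Lemma~4.6]{eff}) that $D$ would then be covered, on a nonempty open set, by curves of anticanonical degree~$1$; via Lemma~\ref{SQMFano}$(c)$ and \cite[Lemma~2.18]{blowup} this forces $D$ to be of type $(3,2)$, a contradiction. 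Finally, for $g$ of type $(2,0)$ one runs an MMP relative to $g\circ f$ to produce the diagram~\eqref{diagram0}, proves the key fact $\dim f_1^{-1}(p)=1$ (this is the technical heart, handled case by case according to the type of $D$), and then deduces from \cite[6.4.7--6.4.8]{fibrations} that $Y_0$ is smooth at $p$ and $g$ is the blow-up of $p$. Your outline omits this last step entirely.
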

\begin{proof}
Since $g$ is elementary and $g(G)$ is either a point or an irreducible curve, we have $\dim \N(g(G),Y_0)\leq 1$, hence
$\dim\N(G,Y)\leq 2$ and $\dim\N({f}^{-1}(G),\w{X})\leq 4$ (see Rem.~\ref{linalg}). If there is a component $D$ of ${f}^{-1}(G)$ which is a fixed prime divisor of type $(3,2)$, let $D_{\s X}\subset X$ be its transform. Then $\dim\N(D_{\s X},X)=\dim\N(D,\w{X})\leq 4$  by Lemma \ref{dim32},
while $\delta_X\leq 2$ and $\rho_X\geq 7$ (see \ref{logFano}), a contradiction. Thus $G\neq B$
(recall that $f^*B=E_1+E_2$ with $E_i$ of type $(3,2)$, see \ref{discrf}) and $D:={f}^{-1}(G)$ is a prime divisor; moreover $D$ is fixed, because $G$ is, and not of type $(3,2)$.

We show that $g$ is of type $(2,0)$. By contradiction, suppose that $g$ is of type $(2,1)$. As in \cite[proof of Lemma 4.6]{eff} we show that there is an open subset $\w{U}\subset\w{X}$ such that $D\cap\w{U}$ is covered by curves of anticanonical degree $1$. By Lemma \ref{SQMFano}$(c)$, $D_{\s X}$ still has a nonempty open subset covered by curves of anticanonical degree $1$; this implies that $D_{\s X}$ and $D$ are of type $(3,2)$ by  \cite[Lemma 2.18]{blowup}, a contradiction.

Thus $g$ is of type $(2,0)$; set $p:=g(G)\in Y_0$.
As in \cite[proof of Lemma 4.6]{eff}
we have a diagram:
\stepcounter{thm}
\begin{equation}\label{diagram0}
\xymatrix{{\w{X}}\ar@{-->}[r]^h\ar[d]_f&{\wi{X}}\ar[r]^k&{\w{X}_1}\ar[dl]^{f_1}\\
  Y\ar[r]^g&{Y_0}&
}
\end{equation}
where $h$ is a sequence of $D$-negative flips relative to $g\circ f$, and $k$ is a divisorial elementary contraction  with $\Exc(k)$ the transform of $D$, so that $k(\Exc(k))$ is contained in the fiber $f_1^{-1}(p)$.

\medskip

We show that $\dim f_1^{-1}(p)=1$. If $D$ is of type $(3,0)^{\sm}$ or $(3,1)^{\sm}$, this is shown in
\cite[proof of Lemma 4.6, Step 2]{eff}, so we can assume that $D$ is of type $(3,0)^Q$. Moreover, 
if $G\cap B\neq\emptyset$, the statement is shown in \cite[6.4.3 -- 6.4.6]{fibrations}, thus we can  also assume that $G\cap B=\emptyset$, so that $p\not\in g(B)$, and $E_1\cap D=\emptyset$.
This implies that $E_1\subset\dom(h)$; 
let $\wi{E}_1\subset\wi{X}$ be its transform.
Then $\wi{E}_1$ is disjoint from $\Exc(k)$; moreover $\wi{E}_1$ is covered by curves of anticanonical degree one, therefore it is disjoint from all exceptional lines of $\wi{X}$ (see Lemma \ref{SQMFano}$(c)$). We also have
 $f_1(k(\wi{E}_1))=g(B)$, thus $\wi{E}_1\cap (f_1\circ k)^{-1}(p)=\emptyset$.

Suppose by contradiction that $\dim f_1^{-1}(p)=2$.
We proceed as in \cite[proof of Lemma 4.6, Steps 3 -- 6]{eff}, with the difference that in our setting $f_1$ is not elementary, and $\dim\NE(f_1\circ k)=3$.

As in \cite{eff} we see that
$f_1\circ k$ is not $K$-negative, 
so there is an extremal ray $R$ of $\NE(f_1\circ k)$ such that $-K_{\wi{X}}\cdot R\leq 0$; moreover $\NE(f_1\circ k)$ also contains the extremal ray $\NE(k)$.
We have $\wi{E}_1\cdot R=\wi{E}_1\cdot\NE(k)=0$; on the other hand $\wi{E}_1$ is not trivial on the whole cone $\NE(f_1\circ k)$,
otherwise $\wi{E}_1$ would be the pullback of a divisor from $Y_0$, but $(f_1\circ k)^{-1}(g(B))$ has two irreducible components, $\wi{E}_1$ and the transform of $E_2$.
Thus $\wi{E}_1^{\perp}\cap\NE(f_1\circ k)=R+\NE(k)$, and $\wi{E}_1^{\perp}$ also contains the classes of all exceptional lines of $\wi{X}$. Then we can work within this $2$-dimensional cone and as in \cite{eff} show that $h$ is just a $K$-negative flip and $\dim f_1^{-1}(p)=1$. This concludes the proof that $\dim f_1^{-1}(p)=1$.

\bigskip

Now as in \cite[6.4.7, 6.4.8]{fibrations} we see that $f_1$ is $K$-negative, that $Y_0$ is smooth at $p$, and finally that $g$ is just the blow-up of $p$.
\end{proof}
\begin{remark}
The assumption that $\rho_X\geq 7$ is essential in the above proof. Consider for instance $S=\Bl_{2\,\pts}\pr^2$, $X=S\times S$, $Y=\pr^1\times S$, and $f\colon X\to Y$ the natural product map given by a conic bundle $S\to\pr^1$. Then $Y$ has a divisorial  elementary contraction $k\colon Y\to \pr^1\times\mathbb{F}_1$ which is the blow-up of a smooth curve $\pr^1\times\{\pt\}$.
\end{remark}  
\begin{prg}\label{weakFano}
 $Y$ is weak Fano.

Indeed since $Y$ is log Fano, $\NE(Y)$ is closed and every one-dimensional face has the form $\NE(g)$ for some elementary contraction $g$ of $Y$. By \ref{logFano} and Lemmas \ref{small} and \ref{divisorial}, we always have $-K_Y\cdot\NE(g)\geq 0$, thus $-K_Y$ is nef and big.
\end{prg}
\begin{prg}\label{Freccia}
Let $Y\dasharrow Y'$ be a SQM. Then the composition $X\dasharrow Y'$ is again a special rational contraction with $\rho_X-\rho_{Y'}=2$ (see \cite[Rem.~2.8]{eff}), so all the previous steps apply to $Y'$ as well. As in \cite[p.\ 622]{eff}, using Lemmas \ref{small} and \ref{divisorial} one shows that if $E\subset Y$ is a fixed prime divisor, then $E$ can contain at most finitely many curves of anticanonical degree zero.
\end{prg}  
\begin{prg}\label{properties}
  Let $\ph\colon Y\to Z$ be the anticanonical map.
  If $Y$ is not Fano, then $\ph$ is small, $\Exc(\ph)\subset Y_{\reg}$, and  $B\cap\Exc(\ph)=\emptyset$. Moreover if $C$ is a 
  connected component of $\Exc(\ph)$, then $C\cong\pr^1$
  with $\ma{N}_{C/Y}\cong\ol(-1)^{\oplus 2}$, 
  $f^{-1}(C)\cong\mathbb{F}_1$, and the $(-1)$-curve $\ell\subset\mathbb{F}_1$ is an exceptional line in $\w{X}$.

   Indeed by \ref{Freccia} $\ph$ is generically finite on every fixed prime divisor of $Y$, thus it is small, and the statement follows from Lemma \ref{small}.
\end{prg}
\begin{lemma}\label{milano}
  Let $k\colon Y\to Y'$ be a blow-up of distinct smooth points. Then
$Y'$ is weak Fano and has the same singularities as $Y$, and the
  following hold:
  \begin{enumerate}[$(a)$]
     \item every small elementary contraction of $Y'$ is $K$-trivial;
  \item if $Y'$ is not Fano, then the anticanonical map $\ph'\colon Y'\to Z'$ is small, and $\Exc(\ph')$ is contained in $(Y')_{\reg}$ and   does not contain any point blown-up by $k$. Moreover if $C$ is a 
  connected component of $\Exc(\ph')$, then $C\cong\pr^1$
  with $\ma{N}_{C/Y'}\cong\ol(-1)^{\oplus 2}$;
    \item  every  divisorial elementary contraction of $Y'$ is the blow-up of a smooth point, with exceptional divisor not containing any point blown-up by $k$.
      \end{enumerate}
\end{lemma}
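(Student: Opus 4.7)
The plan is to transport properties of $Y$ through $k$. Write $E_1,\dotsc,E_r\subset Y$ for the exceptional divisors of $k$ over the smooth points $p_1,\dotsc,p_r\in Y'$, so that $k^*(-K_{Y'})=-K_Y+2\sum_i E_i$. For any irreducible curve $C'\subset Y'$ its strict transform $C\subset Y$ is not contained in any $E_i$, whence $E_i\cdot C\geq 0$ and $-K_{Y'}\cdot C'\geq 0$; combined with $(-K_{Y'})^3=(-K_Y)^3+8r>0$ and nefness, this shows $-K_{Y'}$ is nef and big. Since the $p_i$ are smooth in $Y'$, $k$ is an isomorphism outside $\bigcup E_i$ and identifies $\Sing(Y)$ with $\Sing(Y')$, so $Y'$ inherits isolated, locally factorial, canonical singularities (and is log Fano by \cite[Lemma 2.8]{prokshok}). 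In particular the hypotheses of Th.~\ref{gdn} hold for $Y'$, ruling out any $K$-negative small elementary contraction, which proves $(a)$.

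For $(b)$, assume $Y'$ is not Fano and let $C'\subset Y'$ be a curve with $-K_{Y'}\cdot C'=0$. From $(-K_Y+2\sum E_i)\cdot C=0$ together with $-K_Y\cdot C\geq 0$ and $E_i\cdot C\geq 0$, we obtain $-K_Y\cdot C=0$ and $E_i\cdot C=0$ for every $i$. In particular $Y$ cannot be Fano (otherwise $-K_Y\cdot C>0$), so by \ref{properties} $C$ is a connected component of $\Exc(\ph)$, a smooth rational curve with normal bundle $\ol(-1)^{\oplus 2}$ contained in $Y_{\reg}$. Since $C$ is disjoint from $E_1\cup\cdots\cup E_r$, $k$ is an isomorphism in a neighborhood of $C$, and the same description transfers to its image $C'=k(C)$, which moreover avoids each $p_i$.

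For $(c)$, let $g'\colon Y'\to Y''$ be a divisorial elementary contraction with $G':=\Exc(g')$. By $(a)$, $(b)$, and the observation that a $K$-trivial fiber type contraction would force $-K_{Y'}$ to be pulled back from a lower-dimensional base, contradicting $(-K_{Y'})^3>0$, the contraction $g'$ must be $K$-negative. The composite $g'\circ k\colon Y\to Y''$ is then $K$-negative with relative Picard number $r+1$, so $F:=\NE(g'\circ k)\subset\NE(Y)$ is an $(r+1)$-dimensional $K$-negative face; it contains the $r$ extremal rays $\rho_i=\R_{\geq 0}[\ell_i]$ (with $\ell_i$ a line in $E_i$) coming from the factorization of $k$, and hence an additional $K$-negative extremal ray $R\subset F$ distinct from the $\rho_i$'s. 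Th.~\ref{gdn} excludes $R$ being small, and $R$ cannot be of fiber type (otherwise $\dim Y''<3$), so by Lemma \ref{divisorial} its contraction $\sigma_R\colon Y\to Y_R$ is the blow-up of a smooth point $q\in Y_R$. Its exceptional divisor, being contracted by $g'\circ k$ but not by $k$, must equal the strict transform $\w{G}'$ of $G'$.

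Finally, if some $p_i$ lay on $G'$, the intersection $\w{G}'\cap E_i\subset E_i\cong\pr^2$ would be a curve of strictly positive degree (the projectivized tangent cone of $G'$ at $p_i$), and $\sigma_R|_{E_i}$ would contract it to $q$; but no curve of positive self-intersection in $\pr^2$ can be contracted by a birational morphism to a normal surface, a contradiction. Hence $G'$ avoids every $p_i$, so $\w{G}'$ is disjoint from every $E_i$, $k$ is an isomorphism in a neighborhood of $\w{G}'$, and $\sigma_R(E_i)$ avoids $q$; the induced morphism $Y_R\to Y''$ is then an isomorphism near $q$, and transporting along $k$ shows $Y''$ is smooth at $g'(G')$ and $g'$ is the blow-up of that point. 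I expect the main obstacle to be the ray-theoretic analysis in $(c)$: producing the extra extremal ray $R$ of $\NE(Y)$ inside $F$ and executing the positive-self-intersection argument on $\pr^2$ to exclude the case $p_i\in G'$.
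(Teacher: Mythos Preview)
Your proof is correct, and for the weak Fano claim and parts $(a)$, $(b)$ it runs essentially parallel to the paper's. For $(c)$ your route differs somewhat: the paper invokes \cite[p.~623]{eff} to obtain $\Exc(g')\cap\{p_1,\dotsc,p_r\}=\emptyset$ first, and then (implicitly via the same lift to $Y$) applies Lemma~\ref{divisorial}; its direct argument for excluding $p_i\in G'$ is that $\w{G}'$ would then be swept out by curves of anticanonical degree zero, contradicting \ref{properties}. Your argument is more self-contained: you first produce the extremal ray $R$ of $\NE(Y)$, apply Lemma~\ref{divisorial} to get $\w{G}'\cong\pr^2$, and only then rule out $p_i\in G'$ via the incompatibility of the two $\pr^2$'s meeting in a curve. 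Both approaches are valid; yours avoids the external reference at the cost of a slightly longer ray-theoretic discussion.

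Two small points worth tightening. First, the assertion that $g'\circ k$ is $K$-negative does not follow merely from $g'$ and $k$ being $K$-negative separately (composites of $K$-negative maps need not be $K$-negative). You should note that any curve $C\subset Y$ with $-K_Y\cdot C=0$ lies in $\Exc(\ph)$ by \ref{properties}, is disjoint from every $E_i$ by the argument in $(b)$, and maps to a $K_{Y'}$-trivial curve, hence is not contracted by the $K$-negative $g'$; thus no such $C$ is contracted by $g'\circ k$. Second, your ``positive self-intersection'' step can be replaced by the observation that every curve in $\w{G}'\cong\pr^2$ has class in $R$ while every curve in $E_i\cong\pr^2$ has class in $\rho_i$, so $\w{G}'\cap E_i$ being a curve would force $R=\rho_i$; this is slightly cleaner and avoids discussing normality of $\sigma_R(E_i)$.
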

\begin{proof}
 We note that if $p\in Y'$ is a point blown-up by $k$, and $C\subset Y'$ is an irreducible curve containing $p$, then the transform $\w{C}\subset Y$ of $C$ satisfies $-K_Y\cdot\w{C}\geq 0$ and $\Exc(k)\cdot \w{C}\geq 1$, which implies that $-K_{Y'}\cdot C\geq 2$.  This shows that $Y'$ is weak Fano. Moreover $Y'$ cannot have $K$-negative small contractions  by Th.~\ref{gdn}; this gives $(a)$.
  
Suppose that $Y'$ is not Fano. By what precedes, the points blown-up by $k$ cannot lie in $\Exc(\ph')$, thus $\Exc(\ph')$ is contained in the open subset where $k$ is an isomorphism, and $k^{-1}(\Exc(\ph'))\subseteq\Exc(\ph)$. Hence $(b)$ follows from the analogous property of $\ph$ (see \ref{properties}).

 Finally let $g\colon Y'\to Y'_0$ be a divisorial elementary contraction. It is shown in \cite[p.~623]{eff} that
 $\Exc(g)\cap k(\Exc(k))=\emptyset$; then
 by Lemma \ref{divisorial} $g$ must be the blow-up of a smooth point in $Y_0'$, and  we have $(c)$.
\end{proof}  
\begin{lemma}\label{blowups}
Up to flops  there exists  a blow-up $k\colon Y\to Y_0$ of $r$ distinct smooth points $p_1,\dotsc,p_r\in Y_0$ such that $\rho_{Y_0}\leq 2$, and if $\rho_{Y_0}=2$, then $Y_0$ has two distinct elementary rational contractions of fiber type.
\end{lemma}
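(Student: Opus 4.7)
The strategy is to run a variant of the MMP on $Y$, alternating flops along small $K$-trivial extremal rays with contractions of divisorial extremal rays, using Lemma~\ref{milano} to propagate the key structural properties at each step.

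Starting from $Y_s:=Y$, where $s=\rho_Y$, I would proceed iteratively. If the current variety $Y_i$ admits a divisorial elementary contraction, I contract it: by Lemma~\ref{divisorial} (at the first step) or by Lemma~\ref{milano}(c) (at later steps) this is the blow-up of a smooth point, yielding $Y_i\to Y_{i-1}$ with $\rho_{Y_{i-1}}=\rho_{Y_i}-1$. If instead $Y_i$ has only small and fiber-type extremal rays, then by Lemma~\ref{milano}(a) the small ones are $K$-trivial, hence we flop and check whether a divisorial contraction is newly exposed on the resulting SQM; then iterate. Lemma~\ref{milano} guarantees that at each step the new variety is still weak Fano with the same singularities, and inherits the structural properties on which we rely.

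Termination follows from three facts: each divisorial contraction strictly decreases $\rho$, sequences of flops in dimension $3$ terminate, and by the last clause of Lemma~\ref{milano}(c) the exceptional divisor of each new blow-down avoids all previously blown-down points, so the centers are automatically distinct. Using a two-ray game argument (valid because all the small contractions we meet are $K$-trivial flops), I would rearrange the operations so that the resulting birational map $Y\dasharrow Y_0$ factors as a SQM $Y\dasharrow Y'$ followed by a morphism $Y'\to Y_0$ that simultaneously blows up $r$ distinct smooth points $p_1,\dots,p_r\in Y_0$.

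The main obstacle is the bound $\rho_{Y_0}\leq 2$, together with the statement that $Y_0$ has two distinct elementary rational contractions of fiber type when $\rho_{Y_0}=2$. By construction, the process stops exactly when no SQM of $Y_0$ admits a divisorial elementary contraction, so every extremal ray of $\overline{\NE}(Y_0')$, for every SQM $Y_0\dasharrow Y_0'$, is either a $K$-trivial flop or of fiber type. I would then argue that this forces $\rho_{Y_0}\leq 2$, combining the weak Fano property of $Y_0$ with the structural constraint inherited from Lemma~\ref{milano} and from the special rational contraction $X\dasharrow Y$: if $\rho_{Y_0}\geq 3$ we could expose, on a suitable SQM, a further divisorial contraction, contradicting terminality. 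Finally, when $\rho_{Y_0}=2$, after flopping away any remaining small ray we reach a SQM whose two extremal rays are both of fiber type, providing the two distinct elementary rational contractions of fiber type claimed by the lemma.
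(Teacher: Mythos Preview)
Your outline of the first phase---alternating flops with divisorial contractions, using Lemma~\ref{milano} to propagate weak Fano and point-blow-up properties---is essentially what the paper does, and your remarks on termination and on commuting flops past blow-ups (via Lemma~\ref{milano}(b),(c)) are correct.

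The genuine gap is in the bound $\rho_{Y_0}\leq 2$. Your argument here is circular: you say that if $\rho_{Y_0}\geq 3$ one could ``expose, on a suitable SQM, a further divisorial contraction, contradicting terminality''---but terminality of your process means exactly that no SQM of $Y_0$ has a divisorial contraction, so this deduces nothing. There is no general principle forcing a weak Fano Mori dream $3$-fold with no divisorial rational contractions to have $\rho\leq 2$; the paper does real work here. Specifically, once $Y_0$ has no fixed prime divisors, one analyzes the elementary rational contractions of fiber type $Y_0\dasharrow S$ with $\dim S=2$: since a fixed divisor in $S$ would pull back to a fixed divisor in $Y_0$, the surface $S$ has no $(-1)$-curves, so $S\cong\pr^2$ or $\pr^1\times\pr^1$. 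The case $S\cong\pr^1\times\pr^1$ (which gives $\rho_{Y_0}=3$) is eliminated by a trick: blowing back up one point and factoring through $\Bl_{2\,\pts}\pr^2$ produces a new divisorial rational contraction, so one can reduce further. After this reduction the only fiber-type targets are $\{\pt\}$, $\pr^1$, $\pr^2$, and the Mori dream space structure forces $\rho_{Y_0}\leq 2$.

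A smaller issue: for $\rho_{Y_0}=2$, your claim that ``after flopping away any remaining small ray we reach a SQM whose two extremal rays are both of fiber type'' is not what is asserted. The lemma claims two distinct elementary \emph{rational} contractions of fiber type; one of them may well come from the other side of a flop, not from an extremal ray of $\NE(Y_0)$ itself.
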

 We denote by $G_i\subset Y$ the exceptional divisor over $p_i\in Y_0$.
\begin{proof}
As in \cite[p.~622]{eff} we consider all divisorial extremal rays of $\NE(Y)$ and get a map
$k\colon Y\to Y_0$
which is the blow-up of $r$ distinct smooth points.  Moreover $Y_0$ is weak Fano and has the same singularities as $Y$.

Let $\psi\colon Y_0\dasharrow Y_0'$ be a flop. The composition $\psi\circ k\colon Y\dasharrow Y_0'$ is a rational contraction (see \cite[Rem.~2.8]{eff}), and  there is a SQM $Y'\dasharrow Y$ such that the composition $k'\colon Y'\to Y_0'$ is regular.
By Lemma \ref{milano}
the $r$ points blown-up by $k$ lie in $\dom(\psi)$. Thus $k'$ is again the blow-up of $r$ distinct smooth points, and we may replace 
$k\colon Y\to Y_0$ with $k'\colon Y'\to Y_0'$ if needed (see \ref{Freccia}). Iterating the reasoning, the same holds for any SQM $Y_0\dasharrow Y_0'$.

Suppose that there is a divisorial  elementary rational contraction $\pi\colon Y_0\dasharrow Y_0'$. Up to replacing $Y_0$ and $Y$ with a SQM, we can assume that $\pi$ is regular.
Then
  $\pi$ must be the blow-up of a smooth point in $Y_0'$ by Lemma \ref{milano}$(c)$, and we replace $Y_0$ with $Y_0'$. 

 In this way, in a finite number of steps, up to flops and up to increasing  the number $r$ of blown-up points,
  we reduce to the case where $Y_0$ has no divisorial elementary rational contraction, hence $Y_0$ has no fixed prime divisors (see \cite[Rem.~2.19]{eff}).

  Suppose now that there is an elementary rational contraction of fiber type $\pi\colon Y_0\dasharrow S$ with $\dim S=2$; again up to flops we can assume that $\pi$ is regular.
  Then $S$ cannot have divisorial elementary  contractions: indeed if $S\to S_1$ were such a contraction with exceptional divisor an irreducible curve $C$, then $\pi^{-1}(C)$ should be a fixed divisor in $Y_0$, against our reductions.
 Moreover $S$ is smooth (see for instance \cite[Lemma 5.5]{ou}) and rational, 
 hence either $S\cong\pr^2$, or $S\cong\pr^1\times\pr^1$.

 If $S\cong\pr^1\times\pr^1$ (so that $\rho_{Y_0}=3$), let us consider the blow-up  of the first point $Y_{1}\to Y_0$ (recall from \ref{logFano} that $\rho_Y\geq 5$). The composition $Y_{1}\to S$ is not equidimensional, and by \cite[Prop.~2.13]{fibrations} it factors as
 $$\xymatrix{{Y_{1}}\ar@{-->}[d]\ar[r]^{\Bl_{\pt}}&{Y_0}\ar[d]^{\pi}\\
{S'}\ar[r]&{S=\pr^1\times\pr^1}
   }$$
   where $Y_{1}\dasharrow S'$ is an elementary rational contraction and
$S'\to S$ is the blow-up of a smooth point, thus
$S'\cong\Bl_{2\pts}\pr^2$.

Consider a $(-1)$-curve of $S'$ contracted by $S'\to\pr^2$, and its pullback $G$ in $Y_{1}$. This is a fixed prime divisor, and up to 
to replacing $Y_{1}$ with a SQM, we can assume that $G=\Exc(g)$ for some divisorial elementary contraction $g\colon Y_{1}\to Y_0'$; then $g$ must be the blow-up of a smooth point, and there is an elementary rational contraction $Y_0'\dasharrow\mathbb{F}_1$.
$$\xymatrix{{Y_{1}}\ar@{-->}[d]\ar[r]^{g}&{Y_0'}\ar@{-->}[d]\\
{S'}\ar[r]&{\mathbb{F}_1}
   }$$
 Then as before we replace $Y_{1}\to Y_0$ with $g\colon Y_{1}\to Y_0'$; now using the elementary rational contraction $Y_0\dasharrow\mathbb{F}_1$, 
we   blow-down one more point and get $Y_{0}''$ with $\rho_{Y_0''}=2$.

In the end the only possible non-small elementary rational contractions of $Y_0$ are  $Y_0\to\{pt\}$, $Y_0\dasharrow\pr^1$,  or $Y_0\dasharrow\pr^2$. Since $Y_0$ is a Mori dream space, it has at least $\rho_{Y_0}$  non-small elementary rational contractions; we conclude that 
$\rho_{Y_0}\leq 2$ and we get the statement.
\end{proof}
\begin{prg}\label{degree}
 Since $\rho_Y\geq 5$ (see \ref{logFano}) and $-K_Y$ is nef and big (see \ref{weakFano}), we have $r=\rho_Y-\rho_{Y_0}\geq 3$ and
$0<-K_Y^3=-K^3_{Y_0}-8r$,
thus $-K^3_{Y_0}>8r\geq 24$.
\end{prg}
\begin{lemma}\label{ou2}
If $\rho_{Y_0}=2$ and $Y_0$ is Fano, then $Y_0$ is smooth and rational; more precisely either $Y_0\cong\pr_{\pr^2}(T_{\pr^2})$, or $Y_0$ is isomorphic to a divisor of degree $(1,2)$ in $\pr^2\times\pr^2$.
\end{lemma}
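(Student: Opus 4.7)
The plan is to apply Lemma \ref{ou} to $Y_0$ and then exclude the remaining undesired case via the structure of our setting. All the hypotheses of Lemma \ref{ou} are in place: by Lemma \ref{milano}, $Y_0$ has at most isolated, locally factorial, and canonical singularities (these are inherited from $Y$ via the smooth-point blow-up $k\colon Y\to Y_0$); by Lemma \ref{blowups} together with the Fano hypothesis, $Y_0$ admits two distinct elementary contractions of fiber type, both actually regular since $Y_0$ is Fano with $\rho_{Y_0}=2$; and by \ref{degree} together with $r=\rho_Y-\rho_{Y_0}\ge 3$, we have $-K_{Y_0}^3>24$.

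Lemma \ref{ou} then gives that $Y_0$ is smooth and rational and belongs to one of three families: $\pr^1\times\pr^2$ with $-K_{Y_0}^3=54$, $\pr_{\pr^2}(T_{\pr^2})$ with $-K_{Y_0}^3=48$, or a divisor of bidegree $(1,2)$ in $\pr^2\times\pr^2$ with $-K_{Y_0}^3=30$. The smoothness and rationality conclusions of the lemma are then immediate, and what remains is to rule out the first case.

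The main obstacle is precisely this exclusion. What distinguishes $\pr^1\times\pr^2$ from the other two is that one of its extremal contractions has one-dimensional image --- namely the projection to $\pr^1$ with $\pr^2$-fibers --- whereas in cases (ii) and (iii) both extremal contractions land in $\pr^2$. I would exploit this asymmetry: composing the special rational contraction $X\dasharrow Y$ with $k$ and the two projections yields rational contractions $X\dasharrow\pr^1$ (with three-dimensional general fibers themselves fibered as conic bundles over $\pr^2$) and $X\dasharrow\pr^2$ (with surface general fibers), whose mutual compatibility with the product decomposition of $\pr^1\times\pr^2$ is expected to force $X$ itself to split as a product of surfaces, contradicting the standing hypothesis. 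An alternative line would be to descend the irreducible divisor $B\subset Y$ --- the unique one with $f^*B=E_1+E_2$ reducible --- to $Y_0$, using that $B$ is disjoint from the exceptional divisors of $k$ along the lines of Lemma \ref{small}, and then derive the contradiction by matching the numerical class of $B$ in $\Pic(\pr^1\times\pr^2)$ against the constraints imposed by the $K$-negative conic-bundle structure of $\pi\colon\w W\to Y$ over neighborhoods of $B$; in particular the fact that $-K_{\pr^1\times\pr^2}=2h_1+3h_2$ is primitive in $\Pic$ is expected to play a role, should the proportionality $-K_Y\sim\lambda B$ (available later as Lemma \ref{index}) be invoked.
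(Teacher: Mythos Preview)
Your setup to invoke Lemma \ref{ou} is correct and matches the paper. The gap is in the exclusion of $\pr^1\times\pr^2$: both of your proposed routes are either unworkable or circular in this context.

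Your first route---forcing $X$ to split as a product from the existence of a rational contraction $X\dasharrow\pr^1$---is not an argument but a hope; nothing in the paper's toolkit gives such a splitting criterion, and Fano $4$-folds with rational contractions to $\pr^1$ that are not products certainly exist. Your second route rests on two false premises. First, $B$ is \emph{not} disjoint from the exceptional divisors of $k$: Lemma \ref{small} concerns small contractions, not divisorial ones, and in fact Lemma \ref{index} later shows $p_1,\dotsc,p_r\in B_0$, so $B$ meets every $G_i$. Second, invoking Lemma \ref{index} here would be circular: its proof uses Lemma \ref{rhoZ}, whose proof in turn uses the present Lemma \ref{ou2}.

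The paper's exclusion of $\pr^1\times\pr^2$ is much shorter and uses only what has already been established. If $Y_0\cong\pr^1\times\pr^2$, then $\Bl_{p_1}Y_0$ has a divisorial elementary contraction of type $(2,1)$ (contracting the transform of $\{a\}\times\pr^2$ to a curve, where $p_1=(a,b)$). But Lemma \ref{milano}$(c)$, applied to the intermediate blow-up $\Bl_{p_1}Y_0$, forces every divisorial elementary contraction to be the blow-up of a smooth point, i.e.\ of type $(2,0)$. This is the contradiction. The same mechanism is what the paper uses elsewhere (e.g.\ to exclude the quadric and $\pr_{\pr^1}(\ol\oplus\ol(1)^{\oplus 2})$ in the proof of Lemma \ref{table}), so it is the natural tool here.
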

\begin{proof}
  Since $Y_0$ is Fano, Lemma \ref{milano}$(a)$ implies that $Y_0$ has no small contraction. 
  By Lemma \ref{blowups} $Y_0$ has two distinct elementary rational contractions of fiber type, hence they must be regular. By
 \ref{degree} we can apply Lemma \ref{ou} to $Y_0$; either we get the statement, or 
 $Y_0\cong\pr^2\times\pr^1$. However the blow-up of $\pr^2\times\pr^1$ at a point has a
  divisorial elementary contraction of type $(2,1)$, which is excluded by Lemma \ref{milano}$(c)$.
\end{proof}
We recall that $\ph\colon Y\to Z$ is the anticanonical map of $Y$.
    \begin{lemma}\label{rhoZ}
We have $\rho_Z=1$.
\end{lemma}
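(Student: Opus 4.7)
The plan is to establish $\rho_Z = 1$ by combining two steps: first, excluding the possibility that $Y$ is Fano (which would give $Z \cong Y$ with $\rho_Z = \rho_Y \geq 5$), and second, producing enough $\ph$-contracted curves on $Y$ to span a codimension-one subspace of $\N(Y)$. Since $\ph$ is small by \ref{properties} once $Y$ is non-Fano, the pullback $\ph^{*}\colon \Nu(Z) \hookrightarrow \Nu(Y)$ identifies $\rho_Z$ with $\rho_Y - \dim V$, where $V \subseteq \N(Y)$ is the span of classes of $\ph$-exceptional curves.

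To exclude $Y$ Fano: by Lemma \ref{milano}$(a)$ a Fano $Y$ has no small contractions, so the reduction of Lemma \ref{blowups} gives a morphism $k\colon Y \to Y_0$ that blows up $r \geq 3$ distinct smooth points with $\rho_{Y_0} \leq 2$, and $-K_Y^3 > 0$ forces $-K_{Y_0}^3 > 8r \geq 24$. For the possible $Y_0$'s, classified via Lemmas \ref{ou}, \ref{ou2}, and \ref{bound} combined with this degree bound, a direct check produces a rational curve $C \subset Y_0$ through two of the blown-up points whose proper transform $\tilde C \subset Y$ satisfies $-K_Y \cdot \tilde C \leq 0$, contradicting $Y$ Fano. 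The model case is $Y_0 \cong \pr^3$, where any line through two blown-up points has $-K_Y \cdot \tilde C = 4 - 4 = 0$; analogous anticanonical degree computations handle the other possibilities.

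Assuming now $Y$ is non-Fano with $\ph$ small, I would produce $\rho_Y - 1$ linearly independent classes in $V$ by exploiting the blow-up structure $k\colon Y \to Y_0$. For each pair $(i,j)$ of blown-up points $p_i, p_j \in Y_0$, find a rational curve $C_{ij} \subset Y_0$ through $p_i$ and $p_j$ with $-K_{Y_0} \cdot C_{ij} = 4$, so that its proper transform $\tilde C_{ij}$ satisfies $-K_Y \cdot \tilde C_{ij} = 0$ and hence $[\tilde C_{ij}] = k^{*}[C_{ij}] - [e_i] - [e_j] \in V$. Then $[\tilde C_{12}]$ together with the differences $[\tilde C_{1j}] - [\tilde C_{12}] = [e_2] - [e_j]$ for $j = 3,\dots,r$ yield $r$ linearly independent classes; in the $\rho_{Y_0}=2$ case a further contracted class comes either from the small anticanonical map $\ph_0$ of $Y_0$ (when $Y_0$ is not Fano) or from a $-K$-trivial rational curve connecting the two fibrations on $Y_0$ from Lemma \ref{blowups}.

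The main obstacle will be the case analysis on $Y_0$, since verifying the existence of the curves $C_{ij}$ requires separate geometric arguments for each $Y_0$ allowed by the classification. The constraint $-K_{Y_0}^3 > 8r$ combined with Lemma \ref{bound} ($-K_{Y_0}^3 \leq 64$) and Lemma \ref{conti} ($-K_{Y_0}^3 \leq 54$ in the $\rho_{Y_0}=2$ non-Fano case) keeps the list of $Y_0$'s short and $r$ bounded, but verifying the availability of enough curves of anticanonical degree $4$ through pairs of general points on each such $Y_0$ remains the delicate geometric ingredient of the argument.
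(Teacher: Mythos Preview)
Your plan is in the right spirit but has a concrete gap and is organized less efficiently than the paper's argument.

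\textbf{The counting error.} In the $\rho_{Y_0}=1$ case you need $\rho_Y-1=r$ independent classes in $K_Y^\perp$, but your list ``$[\tilde C_{12}]$ together with $[\tilde C_{1j}]-[\tilde C_{12}]$ for $j=3,\dots,r$'' contains only $1+(r-2)=r-1$ classes, not $r$. You are missing one curve. The fix is to throw in $[\tilde C_{23}]$ (or any $\tilde C_{ij}$ with $i,j\neq 1$), but then you must actually check linear independence of $[\tilde C_{12}],\dots,[\tilde C_{1r}],[\tilde C_{23}]$ via the intersection pattern with $G_1,\dots,G_r$; this is not automatic and requires a small sign argument. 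Similarly, in the $\rho_{Y_0}=2$ Fano case your phrase ``a $-K$-trivial rational curve connecting the two fibrations'' is not meaningful: $Y_0$ is Fano, so it has no $K_{Y_0}$-trivial curves. What you need there are the two conic-bundle fibers through a single $p_i$, each of $-K_{Y_0}$-degree $2$.

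\textbf{The structural difference.} The paper does not first exclude $Y$ Fano and then classify $Y_0$ to hunt for degree-$4$ curves. Instead, for each $i$ it takes an extremal ray $R_i$ of $\NE(\Bl_{p_i}Y_0)$ positive on the exceptional divisor; by Lemma~\ref{milano}$(c)$ this ray is either small (giving a $K$-trivial curve $\Gamma_i$ meeting only $G_i$) or of fiber type onto a surface (forcing $\rho_{Y_0}=1$ and giving conic-bundle fibers $\Gamma_{ij}$ through $p_i$ and any other $p_j$). The case split is then on whether each $R_i$ is small, and the classification input is limited to Lemma~\ref{ou2} for the $\rho_{Y_0}=2$ Fano subcase. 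This avoids your proposed case-by-case search for degree-$4$ rational curves through prescribed pairs of points on each possible $Y_0$, which is exactly the ``delicate geometric ingredient'' you flag; the extremal-ray mechanism produces the curves intrinsically. Your separate Step~1 is then redundant: exhibiting any effective $K_Y$-trivial curve already shows $Y$ is not Fano.
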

\begin{proof}
  We have
 $\rho_Y-\rho_Z=\dim\NE(\ph)=\dim(\NE(Y)\cap K_Y^{\perp})$, thus
 we have
 to exhibit $\rho_Y-1$ curves in $Y$, with anticanonical degree zero, whose classes 
in $\N(Y)$ are linearly independent.

  Consider one of the points $p_i\in Y_0$ blown-up by $k$, and set $Y_{i}:=\Bl_{p_i}Y_0\stackrel{\sigma}{\to} Y_0$. Let $G\subset Y_{i}$ be the exceptional divisor, and consider an extremal ray $R_i$ of $\NE(Y_{i})$ such that $G\cdot R_i>0$. Then the contraction of $R_i$ must be finite on $G\cong\pr^2$, and has fibers of dimension at most one. By Lemma \ref{milano}$(c)$ we conclude that the contraction of $R_i$ is either small, or of fiber type.

  In the small case, let $\Gamma_i'\subset Y_{i}$ be a curve with class in $R_i$, and $\Gamma_i\subset Y$ its transform. Then $\Gamma_i'$ is contained in the open subset here $Y\to Y_i$ is an isomorphism, hence
   $K_Y\cdot\Gamma_i=K_{Y_i}\cdot\Gamma'_i=0$, $\Gamma_i\cdot G_i>0$, and $\Gamma_i\cdot G_j=0$ for every $j=1,\dotsc,r$, $j\neq i$.

  Suppose that the contraction $\psi\colon Y_i\to S$ of $R_i$ is of fiber type; note that
$\psi(G)=S$, thus $\rho_S=1$, $\rho_{Y_i}=2$, and
  $\rho_{Y_0}=1$. Moreover $\psi$ is $K$-negative and a conic bundle, see Prop.~\ref{conicsing}.
 Let us consider another point $p_j\in Y_{i}$ blown-up by $k$, $j\neq i$, and note that $p_j\not\in G$. Since every irreducible curve of $Y_i$ containing $p_j$ must have anticanonical degree $\geq 2$, the fiber $F_j$ of $\psi$ containing $p_j$ is a smooth rational curve with $-K_{Y_{i}}\cdot F_j=2$, and its transform $\Gamma_{ij}\subset Y$ has $K_Y\cdot\Gamma_{ij}=0$, $G_i\cdot\Gamma_{ij}>0$, $G_j\cdot\Gamma_{ij}>0$, and $G_h\cdot\Gamma_{ij}=0$ for every $h\in\{1,\dotsc,r\}\smallsetminus\{i,j\}$.

 \medskip

 If $R_i$ is small for every $i=1,\dotsc,r$, we get $\Gamma_1,\dotsc,\Gamma_r\subset Y$ with linearly independent classes in $K_Y^{\perp}$. If $\rho_{Y_0}=1$, then $r=\rho_Y-1$ and we are done. If $\rho_{Y_0}=2$ and $Y_0$ is not Fano, let $\Gamma_0\subset Y$ be the transform of an irreducible curve of anticanonical degree zero in $Y_0$. Then $G_i\cdot\Gamma_0=0$ for every $i=1,\dotsc,r$ (see Lemma \ref{milano}$(b)$), so that $\Gamma_0,\Gamma_1,\dotsc,\Gamma_r\subset Y$ yield again $\rho_Y-1$ linearly independent classes in $K_Y^{\perp}$.

  If $\rho_{Y_0}=2$ and $Y_0$ is Fano, by Lemma \ref{ou2} we know that $Y_0$ has two conic bundle structures. Let $\Gamma_1',\Gamma_1''\subset Y$ be the transforms of the fibers through $p_1$ of the two conic bundles; as before these fibers must be smooth. Then $K_Y\cdot  \Gamma_1'=K_Y\cdot  \Gamma_1''= G_i\cdot 
  \Gamma_1'= G_i\cdot \Gamma_1''=0$ for every $i=2,\dotsc,r$. Moreover the classes $[\Gamma_1'],[\Gamma_1'']$ are linearly independent, because their pushforwards in $\N(Y_0)$ are. Thus we have the $\rho_Y-1$ curves
  $\Gamma_1',\Gamma_1'',\Gamma_2,\dotsc,\Gamma_r$, and we are done.

  \medskip

  Suppose now that $R_1$ is not small; in particular $\rho_{Y_0}=1$ and $r=\rho_Y-1$. We get curves $\Gamma_{12},\dotsc,\Gamma_{1r}$ in $Y$ with $K_Y\cdot \Gamma_{1j}=0$, $G_1\cdot\Gamma_{1j}>0$, $G_j\cdot\Gamma_{1j}>0$, and $G_i\cdot \Gamma_{1j}=0$ for every $i,j=2,\dotsc,r$, $i\neq j$.

  Recall that $r\geq 3$ (see \ref{degree}). If $R_2$ is small, then we get a curve $\Gamma_2\subset Y$ such that $K_Y\cdot\Gamma_2=0$, $G_2\cdot \Gamma_2>0$, and $G_i\cdot\Gamma_2=0$ for every $i\neq 2$; the classes of  $\Gamma_2,\Gamma_{12},\dotsc,\Gamma_{1r}$ are linearly independent.

  If instead $R_2$ is not small, we get a curve $\Gamma_{23}\subset Y$ with $K_Y\cdot\Gamma_{23}=0$, $G_2\cdot\Gamma_{23}>0$, $G_3\cdot\Gamma_{23}>0$, and $G_i\cdot \Gamma_{23}=0$ for every $i\in\{1,4,\dotsc,r\}$. Then one can check that  the classes of $\Gamma_{23},\Gamma_{12},\dotsc,\Gamma_{1r}$ are again linearly independent. Indeed given a relation $a\Gamma_{23}+b\Gamma_{12}+c\Gamma_{13}\equiv 0$, intersecting with $G_i$ for $i=1,2,3$ and analysing the signs of $a,b,c$, one gets $a=b=c=0$.
\end{proof}
\begin{lemma}\label{index}
  Set $B_0:=k(B)\subset Y_0$. Then $p_1,\dotsc,p_r\in B_0$, and for some $\lambda\in\Q_{>0}$ we have
   $-K_Y=\lambda B$ and $-K_{Y_0}=\lambda B_0$.
\end{lemma}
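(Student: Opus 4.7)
The plan is to establish $-K_Y\equiv\lambda B$ first via the anticanonical map $\ph\colon Y\to Z$, and then descend to $Y_0$ through the standard blow-up formulas.

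For the first part, note that since $\rho_Y\geq 5$ by \ref{logFano} while $\rho_Z=1$ by Lemma \ref{rhoZ}, $Y$ cannot be Fano, for otherwise $\ph$ would be an embedding and would force $\rho_Z=\rho_Y$. Hence \ref{properties} applies: $\ph$ is small and $B\cap\Exc(\ph)=\emptyset$. Every irreducible curve $C$ contracted by $\ph$ lies in $\Exc(\ph)$, so $B\cdot C=0$ for all such $C$. Consequently $[B]$ belongs to $\ph^*\Nu(Z)\subset\Nu(Y)$, which is a line generated by $[-K_Y]=\ph^*[-K_Z]$. Writing $[B]=\mu[-K_Y]$ and observing that $(-K_Y)^2\cdot B>0$ (since $\ph$ is small and birational, $\ph(B)$ is a surface in $Z$ on which $(-K_Z)^2$ is positive, and the projection formula transfers this), we get $\mu>0$; setting $\lambda:=1/\mu\in\Q_{>0}$ yields $-K_Y\equiv\lambda B$.

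For the second part, note first that $B$ cannot be contained in any exceptional divisor $G_i$: otherwise pushing $[B]=\mu[-K_Y]$ forward by $k_*$ would give $0=\mu[-K_{Y_0}]$, contradicting the fact that $Y_0$ is weak Fano. Hence $B$ is the strict transform of $B_0$, and the standard blow-up formulas for distinct smooth points in a $3$-fold read
\[
k^*(-K_{Y_0})=-K_Y+2\sum_i G_i,\qquad k^*B_0=B+\sum_i m_i G_i,
\]
where $m_i=\mathrm{mult}_{p_i}B_0\geq 0$, and $p_i\in B_0$ precisely when $m_i>0$. Substituting $-K_Y=\lambda B$ gives
\[
k^*(-K_{Y_0}-\lambda B_0)=\sum_i (2-\lambda m_i)\,G_i.
\]
Applying $k_*$ (using $k_*G_i=0$ and $k_*k^*=\mathrm{id}$) yields $-K_{Y_0}\equiv\lambda B_0$; pulling back again forces $\sum_i(2-\lambda m_i)\,G_i\equiv 0$, and the linear independence of the $[G_i]$ in $\Nu(Y)$ (detected by intersecting with a line $\ell\subset G_j$, for which $G_i\cdot\ell=-\delta_{ij}$) forces $m_i=2/\lambda>0$ for every $i$. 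Thus $p_1,\dots,p_r\in B_0$.

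I do not anticipate any serious obstacle. The decisive input is Lemma \ref{rhoZ}, which together with the disjointness in \ref{properties} pins $[B]$ to a positive multiple of $[-K_Y]$; the descent to $Y_0$ is then a direct pullback--pushforward calculation on the blow-up.
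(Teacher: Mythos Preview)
Your proof is correct and follows essentially the same approach as the paper. For the first part, both you and the paper use Lemma~\ref{rhoZ} together with $B\cap\Exc(\ph)=\emptyset$ from \ref{properties} to force $[B]$ onto the line spanned by $[-K_Y]$; you add explicit justifications (why $Y$ is not Fano, why $\lambda>0$) that the paper leaves implicit. For the second part the paper is slightly more direct: rather than unwinding the blow-up formulas and invoking linear independence of the $[G_i]$, it simply takes a curve $\Gamma\subset G_i$, observes $-K_Y\cdot\Gamma>0$, hence $B\cdot\Gamma>0$, so $B\cap G_i\neq\emptyset$ and $p_i\in B_0$; the relation $-K_{Y_0}=\lambda B_0$ then follows immediately. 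Your computation recovers the extra information $m_i=2/\lambda$, which the paper derives later in Lemma~\ref{aeb}.
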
  
\begin{proof}
By Lemma \ref{rhoZ} we have 
$\dim\NE(\ph)=\rho_Y-1$. On the other hand $B$ is disjoint from $\Exc(\ph)$ by \ref{properties}, thus $B^{\perp}\supset\NE(\ph)$,
$B^{\perp}=K_Y^{\perp}$, and finally $-K_Y=\lambda B$ for some $\lambda\in\Q_{>0}$. This also implies that
$-K_{Y_0}=\lambda B_0$ in $Y_0$.

Let $i\in\{1,\dotsc,r\}$ and let $\Gamma\subset G_i$ be a curve. Then $-K_Y\cdot \Gamma>0$, thus $B\cdot \Gamma>0$ and $p_i\in B_0$.
\end{proof}
\begin{lemma}\label{bijection}
  There is a bijection between the set of
exceptional lines in $\w{X}$, and the set of 
curves of anticanonical degree zero in $Y$, via $\ell\mapsto f(\ell)$.
\end{lemma}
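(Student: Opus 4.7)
The plan is to construct both directions of the correspondence and verify they are mutually inverse. By Lemma \ref{index} we have $-K_Y=\lambda B$ with $\lambda>0$, so an irreducible curve $C\subset Y$ satisfies $-K_Y\cdot C=0$ if and only if $B\cdot C=0$. The backward direction is essentially already recorded in \ref{properties}: if $Y$ is not Fano, the irreducible curves of anticanonical degree zero in $Y$ are exactly the connected components of $\Exc(\ph)$, each a smooth rational curve contained in $Y_{\reg}$ with normal bundle $\ol(-1)^{\oplus 2}$, and for each such $C$ the fiber $f^{-1}(C)\cong\mathbb{F}_1$ contains a unique $(-1)$-curve $\ell_C$, which is an exceptional line in $\w{X}$ satisfying $f(\ell_C)=C$. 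If $Y$ is Fano, both sets will turn out to be empty and the lemma is vacuous.

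For the forward direction, given an exceptional line $\ell\subset\w{X}$, first note that $K_{\w{X}}\cdot\ell=1>0$ while $f$ is $K$-negative, so $\ell$ is not contracted and $f(\ell)$ is an irreducible curve. The crucial step is to show $-K_Y\cdot f(\ell)=0$. By \ref{discrf} we have $(E_1\cup E_2)\cap\ell=\emptyset$, so $f^*B\cdot\ell=(E_1+E_2)\cdot\ell=0$; on the other hand the projection formula gives $f^*B\cdot\ell=\deg(f_{|\ell})\cdot (B\cdot f(\ell))$ with $\deg(f_{|\ell})\geq 1$, hence $B\cdot f(\ell)=0$ and therefore $-K_Y\cdot f(\ell)=0$ by Lemma \ref{index}. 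In particular, if $Y$ is Fano there are no exceptional lines in $\w{X}$; otherwise $C:=f(\ell)$ is one of the components of $\Exc(\ph)$ described in \ref{properties}, so the forward map is well defined.

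To conclude, I check the two maps are mutually inverse. One composition is immediate: $\ell_C$ is a section of $\mathbb{F}_1\to C$, hence $f(\ell_C)=C$. For the other, set $C:=f(\ell)$; I need to show $\ell=\ell_C$ inside $f^{-1}(C)\cong\mathbb{F}_1$. Since $f^{-1}(C)$ is irreducible, $C$ is disjoint from the discriminant, so $f$ is smooth along $\mathbb{F}_1$ and hence $\ma{N}_{\mathbb{F}_1/\w{X}}\cong (f_{|\mathbb{F}_1})^*\ma{N}_{C/Y}$. Setting $d:=\deg(f_{|\ell})\geq 1$, restriction to $\ell$ gives $\ma{N}_{\mathbb{F}_1/\w{X}}|_\ell\cong\ol_{\pr^1}(-d)^{\oplus 2}$. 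Comparing Chern classes in the sequence
\[0\to\ma{N}_{\ell/\mathbb{F}_1}\to\ma{N}_{\ell/\w{X}}\to\ma{N}_{\mathbb{F}_1/\w{X}}|_\ell\to 0\]
and using $\ma{N}_{\ell/\w{X}}\cong\ol_{\pr^1}(-1)^{\oplus 3}$ forces $\ma{N}_{\ell/\mathbb{F}_1}\cong\ol_{\pr^1}(2d-3)$; since any sub-line bundle of $\ol_{\pr^1}(-1)^{\oplus 3}$ has degree at most $-1$, we must have $2d-3\leq -1$, so $d=1$ and $\ell$ is a section of $\mathbb{F}_1\to C$ of self-intersection $-1$, i.e.\ $\ell=\ell_C$. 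The main subtlety in the proof is the computation $-K_Y\cdot f(\ell)=0$ via the disjointness recorded in \ref{discrf}; the final identification in $\mathbb{F}_1$ is a standard normal bundle calculation.
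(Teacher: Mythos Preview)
Your proof is correct and follows essentially the same approach as the paper: both use the disjointness $\ell\cap(E_1\cup E_2)=\emptyset$ from \ref{discrf} together with Lemma~\ref{index} to show $-K_Y\cdot f(\ell)=0$, and both invoke \ref{properties} for the converse direction. The paper's argument is slightly more direct in deducing $B\cdot f(\ell)=0$ from $f(\ell)\cap B=\emptyset$ rather than via the projection formula, but this is cosmetic.

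The one substantive addition in your proof is the explicit injectivity check showing $\ell=\ell_C$ inside $f^{-1}(C)\cong\mathbb{F}_1$, which the paper leaves implicit. Your normal bundle computation is correct; a shorter alternative is to note (via Rem.~\ref{sections} with $-K_Y\cdot C=0$ and $e=1$) that the $(-1)$-curve is the \emph{only} irreducible curve in $\mathbb{F}_1$ with $-K_{\w{X}}$-degree equal to $-1$, so an exceptional line in $f^{-1}(C)$ must coincide with it.
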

\begin{proof}
  Let $\ell\subset\w{X}$ be an exceptional line, and note that $f(\ell)$ is a curve, because $K_{\w{X}}\cdot\ell=1$ while $f$ is $K$-negative (see \ref{logFano}). We have $\ell\cap (E_1\cup E_2)=\emptyset$ (see \ref{discrf}), thus $f(\ell)\cap B=\emptyset$, and $B\cdot f(\ell)=0$. By Lemma \ref{index} this is equivalent to $-K_Y\cdot f(\ell)=0$. The converse is given by \ref{properties}.
\end{proof}
\begin{prg}\label{regular}
  The composition $\ph\circ f_{\scriptscriptstyle X}\colon X\dasharrow Z$ is a regular contraction,
 and it factors through $\alpha\colon X\to W$ (see Lemma \ref{rays}).
 $$\xymatrix{X\ar@/_1pc/[dd]_{\alpha}\ar@{-->}[r]^{\xi}\ar@{-->}[dr]^{f_{\scriptscriptstyle X}}\ar[d]&
   {\w{X}}\ar[d]^>>>f\ar@/^1pc/[dd]^{\tilde\alpha}\\
   Z& Y\ar[l]^{\ph}\\
   W\ar[u]\ar@{-->}[r]^{\xi_{\s W}}&{\w{W}}\ar[u]^{\pi}}$$

 Indeed, by Lemma \ref{bijection},  $\ph\circ f\colon\w{X}\to Z$ contracts all exceptional lines of $\w{X}$, therefore $\ph\circ f\circ\xi=\ph\circ f_{\s X}$ is regular (see Lemma \ref{SQMFano}$(a)$).
Similarly, by Lemma \ref{rays} the indeterminacy locus $\w{W}\smallsetminus\dom(\xi_{\s W}^{-1})$ is isomorphic, via $\tilde\alpha$, to $\w{X}\smallsetminus\dom(\xi^{-1})$, and $\ph\circ\pi\colon \w{W}\to Z$ contracts to points this locus, therefore $\ph\circ\pi\circ\xi_{\s W}\colon W\to Z$ is regular.
\end{prg}  
\begin{lemma}\label{discriminant}
  The discriminant of $f$ is $B$, and $f$ has no intrinsic discriminant (see \ref{setting}).
\end{lemma}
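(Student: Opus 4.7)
The plan is to derive both conclusions from the single statement $\Delta_{\intr}=\emptyset$. Indeed $m=\rho_X-\rho_Y-1=1$, so by \ref{setting} $B$ is the unique component of $\Delta$ whose preimage is reducible; every other component of $\Delta$ must therefore lie in $\Delta_{\intr}$. By Lemma \ref{conicbdl}, $B$ is itself a connected component of $\Delta$, so once $\Delta_{\intr}=\emptyset$ is established, $\Delta=B$ will follow.

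I will argue by contradiction: suppose there is an irreducible component $\Delta_0\subset\Delta_{\intr}$. By Lemma \ref{conicbdl} one has $B\cap\Delta_0=\emptyset$, so $B\cdot\gamma=0$ for every irreducible curve $\gamma\subset\Delta_0$. Combined with $-K_Y=\lambda B$ from Lemma \ref{index}, this yields
\[
-K_Y\cdot\gamma=\lambda\,B\cdot\gamma=0\qquad\text{for every irreducible curve }\gamma\subset\Delta_0.
\]

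Then I will split into two cases. If $Y$ is Fano, the ampleness of $-K_Y$ contradicts the vanishing above directly. Otherwise, by \ref{properties} the anticanonical map $\ph\colon Y\to Z$ is small and $\Exc(\ph)$ has pure dimension one. Since $\ph$ is crepant and $Z$ is a Gorenstein Fano $3$-fold (see the opening of Section \ref{weakfano}), the pullback formula $-K_Y=\ph^*(-K_Z)$ together with the ampleness of $-K_Z$ on $Z$ gives that a curve of $Y$ has anticanonical degree zero if and only if it is contracted by $\ph$. Hence every curve of $\Delta_0$ must lie in $\Exc(\ph)$, forcing $\Delta_0\subseteq\Exc(\ph)$; this contradicts $\dim\Delta_0=2>1=\dim\Exc(\ph)$.

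I do not foresee any real obstacle: the substantive inputs, namely the proportionality $-K_Y=\lambda B$ (Lemma \ref{index}) and the structure of $\Exc(\ph)$ (\ref{properties}), are already in place. The only minor point to invoke is the crepancy of the anticanonical small contraction of a weak Fano $3$-fold with locally factorial canonical singularities, which is standard.
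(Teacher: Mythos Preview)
Your proof is correct and follows essentially the same route as the paper's: both argue that a hypothetical extra component $\Delta_0$ of the discriminant would be disjoint from $B$, hence (via $-K_Y=\lambda B$) covered by $K_Y$-trivial curves, and therefore contained in the exceptional locus of the small anticanonical map $\ph$, a contradiction. The only difference is that the paper treats the Fano and non-Fano cases uniformly (if $Y$ is Fano then $\ph$ is an isomorphism and $\Exc(\ph)=\emptyset$), whereas you split into cases; your case split is harmless but unnecessary.
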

\begin{proof}
  Suppose by contradiction that the discriminant of $f$ has an irreducible component $\Delta_0$ besides $B$. Then $\Delta_0\cap B=\emptyset$ (see \ref{discrf}),
  thus $B\cdot C=0$ for every curve $C\subset \Delta_0$. By Lemma 
\ref{index} this gives $K_Y\cdot C=0$ for every curve $C\subset \Delta_0$, namely
   $\Delta_0$ is contained in $\Exc(\ph)$, but $\ph$ is small (see \ref{properties}).
\end{proof}
\begin{prg}\label{nodes}
  Let $y_0\in Y$ be a singular point and $F:=f^{-1}(y_0)$. Then $y_0$ is a node, $y_0\not\in B$, $\dim\N(F,\w{X})=1$, and $F\cong\pr^2\bullet\pr^2$.

  This follows from Th.~\ref{2dimfibers} and Lemma \ref{discriminant}, because 
the fiber $F$ is $2$-dimensional (see \ref{logFano}), and $\Delta_{\intr}=\emptyset$.
  \end{prg}
  \begin{lemma}\label{sm}
    If $Y$ is singular, then it is not rational.
\end{lemma}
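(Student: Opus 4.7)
The plan is to argue by contradiction: assuming $Y$ is singular and rational, I would extract a non-trivial $2$-torsion class in the unramified Brauer group $\Br_{\text{nr}}(\C(Y))$, contradicting its vanishing on rational varieties. The approach mirrors the Artin--Mumford obstruction realized in Example \ref{artinmum}, and is the natural generalization since \ref{nodes} tells us the local picture near any singular point of $Y$ is precisely of Artin--Mumford type.

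I would start by fixing a node $y_0\in\Sing(Y)$ with fiber $F=P_1\cup P_2\cong\pr^2\bullet\pr^2$ and $[\ell_1]=[\ell_2]$ in $\N(\w{X})$, where $\ell_i\subset P_i$ is a line (see \ref{nodes}). Choose an analytic neighborhood $U$ of $y_0$ biholomorphic to a neighborhood of the vertex of the $3$-fold ordinary double point $\{xy=uv\}\subset\C^4$, and set $V=f^{-1}(U)\subset\w{X}$. Since $y_0\notin B$ and $F$ is the only $2$-dimensional fiber over $U$ for $U$ small enough, the restriction $f|_V$ gives a smooth $\pr^1$-bundle $V\setminus F\to U\setminus\{y_0\}$.

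The heart of the argument is a local calculation at $y_0$. The link of $y_0$ is diffeomorphic to $S^2\times S^3$, so $H^2(U\setminus\{y_0\},\Z)\cong\Z$, and the $\pr^1$-bundle above is classified topologically by its first Chern class in this group. Using the explicit structure of $F$ together with the equality $[\ell_1]=[\ell_2]$ in $\N(\w{X})$---which obstructs any extension of $V\setminus F\to U\setminus\{y_0\}$ as a $\pr^1$-bundle across either small analytic resolution of the node, since such an extension would force $[\ell_1]$ and $[\ell_2]$ to differ---one checks that this first Chern class is odd, producing a non-trivial $2$-torsion class $\alpha_{y_0}\in\Br(U\setminus\{y_0\})$.

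Finally, I would patch the local classes $\alpha_{y_0}$ over the nodes of $Y$ into a non-trivial element of $\Br_{\text{nr}}(\C(Y))$ (passing, if necessary, to a smooth projective model); since $-K_Y$ is big and nef and $Y$ has only isolated rational singularities, the ramification analysis along the exceptional divisors over the nodes goes through. Rationality of $Y$ then forces $\Br_{\text{nr}}(\C(Y))=\Br_{\text{nr}}(\C(\pr^3))=0$, a contradiction. The main obstacle is the local Brauer computation: turning the algebraic condition $[\ell_1]=[\ell_2]$ into the topological non-triviality of the $\pr^1$-bundle on the link, which is the precise local analog of the calculation underlying the Artin--Mumford example \ref{artinmum} and is where a careful analysis of the normal bundles of $P_1,P_2$ in $\w{X}$ enters.
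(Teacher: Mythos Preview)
Your overall idea---extracting a non-trivial Brauer/torsion obstruction from the special fibers $\pr^2\bullet\pr^2$ over the nodes, in the spirit of Artin--Mumford---is correct and is what the paper does. But the execution has two genuine gaps, and the paper's route is substantially cleaner.

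\medskip

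\textbf{First gap: you work with $f$ and only locally.} The map $f\colon\w{X}\to Y$ has relative Picard number $2$, so away from the nodes it is a conic bundle, not an elementary $\pr^1$-fibration; near $B$ its fibers are reducible. Your local neighbourhoods $U$ avoid $B$, so you only ever get a $\pr^1$-bundle on small punctured balls, and then you must ``patch the local classes $\alpha_{y_0}$'' into a global unramified class. That patching step is the real content and you have not supplied it: a non-trivial class in $\Br(U\setminus\{y_0\})$ for a small analytic $U$ does not by itself produce a non-zero class in $\Br_{\text{nr}}(\C(Y))$ or in $\Tors H^3(\wi Y,\Z)$. The paper sidesteps this entirely by passing to the elementary factorization $\pi\colon\w W\to Y$ from Lemma~\ref{rays}: this $\pi$ is a smooth $\pr^1$-fibration over the \emph{big} open set $U=Y\smallsetminus\{\text{finitely many points}\}$, so it defines a single global class $\alpha\in\Br(U)$ from the start, and no patching is needed.

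\textbf{Second gap: the non-vanishing argument.} What you call ``the first Chern class in $H^2(U\setminus\{y_0\},\Z)\cong\Z$ is odd'' is not the right invariant; the obstruction to a smooth $\pr^1$-fibration being a projectivization is its Brauer class, detected topologically in $\Tors H^3$, not a first Chern class. The paper's non-vanishing argument is a one-line global numerical computation in $\N(\w W)$: since $\dim\N(\pi^{-1}(y_1),\w W)=1$ and both lines have anticanonical degree $1$, one gets $C_{L_1}\equiv C_{L_2}$, hence the general fiber satisfies $F_0\equiv 2C_{L_1}$, so no divisor on $\w W$ can have intersection $1$ with $F_0$, and $\pi_{|\pi^{-1}(U)}$ is not a projectivization. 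This directly gives $0\neq\alpha\in\Br(U)$. The passage to the resolution $\wi Y$ is then a short Thom--Gysin computation (the complement $\wi Y\setminus U$ is a disjoint union of points and smooth quadric surfaces, so $H^1(E,\Z)=0$ and $\Tors H^3(U,\Z)\cong\Tors H^3(\wi Y,\Z)$), yielding $\Tors H^3(\wi Y,\Z)\neq 0$ and hence non-rationality.

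\medskip

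In short: replace $f$ by $\pi$ to get a global $\pr^1$-bundle on a big open set, prove non-triviality of its Brauer class by the divisibility-by-$2$ argument on the general fiber, and use Thom--Gysin rather than ad hoc patching.
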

\begin{proof}
  Set 
  $\Sing(Y):=\{y_1,\dotsc,y_m\}$ with $m\geq 1$, and 
$F_i:=f^{-1}(y_i)$.
By  \ref{nodes} $y_i$ is a node, $y_i\not\in B$, $\dim\N(F_i,\w{X})=1$, and $F_i\cong\pr^2\bullet\pr^2$, for $i=1,\dotsc,m$.

Let $z_1,\dotsc,z_r\in Y$ be the smooth points such that $\dim f^{-1}(z_i)=2$.
Then $f$ is a conic bundle over $U:=Y\smallsetminus\{y_1,\dotsc,y_m,z_1,\dotsc,z_r\}$, with discriminant $B\cap U$ (Lemma \ref{discriminant}).
  Let us consider the factorization of $f$ in elementary steps as in Lemma \ref{rays}:
   $$\xymatrix{
{\w{X}}\ar[r]_{\tilde{\alpha}}\ar@/^1pc/[rr]^{f}& {\w{W}}\ar[r]_{\pi} &{Y}
     }$$
and recall that $\tilde{\alpha}$ is an elementary contraction of type $(3,2)$ with exceptional divisor $E_1$, and $\w{W}$ is locally factorial.
Then
 $\pi$ is smooth with fiber $\pr^1$ over $U$.

 Since $y_1\not\in B$ and $f(E_1)=B$, we have $\pi^{-1}(y_1)\cong F_1 \cong \pr^2\bullet\pr^2$. Write
 $\pi^{-1}(y_1) =L_1\cup L_2\subset\w{W}$ with $L_i\cong\pr^2$, and let $C_{L_i}$ be a line in $L_i$.
We have $\N(\pi^{-1}(y_1),\w{W})=\tilde\alpha_*(\N(F_1,\w{X}))$, hence
  $\dim\N(\pi^{-1}(y_1),\w{W})=1$. Moreover $-K_{\w{W}}\cdot C_{L_1}=-K_{\w{W}}\cdot C_{L_2}=1$, and we conclude that $C_{L_1}\equiv C_{L_2}$.

If $F_0\cong\pr^1$ is a general fiber of $\pi$,  then $F_0\equiv C_{L_1}+C_{L_2}\equiv 2C_{L_1}$; in particular no divisor in $\w{W}$ can have intersection $1$ with $F_0$, and
 $\pi_{|\pi^{-1}(U)}\colon \pi^{-1}(U)\to U$ cannot be the projectivization of a vector bundle on $U$. Therefore $\pi_{|\pi^{-1}(U)}$ defines a nonzero class in the (cohomological) Brauer group $\Br(U)$ (see for instance 
\cite[\S 6.2, 6.3]{debarrerationality}).

The following argument is from \cite[proof of Th.~6.7]{debarrerationality}.
Let $\wi{Y}\to Y$ be the resolution of the nodes; then $U\subset \wi{Y}$ and $E:=\wi{Y}\smallsetminus U$ is a disjoint union of $r$ points and $m$ smooth quadric surfaces. If $H^2(\wi{Y},\ol_{\wi{Y}})\neq 0$ then $\wi{Y}$ and $Y$ are not rational, thus we can assume that $H^2(\wi{Y},\ol_{\wi{Y}})= 0$, therefore $c_1\colon \Pic(\wi{Y})\to H^2(\wi{Y},\Z)$ is surjective.

The Thom-Gysin exact sequence
$$H^2(\wi{Y},\Z)\la H^2(U,\Z)\la H^1(E,\Z)=0$$
yields that the restriction $H^2(\wi{Y},\Z)\to H^2(U,\Z)$ is surjective, hence the composition $\Pic(\wi{Y})\to H^2(U,\Z)$ is surjective, and also $c_1\colon \Pic(U)\to H^2(U,\Z)$ is. This implies that
$\Br(U)\cong\Tors H^3(U,\Z)$ \cite[Prop.~6.4]{debarrerationality}.

Again using the Thom-Gysin exact sequence
$$0= H^1(E,\Z)\la H^3(\wi{Y},\Z)\la H^3(U,\Z)\la H^2(E,\Z)\cong\Z^{2m}$$
we find that $\Tors H^3(\wi{Y},\Z)\cong \Tors H^3(U,Z)$ is nonzero. Then $\wi{Y}$ and $Y$ are not rational \cite[Prop.~6.1]{debarrerationality}.
\end{proof}
  \begin{lemma}\label{table}
 The $3$-folds   $Y$ and $Y_0$ are smooth and rational, and up to flops, $Y_0$ belongs to the following list.

{\em   $$\begin{array}{||c|c|c|c|c||}
\hline\hline
\text{N.} & Y_0   & -K_{Y_0}^3&  \rho_{Y_0} & \\
\hline\hline

  1 &     \pr^3 & 64 &  1 &  \text{Fano}     \\

       \hline

  2 &      \pr_{\pr^2}(T_{\pr^2}) & 48 & 2 & \text{Fano}   \\
   
  \hline

3 &    \text{linear section of }\Gr(2,5) \subset\pr^9& 40 & 1 & \text{Fano}   \\

        \hline
           
    4 &  \text{\em\cite[Th.~3.6(1)]{jahnkepeternell}} & 40 & 2 & \text{weak Fano} \\

 \hline

    5 &   \text{\cite[Th.~3.5(3)]{jahnkepeternell}} & 32 & 2 & \text{weak Fano}  \\     

       \hline

    6 &   \text{divisor of degree $(1,2)$ in }\pr^2\times\pr^2 & 30 & 2 & \text{Fano}  \\ 
       
\hline\hline
         \end{array}$$}       
     \end{lemma}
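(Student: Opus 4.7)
The plan is to enumerate $Y_0$ using the classification results collected in Section \ref{weakfano}, refine the list using the constraints already established in this section, and then deduce smoothness from rationality via Lemma \ref{sm}.

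First I would collect the constraints on $Y_0$. By \ref{weakFano} and Lemma \ref{milano}, $Y_0$ is weak Fano with at most isolated, locally factorial, and canonical singularities. By Lemma \ref{blowups}, $\rho_{Y_0}\leq 2$, and if $\rho_{Y_0}=2$ then $Y_0$ has two distinct elementary rational contractions of fiber type. By \ref{degree}, $-K_{Y_0}^3>8r$ where $r=\rho_Y-\rho_{Y_0}\geq 3$; in particular $-K_{Y_0}^3>32$ when $\rho_{Y_0}=1$. A key further constraint comes from Lemma \ref{index}: writing $-K_Y=k^*(-K_{Y_0})-2\sum G_i$ and comparing with $\lambda B=\lambda k^*B_0-\lambda\sum m_iG_i$, where $m_i$ is the multiplicity of $B_0$ at $p_i$, one obtains $\lambda m_i=2$ for every $i$. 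Hence $\lambda\in\{1,2\}$, which imposes a divisibility constraint on $-K_{Y_0}$ in $\Pic(Y_0)$.

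Next I would split by $\rho_{Y_0}$. For $\rho_{Y_0}=1$: $Y_0$ is Fano, and Lemma \ref{bound} gives $-K_{Y_0}^3\leq 64$. The classification of locally factorial canonical Fano $3$-folds with $32<-K^3\leq 64$, combined with the divisibility constraint on $-K_{Y_0}$ and the geometric requirement that $B_0$ contain at least four blown-up points compatibly with the conic bundle structure of $f$, leaves $\pr^3$ and $V_5$ (the linear section of $\Gr(2,5)$); the smooth quadric $Q^3$, of Fano index $3$, is excluded by ruling out the possible choices of $B_0$. For $\rho_{Y_0}=2$ Fano: Lemma \ref{ou2} directly gives $\pr_{\pr^2}(T_{\pr^2})$ and a divisor of degree $(1,2)$ in $\pr^2\times\pr^2$. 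For $\rho_{Y_0}=2$ non-Fano: the anticanonical map must be small, since divisorial $K$-trivial contractions are forbidden by Lemma \ref{milano}(c) and small $K$-negative contractions by Theorem \ref{gdn}. Applying Remark \ref{smoothwfrho2} and Lemma \ref{conti}, the three candidates with $-K^3>24$ have degrees $54$, $40$, $32$; the degree-$54$ case $\pr_{\pr^1}(\ol\oplus\ol(1)^{\oplus 2})$ is excluded because $-K=3\xi$ is not compatible with the divisibility constraint.

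Finally, each of the six listed $3$-folds is rational: this is classical for cases $1$, $2$, $3$, and $6$, and follows from the explicit descriptions in \cite{JPRII} for cases $4$ and $5$. Since $Y$ is birational to $Y_0$ through flops and blow-ups of smooth points, $Y$ is rational, so Lemma \ref{sm} forces $Y$ to be smooth, and then $Y_0$ is smooth as well because the two have the same singularities (Lemma \ref{milano}). The main obstacle will be the detailed classification in the $\rho_{Y_0}=1$ case, where one must exclude both the smooth quadric $Q^3$ and any singular Fano $3$-folds of large degree appearing in \cite{prok,karz}; the divisibility identity $\lambda m_i=2$ handles most of the work, but the case $\lambda=1$ with $p_i$ a double point of $B_0$ requires a careful analysis of the possible geometry of $B_0$ combined with the weak Fano condition on the blow-up $Y$.
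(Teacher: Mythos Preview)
Your proposal has the right ingredients but assembles them in the wrong order, and this creates a genuine gap in the $\rho_{Y_0}=1$ case. You want to classify $Y_0$ first (allowing singularities), then check that each entry on the resulting list is rational, and only then invoke Lemma \ref{sm} to conclude smoothness. But the classification you invoke --- ``locally factorial canonical Fano $3$-folds with $32<-K^3\leq 64$'' --- is not available in the paper (Lemma \ref{bound} only bounds the degree; \cite{prok,karz} do not give a usable list). The paper resolves this by reversing the order: for $\rho_{Y_0}=1$, or $\rho_{Y_0}=2$ with $Y_0$ singular and not Fano, it passes to the anticanonical model $Z_0$, which is a \emph{nodal} Gorenstein Fano $3$-fold with $\rho_{Z_0}=1$, $\rk\Cl(Z_0)\leq 2$, and $-K_{Z_0}^3>24$, and applies \cite[Th.~1.1, 1.2]{prokratII} to conclude that $Z_0$ is rational. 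Rationality of $Y_0$ and $Y$ follows, Lemma \ref{sm} then gives smoothness, and only \emph{after} that does the paper use the classical classification of smooth Fano $3$-folds with $\rho=1$. You have the rationality-implies-smoothness step, but you are missing the input that makes it usable before classification.

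Your exclusion arguments for $Q^3$ and for $\pr_{\pr^1}(\ol\oplus\ol(1)^{\oplus 2})$ via the divisibility constraint $\lambda\in\{1,2\}$ do not work as stated: in both cases $-K$ has index $3$, but $\lambda=1$ with $B_0\in|{-K}|$ is perfectly compatible with $\lambda m_i=2$ (take $m_i=2$), so divisibility alone rules out nothing. The paper instead excludes both varieties by the same geometric mechanism: the blow-up of either one at a point admits a divisorial elementary contraction of type $(2,1)$, which is forbidden by Lemma \ref{milano}$(c)$. You should also note that Remark \ref{smoothwfrho2} applies only to \emph{smooth} weak Fano $3$-folds, so in the $\rho_{Y_0}=2$ non-Fano case you must separate the smooth and singular subcases; the paper handles the singular subcase together with $\rho_{Y_0}=1$ via the $Z_0$ argument above.
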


     \smallskip
     
\begin{proof}
  If $\rho_{Y_0}=2$ and $Y_0$ is Fano, the statement follows from Lemma \ref{ou2}, and we get N.~2 and 6.
If $\rho_{Y_0}=2$ and $Y_0$ is smooth and not Fano, 
then $Y_0$ has small anticanonical map by Lemma \ref{milano}$(b)$,
  and two elementary rational contractions of fiber type by Lemma \ref{blowups};   moreover $-K_{Y_0}^3>24$ (see \ref{degree}).
By Prop.~\ref{smoothwfrho2} 
we see that, up to flops, $Y_0$ is either
 \cite[Th.~3.5(3) or Th.~3.6(1)]{jahnkepeternell}, or $\pr_{\pr^1}(\ol\oplus\ol(1)^{\oplus 2})$.
However the blow-up of $\pr_{\pr^1}(\ol\oplus\ol(1)^{\oplus 2})$ at a point (not lying on the curve of anticanonical degree zero) has a divisorial  elementary contraction of type $(2,1)$, which is excluded by Lemma \ref{milano}$(c)$. Thus we get N.~4 and 5.

  Let us assume that either $\rho_{Y_0}=1$, or $\rho_{Y_0}=2$ and $Y_0$ is singular and not Fano.
  Consider the anticanonical map $\ph_0\colon Y_0\to Z_0$, and note that $Y_0$ is nodal by \ref{nodes}. By Lemma \ref{milano}$(b)$ $\ph_0$ contracts curves of anticanonical degree zero in $(Y_0)_{\reg}$ to nodes in $Z_0$, therefore $Z_0$ is a nodal Gorenstein Fano $3$-fold with $\rho_{Z_0}=1$ and $-K_{Z_0}^3=-K_{Y_0}^3> 24$ by \ref{degree}. Moreover if $\Cl(Z_0)$ is the  Weil divisor class group, we have $\rk\Cl(Z_0)=\rho_{Y_0}\in\{1,2\}$. 
  
Then we can apply \cite[Th.~1.1 and 1.2]{prokratII} to $Z_0$, and conclude that $Z_0$ is rational. Note that in the notation of \cite{prokratII}, $Z_0$ has genus $g:=\frac{1}{2}(-K_{Z_0}^3)+1\geq 14$, and if $Z_0$ has index two, then it has degree $d:=\frac{1}{8}(-K_{Z_0}^3)\geq 4$.

Hence $Y_0$ and $Y$ are rational too, so they are smooth by Lemma \ref{sm}, and we conclude that $\rho_{Y_0}=1$ and $Y_0$ is a smooth Fano $3$-fold.
Moreover $r=\rho_Y-1\geq 4$ and $-K_{Y_0}^3>8r\geq 32$ (see \ref{degree}). 
 By classification (see \cite[\S 12.2]{fanoEMS}) there are three possibilities for $Y_0$: $\pr^3$, a section of the Pl\"ucker embedding of the Grassmannian $\Gr(2,5)$ in $\pr^9$ by a codimension $3$ linear subspace, or
 a quadric. However 
the blow-up of a smooth $3$-dimensional quadric at a point has a divisorial  elementary contraction of type $(2,1)$, which is excluded by Lemma \ref{milano}$(c)$.  Thus we get N.~1 and 3.
\end{proof}
\begin{prg}\label{wings}
  We note that $k\circ f\colon\w{X}\to Y_0$ is $K$-negative.

  Indeed by Lemma \ref{SQMFano}$(b)$ it is enough to check that $k\circ f$ does not contract any exceptional line $\ell\subset\w{X}$.   By Lemma \ref{bijection} $f(\ell)\subset Y$ is a curve of anticanonical degree zero, thus $f(\ell)\not\subset G_i$ for every $i=1,\dotsc,r$, and $k(f(\ell))$ is a curve.
\end{prg}
\begin{lemma}\label{Xr}
There is a diagram:
\stepcounter{thm}
\begin{equation}\label{diagram}
\xymatrix{X\ar@{-->}[r]^{\xi}\ar@{-->}[dr]_{f_{\scriptscriptstyle X}}&{\w{X}}\ar[d]^f\ar@{-->}[r]^{\psi}&{\wi{X}}\ar[r]^{\sigma}&{X_0}\ar[dl]^{f_0}\\
  &Y\ar[r]^k&{Y_0}&
}\end{equation}
where $\psi$ is a  sequence of $K$-negative flips, $\wi{X}$ is smooth, 
$\sigma$ is birational and divisorial with pairwise disjoint exceptional divisors $D_1,\dotsc,D_r$, and $f_0$ is a contraction.
Moreover,  for each $i=1,\dotsc,r$, the divisor 
$D_i\subset \wi{X}$ is the transform of $f^*G_i\subset\w{X}$, $\sigma(D_i)\subset f_0^{-1}(p_i)$, and there are three possitibilities:
\begin{enumerate}[$\bullet$]
\item $D_i$ is of type $(3,0)^{\sm}$ and is the exceptional divisor of the blow-up of a smooth point of $X_0$;
\item $D_i$ is of type $(3,1)^{\sm}$ and is the exceptional divisor of the blow-up of a smooth curve contained in $(X_0)_{\reg}$;
\item $D_i$ is of type $(3,0)^Q$ and   is contracted to an isolated hypersurface singularity of $X_0$, terminal and locally factorial.
\end{enumerate}

In particular $X_0$ has at most  locally factorial, terminal, isolated hypersurface singularities at $\sigma(D_i)$ for $D_i$ of type $(3,0)^Q$.
\end{lemma}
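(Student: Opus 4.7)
The plan is to construct the diagram in three stages: first identify each $f^*G_i$ as a fixed prime divisor of one of the three allowed types, then contract them simultaneously using the machinery of Th.-Def.~\ref{fixed}, and finally descend $f$ to a contraction $f_0\colon X_0\to Y_0$ and check the local structure of $X_0$.

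For the first stage, observe that for each $i=1,\dotsc,r$ the blow-up $k$ factors through a divisorial elementary contraction $Y\to Y_0^{(i)}$ that blows up only the smooth point $p_i$. Applying Lemma~\ref{divisorial} to this contraction composed with $f$, I conclude that $D_i^0:=f^*G_i\subset\w{X}$ is a fixed prime divisor not of type $(3,2)$; hence by Th.-Def.~\ref{fixed}$(b)$ its type is one of $(3,0)^{\sm}$, $(3,0)^Q$, or $(3,1)^{\sm}$. Moreover $D_i^0\cap D_j^0=\emptyset$ for $i\neq j$, since $G_i\cap G_j=\emptyset$ in $Y$.

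For the second stage I would induct on $r$. Applying Th.-Def.~\ref{fixed}$(a),(c)$ to $D_1^0$ produces a SQM $\w{X}\dasharrow \w{X}^{(1)}$ factoring as $D_1^0$-negative, $K$-negative flips, followed by a divisorial elementary contraction $\w{X}^{(1)}\to X^{(1)}$ with exceptional divisor the transform of $D_1^0$. Each flip here is centered on an exceptional plane $L$ with $D_1^0\cdot C_L<0$, and since $D_1^0$ is effective this forces $L\subset D_1^0$; in particular $L$ is disjoint from $D_2^0,\dotsc,D_r^0$. Consequently these divisors pass unchanged through the SQM and the contraction, their images in $X^{(1)}$ remain pairwise disjoint fixed prime divisors, and their types are preserved thanks to Th.-Def.~\ref{fixed}$(e)$ applied to each $D_j^0$ with the ambient SQM. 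Iterating $r$ times yields the factorization $\w{X}\stackrel{\psi}{\dasharrow}\wi{X}\stackrel{\sigma}{\to} X_0$, where $\psi$ is a sequence of $K$-negative flips and $\sigma$ is the simultaneous divisorial contraction of the pairwise disjoint transforms $D_1,\dotsc,D_r$. Smoothness of $\wi{X}$ follows from Lemma~\ref{SQMFano}$(a)$ since flips of smooth $4$-folds remain smooth, and each elementary step $\sigma_i$ of $\sigma$ inherits the geometric structure dictated by the type of $D_i$, producing the three listed cases.

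For the descent, $k\circ f\colon\w{X}\to Y_0$ is $K$-negative by \ref{wings} and contracts each $D_i^0$ to the single point $p_i\in Y_0$. Hence every flip in $\psi$ and each elementary contraction $\sigma_i$ takes place over $p_i$; running the entire MMP above relatively over $Y_0$ yields a regular contraction $f_0\colon X_0\to Y_0$ with $\sigma(D_i)\subset f_0^{-1}(p_i)$. Types $(3,0)^{\sm}$ and $(3,1)^{\sm}$ produce smooth blow-ups, so $X_0$ is smooth at $\sigma(D_i)$ in those cases; for type $(3,0)^Q$, the contraction of a smooth $3$-dimensional quadric $Q$ with $\mathcal{N}_{Q/\wi{X}}\cong\ol_Q(-1)$ realizes $\sigma(D_i)$ as a cone over $Q$, i.e., an isolated, locally factorial, terminal hypersurface singularity. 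The main obstacle is keeping each $D_j^0$ disjoint from the flip loci and preserving its type through the contractions of the other divisors; this is manageable because any $D_i^0$-negative flip locus lies in $D_i^0$, so disjoint divisors never interfere.
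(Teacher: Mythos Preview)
Your overall plan is close to the paper's, but the inductive contraction in your second stage has a real gap. Th.-Def.~\ref{fixed} is stated for a \emph{smooth} Fano $4$-fold $X$ with $\rho_X\geq 7$ (or $\rho_X=6$, $\delta_X\leq 2$); after you contract $D_1$, the variety $X^{(1)}$ may be singular (precisely when $D_1$ is of type $(3,0)^Q$), and its Picard number drops by one at each step. So you cannot re-invoke Th.-Def.~\ref{fixed} on $X^{(1)}$ to contract $D_2$, and your appeal to part $(e)$ does not help: $(e)$ compares different SQMs of the same $X$ together with a divisorial contraction of the \emph{same} divisor, it does not let you transport the theorem to the blown-down variety. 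There is also a mismatch in your use of $(a),(c)$: these produce a SQM starting from $X$, not from $\w{X}$, so the map $\w X\dasharrow\w X^{(1)}$ you write is really $\xi^{-1}$ followed by the canonical SQM for $D_1$, and the first factor is $K$-\emph{positive}; your assertion that $\psi$ is a sequence of $K$-negative flips does not follow from $(c)$.

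The paper avoids both problems by running a single MMP for $D_1+\cdots+D_r$ in $\w X$, \emph{relative to} $k\circ f\colon\w X\to Y_0$. First one checks that $\Mov(\w X)\cap\langle[D_1],\dotsc,[D_r]\rangle=\{0\}$ (using disjointness and \cite[Lemma 5.29(2)]{blowup}), so the MMP terminates with all $D_i$ contracted. Because $k\circ f$ is $K$-negative (this is exactly \ref{wings}, which you only invoke in your third stage), every extremal ray appearing in this relative MMP is automatically $K$-negative; hence $\psi$ is genuinely a sequence of $K$-negative flips. The flip loci are then exceptional planes contained in some $D_i$ (via \cite[Lemma 4.1(c)]{fibrations}), so the $D_i$'s stay pairwise disjoint in $\wi X$, and now Th.-Def.~\ref{fixed} (applied once to each $D_i$, always relative to the original smooth Fano $X$) gives the three listed local pictures and the description of $\Sing(X_0)$ via \cite[Lemma 2.19]{blowup}. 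This produces $\psi$, $\sigma$, and $f_0$ simultaneously without any induction on $r$, and no step requires hypotheses on an intermediate contracted variety.
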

\begin{proof}
We set $D_i:=f^*G_i\subset\w{X}$,  and with a slight abuse of notation we still denote by $D_i\subset\wi{X}$ its transform, for $i=1,\dotsc,r$.

By Lemma \ref{divisorial} $D_i$ is a fixed prime divisor, not of type $(3,2)$, for every $i=1,\dotsc,r$;
by Th.-Def.~\ref{fixed}, $D_i$ can be of type $(3,0)^{\sm}$, $(3,1)^{\sm}$, or $(3,0)^Q$. 
Moreover $D_1,\dotsc,D_r\subset\w{X}$ are pairwise disjoint, thus $D_i\cdot C_{D_j}=0$ when $i\neq j$. Using \cite[Lemma 5.29(2)]{blowup} we see that
$\Mov(\w{X})\cap\langle [D_1],\dotsc,[D_r]\rangle=\{0\}$. Also note that $k(f(D_i))=p_i$ for every $i=1,\dotsc,r$.

By running in $\w{X}$ a MMP for $D_1+\cdots+D_r$, relative to $k\circ f$,
we get a
 diagram as \eqref{diagram}, 
  where
$\psi$ is a sequence of $D_i$-negative flips for some $i$, 
$X_0$ is $\Q$-factorial, $\sigma$ is birational with exceptional divisors $D_1,\dotsc,D_r$, and $f_0$ is a contraction. 

Since  the MMP is relative to $k\circ f$ which is $K$-negative (see \ref{wings}), $\psi$ is a sequence of $K$-negative flips. Therefore $\w{X}\smallsetminus\dom(\psi)$ is a finite, disjoint union of exceptional planes (see \cite[Lemma 4.1(c)]{fibrations}), each contained in some $D_i$.
Since the $D_i$'s are disjoint in $\w{X}$, they stay disjoint also in $\wi{X}$. Then the description from Th.-Def.~\ref{fixed} holds for each one of them. Finally the description of the singularities of $X_0$ follows from \cite[Lemma 2.19]{blowup}.
\end{proof}
\begin{lemma}\label{onedim}
 Let $i\in\{1,\dotsc,r\}$. Every fiber of $f$ over $G_i$ is one-dimensional.
\end{lemma}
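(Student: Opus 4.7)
The plan is to argue by contradiction. Suppose $F:=f^{-1}(y)$ is $2$-dimensional for some $y\in G_i$; then $F\subset f^*G_i$, and the goal is to combine the factorization $f=\pi\circ\tilde{\alpha}$ from Lemma~\ref{rays}, the classification of $2$-dimensional fibers in Th.~\ref{2dimfibers}, and the explicit description of the fixed prime divisor $f^*G_i$ from Lemma~\ref{Xr} in order to exclude every possible shape of $F$.

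First I would write $F=\tilde{\alpha}^{-1}(\pi^{-1}(y))$. Since $\tilde\alpha$ is birational, an isomorphism off $E_1$, and $\tilde\alpha|_{E_1}\colon E_1\to S$ has generic fibers $\pr^1$, the hypothesis $\dim F=2$ forces either $\dim\pi^{-1}(y)=2$, or $\dim\pi^{-1}(y)=1$ with $\pi^{-1}(y)\subset S$; the second alternative implies $y\in\pi(S)=B$, so it can happen only when $y\in G_i\cap B$. In parallel, Rem.~\ref{fiber} distinguishes the cases $y\notin B$ (giving $\dim\N(F,\w X)=1$) and $y\in B$ (giving $\dim\N(F,\w X)=2$). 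Applying Th.~\ref{2dimfibers} together with the facts that $Y$ is smooth (Lemma~\ref{table}) and $\Delta_{\intr}=\emptyset$ (Lemma~\ref{discriminant}), only case $(i)$ of Th.~\ref{2dimfibers} occurs, and this restricts the isomorphism class of $F$ to a short explicit list: one of Kachi's type A or B surfaces for $y\notin B$, or one of the four reducible/irreducible surfaces $\mathbb{F}_1$, $\pr^1\times\pr^1$, $\pr^2\cup(\pr^1\times\pr^1)$, $\pr^2\cup\mathbb{F}_1$ from \cite[Prop.~4.11]{AW} for $y\in B$.

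I would then derive the contradiction from the containment $F\subset f^*G_i$. By Lemma~\ref{Xr}, the transform $\wi{D}_i\subset\wi{X}$ of $f^*G_i$ is isomorphic to $\pr^3$, to a (possibly singular) $3$-dimensional quadric, or to a $\pr^2$-bundle over a smooth curve, and $f^*G_i$ itself is obtained from $\wi{D}_i$ by blowing up the points corresponding to the exceptional planes it contains. Every curve $C\subset F$ is contracted by $f$, so $[C]\in R_1\cup R_2$ where $R_1,R_2$ are the two extremal rays of $\NE(f)$ from Lemma~\ref{rays}; since $E_j$ is the locus of $R_j$, $C\subset E_j$ whenever $[C]\in R_j$. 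Tracking $F\cap E_j$ and combining with the anticanonical degree constraints in Lemma~\ref{SQMFano}(b)-(c) and with the precise description of $\wi{D}_i$ should force each candidate surface $F$ to conflict with $f^*G_i$. The main obstacle is the subcase $y\in G_i\cap B$, where $F$ may be reducible with components lying in both $E_1$ and $E_2$: ruling out these configurations requires a delicate case-by-case intersection-theoretic computation, exploiting that $f^*G_i\neq E_1,E_2$ (as $f^*G_i$ is of type $(3,0)^{\sm}$, $(3,0)^Q$, or $(3,1)^{\sm}$, not $(3,2)$) so that the intersections $f^*G_i\cap E_j$ have the expected codimension.
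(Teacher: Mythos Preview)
Your proposal contains a genuine gap and misses the simple idea that makes the paper's proof work.

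First, your claim ``$[C]\in R_1\cup R_2$ where $R_1,R_2$ are the two extremal rays of $\NE(f)$'' is wrong. The cone $\NE(f)$ is $2$-dimensional, and an irreducible curve $C\subset F$ has class in this cone but not necessarily on one of its edges. Indeed, when $y\notin B$, Rem.~\ref{fiber} shows $\N(F,\w X)=\R[F_0]$ with $[F_0]=[e_1]+[\hat e_1]$ in the \emph{interior} of $\NE(f)$, so no irreducible curve of $F$ lies on either $R_j$. Your subsequent deduction ``$C\subset E_j$ whenever $[C]\in R_j$'' therefore has no force.

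Second, and more importantly, you miss the elementary observation that drives the paper's argument: since $D_i=f^*G_i$ is a pullback and $F$ is a fiber of $f$, one has $D_i\cdot C=0$ for \emph{every} curve $C\subset F$. This single numerical identity replaces all of your proposed classification of $F$ and case analysis. The paper then passes to $\wi X$ via $\psi$ (Lemma~\ref{Xr}): the transform $\wi F$ sits inside $\Exc(\sigma)$, which is contracted by $\sigma$ to something of dimension $\leq 1$. A dimension count (using that the indeterminacy locus of $\psi^{-1}$ is a finite union of exceptional lines, none contained in $\Exc(\sigma)$) produces an irreducible curve $C\subset\wi F\cap\dom(\psi^{-1})$ with $\sigma(C)=\{\pt\}$, hence $D_i\cdot C<0$ in $\wi X$. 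Since $C\subset\dom(\psi^{-1})$, this intersection number transports back to $\w X$ and contradicts $D_i\cdot C'=0$.

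Your route, by contrast, invokes Lemmas~\ref{discriminant} and~\ref{table}, Th.~\ref{2dimfibers}, the Kachi and Andreatta--Wi\'sniewski fiber classifications, and then an admittedly ``delicate case-by-case'' verification against the geometry of $D_i$, which you leave unspecified. Even setting aside the error above, it is unclear how you would carry this out: the isomorphism type of $F$ by itself does not tell you how $F$ sits inside $D_i$, and the relevant constraint you need is exactly the numerical one $D_i\cdot C=0$ that you did not record.
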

\begin{proof}
  Suppose by contradiction that $f$ has a $2$-dimensional fiber $F$ over $G_i$, so that $F\subset D_i$. Since $D_i=f^*G_i$, we have $D_i\cdot C=0$ for every curve $C\subset F$; in particular $F\cap\dom(\psi)\neq\emptyset$, as $\w{X}\smallsetminus \dom(\psi)$ is a finite union of exceptional planes $L$ such that either $D_i\cdot C_L< 0$, or $D_i\cap L=\emptyset$ (see the proof of Lemma \ref{Xr}).

 Let $\wi{F}\subset\wi{X}$ be the transform of $F$. Then  $\wi{F}\subset \Exc(\sigma)$, and $\Exc(\sigma)$ cannot contain exceptional lines, see \cite[Rem.~5.6]{blowup}. The indeterminacy locus of $\psi^{-1}$ is a finite union of exceptional lines, therefore $\dim( \wi{F}\cap (\wi{X}\smallsetminus\dom(\psi^{-1})))\leq 0$. Since  $\dim \sigma(\wi{F})\leq \dim\sigma(\Exc(\sigma))\leq 1$, there is an irreducible curve $C\subset \wi{F}$ such that $C\subset \dom(\psi^{-1})$ and $\sigma(C)=\{pt\}$. We get $\Exc(\sigma)\cdot C<0$, and $D_i\cdot C'<0$ where $C'\subset F$ is the transform of $C$, a contradiction because  $D_i\cdot C'=0$.
\end{proof}  
\begin{prg}\label{BG}
  We have $B\cap G_i\subset B_{\reg}$ for every $i\in\{1,\dotsc,r\}$. 

  This follows from
  Lemma \ref{onedim}, because $B$ is smooth outside (possibly) the images of the $2$-dimensional fibers of $f$ (see \ref{discrf}).
\end{prg}
Recall that $\w{X}$ contains the fixed prime divisors $D_1,\dotsc,D_r,E_1,E_2$, and that we have associated curves $C_{D_i}\subset D_i$, $C_{E_j}\subset E_j$, see Th.-Def.~\ref{fixed}.
 \begin{lemma}\label{X0Fano}
If $Y_0$ is Fano, then 
 $X_0$ is Fano; otherwise $X_0$  has a SQM which is Fano. Moreover $f_0$ is $K$-negative.
\end{lemma}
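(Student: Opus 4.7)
The plan is to leverage the $K$-negativity built into the MMP construction of Lemma \ref{Xr}, together with the Mori dream space structure coming from $X$ being Fano.

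For the $K$-negativity of $f_0$: by \ref{wings}, $k \circ f\colon \w{X} \to Y_0$ is $K$-negative, so $-K_{\w{X}}$ is $(k\circ f)$-ample, and every extremal ray of the Mori cones relative to $Y_0$ is $K$-negative. Since in Lemma \ref{Xr} the morphism $\wi{X} \to X_0 \to Y_0$ is obtained by running the relative $(D_1 + \cdots + D_r)$-MMP on $\w{X}$ over $Y_0$, all the flipping contractions in $\psi$ and all the divisorial extremal steps making up $\sigma$ are $K$-negative, and this property propagates to the induced morphism $f_0\colon X_0 \to Y_0$.

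For the Fano SQM claim: $X$ is log Fano and a Mori dream space, and the map $X \dasharrow X_0$ is a birational rational contraction (composition of a SQM, $K$-negative flips, and a divisorial contraction), so by \cite[Lemma 2.8]{prokshok} $X_0$ is log Fano, hence itself a Mori dream space. Under the natural identifications $\Nu(X) \cong \Nu(\w{X}) \cong \Nu(\wi{X})$ and the quotient $\Nu(\wi{X}) \twoheadrightarrow \Nu(X_0)$ that kills $[D_1], \dotsc, [D_r]$, the class $[-K_X]$ is sent to $[-K_{X_0}]$. Since $-K_X$ is ample on $X$, its class lies in the strict interior of the chamber of $\Mov(X)$ corresponding to $X$ in the MDS chamber decomposition; the image of this chamber in $\Nu(X_0)$ is the nef cone of some SQM $X_0'$ of $X_0$, and $[-K_{X_0'}]$ lies in its interior, so $-K_{X_0'}$ is ample and $X_0'$ is Fano.

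Finally, in the case $Y_0$ is Fano I would strengthen this to $X_0$ itself being Fano by verifying that every extremal ray $R$ of $\NE(X_0)$ is $K$-negative: rays contained in $\NE(f_0)$ are $K$-negative by the $K$-negativity of $f_0$, while any ray $R \not\subset \NE(f_0)$ pushes forward under $(f_0)_*$ to a nonzero, necessarily extremal and $K$-negative ray of $\NE(Y_0)$ (by Fano-ness of $Y_0$), and the $f_0$-ampleness of $-K_{X_0}$ together with the decomposition $-K_{X_0} = -K_{X_0/Y_0} + f_0^*(-K_{Y_0})$ should force $-K_{X_0} \cdot R > 0$. The main obstacle is controlling the sign of $-K_{X_0/Y_0} \cdot R$ for these transverse extremal rays, which I would handle using the generically-conic-bundle description of $f_0$ from \ref{setting} together with the classification of its $2$-dimensional fibers from \ref{nodes}, and if needed by lifting $R$ to an extremal class in some SQM of $\wi{X}$ where ampleness of $-K_X$ can be applied.
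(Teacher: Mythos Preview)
Your proposal has genuine gaps in all three parts.

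\textbf{On $K$-negativity of $f_0$.} The claim that $K$-negativity ``propagates'' from $k\circ f$ through the MMP to $f_0$ is not justified. It is true that each flip in $\psi$ and each divisorial step in $\sigma$ is $K$-negative, but after a $K$-negative flip the anticanonical class is no longer relatively ample (the new exceptional lines have $K\cdot\ell=1$). So knowing that $-K_{\w X}$ is $(k\circ f)$-ample does not tell you that $-K_{X_0}$ is $f_0$-ample. The paper in fact proves the Fano statement \emph{first} and deduces $K$-negativity of $f_0$ afterwards, by checking that $f_0$ does not contract any exceptional line of $X_0$.

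\textbf{On the Fano SQM via chamber decomposition.} Your claim that the projection $\Nu(X)\twoheadrightarrow\Nu(X_0)$ sends $\Nef(X)$ to the nef cone of some SQM of $X_0$ is not how the Mori chamber decomposition behaves under rational contractions. What is true is that $\sigma^*\Mov(X_0)$ sits inside $\Mov(\wi X)$ with a compatible chamber structure, but you are going in the wrong direction: projecting chambers does not yield chambers. In particular you have not shown that $[-K_{X_0}]$ even lies in the interior of $\Mov(X_0)$.

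\textbf{On $X_0$ Fano when $Y_0$ is Fano.} Your proposed decomposition $-K_{X_0}=-K_{X_0/Y_0}+f_0^*(-K_{Y_0})$ cannot control transverse extremal rays without knowing the sign of the relative canonical, as you yourself note; and your fallback (``lift $R$ to some SQM of $\wi X$'') is circular, since that is essentially what you are trying to prove.

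\textbf{What the paper does instead.} The key input you are missing is Lemma~\ref{bijection}: the exceptional lines in $\w X$ are in bijection with the $K$-trivial curves in $Y$, via $\ell\mapsto f(\ell)$. When $Y_0$ is Fano, every $K$-trivial curve in $Y$ meets $\Exc(k)=\bigcup G_i$, hence every exceptional line in $\w X$ meets some $D_i=f^*G_i$ and so has $D_i\cdot\ell>0$; this persists in $\wi X$. One then checks directly that $\sigma^*(-K_{X_0})=-K_{\wi X}+\sum m_iD_i$ is nef with $\bigl(\sigma^*(-K_{X_0})\bigr)^\perp\cap\NE(\wi X)=\NE(\sigma)$, giving $-K_{X_0}$ ample. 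When $Y_0$ is not Fano, the remaining exceptional lines (those disjoint from all $D_i$) correspond to the finitely many $K$-trivial curves in $Y_0$; one shows they are numerically equivalent in $\wi X$ and span a single small extremal ray, whose flip descends to a SQM $X_0\dasharrow X_0'$ with $X_0'$ Fano. Finally, since these exceptional lines map under $f_0\circ\sigma$ to curves (not points) in $Y_0$, $f_0$ contracts no exceptional line and is $K$-negative.
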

\begin{proof}
  We keep the same notation as in the proof of Lemma \ref{Xr}. Recall from Lemma \ref{SQMFano} that the indeterminacy locus of $\xi^{-1}$ is the finite, disjoint union of all exceptional lines in $\w{X}$. In particular 
$\w{X}\smallsetminus\dom(\psi)$ is disjoint from $\w{X}\smallsetminus\dom(\xi^{-1})$, because exceptional planes and exceptional lines cannot meet (see Lemma \ref{SQMFano}$(c)$). Therefore $\wi{X}$ contains two types of exceptional lines, those contained in $\dom(\psi^{-1})$, and those in the indeterminacy locus of $\psi^{-1}$. Each of these last ones has positive intersection with some $D_i$, because $\psi$ is a sequence of flips that are $D_i$-negative for some $i$.

  Suppose first that $Y_0$ is Fano. Then every curve of anticanonical degree zero in $Y$ must meet $\Exc(k)=G_1\cup\cdots\cup G_r$, and by Lemma \ref{bijection} this means that every exceptional line $\ell\subset\w{X}$ must meet some $D_i=f^*G_i$. On the other hand  $\ell$ cannot be contained in any $D_i$, because
  $k\circ f$ is $K$-negative (see \ref{wings}),   thus  $D_i\cdot\ell>0$ for some $i$.
  We conclude that for every exceptional line $\ell'\subset\wi{X}$ we have $D_i\cdot\ell'>0$ for some $i$.
  
Consider now $\sigma^*(-K_{X_0})=-K_{\wi{X}}+\sum_{i=1}^rm_iD_i$ with $m_i=2$ (respectively $m_i=3$) if $D_i$ is of type $(3,0)^Q$ or $(3,1)^{\sm}$ (respectively $(3,0)^{\sm}$).  
Using that $D_i\cdot C_{D_j}=0$ for every $i\neq j$, proceeding as in \cite[proof of Th.~3.15]{eff} one shows that $-K_{\wi{X}}+\sum_im_iD_i$ is nef and that $(-K_{\wi{X}}+\sum_im_iD_i)^{\perp}\cap\NE(\wi{X})=\NE(\sigma)$; this implies that
$X_0$ is Fano.

\medskip

If instead $Y_0$ is not Fano, then we have $\rho_{Y_0}=2$ and $Y_0$ contains some curves $C_j$ of anticanonical degree zero, $j=1,\dotsc,s$, with $C_1\equiv\cdots \equiv C_s$. The transforms $\w{C}_j\subset Y$ of these curves are precisely the curves of anticanonical degree zero in $Y$ that are disjoint from $\Exc(k)$, and in turn by Lemma \ref{bijection} these are images of the exceptional curves $\ell_j\subset\w{X}$ that are disjoint from $D_1,\dotsc,D_r$.

We also have $\ell_1\equiv\cdots\equiv\ell_s$. Indeed fix $j\in\{1,\dotsc,s\}$. We have $f_*\ell_1 =\w{C}_1\equiv\w{C}_j=f_*\ell_j$ and, by Lemma \ref{rays}, $\ker f_*$ is generated by the classes $[C_{E_1}],[C_{E_2}]$ with  $E_1\cdot C_{E_2}>0$ and $E_1\cdot\ell_1=E_1\cdot\ell_j=0$ (see \ref{discrf}). Then $\ell_1\equiv\ell_j+aC_{E_1}+bC_{E_2}$ with $a,b\in\Q$, and intersecting with $K_{\w{X}}$ and $E_1$ we get $a=b=0$.

We note that the $\ell_j$'s are contained in $\dom(\psi)$; let us still denote by $\ell_j$ their images in $\wi{X}$. These are the unique exceptional lines in $\wi{X}$ that are disjoint from $\Exc(\sigma)$, and they are still numerically equivalent.

Let us show that their class generates an extremal ray $R$ of $\NE(\wi{X})$.
By contradiction, if $[\ell_1]$ does not belong to an extremal ray, we can write
$$\ell_1\equiv \sum_{\ell\not\equiv\ell_1}\lambda_\ell\ell+\sum_{i=1}^r\mu_iC_{D_i}+\Gamma^+$$
where $\lambda_\ell,\mu_i\in\Q_{\geq 0}$, $\ell$ are exceptional lines, and $\Gamma^+$ is an effective  one-cycle (with coefficients in $\Q$) such that $-K_{\w{X}}\cdot \Gamma^+\geq 0$ and $D_i\cdot  \Gamma^+\geq 0$ for every $i=1,\dotsc,r$. Intersecting with $D_i$ we get $\mu_i=\sum_\ell\lambda_\ell D_i\cdot\ell+D_i\cdot\Gamma^+\geq \sum_\ell\lambda_\ell$ for every $i=1,\dotsc,r$, and intersecting with $K_{\w{X}}$ and using that $-K_{\w{X}}\cdot C_{D_i}\geq 2$ for every $i$ we reach a contradiction.

Let $\eta\colon \wi{X}\dasharrow\wi{X}'$ be the flip of $R$. Then the composite map $X\dasharrow\wi{X}'$ factors as sequence of $K$-negative flips, each one $D_i$-negative for some $i$, and every exceptional line in $\wi{X}'$ has positive intersection with some $D_i$. We have a diagram:
$$\xymatrix{{\wi{X}}\ar[d]_{\sigma}\ar@{-->}[r]^{\eta}&{\wi{X}'}\ar[d]^{\sigma'}\\
  {X_0}\ar@{-->}[r]^{\eta_0}&{X_0'}
  }$$
  where $\Exc(\sigma)=D_1\cup\cdots\cup D_r$ is contained in $\dom(\eta)$, so that $\sigma$ and $\sigma'$ are locally isomorphic divisorial contractions, and as in the first part of the proof we see that $X_0'$ is Fano. Moreover $\eta_0$ is the flip of a small extremal ray generated by the class of the exceptional lines $\sigma(\ell_j)\subset X_0$.

  We also note that $f_0(\sigma(\ell_j))=C_j$, so that $f_0$ does not contract any exceptional line, and it is $K$-negative.
\end{proof}
\begin{lemma}\label{f0}
The contraction $f_0$ is  special, $\rho_{X_0}-\rho_{Y_0}=2$, 
and the fiber $f_0^{-1}(p_i)$ is one-dimensional for every $i=1,\dotsc,r$.
\end{lemma}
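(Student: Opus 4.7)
The plan is to handle the three claims in sequence, with the Picard number count being bookkeeping, the fiber dimension requiring the main geometric input, and specialness following by descending properties of $f$ outside the exceptional locus.

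First I would establish $\rho_{X_0}-\rho_{Y_0}=2$ by combining known bookkeeping. Since $\xi$ and $\psi$ are SQMs, they preserve Picard number, so $\rho_{\wi{X}}=\rho_{\w{X}}=\rho_X$. By Lemma \ref{blowups}, $k\colon Y\to Y_0$ is the blow-up of $r$ distinct smooth points, whence $\rho_Y=\rho_{Y_0}+r$. By Lemma \ref{Xr}, $\sigma$ is a divisorial birational contraction with $r$ pairwise disjoint irreducible exceptional divisors $D_1,\dots,D_r$; therefore $\sigma$ factors as a sequence of $r$ elementary divisorial contractions (contracting one $D_i$ at a time), so $\rho_{X_0}=\rho_{\wi{X}}-r=\rho_X-r$. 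Combining with the hypothesis $\rho_X-\rho_Y=2$ gives $\rho_{X_0}-\rho_{Y_0}=2$.

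Next I would prove $\dim f_0^{-1}(p_i)=1$. The contraction $f_0$ is of fiber type with relative dimension $1$, so upper-semicontinuity of fiber dimension forces $\dim f_0^{-1}(p_i)\geq 1$. For the opposite inequality, I would analyze the fiber of the regular morphism $f_0\circ\sigma\colon\wi{X}\to Y_0$ over $p_i$. On the open set $\dom(\psi^{-1})\subset \wi{X}$ we have the identity $f_0\circ\sigma=k\circ f\circ\psi^{-1}$, while $(k\circ f)^{-1}(p_i)=f^{-1}(G_i)=D_i$ as sets. Thus
\[
(f_0\circ\sigma)^{-1}(p_i)\cap\dom(\psi^{-1})=\wi{D}_i\cap\dom(\psi^{-1}).
\]
Since $\wi{D}_i$ is irreducible of dimension $3$ and $\wi{X}\setminus\dom(\psi^{-1})$ is a finite union of exceptional lines (dimension $1$), every irreducible component of the closed subset $(f_0\circ\sigma)^{-1}(p_i)$ either has dense intersection with $\dom(\psi^{-1})$ and is therefore contained in $\wi{D}_i$, or lies in $\wi{X}\setminus\dom(\psi^{-1})$ and is therefore a union of exceptional lines. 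Pushing forward by $\sigma$: the contribution from $\wi{D}_i$ is $\sigma(\wi{D}_i)$, which is a point or a curve according to Lemma \ref{Xr}; the contribution from each exceptional line is at most a curve in $X_0$. Hence $f_0^{-1}(p_i)$ is at most $1$-dimensional, and combining the two bounds gives equality.

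Finally, to conclude that $f_0$ is special, I need $Y_0$ to be $\Q$-factorial and $f_0$ to have at most isolated $2$-dimensional fibers and no $3$-dimensional fibers. The first point is immediate: $Y_0$ is smooth by Lemma \ref{table}. For the second, over $Y_0\setminus\{p_1,\dots,p_r\}$ the map $k$ restricts to an isomorphism onto $Y\setminus\bigcup G_i$, and (since all exceptional planes of $\psi$ are contained in $\bigcup D_i$) the composition $\sigma\circ\psi$ restricts to an isomorphism $\w{X}\setminus\bigcup D_i\xrightarrow{\sim} X_0\setminus\sigma(\Exc\sigma)$ intertwining $f$ and $f_0$. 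Since $f$ is special by hypothesis, $f_0$ inherits the property of having at most isolated $2$-dimensional fibers and no $3$-dimensional fibers over $Y_0\setminus\{p_1,\dots,p_r\}$; over each $p_i$ the fiber is $1$-dimensional by the previous paragraph. Hence $f_0$ is special. The main obstacle is the upper bound on $\dim f_0^{-1}(p_i)$: the key is that the only extra pieces of $(f_0\circ\sigma)^{-1}(p_i)$ beyond $\wi{D}_i$ must lie in the one-dimensional locus $\wi{X}\setminus\dom(\psi^{-1})$, which is exactly what prevents $f_0^{-1}(p_i)$ from acquiring higher-dimensional components.
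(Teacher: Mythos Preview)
Your argument is correct. The Picard number count is identical to the paper's. For the other two claims your route differs from the paper's in instructive ways.

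For $\dim f_0^{-1}(p_i)=1$, the paper simply observes that, locally around $p_i$, diagram \eqref{diagram} coincides with diagram \eqref{diagram0} from the proof of Lemma~\ref{divisorial}, and then invokes the conclusion $\dim f_1^{-1}(p)=1$ established there (which in turn relies on a case analysis according to the type of $D_i$ and on results from \cite{eff}). Your argument is more self-contained: you bound the fiber directly by noting that $(f_0\circ\sigma)^{-1}(p_i)$ agrees with $\wi{D}_i$ on the open set $\dom(\psi^{-1})$, and that the complement $\wi{X}\smallsetminus\dom(\psi^{-1})$ is one-dimensional, so the only possible extra pieces are curves; since $\sigma(\wi{D}_i)$ has dimension $\leq 1$ by Lemma~\ref{Xr}, the conclusion follows. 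This avoids unwinding the earlier case analysis, at the cost of a small dimension-counting computation.

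For specialness, the paper uses the prime-divisor formulation of Definition~\ref{defspecial}: any prime divisor $P\subset X_0$ not dominating $Y_0$ has transform $\w P\subset\w X$ distinct from the $D_i$, so $f(\w P)$ is a prime divisor distinct from the $G_i$, and hence $f_0(P)=k(f(\w P))$ is a prime divisor in $Y_0$. You instead use the equivalent fiber-dimension characterization (valid since $\dim X_0=4$, $\dim Y_0=3$), comparing fibers of $f_0$ over $Y_0\smallsetminus\{p_1,\dots,p_r\}$ with those of $f$ via the isomorphism away from $\bigcup D_i$. Both are clean; the paper's version is a one-liner, while yours fits naturally with your direct analysis of the fibers over the $p_i$.
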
  
\begin{proof}
  We have $\rho_X-\rho_{X_0}=\rho_Y-\rho_{Y_0}=r$, therefore
  $\rho_{X_0}-\rho_{Y_0}= \rho_X-\rho_Y=2$. Moreover $f_0$ is special because $Y_0$ is smooth (Lemma \ref{table}) and if $P\subset X_0$ is a prime divisor with $f_0(P)\subsetneq Y_0$, then its transform $\w{P}\subset\w{X}$ is different from $D_1,\dotsc,D_r$, thus $f(\w{P})\subset Y$ is a prime divisor different from $G_1,\dotsc,G_r$, and $f_0(P)=k(f(\w{P}))\subset Y_0$ is a prime divisor.

  Finally, locally around $p_i$, diagram \eqref{diagram} is isomorphic to diagram \eqref{diagram0}, therefore $\dim f_0^{-1}(p_i)=1$ for every $i=1,\dotsc,r$, as shown in the proof of Lemma \ref{divisorial}.
\end{proof}
\begin{prg}\label{B0}
  Since $f_0$ is special, it has at most isolated $2$-dimensional fibers;
  outside these, $f_0$ is a conic bundle (see Prop.~\ref{conicsing}).

  The discriminant divisor of $f_0$ is $B_0$, and  $(f_0)^*B_0$ has two irreducible components, which are the transforms of $E_1$ and $E_2$ in $X_0$. Indeed $f$ and $f_0$ coincide on $Y\smallsetminus\Exc(k)$ and $Y_0\smallsetminus \{p_1,\dotsc,p_r\}$ respectively, thus this follows from \ref{discrf} and Lemma
  \ref{discriminant}.
Similarly, since $B$ is smooth outside the images of the $2$-dimensional fibers of $f$, we see that
$B_0$ is smooth outside (possibly) $p_1,\dotsc,p_r$ and the images of the $2$-dimensional fibers of $f_0$.
\end{prg}
\begin{lemma}\label{rays2}
  The cone $\NE(f_0)$ has two extremal rays, both of type $(3,2)$, with exceptional divisors the transforms of $E_1$ and $E_2$.
\end{lemma}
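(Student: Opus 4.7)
The plan is to argue in complete parallel with Lemma \ref{rays}, using that $f_0$ is a $K$-negative, special contraction with $\rho_{X_0}-\rho_{Y_0}=2$.

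First, by Lemma \ref{f0}, $\dim\NE(f_0)=2$, so there are exactly two extremal rays. By Theorem \ref{special}$(d)$ applied to $f_0$ (special by Lemma \ref{f0}, $K$-negative by Lemma \ref{X0Fano}), the pullback $f_0^*B_0$ decomposes as $E_1^0+E_2^0$, where $E_i^0\subset X_0$ are the transforms of $E_i$ (see \ref{B0}), and there are curves $e_1,\hat e_1$ in a general fiber over $B_0$ with $E_1^0\cdot e_1<0$ and $E_2^0\cdot\hat e_1<0$, while $E_2^0\cdot e_1\geq 0$ and $E_1^0\cdot\hat e_1\geq 0$. Since these classes lie in $\NE(f_0)$, one extremal ray $R_1$ must be $E_1^0$-negative and the other $R_2$ must be $E_2^0$-negative; in particular the two rays are distinct and $\Lo(R_i)\subseteq E_i^0$, so neither is of fiber type.

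Next, I would show that each $R_i$ is divisorial with $\Exc(R_i)=E_i^0$. Each $R_i$ is $K$-negative (as $f_0$ is) and, by Lemma \ref{Xr}, $X_0$ has at most locally factorial, terminal, isolated hypersurface singularities. Hence if $R_i$ were small, Theorem \ref{kachiflip} would produce exceptional planes $L\cong\pr^2\subseteq E_i^0$. I would derive a contradiction by lifting $L$ through $\sigma\circ\psi\colon\w X\dashrightarrow X_0$ to obtain an exceptional plane contained in $E_i\subset\w X$, which is impossible: $E_i$ inherits from $E_i'\subset X$ the property of containing no exceptional planes (by \cite[Rem.~2.17(2)]{blowup} applied to the fixed prime divisor $E_i'$ of type $(3,2)$ from Lemma \ref{rays}), and the exceptional planes of $\w X$ must sit in the open locus where $\xi$ is an isomorphism since planes and exceptional lines cannot meet (Lemma \ref{SQMFano}$(c)$). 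To make the lift rigorous I would split cases: if $f_0(L)\in B_0\setminus\{p_1,\dots,p_r\}$ then $\sigma$ and $\psi^{-1}$ are isomorphisms near $L$, so the lift is immediate; if instead $f_0(L)=p_j$ for some $j$, then $L\subseteq f_0^{-1}(p_j)$, contradicting $\dim f_0^{-1}(p_j)=1$ (Lemma \ref{f0}).

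Finally, with $R_i$ divisorial and $\Lo(R_i)\subseteq E_i^0$, we obtain $\Exc(R_i)=E_i^0$. To determine the type, the contraction of $R_i$ gives a factorization $X_0\xrightarrow{\alpha_0^i}W_0^i\xrightarrow{\pi_0^i}Y_0$ of $f_0$. Since $\pi_0^i(\alpha_0^i(E_i^0))=f_0(E_i^0)=B_0$ is a surface, we must have $\dim\alpha_0^i(E_i^0)\geq 2$; combined with the general inequality $\dim\alpha_0^i(E_i^0)\leq 2$ for the exceptional divisor of a divisorial contraction, this forces $R_i$ to be of type $(3,2)$, as required.

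The main obstacle is the exceptional-plane lifting step, because $L$ may meet the loci $\sigma(D_j)\subseteq f_0^{-1}(p_j)$ in complicated ways (points for $(3,0)^{\sm},(3,0)^Q$, curves for $(3,1)^{\sm}$), and the strict transform of $L$ need not remain a $\pr^2$; the trick is to use the fiber-dimension constraint from Lemma \ref{f0} to reduce to the situation where $\sigma\circ\psi$ is a local isomorphism around $L$.
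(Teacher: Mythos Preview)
Your overall strategy matches the paper's: find two $E_i^0$-negative rays in $\NE(f_0)$, rule out the small case by producing a $\pr^2$ inside $E_i$ (which contains no exceptional planes, as already noted in the proof of Lemma~\ref{rays}), and then read off type $(3,2)$ from $f_0(E_i^0)=B_0$. The difference is in how the lifting step is executed. The paper sidesteps your case analysis entirely by applying Prop.~\ref{degreeone} to the birational map $X\dasharrow X_0$: since $(L,-K_{X_0|L})\cong(\pr^2,\ol_{\pr^2}(1))$ by Th.~\ref{kachiflip}, every line in $L$ has anticanonical degree~$1$, and since the indeterminacy locus $P=X_0\smallsetminus\dom((X\dasharrow X_0)^{-1})$ is one-dimensional (hence cannot contain $L$), Prop.~\ref{degreeone} forces $L\cap P=\emptyset$. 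This places $L$ in $(X_0)_{\reg}$ and in the locus where the map to $\w X$ is an isomorphism, all in one stroke.

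Your case split works, but Case~1 has a gap you should close. You claim that if $f_0(L)\notin\{p_1,\dots,p_r\}$ then $\psi^{-1}$ is an isomorphism near $\sigma^{-1}(L)$; for this you need that the exceptional lines $\ell_j\subset\wi X$ (the indeterminacy of $\psi^{-1}$) satisfy $\sigma(\ell_j)\subset f_0^{-1}(p_i)$ for some $i$. This is true because the MMP producing $\psi$ is run \emph{relative to $k\circ f$} (see the proof of Lemma~\ref{Xr}), so each flip is over $Y_0$, and the flipped curve $\ell_j$ (arising from an exceptional plane inside $D_i$, which maps to $p_i$) lies in the fiber of $f_0\circ\sigma$ over $p_i$; say this explicitly. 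A second minor point: Th.~\ref{kachiflip} gives only $(L,-K_{X_0|L})\cong(\pr^2,\ol(1))$, not the normal bundle, so calling $L$ an ``exceptional plane'' at that stage is premature. Once you know $L\subset (X_0)_{\reg}$ (which your Case~1 yields), apply Th.~\ref{kawsm} locally to get $\ma N_{L/X_0}\cong\ol(-1)^{\oplus 2}$ before lifting.
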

\begin{proof}
  We proceed as in the proof of Lemma \ref{rays}; in particular  we have shown there that $E_1$ does not contain exceptional planes.

  Since $f$ and $f_0$ coincide on the general fibers over $B$ and $B_0$ respectively,
  $f_0$ contracts some curve $e_1$ with $E_1''\cdot e_1<0$, where $E_1''\subset X_0$ is
  the transform of $E_1\subset\w{X}$.  Thus $\NE(f_0)$ has an $E_1''$-negative extremal
ray $R$, and $R$ is $K$-negative because $f_0$ is. It is enough to show that $R$ is not small.

 Suppose by contradiction that $R$ is small.
 By Th.~\ref{kachiflip}  for every irreducible component $L$ of $\Lo(R)$ we have $(L, -K_{X_0|L}) \cong(\pr^2, \ol_{\pr^2}(1))$. Consider the birational map $X\dasharrow X_0$ and note that the indeterminacy locus of its inverse has dimensione one, so that it cannot contain $L$. By Prop.~\ref{degreeone} we conclude that $L$ is disjoint from this indeterminacy locus; in particular $L\subset (X_0)_{\reg}$ and $L$ is contained in the open subset where the map ${X}\dasharrow X_0$ is an isomorphism. Thefore
 $L$ is an exceptional plane by Th.~\ref{kawsm}, and its transform in $\w{X}$ gives an exceptional plane contained in $E_1$, a contradiction.
\end{proof}
\begin{prg}\label{reducible}
For every $i=1,\dotsc,r$ the fiber  $f_0^{-1}(p_i)$ is isomorphic to a reducible conic, with a component in the transform of $E_1$ and one in that of $E_2$.

  Indeed, since $p_i\in B_0$ (Lemma \ref{index}) and $\dim f_0^{-1}(p_i)=1$ (Lemma \ref{f0}), $f_0^{-1}(p_i)$ must be isomorphic to a singular conic. On the other hand 
  $f_0^{-1}(p_i)$ cannot be a double line, otherwise the class of $(f_0^{-1}(p_i))_{\text{\it red}}$ should belong to both extremal rays of $\NE(f_0)$.
\end{prg}
\begin{prg}\label{no30}
  No $D_i$ is of type $(3,0)^{\sm}$.

  Indeed $\sigma(D_i)\in f_0^{-1}(p_i)$ (Lemma \ref{Xr})
  and $f_0^{-1}(p_i)$
 is isomorphic to a reducible conic by \ref{reducible}, 
therefore
$\sigma(D_i)$ is contained in an integral curve $\Gamma$ of anticanonical degree one.
This is impossible in the case $(3,0)^{\sm}$, as the transform of $\Gamma$ in $\w{X}$ would have anticanonical degree $\leq -2$, contradicting Lemma \ref{SQMFano}$(b)$.
\end{prg}
\begin{prg}\label{adjacent}
  Consider the transform of $E_j$ in $X_0$, for $j\in\{1,2\}$. This is a fixed prime divisor in $X_0$, therefore $D_i$ is adjacent to $E_j$ in $X$ for every $i=1,\dotsc,r$ (see p.~\pageref{padj} and \cite[Lemma 4.4]{small}). We also note that $B\cap G_i\neq\emptyset$ in $Y$ by Lemma \ref{index}, thus $E_j\cap D_i\neq\emptyset$ in $\w{X}$, and the same must be in $X$, because the indeterminacy locus of $\xi^{-1}\colon\w{X}\dasharrow X$ has dimension one (Lemma \ref{SQMFano}$(a)$). By Lemma \ref{2dimfaces}
  we conclude that $E_j\cdot C_{D_i}=1$ if $D_i$ is of type $(3,0)^Q$, while  $E_j\cdot C_{D_i}\in\{0,1\}$ if $D_i$ is of type $(3,1)^{\sm}$.
\end{prg}
For $i=1,\dotsc,r$ we denote by $C_{G_i}\subset G_i$ a line in $G_i\cong\pr^2$.
\begin{lemma}\label{aeb} 
  For every $i=1,\dotsc,r$ let $x_i\in X_0$ be the singular point of the reducible conic $f_0^{-1}(p_i)$ (see \ref{reducible}).
  One of the following holds:
  \begin{enumerate}[$(a)$]
  \item  $-K_Y=2B$,  $-K_{Y_0}=2B_0$, $B_0$ is smooth at $p_1,\dotsc,p_r$, $X_0$ is smooth, every $D_i$ is of type $(3,1)^{\sm}$, and $(E_1+E_2)\cdot C_{D_i}=1$. Moreover, for every $i$, $\sigma$ blows-up the component of $f_0^{-1}(p_i)$ contained in $\sigma(E_j)$, where $j\in\{1,2\}$ is such that $E_j\cdot C_{D_i}=1$;
  \item   $-K_Y=B$,  $-K_{Y_0}=B_0$, $\Sing(X_0)=\{x_1,\dotsc,x_r\}$, and every $D_i$ is of type $(3,0)^Q$. Moreover $B\cdot C_{G_i}=2$ and
$B_0$ has at $p_i$ a rational double point of type $A_1$ or $A_2$, for every $i=1,\dotsc,r$.
    \end{enumerate} \end{lemma}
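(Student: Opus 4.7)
My strategy is to compute $(E_1+E_2)\cdot C_{D_i}$ in two ways and extract the dichotomy between cases $(a)$ and $(b)$ from the comparison. Since $D_i=f^*G_i$ and $D_i\cdot C_{D_i}=-1$, the projection formula gives $G_i\cdot f_*C_{D_i}=-1$; as a degree-$d$ curve in $G_i\cong\pr^2$ meets $G_i$ in $-d$, the class $f_*C_{D_i}$ is a line $C_{G_i}\subset G_i$. From $k^*B_0=B+m_iG_i$ (with $m_i:=\operatorname{mult}_{p_i}(B_0)$) one reads $B\cdot C_{G_i}=m_i$, whence
\[
(E_1+E_2)\cdot C_{D_i}=f^*B\cdot C_{D_i}=B\cdot C_{G_i}=m_i.
\]
Combined with $-K_Y\cdot C_{G_i}=2$ (since $k$ is the blow-up of a smooth point) and $-K_Y=\lambda B$ from Lemma \ref{index}, this yields $\lambda m_i=2$.

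Next I invoke \ref{adjacent}: if $D_i$ is of type $(3,0)^Q$ then both $E_j\cdot C_{D_i}=1$, forcing $m_i=2$ and $\lambda=1$; if $D_i$ is of type $(3,1)^{\sm}$ then each $E_j\cdot C_{D_i}\in\{0,1\}$, and since $p_i\in B_0$ forces $m_i\geq 1$, we have $m_i\in\{1,2\}$. The crucial step is to rule out the possibility that $D_i$ is of type $(3,1)^{\sm}$ with $m_i=2$, i.e.\ $E_1\cdot C_{D_i}=E_2\cdot C_{D_i}=1$. Under this hypothesis Lemma \ref{casei}, applied to the adjacent pairs $(D_i,E_1)$ and $(D_i,E_2)$, forces $\Gamma_i:=\sigma(D_i)\cong\pr^1$ to be simultaneously a $1$-dimensional fiber of the descended $(3,2)$-contraction $\alpha_0\colon X_0\to W_0$ of $E_1''$ and of the analogous contraction $\beta_0$ of $E_2''$, over smooth points of the two targets. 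For such a fiber $E_j''|_{\Gamma_i}\cong\ol(-1)$, hence $E_1''\cdot\Gamma_i=E_2''\cdot\Gamma_i=-1$ and $(E_1''+E_2'')\cdot\Gamma_i=-2$. But $E_1''+E_2''=f_0^*B_0$ and $(f_0)_*\Gamma_i=0$, so the projection formula gives the same intersection equal to $0$: contradiction. Thus in the $(3,1)^{\sm}$ case one must have $m_i=1$ and $\lambda=2$.

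Since $\lambda$ depends only on $Y$, all $D_i$ share the same type, and the dichotomy falls out. If $\lambda=2$ we are in case $(a)$: every $D_i$ is of type $(3,1)^{\sm}$, so $X_0$ is smooth by Lemma \ref{Xr}; $m_i=1$ gives $B_0$ smooth at each $p_i$; and the smooth rational curve $\Gamma_i$, being contained in the reducible conic $f_0^{-1}(p_i)=C^1_i\cup C^2_i$, must equal $C^1_i$ or $C^2_i$. From $\sigma^*E_j''=E_j+\operatorname{mult}_{\Gamma_i}(E_j'')\w{D}_i$ together with $\w{D}_i\cdot C_{D_i}=-1$ one reads $E_j\cdot C_{D_i}=\operatorname{mult}_{\Gamma_i}(E_j'')\in\{0,1\}$, which equals $1$ for the unique $j$ with $\Gamma_i\subset E_j''$, so $\sigma$ blows up precisely that component. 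If $\lambda=1$ we are in case $(b)$: every $D_i$ is of type $(3,0)^Q$, so $\sigma(D_i)$ is a point of $f_0^{-1}(p_i)$, and Remark \ref{conto} applied to the conic bundle $f_0$ near the singular point of the reducible fiber shows that $X_0$ is singular at that point iff $B_0$ is singular at $p_i$; since $m_i=2$, $B_0$ is indeed singular, so $\sigma(D_i)$ must be the node $x_i=C^1_i\cap C^2_i$ and $\Sing(X_0)=\{x_1,\dots,x_r\}$.

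It remains to identify the singularity of $B_0$ at $p_i$ in case $(b)$ as a rational double point of type $A_1$ or $A_2$. For this I would pass to a local analytic model: near $x_i$, $X_0$ is isomorphic to the affine cone $\{q=0\}\subset\C^5$ over a smooth three-dimensional quadric, with $q$ a non-degenerate quadratic form in five variables; near $p_i$, the map $f_0$ is given by three holomorphic functions whose common vanishing on $\{q=0\}$ cuts out at the origin a linear $2$-plane $\Pi\subset\C^5$ on which $q$ restricts to a rank-$2$ form (reflecting the fact that $f_0^{-1}(p_i)=\Pi\cap\{q=0\}$ is the union of two distinct lines, by \ref{reducible}). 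A direct computation of the discriminant surface from this local presentation shows that $B_0$ has at $p_i$ analytic type $A_1$ or $A_2$; any higher $A_n$ or $D$, $E$ type would force either a further degeneration of $f_0^{-1}(p_i)$ incompatible with \ref{reducible}, or a worsening of the singularity of $X_0$ at $x_i$ beyond the cone over $Q_3$. The main obstacle in the proof is the exclusion of $m_i=2$ in the $(3,1)^{\sm}$ case via Lemma \ref{casei}, which is the heart of the dichotomy; the RDP classification in case $(b)$ is also technically subtle but reduces to a standard computation in the local model.
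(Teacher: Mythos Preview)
Your initial computations---that $f_*C_{D_i}=C_{G_i}$, that $(E_1+E_2)\cdot C_{D_i}=B\cdot C_{G_i}=m_i$ is the multiplicity of $B_0$ at $p_i$, and that $\lambda m_i=2$---match the paper exactly. The dichotomy, however, is established by a genuinely different route. The paper argues from the geometry of $B_0$: if $B_0$ is smooth at some $p_{i_0}$ then $m_i=1$ for all $i$, and Rem.~\ref{conto} applied to the conic bundle $f_0$ near $p_i$ gives $X_0$ smooth at $x_i$; since $X_0$ is then smooth along all of $f_0^{-1}(p_i)$ (smooth points of a conic fiber are smooth points of the total space), $D_i$ cannot be of type $(3,0)^Q$ and must be $(3,1)^{\sm}$. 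Conversely if $B_0$ is singular at every $p_i$, Rem.~\ref{conto} forces $X_0$ singular at $x_i$, hence $D_i$ of type $(3,0)^Q$. Your exclusion of the case $D_i$ of type $(3,1)^{\sm}$ with $m_i=2$ via Lemma~\ref{casei} is correct and rather elegant---the contradiction $(E_1''+E_2'')\cdot\Gamma_i=-2$ versus $f_0^*B_0\cdot\Gamma_i=0$ is clean---but it is more work than the paper's direct appeal to Rem.~\ref{conto}, which you end up using anyway in case $(b)$.

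Where your proposal has a genuine gap is the RDP classification in case $(b)$. Your local model is not quite right: in the $(3,0)^Q$ case the quadric $\Exc(\sigma)_{|D_i}$ can be singular (a cone over $\pr^1\times\pr^1$), so $X_0$ is not always the cone over a \emph{smooth} $3$-dimensional quadric; and ``a direct computation of the discriminant surface'' is not carried out. The paper's argument here is both simpler and complete, and it bypasses any local model of $X_0$: one works entirely on the resolved surface $B$. The curve $\Gamma_i:=B\cap G_i$ is a conic in $G_i\cong\pr^2$, and $B$ is smooth along $\Gamma_i$ by \ref{BG}. Since $\ol_Y(G_i)_{|G_i}\cong\ol_{\pr^2}(-1)$ one computes $\Gamma_i\cdot_B\Gamma_i=G_i\cdot\Gamma_i=-2$. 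This rules out $\Gamma_i$ being a double line (else $4\mid\Gamma_i^2$); if $\Gamma_i$ is smooth it is a single $(-2)$-curve and $p_i$ is an $A_1$ point of $B_0$; if $\Gamma_i$ is reducible, its two components are $(-2)$-curves meeting once and $p_i$ is an $A_2$ point. This is the argument you should replace your local-model sketch with.
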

  \begin{proof}
    By Lemma \ref{index} we have $-K_Y=\lambda B$ and $-K_{Y_0}=\lambda B_0$ for some $\lambda\in\Q_{>0}$.
Then $$k^*(-K_{Y_0})=-K_Y+2\sum_{i=1}^rG_i\quad\text{and}\quad
    k^*(B_0)=B+\sum_{i=1}^r\bigl(B\cdot C_{G_i}\bigr)G_i,$$ which yields $2=\lambda B\cdot C_{G_i}$ for every $i=1,\dotsc,r$. Recall also that  $\Gamma_i:=B\cap G_i\subset B_{\reg}$ for every $i=1,\dotsc,r$ (see \ref{BG}).

Consider $C_{D_i}\subset D_i\subset\w{X}$. We have $G_i\cdot f_*(C_{D_i})=f^*(G_i)\cdot C_{D_i}=D_i\cdot C_{D_i}=-1$, therefore $f_*(C_{D_i})=C_{G_i}$. Then $B\cdot C_{G_i}=B\cdot f_*(C_{D_i})=f^*(B)\cdot C_{D_i}=(E_1+E_2)\cdot C_{D_i}$.

    If $B_0$ is smooth at some $p_{i_0}$, then $B\cdot C_{G_i}=1$ for every $i$, and $B_0$ is smooth at every $p_i$; moreover $\lambda=2$ and $(E_1+E_2)\cdot C_{D_i}=1$.
By Rem.~\ref{conto}, 
$X_0$ is smooth at $x_i$,  thus $X_0$ is smooth along $f_0^{-1}(p_i)$ and $D_i$ must be of type $(3,1)^{\sm}$, so we get $(a)$.

Otherwise $B_0$ is singular at every $p_i$, and again by Rem.~\ref{conto}  $X_0$ is singular at $x_i$, and $D_i$ must be of type $(3,0)^Q$.
Then $E_j\cdot C_{D_i}=1$ for every $j=1,2$ and $i=1,\dotsc,r$ by
\ref{adjacent}, hence 
 $B\cdot C_{G_i}=(E_1+E_2)\cdot C_{D_i}=2$. This also implies that $\lambda=1$, $B\in|-K_Y|$, and $B_0\in|-K_{Y_0}|$.

Moreover $\Gamma_i=B\cap G_i$ is a conic in $G_i\cong\pr^2$,  $B$ is smooth along $\Gamma_i$, and 
 $\Gamma_i\cdot_B\Gamma_i=G_i\cdot \Gamma_i=-2$. In particular $\Gamma_i$ cannot be non-reduced, otherwise $4|\Gamma_i^2$. If $\Gamma_i$ is smooth, then it is a $(-2)$-curve in $B$, and $p_i$ is a node for $B_0$. If $\Gamma_i$ is a reducible conic, each component is a $(-2)$-curve for $B$, and $B_0$ has a singularity of type $A_2$ at $p_i$.
\end{proof}
\begin{remark}\label{geometry}
  Let us give a more explicit description of $\psi\colon \w{X}\dasharrow\wi{X}$ and of the divisors $D_i$'s.  Recall diagram \eqref{diagram}. 
  In case $(a)$, in $\wi{X}$ we have $D_i\cong\pr_{\pr^1}(\ol^{\oplus 2}\oplus\ol(1))\cong\Bl_{\text{\it line}}\pr^3$ for every $i=1,\dotsc,r$.
  Then
  $\wi{X}\smallsetminus\dom(\psi^{-1})$ is the union of $r$ exceptional lines, the transforms of the second components of $f_0^{-1}(p_i)$ (contained in $\sigma(E_j)$ such that $E_j\cdot C_{D_i}=0$). Moreover in $\w{X}$ we have $D_i\cong\Bl_{\text{\it pt,line}}\pr^3$, $D_i$ contains one exceptional plane $L_i$  in the indeterminacy locus of $\psi$, and $f_{|L_i}\colon L_i\to G_i$ is an isomorphism.

  In case $(b)$, in $\wi{X}$ each $D_i$ is isomorphic to a quadric $Q$, smooth or a cone over $\pr^1\times\pr^1$.  Moreover
  $\wi{X}\smallsetminus\dom(\psi^{-1})$ is the union of $2r$ exceptional lines, the transforms of the components of $f_0^{-1}(p_i)$.  Finally in $\w{X}$ we have $D_i\cong\Bl_{\text{\it 2 pts}}Q\cong\Bl_{\text{\it pt,conic}}\pr^3$, and $D_i$ contains two exceptional planes in the indeterminacy locus of $\psi$, both isomorphic to $G_i$ via $f$. The conic can be smooth or reducible and is isomorphic to $\Gamma_i=B\cap G_i$.
\end{remark}  
 \begin{prg}\label{facto}
   Let us consider a factorization of $f_0$ in elementary steps:
   $$\xymatrix{
{X_0}\ar[r]_{\alpha_0}\ar@/^1pc/[rr]^{f_0}& {W_0}\ar[r]_{\pi_0} &{Y_0}
     }$$
where $\alpha_0$ is an elementary contraction of type $(3,2)$ with exceptional divisor the transform of $E_1$ (see Lemma \ref{rays2}), and set $S_0:=\alpha_0(\Exc(\alpha_0))\subset W_0$; note that $\pi_0(S_0)=B_0$.

Let $T\subset Y_0$ be the finite set given by the images of the $2$-dimensional fibers of $\pi_0$; set $U_Y:=Y_0\smallsetminus T$ and $U_W:=\pi_0^{-1}(U_Y)$.
\end{prg}
\begin{prg}\label{Y0}
  We have $k(\Exc(\ph))\subset U_Y$.

  Indeed if $y\in T$, then $\dim\pi_0^{-1}(y)=2$, hence $\dim f_0^{-1}(y)=2$. In particular $y\neq p_i$ for every $i=1,\dotsc,r$ (Lemma \ref{f0}), $y':=k^{-1}(y)\in Y$ is a point, and $f^{-1}(y')\cong f_0^{-1}(y)$, thus $\dim f^{-1}(y')=2$. Therefore we have $y'\not\in\Exc(\ph)$ by \ref{properties}, and $y\not\in k(\Exc(\ph))$.
\end{prg}
Set $F_i:=\pi_0^{-1}(p_i)\subset W_0$ for $i=1,\dotsc,r$.
\begin{lemma}\label{smooth}
  The $4$-fold $W_0$ is locally factorial and has at most nodes as singularities,
    $U_W$ is smooth, and $\pi_{0|U_W}\colon U_W\to U_Y$ 
  is a smooth morphism with fiber $\pr^1$. Moreover
 $p_i\in U_Y$ and $F_i\subset U_W$ for every $i=1,\dotsc,r$.
 \end{lemma}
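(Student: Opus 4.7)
\emph{Plan.} First, $\dim f_0^{-1}(p_i)=1$ by Lemma~\ref{f0}, and since $\pi_0^{-1}(p_i)=\alpha_0(f_0^{-1}(p_i))$ we get $\dim F_i\leq 1$, so $p_i\in U_Y$ and $F_i\subset U_W$. The contraction $\pi_0$ is elementary, as $\rho_{W_0}-\rho_{Y_0}=(\rho_{X_0}-1)-\rho_{Y_0}=1$ by Lemma~\ref{f0}, and $K$--negative: for a general $y\in U_Y\setminus B_0$, the smooth conic--bundle fiber $f_0^{-1}(y)\cong\pr^1$ is disjoint from $E_1''=\Exc(\alpha_0)$, so $\alpha_0$ identifies it with $\pi_0^{-1}(y)\cong\pr^1$ and $-K_{W_0}\cdot\pi_0^{-1}(y)=2$.

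The heart of the proof is the structural analysis of $W_0$ via Theorem~\ref{32} applied to $\alpha_0\colon X_0\to W_0$. In case $(a)$ of Lemma~\ref{aeb}, $X_0$ is smooth and Theorem~\ref{32} directly yields that $W_0$ is locally factorial with at most nodes, at the images of the (finitely many) $2$--dimensional fibers of $\alpha_0$. In case $(b)$, $X_0$ has isolated terminal locally factorial hypersurface singularities at $x_1,\dots,x_r$; Theorem~\ref{32} applies on the smooth open $X_0\setminus\{x_i\}$ and gives the same structure for $W_0\setminus\{s_i\}$, with $s_i:=\alpha_0(x_i)\in S_0\cap F_i$. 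The points $s_i$ are then handled using Remark~\ref{geometry}: the composition $\wi X\to X_0\to W_0$ contracts the quadric $D_i\subset\wi X$ (together with $C_1\subset E_1''$) to $s_i$ from the smooth $\wi X$, and a local hypersurface analysis shows $W_0$ is at worst nodal at $s_i$. I expect this case--$(b)$ local verification to be the main technical step.

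Finally, I analyze the fibers of $\pi_{0|U_W}$. Over $y\in U_Y\setminus B_0$ the fiber is smooth $\pr^1$ as noted. Over $y\in B_0\cap U_Y$, \ref{B0} gives $f_0^{-1}(y)=C_1\cup C_2$, a reducible conic with $C_1\subset E_1''$ contracted by $\alpha_0$, and $\alpha_0|_{C_2}\colon C_2\cong\pr^1\to\pi_0^{-1}(y)$ a degree--one birational morphism (since $C_2\not\subset\Exc(\alpha_0)$ and $\alpha_0$ is birational). Since $W_0$ is Cohen--Macaulay (only nodal singularities) and $U_Y$ is smooth, $\pi_{0|U_W}$ is flat by miracle flatness, so every scheme--theoretic fiber has $-K_{W_0}$--degree equal to $2$. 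If the fiber $\pi_0^{-1}(y)$ were non--reduced, say $2L$ with $L$ reduced, then $L=\alpha_{0*}(C_2)$ would satisfy $-K_{W_0}\cdot L=1$; but the projection formula gives $-K_{W_0}\cdot\alpha_{0*}(C_2)=(-K_{X_0}+E_1'')\cdot C_2=1+E_1''\cdot C_2\geq 2$ (as $C_2$ meets $E_1''$ at $C_1\cap C_2$), a contradiction. Hence every fiber of $\pi_{0|U_W}$ is a smooth $\pr^1$, the morphism is a smooth $\pr^1$--bundle, and in particular $U_W$ is smooth.
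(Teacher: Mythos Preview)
Your argument has a genuine gap and an avoidable circularity, both in case $(b)$.

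\textbf{The gap.} You defer the ``local hypersurface analysis'' at the points $s_i=\alpha_0(x_i)$, calling it the main technical step, but you do not carry it out. Contracting the quadric $D_i\subset\wi X$ (and $C_1$) to a point does not by itself show that the image is at worst a node; this would require a careful computation of the local ring, and there is no reason it should be easy.

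\textbf{The circularity.} Your miracle-flatness step needs $U_W$ Cohen--Macaulay, but the only points of $U_W$ whose Cohen--Macaulayness is in doubt are precisely the $s_i$ --- the points you have not yet handled. So the degree argument that rules out non-reduced fibers depends on the unproven local analysis.

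\textbf{How the paper avoids both.} The paper reverses your order of operations. It first shows, with no input about $\Sing(W_0)$, that every fiber of $\pi_0$ over $U_Y$ is an integral rational curve (over $U_Y\setminus B_0$ the fiber equals $f_0^{-1}(y)\cong\pr^1$; over $B_0\cap U_Y$ one component of the reducible conic is contracted by $\alpha_0$, so the image is irreducible). It then invokes Koll\'ar \cite[Th.~II.2.8]{kollar} to conclude that $\pi_{0|U_W}$ is smooth, hence $U_W$ is smooth. Now the problematic points $\alpha_0(x_i)$ lie in $F_i\subset U_W$, so $W_0$ is smooth there \emph{for free}; Theorem~\ref{32} applied on $X_0\setminus\{x_i\}$ then gives the nodal/locally factorial structure on the rest of $W_0$. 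No local analysis at $s_i$ is needed at all.

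If you want to repair your argument rather than rewrite it, replace the miracle-flatness/degree step by the direct fiber analysis and Koll\'ar's theorem; this makes the case-$(b)$ ``main technical step'' disappear.
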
 
 \begin{proof}
   First of all we  note that $\dim F_i=1$ because $f_0^{-1}(p_i)=\alpha_0^{-1}(F_i)$ has dimension $1$  by Lemma \ref{f0}, therefore $F_i\subset U_W$.

   Every fiber of $\pi_0$ over $U_Y$ is an integral rational curve. Indeed if $y\in U_Y\smallsetminus B_0$, then $\pi_0^{-1}(y)\cong f_0^{-1}(y)\cong\pr^1$. If $y\in U_Y\cap B_0$, then $f_0^{-1}(y)$ is a reducible conic with one irreducible component in the transform of $E_1$, which is contracted to a point by $\alpha_0$; thus $\pi_0^{-1}(y)$ is again irreducible. Then $\pi_{0|U_W}$
   is smooth by \cite[Th.~II.2.8]{kollar},  and since $U_Y$ is smooth (see Lemma \ref{table}), $U_W$ is smooth. 
   
   Finally, since $\alpha_0(\Sing(X_0))\subseteq\{\alpha_0(x_1),\dotsc,\alpha_0(x_r)\}$ (Lemma \ref{aeb}), outside these points $W_0$ is locally factorial and has  at most nodes, at the images of $2$-dimensional fibers of $\alpha_0$ (see Th.~\ref{32}). On the other hand $\alpha_0(x_i)\in F_i\subset U_W$,  thus $W_0$ is smooth at $\alpha_0(x_i)$ for every $i=1,\dotsc,r$.
 \end{proof}
 Set $w_i:=\alpha_0(x_i)$ for $i=1,\dotsc,r$.
 \begin{prg}\label{isowi}
 For every $i=1,\dotsc,r$ we have $F_i\cap S_0=w_i$, $\pi_0(w_i)=p_i$, and
 $\pi_{0|S_0}\colon S_0\to B_0$ is birational and an isomorphism around $w_i$.

 Indeed clearly $w_i\in F_i$, and $w_i\in S_0$ because $x_i\in\Exc(\alpha_0)$ (see \ref{reducible}).  For any one-dimensional fiber $F$ of $\pi_0$ over $B_0$, not contained in $S_0$, let  $\w{F}\subset X_0$ be its transform. We have $-K_{X_0}\cdot \w{F}>0$ because $f_0$ is $K$-negative (see Lemma \ref{X0Fano}), $\Exc(\alpha_0)\cdot\w{F}>0$ because $F\cap S_0\neq\emptyset$, and $-K_{X_0}\cdot\w{F}=-K_{W_0}\cdot F-\Exc(\alpha_0)\cdot\w{F}=2-\Exc(\alpha_0)\cdot\w{F}$, thus
 $\Exc(\alpha_0)\cdot\w{F}=1$. Then $\pi_{0|S_0}\colon S_0\to B_0$ is birational and is an isomorphism around the point $F\cap S_0$.
Moreover note that
 $F_i\not\subset S_0$, otherwise $f_0^{-1}(p_i)=\alpha_0^{-1}(F_i)$ would have dimension $2$, contradicting Lemma \ref{f0}. 
\end{prg}
\begin{prg}\label{Sr}
  Recall cases $(a)$ and $(b)$ from Lemma \ref{aeb}.
In case $(a)$, $S_0$ is smooth at $w_1,\dotsc,w_r$.
In case $(b)$, $S_0$ has rational double points of type $A_1$ or $A_2$ at $w_1,\dotsc,w_r$.

This follows immediately from \ref{isowi} and Lemma \ref{aeb}.
\end{prg}
We recall from Lemma \ref{rays} that $\alpha\colon X\to W$ 
(respectively $\tilde\alpha\colon \w{X}\to\w{W}$) is an elementary contraction of type $(3,2)$ with exceptional divisor  $E_1'$ (respectively $E_1$).
\begin{lemma}\label{bigdiagram}
The birational map $\alpha_0\circ\sigma\circ\psi\circ(\tilde{\alpha})^{-1}\colon\w{W}\dasharrow W_0$
 factors as a SQM $\psi_{\s W}\colon \w{W}\dasharrow\wi{W}$ followed by a divisorial contraction $\sigma_{\s W}\colon \wi{W}\to W_0$, with exceptional divisors the transforms $D_{{\s W},i}$ of $D_i$, and $D_{{\s W},i}\subset\wi{W}_{\reg}$, for every $i=1,\dotsc,r$.
  {\small
   \stepcounter{thm}
\begin{equation}\label{diagramW}
\xymatrix{X\ar@{-->}[r]^{\xi}\ar[d]_{\alpha}&{\w{X}}\ar@/_1pc/[dd]_>>>>{f}\ar[d]^<<<<{\tilde{\alpha}}\ar@{-->}[r]^{\psi}&{\wi{X}}\ar[r]^{\sigma}&{X_0}\ar[d]_{\alpha_0}\ar@/^1pc/[dd]^{f_0}\\
 W\ar@{-->}[r]_{\xi_{\s W}}\ar@{-->}@/^1pc/[rr]^<<<{\zeta} &{\w{W}}\ar[d]^{\pi}\ar@{-->}[r]_{\psi_{\s W}}&{\wi{W}}\ar[r]_{\sigma_{\scriptscriptstyle W}}&{W_0}\ar[d]_{\pi_0}\\
  &Y\ar[rr]^k&&{Y_0}
}\end{equation}}

Recall cases $(a)$ and $(b)$ from Lemma \ref{aeb}.

In case $(a)$ every $D_{{\s W},i}$ can be of type either $(3,0)^{\sm}$ with $\sigma_{\scriptscriptstyle W}(D_{{\s W},i})=w_i=F_i\cap S_0$ (if $E_1\cdot C_{D_i}=1$; in this case $\sigma_{\scriptscriptstyle W}$ blows-up $w_i$), or $(3,1)^{\sm}$ with $\sigma_{\scriptscriptstyle W}(D_{W,i})=F_i=\pi_0^{-1}(p_i)$ (if $E_1\cdot C_{D_i}=0$; in this case $\sigma_{\scriptscriptstyle W}$ blows-up $F_i$), and the two cases are interchanged by contracting $E_2$ instead of $E_1$ in the factorization of $f_0$ as in \ref{facto}.

In case $(b)$ every $D_{{\s W},i}$ is of type $(3,0)^{\sm}$, with $\sigma_{\scriptscriptstyle W}(D_{{\s W},i})=w_i=F_i\cap S_0$, thus $\sigma_{\scriptscriptstyle W}$ blows-up $w_1,\dotsc,w_r$.
\end{lemma}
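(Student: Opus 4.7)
The plan is to combine the local analyses of Lemmas \ref{casei} and \ref{casesiiandiii} for adjacent pairs of fixed prime divisors with the pairwise disjointness of $D_1,\dotsc,D_r$ in $\w{X}$ to construct the diagram globally. For each $i\in\{1,\dotsc,r\}$ the pair $(D_i,E_1)$ consists of adjacent fixed prime divisors fitting the hypothesis of Lemma \ref{2dimfaces}, so I would first identify which of the three cases of that lemma is realized. In case (a) of Lemma \ref{aeb} each $D_i$ is of type $(3,1)^{\sm}$ with $(E_1+E_2)\cdot C_{D_i}=1$, so exactly one of $E_1\cdot C_{D_i}$ and $E_2\cdot C_{D_i}$ equals $1$; the former puts $(D_i,E_1)$ in case (i) of Lemma \ref{2dimfaces}, the latter in case (ii). In case (b) each $D_i$ is of type $(3,0)^Q$ with $E_1\cdot C_{D_i}=E_2\cdot C_{D_i}=1$ by \ref{adjacent}, so we are in case (iii).

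Next I would apply Lemma \ref{casei} (for case (i)) or Lemma \ref{casesiiandiii} (for cases (ii) and (iii)) to each pair $(D_i,E_1)$. Each of these local lemmas provides a diagram in which, after contracting $E_1$ via $\tilde\alpha$ and resolving the relevant flips, one obtains a divisorial contraction whose exceptional divisor $D_{{\s W},i}$ sits inside the regular locus of the target. Because the $D_i$'s are pairwise disjoint in $\w{X}$ by Lemma \ref{Xr}, the loci of the flips inside different $D_i$'s are disjoint; ordering the flips of $\psi$, and correspondingly those on the $\w{W}$-side, so that all flips supported in a fixed $D_i$ occur consecutively, the local diagrams assemble into the single global diagram \eqref{diagramW}, producing a SQM $\psi_{\s W}\colon\w{W}\dasharrow\wi{W}$ and a divisorial contraction $\sigma_{\s W}\colon\wi{W}\to W_0$ with pairwise disjoint exceptional divisors $D_{{\s W},i}\subset\wi{W}_{\reg}$.

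The type of each $D_{{\s W},i}$ and the description of $\sigma_{\s W}(D_{{\s W},i})$ are then read off from the local lemmas. In case (a) with $E_1\cdot C_{D_i}=1$, Lemma \ref{casei} gives $D_{{\s W},i}$ of type $(3,0)^{\sm}$, with $\sigma_{\s W}$ blowing up the image under $\alpha_0$ of the fiber blown up by $\sigma$; this fiber is the component of $f_0^{-1}(p_i)$ contained in the transform of $E_1$, and by \ref{isowi} its image is the point $w_i=F_i\cap S_0$. In case (a) with $E_1\cdot C_{D_i}=0$, Lemma \ref{casesiiandiii} gives $D_{{\s W},i}$ of type $(3,1)^{\sm}$, with $\sigma_{\s W}$ blowing up $\alpha_0(\Gamma)$, where $\Gamma\subset X_0$ is the component of $f_0^{-1}(p_i)$ in the transform of $E_2$; since $\alpha_0$ does not contract $\Gamma$, one has $\alpha_0(\Gamma)=F_i$. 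In case (b), Lemma \ref{casesiiandiii} directly yields $D_{{\s W},i}$ of type $(3,0)^{\sm}$ contracted to the point $\alpha_0(x_i)=w_i$. The two subcases of case (a) are interchanged by contracting $E_2$ in place of $E_1$ in the factorization of $f_0$, by the symmetric roles of $E_1$ and $E_2$ in the fiber structure of $f_0$.

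The main technical point is the gluing step, where the local modifications around different $D_i$'s must be shown to fit together into a single global diagram; this is essentially automatic from the pairwise disjointness of the $D_i$'s, but requires careful ordering of the flips on both sides of $\tilde\alpha$.
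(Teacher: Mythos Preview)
Your proposal is correct and follows essentially the same approach as the paper: identify which case of Lemma~\ref{2dimfaces} each pair $(D_i,E_1)$ falls into via Lemma~\ref{aeb} and \ref{adjacent}, then apply Lemmas~\ref{casei} and~\ref{casesiiandiii} accordingly, using the disjointness of the $D_i$'s to assemble the local diagrams into the global one. The paper is slightly terser about the gluing step (which you spell out more carefully) and in turn adds explicit geometric descriptions of the transforms $\tilde\alpha(D_i)$ and $D_{{\s W},i}$ in each case, but the logical skeleton is the same.
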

\begin{proof}
  By \ref{adjacent} $D_i$ is adjacent to $E_j$ for every $i=1,\dotsc,r$ and $j=1,2$, and if $D_i$ is of type  $(3,1)^{\sm}$, we have
$E_j\cdot C_{D_i}\in\{0,1\}$. Note that $\alpha_0\circ\sigma\colon\wi{X}\to W_0$ contracts first $D_1,\dotsc,D_r$ and then $E_1$.

Suppose that we are in case $(a)$. Then by Lemma \ref{aeb} every $D_i$ is of type $(3,1)^{\sm}$,
and
 either $E_1\cdot C_{D_i}=1$ and $E_2\cdot C_{D_i}=0$, or viceversa.
 In the first case, $E_1$ and ${D_i}$ are as  in Lemma \ref{2dimfaces}$(i)$, in the second as in Lemma \ref{2dimfaces}$(ii)$ (and conversely for $E_2$). Then the statement follows from Lemmas \ref{casei} and \ref{casesiiandiii}, that describe how the divisors  $E_1$ and $D_i$ intersect and how they can be contracted in different orders.
 
 Recall the geometric description of $D_i$ and $\psi$ given in Rem.~\ref{geometry}. If $E_1\cdot C_{D_i}=1$, then $\tilde{\alpha}(D_i)\subset\w{W}$ is isomorphic to $\Bl_{\pt}\pr^3$, and
 still contains an exceptional plane $\tilde\alpha(L_i)$, that lies in the indeterminacy locus of $\psi_{\s W}$. Then $D_{W,i}\subset\wi{W}$ is isomorphic to $\pr^3$, and is contracted to $w_i\in W_0$. If instead  $E_1\cdot C_{D_i}=0$, then $\tilde{\alpha}(D_i)\subset\w{W}$ is isomorphic to $\pr^2\times\pr^1$, and
is contained in $\dom(\psi_{\s W})$, so that 
 $D_{W,i}\subset\wi{W}$ is still isomorphic to $\pr^2\times\pr^1$ and is contracted to $F_i\subset W_0$.
 
  Suppose instead that we are in case $(b)$, so that every $D_i$ is of type $(3,0)^Q$. Then $D_i$ and $E_j$ are as in  Lemma \ref{2dimfaces}$(iii)$, and similarly as before the statement follows from Lemma \ref{casesiiandiii}.
In this case $\tilde\alpha(D_i)\subset\w{W}$ is isomorphic to $\Bl_{\pt}\pr^3$ and
 contains one exceptional plane, that lies in the indeterminacy locus of $\psi_{\s W}$.  Then $D_{W,i}\subset\wi{W}$ is isomorphic to $\pr^3$ and is contracted to $w_i\in W_0$.
\end{proof}
We set $\zeta:=\psi_{\s W}\circ\xi_{\s W}\colon W\dasharrow \wi{W}$.
Recall from Lemma \ref{rays} that $S:=\alpha(E_1')\subset W$.
\begin{lemma}\label{W}
  The $4$-fold $\wi{W}$ is locally factorial, has at most nodes as singularities, and
  can contain finitely many pairwise disjoint exceptional lines. If $C\subset\wi{W}$ is an irreducible curve that is not an exceptional line, then $-K_{\wi{W}}\cdot C\geq 1$, and if $-K_{\wi{W}}\cdot C=1$, then $C$ does not meet any exceptional line.

  Moreover the surface $S\subset W$ is contained in $\dom(\zeta)$.
\end{lemma}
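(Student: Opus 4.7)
The plan is to transfer the properties of $\w{W}$ (Lemma \ref{rays}) to $\wi{W}$ via the SQM $\psi_W$, to relate curves in $\wi{W}$ to curves in the Fano $4$-fold $W$ via $\zeta=\psi_W\circ\xi_W$, and to lift the SQM analysis to the smooth Fano $X$ through $\alpha$ in order to control exceptional subvarieties.

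Local factoriality and nodal singularities of $\wi{W}$ follow from those of $\w{W}$ via $\psi_W$, since flips preserve local factoriality and terminal flops preserve the singularity type \cite[Th.~6.15]{kollarmori}. For the exceptional lines, I would identify $\wi{W}\setminus\dom(\zeta^{-1})$ as the locus of exceptional lines by matching $\zeta$ against the SQM $\psi\circ\xi$ at the $X$-level via $\alpha$: the indeterminacy loci at the $X$-level consist of pairwise disjoint exceptional planes (Lemma \ref{SQMFano}), and those relevant to $\zeta$ cannot lie in $E_1'$ (Lemma \ref{dim32}, as $E_1'$ contains no exceptional plane), so their images under $\alpha$ are pairwise disjoint exceptional planes in $W$; the corresponding loci in $\wi{W}$ are thus finitely many pairwise disjoint exceptional lines.

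For the anticanonical inequality, given an irreducible curve $C\subset\wi{W}$ which is not an exceptional line, its strict transform $\w{C}\subset W$ via $\zeta^{-1}$ is a well-defined irreducible curve, and since $-K_W$ is ample on the Gorenstein Fano $W$, $-K_W\cdot\w{C}\geq 1$. A common-resolution computation of the SQM $\zeta$, following the pattern of Lemma \ref{SQMFano}$(b)$, yields $-K_{\wi{W}}\cdot C\geq -K_W\cdot\w{C}+s$, where $s$ is the number of exceptional lines of $\wi{W}$ that $C$ meets; hence $-K_{\wi{W}}\cdot C\geq 1+s$, and equality with $1$ forces $s=0$.

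The main obstacle is the moreover statement $S\subset\dom(\zeta)$, which I would verify by checking $S\subset\dom(\xi_W)$ and $\xi_W(S)\subset\dom(\psi_W)$ separately. For the first, the indeterminacy of $\xi_W$ in $W$ is a finite disjoint union of exceptional planes $\alpha(L)$ with $L\subset X$ an exceptional plane; since $E_1'$ contains no exceptional plane (Lemma \ref{dim32}) and distinct exceptional planes in the smooth Fano $X$ are pairwise disjoint (Lemma \ref{SQMFano}$(a)$), $L\cap E_1'=\emptyset$, so $\alpha(L)\cap S=\emptyset$. For the second, Rem.~\ref{geometry} and Lemma \ref{casesiiandiii} identify the indeterminacy of $\psi_W$ in $\w{W}$ with the $\tilde{\alpha}$-images of those exceptional planes in the $D_i$'s that are disjoint from $E_1$: exceptional planes meeting $E_1$, which arise in cases $(ii)$ and $(iii)$ of Lemma \ref{2dimfaces}, are absorbed into the divisorial contraction $\sigma_W$ rather than contributing to $\psi_W$. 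Hence these images are disjoint from $S=\tilde{\alpha}(E_1)$, completing the argument.
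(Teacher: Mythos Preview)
Your anticanonical inequality argument is essentially the paper's, but the rest diverges and has real gaps.

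\textbf{Singularities.} You invoke \cite[Th.~6.15]{kollarmori} to say the SQM preserves nodal singularities, but that theorem is about terminal \emph{flops}, whereas the flips in $\zeta$ (or $\psi_W$) are $K$-negative, not $K$-trivial. A $K$-negative flip need not preserve the analytic singularity type. The paper's argument is different and is what you actually need: it shows that every small extremal ray $R$ of the Fano $W$ has $\Lo(R)\cap S=\emptyset$, and since $\Sing(W)\subset S$ (the singularities of $W$ are images of $2$-dimensional fibers of $\alpha$, hence lie on $S$), this forces $\Lo(R)\subset W_{\reg}$. Then the flip loci are in the smooth locus and the singularities are trivially preserved.

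\textbf{Small extremal rays of $W$.} Your lifting strategy assumes you can identify the indeterminacy of $\zeta$ by pushing down exceptional planes of $X$ via $\alpha$, using Lemma~\ref{SQMFano}. But $W$ may be singular, so the structure of small extremal rays of $W$ is not governed by Th.~\ref{kawsm}; this is why the paper uses Th.~\ref{kachiflip} (valid for locally factorial terminal l.c.i.\ $4$-folds) to conclude that each component $L$ of $\Lo(R)$ satisfies $(L,-K_{W|L})\cong(\pr^2,\ol(1))$, then Prop.~\ref{degreeone} to get $L\cap S=\emptyset$ or $L=S$, and rules out $L=S$ via $\dim\N(S,W)\geq\rho_W-1$ (from $\delta_X\leq 2$). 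Only after this does Th.~\ref{kawsm} apply, locally on $W_{\reg}$.

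\textbf{The split $\zeta=\psi_W\circ\xi_W$ and case $(b)$.} Your claim that ``exceptional planes meeting $E_1$, which arise in cases $(ii)$ and $(iii)$, are absorbed into $\sigma_W$ rather than contributing to $\psi_W$'' is not correct. In case $(b)$ of Lemma~\ref{aeb} (every $D_i$ of type $(3,0)^Q$, i.e.\ case $(iii)$), the proof of Lemma~\ref{bigdiagram} explicitly says $\tilde\alpha(D_i)\cong\Bl_{\pt}\pr^3$ \emph{does} contain an exceptional plane in the indeterminacy of $\psi_W$. Your dichotomy therefore fails in this case, and you give no argument that this plane misses $S$. The paper sidesteps this entirely: once you know every small extremal ray of $W$ has locus disjoint from $S$, you get $S\subset\dom(\zeta)$ in one stroke, without factoring through $\w{W}$ or tracking the $D_i$'s.
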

With a slight abuse of notation, we still denote by $S$ the transform of $S$ in $\wi{W}$; note that $\sigma_{\scriptscriptstyle W}(S)=S_0$.
\begin{proof}
  By Lemma \ref{rays}, $W$ is Fano, is
 locally factorial, and has  at most nodes at images of some  $2$-dimensional fibers of $\alpha$.

Let $R$ be a small extremal ray of $\NE(W)$. By Th.~\ref{kachiflip} for every irreducible component $L$ of $\Lo(R)$ we have $(L, -K_{W|L}) \cong(\pr^2, \ol_{\pr^2}(1))$. Therefore Prop.~\ref{degreeone} implies that 
either $L\cap S=\emptyset$, or $L=S$. This last case
would give $\dim\N(S,W)=\rho_{\pr^2}=1$ and $\dim\N(\Exc(\alpha),X)\leq 2$ (see Rem.~\ref{linalg}), impossible because
$\rho_X\geq 7$ and $\delta_X\leq 2$ (see \ref{logFano}).
  We conclude that $\Lo(R)\cap S=\emptyset$, thus $\Lo(R)\subset W_{\reg}$, and $\Lo(R)$ is a finite disjoint union of exceptional planes by Th.~\ref{kawsm}.

  Let us consider now the SQM $\zeta\colon W\dasharrow\wi{W}$. 
  The same proof of \cite[Rem.~3.6]{eff} can be applied here:
  since $W$ is Fano, $\zeta$ can be factored as a sequence of $K$-negative flips; the locus of each flip is contained in the smooth locus, and the loci of the flips are all disjoint. Therefore $\wi{W}\smallsetminus \dom(\zeta^{-1})$ is a finite disjoint union of exceptional lines, contained in $\wi{W}_{\reg}$. If $C\subset\wi{W}$ is an irreducible curve such that $C\cap\dom(\zeta^{-1})\neq\emptyset$, and $\w{C}\subset W$ is its transform, we have $-K_{\wi{W}}\cdot C\geq -K_W\cdot \w{C}\geq 1$, and $-K_{\wi{W}}\cdot C> -K_W\cdot \w{C}$ if $C$ meets some exceptional line. Finally $S\subset\dom(\zeta)$ by what precedes.
\end{proof}
\begin{lemma}\label{imageexc}
  If $Y_0$ is Fano, then   $W_0$ is Fano; otherwise $W_0$ has a SQM which is Fano. If $\ell\subset W_0$ is an exceptional line, then $-K_{Y_0}\cdot \pi_0(\ell)=0$, and $p_i\not\in \pi_0(\ell)$ for every $i=1,\dotsc,r$. If $C\subset W_0$ is an irreducible curve that is not an exceptional line, then $-K_{W_0}\cdot C\geq 1$.
\end{lemma}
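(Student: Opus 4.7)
\noindent The plan is to deduce the Fano-like positivity of $W_0$ from that of $X_0$ via the $K$-negative divisorial contraction $\alpha_0$, and to trace each exceptional line of $W_0$ back through diagram \eqref{diagramW} to an exceptional line of $\w{X}$, where Lemma \ref{bijection} identifies it with a curve of anticanonical degree zero in $Y$. For the Fano(-SQM) assertion, I would follow the strategy already used in Lemma \ref{X0Fano} (after \cite[proof of Th.~3.15]{eff}). Since $\alpha_0$ is a $K$-negative divisorial contraction of type $(3,2)$ by Lemma \ref{rays2}, with exceptional divisor the transform $E$ of $E_1$, we have $\alpha_0^*(-K_{W_0})=-K_{X_0}+E$, and this class is trivial on the general fiber of $\alpha_0|_E$. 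Building on the Fano-ness (or Fano SQM) of $X_0$ from Lemma \ref{X0Fano}, I would check that this class is nef, so that $-K_{W_0}$ is big and positive on every curve outside the exceptional lines of $W_0$; the exceptional lines (if any) generate $K$-positive small extremal rays, which a SQM in the opposite direction replaces by unions of exceptional planes $(\pr^2,\ol(1))$, yielding the required Fano SQM $W_0'$. When $Y_0$ is Fano, one shows a posteriori (using the next step) that no exceptional lines occur, so $W_0$ itself is Fano.

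For the remaining assertions, let $\ell\subset W_0$ be an exceptional line. Then $\ell\subset(W_0)_{\reg}$ since $\ma{N}_{\ell/W_0}$ is locally free, and $\pi_0(\ell)$ cannot be a point: by Lemma \ref{smooth} every one-dimensional fiber of $\pi_0$ has trivial normal bundle, while a two-dimensional fiber $\pi_0^{-1}(y)$ with $y\in T$ is, via Th.~\ref{2dimfibers} applied to $f_0$, incompatible with containing a $\pr^1$ of anticanonical degree $-1$. Hence $\pi_0(\ell)$ is a curve. I would next show that $\ell\cap S_0=\emptyset$ by applying Prop.~\ref{degreeone} to the birational map $X\dasharrow X_0$, so that $\alpha_0$ is a local isomorphism near $\ell$ and the strict transform $\ell'\subset X_0$ is again an exceptional line. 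Repeating the strict-transform argument through $\psi^{-1}\circ\sigma^{-1}$, and invoking Prop.~\ref{degreeone} once more to avoid the indeterminacy loci, produces an exceptional line $\widetilde\ell\subset\w{X}$. By Lemma \ref{bijection}, $f(\widetilde\ell)$ is a curve of anticanonical degree zero in $Y$; hence $f(\widetilde\ell)\subset\Exc(\ph)\subset Y_{\reg}$ by \ref{properties}, and $f(\widetilde\ell)\cap G_i=\emptyset$ for every $i$ by Lemma \ref{milano}$(b)$ combined with \ref{Y0}. Therefore $\pi_0(\ell)=k(f(\widetilde\ell))$ avoids every $p_i$, and using $k^*(-K_{Y_0})=-K_Y+2\sum_iG_i$ (see the proof of Lemma \ref{aeb}) we obtain $-K_{Y_0}\cdot\pi_0(\ell)=-K_Y\cdot f(\widetilde\ell)=0$.

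Finally, an irreducible curve $C\subset W_0$ that is not an exceptional line satisfies $-K_{W_0}\cdot C\in\Z_{>0}$: either $W_0$ is Fano and the bound is immediate by Gorensteinness, or we pass through the Fano SQM $W_0\dasharrow W_0'$, whose indeterminacy locus is a union of pairwise disjoint exceptional lines of $W_0$; then $C$ lies in the domain of the SQM and its strict transform $C'\subset W_0'$ satisfies $-K_{W_0}\cdot C=-K_{W_0'}\cdot C'\geq 1$. The hard part will be the nef-ness verification for $\alpha_0^*(-K_{W_0})=-K_{X_0}+E$: curves inside the exceptional surface $S_0=\alpha_0(E)$ admit only multisection lifts to $E$ with potentially very negative $E$-degree, so naive pullback does not suffice, and the analysis must instead proceed through extremal rays of $\NE(X_0)$ distinct from $\NE(\alpha_0)$. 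This step becomes especially delicate in case $(b)$ of Lemma \ref{aeb}, where $X_0$ and $W_0$ simultaneously carry isolated locally factorial hypersurface singularities that must be tracked in parallel.
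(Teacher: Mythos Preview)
Your approach through $\alpha_0\colon X_0\to W_0$ has a genuine gap that you yourself flag but do not close: the nef-ness of $\alpha_0^*(-K_{W_0})=-K_{X_0}+E$ cannot be verified on curves $C\subset S_0=\alpha_0(E)$ by pulling back, because $\alpha_0$ is of type $(3,2)$ and such curves have no canonical lift. Restricting $\alpha_0^*(-K_{W_0})$ to $E$ just gives the pullback of $(-K_{W_0})_{|S_0}$, which is exactly what you are trying to control, so the argument is circular at this point. Your attempted fix via Prop.~\ref{degreeone} is also misapplied: that proposition concerns irreducible curves with $-K\cdot C=1$, whereas an exceptional line has $-K\cdot\ell=-1$, so it does not give $\ell\cap S_0=\emptyset$.

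The paper avoids both difficulties by working on the $W$-side of diagram~\eqref{diagramW} rather than through $\alpha_0$. The point is that $W$ is already known to be Fano (Lemma~\ref{rays}), and the birational map $W\dasharrow W_0$ factors as the SQM $\zeta\colon W\dasharrow\wi{W}$ followed by $\sigma_{\scriptscriptstyle W}\colon\wi{W}\to W_0$, where $\sigma_{\scriptscriptstyle W}$ contracts the divisors $D_{{\scriptscriptstyle W},i}$ to \emph{points or curves} (Lemma~\ref{bigdiagram}), not to a surface. One then runs the argument of Lemma~\ref{X0Fano} verbatim with $(W,\wi{W},W_0,D_{{\scriptscriptstyle W},i})$ in place of $(X,\wi{X},X_0,D_i)$: when $Y_0$ is Fano every exceptional line in $\wi{W}$ meets some $D_{{\scriptscriptstyle W},i}$ positively, so $\sigma_{\scriptscriptstyle W}^*(-K_{W_0})=-K_{\wi{W}}+\sum_i m_iD_{{\scriptscriptstyle W},i}$ is nef with the correct trivial locus, and $W_0$ is Fano directly (no a~posteriori step needed). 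When $Y_0$ is not Fano, the exceptional lines of $W_0$ are identified, again exactly as in the proof of Lemma~\ref{X0Fano}, as the images $\alpha_0(\sigma(\ell_j))$ of the exceptional lines $\ell_j\subset\wi{X}$ disjoint from $\cup_iD_i$; their $\pi_0$-images are then the curves $C_j\subset Y_0$ of anticanonical degree zero from that proof, whence $-K_{Y_0}\cdot\pi_0(\ell)=0$ and $p_i\notin\pi_0(\ell)$ by Lemma~\ref{milano}$(b)$. No tracing through $\w{X}$ or repeated use of Prop.~\ref{degreeone} is needed.
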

\begin{proof}
  This is similar to the proof of Lemma \ref{X0Fano}. We keep the same notation as in the proof of Lemma \ref{W}. If $Y_0$ is Fano, then the SQM $\zeta\colon W\dasharrow\wi{W}$ is a sequence of $K$-negative flips, each negative for some $D_{{\s W},i}$ in $W$. Therefore every exceptional line in $\wi{W}$ has positive intersection with some $D_{W,i}$. Then we consider $\sigma_{\scriptscriptstyle W}^*(-K_{W_0})=-K_{\wi{W}}+\sum_im_iD_{{\s W},i}$, where $m_i=2$ (respectively, $m_i=3$) if $D_{{\s W},i}$ is of type $(3,1)^{\sm}$ (respectively, $(3,0)^{\sm}$), and we show that $-K_{\wi{W}}+\sum_im_iD_{{\s W},i}$ is nef and that $(-K_{\wi{W}}+\sum_im_iD_{{\s W},i})^{\perp}\cap\NE(\wi{W})=\NE(\sigma_{\scriptscriptstyle W})$. This shows that $W_0$ is Fano.

  If instead $Y_0$ is weak Fano  with $\rho_{Y_0}=2$, then there is a $K$-positive flip $\eta_{\scriptscriptstyle W}\colon W_0\dasharrow W_0'$ such that $W_0'$ is Fano; the indeterminacy locus of $\eta_{\scriptscriptstyle W}$ is given by the exceptional lines $\alpha_0(\sigma(\ell_j))$, in the notation of the proof of  Lemma \ref{X0Fano}. In particular, if $\ell\subset W_0$ is an exceptional line, then $\ell=\alpha_0(\sigma(\ell_j))$ for some $j$, and $\pi_0(\ell)=f_0(\sigma(\ell_j))=C_j$ a curve of anticanonical degree zero in $Y_0$, so that  $p_i\not\in \pi_0(\ell)$ for every $i=1,\dotsc,r$ by Lemma \ref{milano}$(b)$.
\end{proof}
\begin{prg}
  Recall from Lemma \ref{smooth} that $\pi_{0|U_W}\colon U_W\to U_Y$ is a smooth morphism with fiber $\pr^1$.
Since $Y_0$ is rational (see Lemma \ref{table}), its Brauer group $\Br(Y_0)$ is trivial, and since $Y_0\smallsetminus U_Y$ is finite, we have $\Br(U_Y)\cong\Br(Y_0)=0$ by purity \cite[Th.~$2'$ p.~131]{gabber}. We conclude that
 $U_W=\pr_{U_Y}(\ma{E})$ for some rank two vector bundle  $\ma{E}$ on $U_Y$ (see for instance \cite[\S6.3]{debarrerationality}). Moreover $\det\ma{E}\in\Pic(U_Y)$ extends to a line bundle $L\in\Pic(Y_0)$.
\end{prg}
 \begin{lemma}\label{degree2}
  Let $i\in\{1,\dotsc,r\}$, and suppose that $C\subset Y_0$ is a smooth rational curve with $-K_{Y_0}\cdot C=2$ and $p_i\in C$. Then $p_j\not\in C$ for every $j\neq i$, and one of the following holds:
 \begin{enumerate}[$(i)$]
 \item $L\cdot C$ is odd, $\pi_0^{-1}(C)\cong\mathbb{F}_1$,
   and $\sigma_{\scriptscriptstyle W}$ blows-up $F_i$;
  \item
    $L\cdot C$ is even, $\pi_0^{-1}(C)\cong\pr^1\times\pr^1$, and
  $\sigma_{\scriptscriptstyle W}$ blows-up $w_i$.
\end{enumerate}
\end{lemma}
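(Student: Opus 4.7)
The plan is to prove the three consequences in sequence: first the separation of blown-up points by an intersection count on $B_0$, then the identification of $\pi_0^{-1}(C)$ by applying Remark \ref{sections}, and finally the blow-up assertion by tracking a distinguished exceptional line from $\w{W}$ to $W_0$ through diagram \eqref{diagramW}.

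For the first assertion, by Lemma \ref{index} and Lemma \ref{aeb} we have $-K_{Y_0}=\lambda B_0$ with $\lambda\in\{1,2\}$, so $B_0\cdot C=2/\lambda\in\{1,2\}$. In case (a), $B_0\cdot C=1$ and $p_i\in C\cap B_0$ force $C\cap B_0=\{p_i\}$ set-theoretically, and since each $p_j\in B_0$ by Lemma \ref{index}, we conclude $p_j\notin C$ for $j\neq i$. In case (b), $B_0\cdot C=2$ and $B_0$ has multiplicity two at every $p_\ell$ (by the $A_1$ or $A_2$ singularity type), so $(C\cdot B_0)_{p_\ell}\geq 2$ whenever $p_\ell\in C$; two such distinct points would give $B_0\cdot C\geq 4$, a contradiction.

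After verifying that $C\subset U_Y$ (by noting that a $2$-dimensional fibre of $\pi_0$ over a point of $C$ is incompatible with $-K_{Y_0}\cdot C=2$ being so small), Remark \ref{sections} applied to the $\pr^1$-bundle $U_W=\pr_{U_Y}(\ma{E})\to U_Y$ gives $S:=\pi_0^{-1}(C)\cong\mathbb{F}_e$, $L\cdot C\equiv e\pmod{2}$, and $-K_{W_0}\cdot\Gamma^-=2-e$ for the negative section $\Gamma^-$. Since $\pi_0(\Gamma^-)=C$ has $-K_{Y_0}\cdot C=2\neq 0$, Lemma \ref{imageexc} implies that $\Gamma^-$ is not an exceptional line of $W_0$, so $-K_{W_0}\cdot\Gamma^-\geq 1$; this forces $e\leq 1$, yielding the two alternatives $\pi_0^{-1}(C)\cong\pr^1\times\pr^1$ (with $L\cdot C$ even) and $\pi_0^{-1}(C)\cong\mathbb{F}_1$ (with $L\cdot C$ odd).

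For the blow-up part I would exhibit a distinguished exceptional line and use it to constrain $\sigma_{\scriptscriptstyle W}$. Let $\w{C}\subset Y$ be the transform of $C$; from $k^*(-K_{Y_0})=-K_Y+2\sum_j G_j$ and $G_i\cdot\w{C}=1$, $G_j\cdot\w{C}=0$ for $j\neq i$, one computes $-K_Y\cdot\w{C}=0$, so $\w{C}$ is a component of $\Exc(\ph)$ (the case $Y$ Fano being vacuous). By \ref{properties}, $f^{-1}(\w{C})\cong\mathbb{F}_1$ and its $(-1)$-curve is an exceptional line of $\w{X}$; since $\w{C}\cap B=\emptyset$ (again by \ref{properties}), $\tilde\alpha$ is an isomorphism over this surface, so $\pi^{-1}(\w{C})\cong\mathbb{F}_1$ and its $(-1)$-curve $\ell\subset\w{W}$ is an exceptional line of $\w{W}$ with $-K_{\w{W}}\cdot\ell=-1$. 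Because the flipping curves of $\psi_{\scriptscriptstyle W}$ have positive anticanonical degree, $\ell$ lies outside the indeterminacy locus of $\psi_{\scriptscriptstyle W}$, and $\psi_{\scriptscriptstyle W}(\ell)\not\subset D_{{\s W},i}$ since $\sigma_{\scriptscriptstyle W}(\psi_{\scriptscriptstyle W}(\ell))$ must dominate $C$ as a section of $S\to C$. Writing $\sigma_{\scriptscriptstyle W}^*(-K_{W_0})=-K_{\wi{W}}+m\,D_{{\s W},i}$ with $m=3$ if $\sigma_{\scriptscriptstyle W}$ blows-up $w_i$ and $m=2$ if it blows-up $F_i$, and setting $d:=D_{{\s W},i}\cdot\psi_{\scriptscriptstyle W}(\ell)\geq 1$, one obtains $-K_{W_0}\cdot\sigma_{\scriptscriptstyle W}(\psi_{\scriptscriptstyle W}(\ell))=-1+md$. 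Since this number must equal $2-e$ or $2+e$ (the possible anticanonical degrees of sections of $\mathbb{F}_e$) and $e\leq 1$, the only integral solutions are $(m,e)=(3,0)$ and $(m,e)=(2,1)$, corresponding respectively to cases (ii) and (i) of the statement.

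The main obstacle is the delicate bookkeeping in the last step: one must control how $\psi_{\scriptscriptstyle W}$ acts near $\ell$ so that the anticanonical degree $-K_{\wi{W}}\cdot\psi_{\scriptscriptstyle W}(\ell)=-1$ is preserved, ensure that $\psi_{\scriptscriptstyle W}(\ell)$ meets $D_{{\s W},i}$ transversally with the predicted multiplicity $d$, and verify the preliminary claim $C\subset U_Y$.
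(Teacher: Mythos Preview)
Your first two steps are sound. For the separation of the $p_j$'s, the paper uses a one-line argument: the transform $\w{C}\subset Y$ satisfies $-K_Y\cdot\w{C}=0$, so a second $p_j\in C$ would make $-K_Y\cdot\w{C}<0$, contradicting nefness of $-K_Y$. Your intersection-with-$B_0$ argument is correct but heavier. For $C\subset U_Y$, the paper simply invokes \ref{Y0}: since $-K_Y\cdot\w{C}=0$ we have $\w{C}\subset\Exc(\ph)$, and $k(\Exc(\ph))\subset U_Y$. Your vague remark about incompatibility with small degree is not a proof; use \ref{Y0} instead.

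The third step is where your route diverges substantially from the paper's, and where the obstacles you flag are real. The paper avoids the SQM $\psi_{\scriptscriptstyle W}$ entirely: it works directly with the sections $\Gamma^{\pm}\subset S=\pi_0^{-1}(C)\subset W_0$, pulls them back to $\wi{W}$ via the blow-up $\sigma_{\scriptscriptstyle W}$ (no SQM), and applies Lemma~\ref{W}. Concretely: if $\sigma_{\scriptscriptstyle W}$ blows up $F_i$, then $\Gamma^-$ meets $F_i$ and its transform in $\wi{W}$ has anticanonical degree $(2-e)-2=-e$, which forces $e=1$ by Lemma~\ref{W}. If $\sigma_{\scriptscriptstyle W}$ blows up $w_i$ and $e=1$, then $w_i$ lies on $\Gamma^-$ or on some $\Gamma^+$; either way the transform has degree $1-3=-2$ or $3-3=0$, both excluded by Lemma~\ref{W}; hence $e=0$. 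This is a two-line case split with no indeterminacy to track.

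Your route can be made to work, but the bookkeeping you worry about is genuine. You need: (1) $\ell$ disjoint from the indeterminacy of $\psi_{\scriptscriptstyle W}$ --- this follows because all flipping loci of $\zeta=\psi_{\scriptscriptstyle W}\circ\xi_{\scriptscriptstyle W}$ come from pairwise disjoint exceptional planes in the Fano $W$ (proof of Lemma~\ref{W}), and $\ell$ is one of the already-flipped lines; (2) from (1), intersection numbers are preserved, so $d=D_{{\scriptscriptstyle W},i}\cdot\psi_{\scriptscriptstyle W}(\ell)=\tilde\alpha(D_i)\cdot\ell=\pi^*(G_i)\cdot\ell=G_i\cdot\w{C}=1$; (3) $\sigma_{\scriptscriptstyle W}(\psi_{\scriptscriptstyle W}(\ell))$ is a section of $S\to C$, so its anticanonical degree is $2+s$ for some $s\geq -e$ with $s\equiv e\pmod 2$. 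Then $2+s=-1+m$ gives $s=m-3$, and the parity/inequality constraints force $(m,e)=(2,1)$ or $(3,0)$. Without pinning down $d=1$ your enumeration is incomplete: for instance $m=3$, $d=2$, $e=1$, $s=3$ satisfies $-1+md=5=2+s$, so you cannot conclude from $d\geq 1$ alone.
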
  
\begin{proof}
  Since the transform of $C$ in $Y$ has anticanonical degree zero, we have  $C\subset U_Y$ by \ref{Y0}; moreover $p_j\not\in C$ for every $j\neq i$, otherwise $-K_Y$ would not be nef.
  
  Set $S:=\pi_0^{-1}(C)\cong\mathbb{F}_e$ with $e\in\Z_{\geq 0}$. We
apply Rem.~\ref{sections} and
  keep the same notation; in particular  $L\cdot C\equiv e\mod 2$. We have $-K_{W_0}\cdot\Gamma^-=2-e$ and $\pi_0(\Gamma^-)=C$,
  therefore by Lemma \ref{imageexc} $\Gamma^-$ is not an exceptional line and $e\in\{0,1\}$ .

 Suppose that  $\sigma_{\scriptscriptstyle W}$ blows-up $F_i$. Since $F_i\cap\Gamma^-\neq\emptyset$, and $-K_{W_0}\cdot\Gamma^-= 2-e$, the transform of $\Gamma^-$ in $\wi{W}$ has anticanonical degree $-e$, which implies that $e=1$ (see Lemma \ref{W}), and $L\cdot C$ is odd.

 If instead $\sigma_{\scriptscriptstyle W}$ blows-up $w_i$, we claim that $e\neq 1$, because in that case we would have $-K_{W_0}\cdot\Gamma^-=1$ and $-K_{W_0}\cdot\Gamma^+=3$.
If $w_i\in\Gamma^-$, then the transform of $\Gamma^-$ in $\wi{W}$ has anticanonical degree $-2$, impossible by Lemma \ref{W}. Otherwise, we can assume that $w_i\in\Gamma^+$, and then the transform of $\Gamma^+$  has  degree zero, again impossible.
Hence $e=0$ and $L\cdot C$ is even.
\end{proof}
\begin{lemma}\label{degree4}
   Suppose that $C\subset Y_0$ is a smooth rational curve with $-K_{Y_0}\cdot C=4$ and $p_i,p_j\in C$, with $i\neq j$. Then $p_k\not\in C$ for every $k\neq i,j$, and one of the following holds:
   \begin{enumerate}[$(i)$]
   \item   $L\cdot C$ is odd
         and $\sigma_{\scriptscriptstyle W}$ blows-up either $F_i$ and $F_j$, or
$w_i$ and $w_j$;
\item  $L\cdot C$ is even
    and
 $\sigma_{\scriptscriptstyle W}$ blows-up either $F_i$ and $w_j$, or $w_i$ and $F_j$.
   \end{enumerate}
\end{lemma}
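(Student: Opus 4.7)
The argument parallels Lemma~\ref{degree2}, enlarged by a case split reflecting two blown-up points on $C$. First I would show $p_k\notin C$ for every $k\neq i,j$: the transform $\w{C}\subset Y$ satisfies $-K_Y\cdot\w{C}=-K_{Y_0}\cdot C-2\,\#\{k:p_k\in C\}=4-2\,\#\{k:p_k\in C\}$, using Lemma~\ref{index} and the relation $k^*(-K_{Y_0})=-K_Y+2\sum_k G_k$. With only $p_i,p_j\in C$ this is $0$; any additional $p_k\in C$ forces $-K_Y\cdot\w{C}<0$, contradicting weak Fano \ref{weakFano}. Hence $\w{C}\subset\Exc(\ph)$ by \ref{properties}, and $C\subset U_Y$ by \ref{Y0}. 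Set $S:=\pi_0^{-1}(C)\cong\mathbb{F}_e$ via Lemma~\ref{smooth}; then Rem.~\ref{sections} yields $L\cdot C\equiv e\pmod 2$ and $-K_{W_0}\cdot\Gamma^{\pm}=4\mp e$. Since $\pi_0(\Gamma^-)=C$ has positive anticanonical degree, Lemma~\ref{imageexc} forbids $\Gamma^-$ from being an exceptional line of $W_0$, so $-K_{W_0}\cdot\Gamma^-\geq 1$ and $e\in\{0,1,2,3\}$.

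By Lemma~\ref{bigdiagram}, for each $k\in\{i,j\}$ the map $\sigma_{\s W}$ blows up either the curve $F_k=\pi_0^{-1}(p_k)$ or the point $w_k=F_k\cap S_0$; this lowers $-K_{\wi{W}}\cdot\w{\Gamma}$ for a section $\Gamma\subset S$ by $2$ (meeting $F_k$ transversally once) or by $3\,\mathrm{mult}_{w_k}(\Gamma)$ respectively. Throughout, Lemma~\ref{W} forbids transforms of degree $\leq -2$ or $0$, and forces degree $-1$ to mean exceptional line. The analysis splits into three sub-cases. In $(\alpha)$, both $F_i,F_j$ are blown up and $-K_{\wi{W}}\cdot\w{\Gamma}^-=-e$, so $e=1$ with $\w{\Gamma}^-$ an exceptional line, giving case $(i)$. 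In $(\beta)$, $F_i$ and $w_j$ (or symmetrically) are blown up: one analyzes $\w{\Gamma}^-$ together with a positive section through $w_j$, whose existence follows from $\dim|\Gamma^-+eF|=e+1$, ruling out odd $e$ and yielding case $(ii)$. In $(\gamma)$, both $w_i,w_j$ are blown up: one uses sections through $w_i$ and through $w_j$, and when available a section through both, to eliminate even $e$ and obtain case $(i)$. The parity $L\cdot C\equiv e\pmod 2$ then matches each sub-case with the asserted alternative.

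The delicate point is excluding the even values $e=0,2$ in sub-case $(\gamma)$ and $e=3$ in sub-case $(\beta)$, where transforms of $\Gamma^{\pm}$ alone do not immediately contradict Lemma~\ref{W}. The main tool is to produce an irreducible section of $\pi_0|_S$ through both $w_i$ and $w_j$: if they lie on a common section, its transform has degree $-e-2<-1$, impossible; otherwise an irreducible positive section through both, obtained by a dimension count on $|\Gamma^-+eF|$ (which has $h^0=e+2$) when $w_i,w_j\notin\Gamma^-$, has transform of degree $(4+e)-6=e-2\leq 0$ for $e\leq 2$, also forbidden. The residual configurations where no such irreducible section exists, notably $e=0$ in $(\gamma)$ with $w_i,w_j$ on distinct sections of $\pr^1\times\pr^1$, require an ad hoc argument exploiting that $w_k=F_k\cap S_0$ is constrained by the geometry of $S_0\subset W_0$ via \ref{isowi} (in particular the birationality of $\pi_{0|S_0}\colon S_0\to B_0$), together with the finiteness and disjointness of the exceptional lines of $\wi{W}$ from Lemma~\ref{W}.
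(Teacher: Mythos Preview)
Your plan follows the paper's approach: the case split $(\alpha),(\beta),(\gamma)$ on what $\sigma_{\s W}$ blows up, the degree constraints from Rem.~\ref{sections} and Lemma~\ref{W}, and the disjointness of exceptional lines in $\wi{W}$ for the borderline values $e=3$ in $(\beta)$ and $e=2$ in $(\gamma)$ --- all of this matches the paper.

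The gap is in your handling of the residual configurations in $(\gamma)$. For $e=0$ with $w_i,w_j$ on distinct minimal sections of $S\cong\pr^1\times\pr^1$, you propose invoking the geometry of $S_0$ via \ref{isowi}; this is unnecessary and not visibly sufficient. The paper's fix stays entirely inside $S$: through two points in distinct fibers of $\pi_{0|S}$ there passes a section of class $\Gamma^-+F$ (a $(1,1)$-curve), of $-K_{W_0}$-degree $6$, whose transform in $\wi{W}$ has degree $6-3-3=0$, forbidden by Lemma~\ref{W}. Relatedly, your dichotomy ``both on $\Gamma^-$ / neither on $\Gamma^-$'' omits the case where exactly one of $w_i,w_j$ lies on $\Gamma^-$ (with $e>0$): here no irreducible member of $|\Gamma^-+eF|$ passes through both points, since such a member is disjoint from $\Gamma^-$, so your dimension count does not apply. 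The paper treats this first: the transform of $\Gamma^-$ then has degree $1-e$, forcing $e\in\{0,2\}$ by Lemma~\ref{W}; for $e=2$ the transforms of $\Gamma^-$ and of $F_j$ are intersecting exceptional lines (exactly your disjointness argument), and $e=0$ reduces to the $(1,1)$-section case above. Only after excluding these does one know that both points lie off $\Gamma^-$ and that $e>0$, whereupon a $\Gamma^+$ through both gives $e\in\{1,3\}$ as you say.
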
  
\begin{proof}
Again, since the transform of $C$ in $Y$ has anticanonical degree zero, we have $C\subset U_Y$
by \ref{Y0}, and $p_k\not\in C$ for every $k\neq i,j$.
Similarly as before we apply Rem.~\ref{sections}; we have $\pi_0^{-1}(C)\cong\mathbb{F}_e$ with $L\cdot C\equiv e\mod 2$,
$-K_{W_0}\cdot\Gamma^-=4-e$, and $-K_{W_0}\cdot\Gamma^+=4+e$; moreover $\Gamma^-$ is not an exceptional line by Lemma \ref{imageexc},
therefore $e\in\{0,1,2,3\}$. 

 If $\sigma_{\scriptscriptstyle W}$ blows-up $F_i$ and $F_j$, then $\Gamma^-$ meets both fibers and its transform in $\wi{W}$ has anticanonical degree $-e$, which yields $e=1$ (see Lemma \ref{W}), and we have $(i)$.

 If $\sigma_{\scriptscriptstyle W}$ blows-up $F_i$ and $w_j$, then the transform of $F_j$ in $\wi{W}$ is an exceptional line. 
 We show that $e=0$, which gives $(ii)$. If $e>0$, then $-K_{W_0}\cdot\Gamma^-\leq 3$; since $\Gamma^-\cap F_i\neq\emptyset$, if $w_j\in\Gamma^-$, the transform of $\Gamma^-$ in $\wi{W}$ would have anticanonical degree $-e-1$, impossible by Lemma \ref{W}.
Then
 $w_j\not\in\Gamma^-$, and we can assume that 
$w_j\in\Gamma^+$, so that the transforms of $\Gamma^-$ and $\Gamma^+$ have degrees, respectively, $2-e$ and $e-1$, which yields $e=3$. Then the transforms of $\Gamma^-$ and $F_j$ are exceptional lines in $\wi{W}$, but they intersect, which is again impossible. 

Finally suppose that $\sigma_{\scriptscriptstyle W}$ blows-up $w_i$ and $w_j$.
We show that $\Gamma^-$ cannot contain any of these two points. 
Indeed $\Gamma^-$ can contain at most one of them, say $w_i$. Then the transform of $\Gamma^-$ in $\wi{W}$ has degree $1-e$, which implies $e\in\{0,2\}$ (see Lemma \ref{W}). If $e=0$, then $S\cong\pr^1\times\pr^1$, and there is a section of $\pi_{0|S}$, containing $w_i$ and $w_j$, of degree $4$ or $6$, in both cases impossible by Lemma \ref{W}, as its transform would have degree $-2$ or $0$ respectively. If $e=2$, then both the transforms of $\Gamma^-$ and $F_j$ are exceptional lines in $\wi{W}$, but they intersect, which is again impossible by Lemma \ref{W}. 
Thus $w_i,w_j\not\in\Gamma^-$.

 Therefore $e>0$ and we can choose $\Gamma^+$ containing both points. Then its transform has degree $e-2$, hence $e\in\{1,3\}$  (see Lemma \ref{W}), and we have again $(i)$.
\end{proof}
We recall that, up to flops, $Y_0$ belongs to the list of six $3$-folds given in Lemma \ref{table}.
\begin{lemma}\label{blowup}
  Suppose that $Y_0$ is isomorphic to $\pr_{\pr^2}(T_{\pr^2})$ or to a linear section of $\Gr(2,5)$.
  If there exists a smooth rational curve $C\subset Y_0$  with $-K_{Y_0}\cdot C=4$ and $p_1,p_2\in C$, then $L\cdot C$ is odd.
\end{lemma}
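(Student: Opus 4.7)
The plan is to extract the parity of $L\cdot C$ by transporting parity information from Lemma~\ref{degree2} applied to auxiliary smooth rational curves of lower anticanonical degree passing through only one of the blown-up points, and then combining these parities using the specific cohomological structure of $Y_0$ in each of the two cases.

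I would first treat $Y_0\cong\pr_{\pr^2}(T_{\pr^2})\cong F(1,2)$, which has two $\pr^1$-bundle projections $\pi_j\colon Y_0\to\pr^2$ ($j=1,2$) and Picard group $\Z H_1\oplus\Z H_2$ with $H_j=\pi_j^*\ol_{\pr^2}(1)$ and $-K_{Y_0}=2H_1+2H_2$. Since $-K_{Y_0}\cdot C=4$ and $C$ is an irreducible smooth rational curve, the bidegree of $C$ is $(1,1)$ (the cases $(2,0)$ and $(0,2)$ force $C$ to be contained in a fiber of a projection), so $[C]=f_1+f_2$ in $N_1(Y_0)$, where $f_j$ denotes the class of a fiber of $\pi_j$. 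For each $p_i\in C$ ($i=1,2$), I choose the fiber $F_j^{(i)}$ of $\pi_j$ through $p_i$; this is a smooth rational curve with $-K_{Y_0}\cdot F_j^{(i)}=2$, and by genericity of the remaining blown-up points (and by the fact that the $p_k$ impose only finitely many conditions on the fiber) it contains no other $p_k$, so Lemma~\ref{degree2} applies and forces $L\cdot F_j^{(i)}\pmod 2$ to match the type ($F$ vs.\ $w$) of $\sigma_{\s W}$ at $p_i$. In particular $L\cdot F_1^{(i)}\equiv L\cdot F_2^{(i)}\pmod 2$ for each $i$. Writing $L\cdot C=L\cdot f_1+L\cdot f_2$ and comparing the computation performed at $p_1$ with that at $p_2$, a parity compatibility gives that $L\cdot C$ is odd precisely when $\sigma_{\s W}$ acts by the same type at $p_1$ and $p_2$, which by Lemma~\ref{degree4}$(ii)$ rules out the even case and yields the claim.

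For $Y_0$ a linear section of $\Gr(2,5)\subset\pr^9$, the Picard group is $\Z H$ with $H^3=5$ and $-K_{Y_0}=2H$, hence $C$ is a conic ($H\cdot C=2$). Here I would use that $V_5$ carries a $2$-parameter family of lines, and that through any point there passes a positive-dimensional family of lines disjoint from the finite set $\{p_2,\dots,p_r\}\cup T$; choose such a line $\ell$ through $p_1$ and apply Lemma~\ref{degree2} to deduce the parity of $L\cdot \ell$. Then by specializing $C$ into a union of two lines, each passing through one of $p_1,p_2$ (or applying the analogous argument using a line through $p_2$), one computes $L\cdot C$ as the sum of intersections on the components and reads off the parity from Lemma~\ref{degree2}. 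The main obstacle is the parity-combination step in the first case, where one must carefully track how the type of $\sigma_{\s W}$ at $p_1$ and $p_2$ interacts with the two factors of $[C]=f_1+f_2$; the secondary difficulty is, in the second case, guaranteeing the existence of a suitable line or degeneration avoiding the finite set of bad points, which should follow from the irreducibility and positive-dimensional nature of the relevant families on $V_5$.
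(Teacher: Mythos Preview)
Your core idea --- use degree-$2$ curves through each $p_i$ together with Lemma~\ref{degree2}, then feed the resulting blow-up type information into Lemma~\ref{degree4} --- is the same as the paper's, but the execution contains a genuine error in the $\Gr(2,5)$ case and a misleading detour in the $\pr(T_{\pr^2})$ case.

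The paper's argument is much shorter and uniform: in both cases one picks $\Gamma_1\ni p_1$ and $\Gamma_2\ni p_2$ smooth rational with $-K_{Y_0}\cdot\Gamma_i=2$ and $\Gamma_1\equiv\Gamma_2$ (fibers of one $\pr^1$-bundle in the first case, lines in the second). Numerical equivalence gives $L\cdot\Gamma_1=L\cdot\Gamma_2$, so Lemma~\ref{degree2} forces the \emph{same} blow-up type at $p_1$ and $p_2$; then the contrapositive of Lemma~\ref{degree4}$(ii)$, applied to $C$, yields $L\cdot C$ odd. The numerical class of $C$ itself is never used.

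In your $\pr(T_{\pr^2})$ paragraph, the decomposition $[C]=f_1+f_2$ and the formula $L\cdot C=L\cdot f_1+L\cdot f_2$ are red herrings. If you follow them through --- you showed $L\cdot f_1\equiv L\cdot f_2\bmod 2$ --- you get $L\cdot C$ \emph{even}, not odd. That tension is exactly what Lemma~\ref{1} later exploits to exclude $\pr(T_{\pr^2})$; it does not belong inside the proof of the present lemma. Your final sentence does invoke Lemma~\ref{degree4}$(ii)$ correctly, so the proof is salvageable once you delete the decomposition. (Also: do not appeal to ``genericity'' of the $p_k$; they are fixed. That no other $p_j$ lies on $\Gamma_i$ is a \emph{conclusion} of Lemma~\ref{degree2}, not something you must arrange.)

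In your $V_5$ paragraph the degeneration plan actually fails: all lines on $V_5$ are numerically equivalent, so specializing $C$ to $\ell_1\cup\ell_2$ gives $L\cdot C=2\,L\cdot\ell$, which is even --- the opposite of the claim. Your parenthetical ``or applying the analogous argument using a line through $p_2$'' is the correct route: take a line through $p_2$, note it has the same $L$-degree as the line through $p_1$, conclude same blow-up type, and then apply Lemma~\ref{degree4} to $C$. Drop the degeneration entirely.
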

\begin{proof}
  In both cases there exist
smooth rational curves
$\Gamma_1,\Gamma_2\subset Y_0$  with $\Gamma_1\equiv \Gamma_2$, $-K_{Y_0}\cdot \Gamma_i=2$, and $p_i\in \Gamma_i$, for $i=1,2$. Indeed for $\pr_{\pr^2}(T_{\pr^2})$ we just consider the fibers of one of the $\pr^1$-bundles onto $\pr^2$ (note that $p_1$ and $p_2$ cannot be contained in the same fiber, see Lemma \ref{degree2}).  For the linear section of $\Gr(2,5)$, it is well-known that it is covered by an irreducible family of lines in the Pl\"ucker embedding $\Gr(2,5)\subset\pr^9$, see for instance \cite[\S 2.2]{sanna}.

  Set $d:=L\cdot \Gamma_i$.
By Lemma \ref{degree2}, if $d$ is odd, then $\sigma_{\scriptscriptstyle W}$ blows-up  $F_1$ and $F_2$, while if $d$ is even, then $\sigma_{\scriptscriptstyle W}$ blows-up  $w_1$ and $w_2$. Thus $L\cdot C$ is odd by Lemma \ref{degree4}.
\end{proof}
\begin{lemma}\label{1}
$Y_0\not\cong\pr_{\pr^2}(T_{\pr^2})$.
\end{lemma}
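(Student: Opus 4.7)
The plan is to argue by contradiction, combining the parity constraint of Lemma~\ref{degree2} applied to fibers of the two natural $\pr^1$-bundle structures on $\pr_{\pr^2}(T_{\pr^2})$ with the existence of a degree-$4$ rational curve through two of the blown-up points, producing a contradiction with Lemma~\ref{blowup}.

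Assume for contradiction that $Y_0\cong\pr_{\pr^2}(T_{\pr^2})\cong F(1,2;3)\subset\pr^2\times\pr^2$. Let $\pi_1,\pi_2\colon Y_0\to\pr^2$ be the two projections and $H_j:=\pi_j^*\ol_{\pr^2}(1)$, so $\Pic(Y_0)=\Z H_1\oplus\Z H_2$; a fiber $F^{(j)}$ of $\pi_j$ is a smooth rational curve of class $(0,1)$ for $j=1$ and $(1,0)$ for $j=2$, with $-K_{Y_0}\cdot F^{(j)}=2$. Writing $L=aH_1+bH_2$ gives $L\cdot F^{(1)}=b$ and $L\cdot F^{(2)}=a$. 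For each $i\in\{1,\dotsc,r\}$, I would apply Lemma~\ref{degree2} to the $\pi_1$-fiber through $p_i$: since all such fibers are numerically equivalent, the conclusion is uniform and depends only on the parity of $b$ — if $b$ is odd then $\sigma_{\s W}$ blows up $F_i$ at every $p_i$, while if $b$ is even then $\sigma_{\s W}$ blows up $w_i$ at every $p_i$. The same reasoning applied to the $\pi_2$-fibers gives the dichotomy governed by the parity of $a$. Since $\sigma_{\s W}$ performs a single well-defined operation at each $p_i$, these two conclusions must coincide, forcing $a\equiv b\pmod 2$, that is, $a+b$ is even.

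I would then exhibit a smooth rational curve $C\subset Y_0$ of class $(1,1)$ through two of the $p_i$'s. Writing $p_i=(P_i,\ell_i)$ as a flag (point on a line in $\pr^2$), a direct computation shows that for a generic pair there is a unique such $C$, namely the graph of a linear isomorphism $\overline{P_iP_j}\to\overline{\ell_i\ell_j}$ compatible with the incidence relation. Such $C$ satisfies $-K_{Y_0}\cdot C=4$ and $L\cdot C=a+b$. After relabeling so that the chosen pair is $p_1,p_2$, Lemma~\ref{blowup} gives that $L\cdot C$ is odd, contradicting that $a+b$ is even; this would conclude that $Y_0\not\cong\pr_{\pr^2}(T_{\pr^2})$.

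The main obstacle is ensuring that the $(1,1)$-conic $C$ exists through some pair $(p_i,p_j)$: the blown-up points lie on the divisor $B_0\subset Y_0$ and are not fully general, so one must check that among the $r\geq 3$ points (see~\ref{degree}) at least one pair is not in the incidence locus where exactly one of $P_i\in\ell_j$, $P_j\in\ell_i$ holds. Ruling this out for all pairs would place the $p_i$'s in a tightly constrained configuration on $B_0$, which should be incompatible with $k\colon Y\to Y_0$ being weak Fano (Lemma~\ref{milano}) and with the structure of $f_0$ and $\sigma_{\s W}$ developed earlier in the section.
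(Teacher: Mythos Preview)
Your strategy is exactly the paper's: use Lemma~\ref{degree2} on the fibers of both $\pr^1$-bundles to force $a\equiv b\pmod 2$, then produce a smooth rational curve $C$ of class $(1,1)$ through two of the $p_i$'s to contradict Lemma~\ref{blowup}. The parity step is fine. The gap is precisely the point you flag in your last paragraph: you do not actually construct $C$, and your closing remarks (``should be incompatible with\dots'') are not a proof.

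The obstruction is concrete and the paper resolves it directly. Write $p_i=(P_i,\ell_i)$. First $\pi_j(p_1)\neq\pi_j(p_2)$ for $j=1,2$, else $\Bl_{p_1,p_2}Y_0$ would not be weak Fano. Now set $S:=\pi_1^{-1}(\overline{P_1P_2})\cong\mathbb{F}_1$; then $\pi_{2|S}$ is the blow-down with exceptional curve $\overline{\Gamma}_2=\{(P,\overline{P_1P_2}):P\in\overline{P_1P_2}\}$, which has class $\Gamma_2$ in $Y_0$. Your incidence condition $P_2\in\ell_1$ is exactly $p_1\in\overline{\Gamma}_2$. If that held, then the transform of $\overline{\Gamma}_2$ and the transform of the $\pi_1$-fiber through $p_2$ would be two \emph{intersecting} curves of anticanonical degree zero in $\Bl_{p_1,p_2}Y_0$ (they meet at $(P_2,\overline{P_1P_2})$), contradicting Lemma~\ref{milano}$(b)$, which forces the $K$-trivial locus to consist of disjoint $(-1,-1)$-curves. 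So neither $p_1$ nor $p_2$ lies on $\overline{\Gamma}_2$. Then in $S\cong\mathbb{F}_1$ the linear system of class $\Gamma_1+\Gamma_2$ (the pullback of $\ol_{\pr^2}(1)$ under $\pi_{2|S}$) has a member through $p_1,p_2$, and since $p_1,p_2$ lie in distinct $\pi_1$-fibers and off $\overline{\Gamma}_2$, that member is irreducible and smooth. This is the curve $C$ you need; no genericity of the $p_i$'s on $B_0$ is required, and no appeal to $r\geq 3$ is needed for this step.
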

\begin{proof}
Assume by contradiction that $Y_0=\pr_{\pr^2}(T_{\pr^2})$. For $i=1,2$
      let $\pi_i\colon Y_0\to\pr^2$ be a $\pr^1$-bundle, and $\Gamma_i\subset Y_0$ a fiber of $\pi_i$.

        We show that there exists a smooth rational curve $C\subset Y_0$ with $C\equiv \Gamma_1+\Gamma_2$ and containing $p_1$ and $p_2$.
  Note that $\pi_i(p_1)\neq\pi_i(p_2)$ for $i=1,2$, otherwise $\Bl_{p_1,p_2}Y_0$ would not be weak Fano.  
We consider the line  $\ell:=\overline{\pi_1(p_1)\pi_1(p_2)}\subset\pr^2$ and the surface $S:=\pi_1^{-1}(\ell)$.
Then $S\cong\mathbb{F}_1$, and $\pi_{2|S}\colon S\to \pr^2$ is a blow-up with exceptional curve 
$\overline{\Gamma}_2\subset S$, with $\overline{\Gamma}_2\equiv \Gamma_2$.

If $p_1\in\overline{\Gamma}_2$, then $\overline{\Gamma}_2$ and the fiber of $\pi_1$ through $p_2$   would give two intersecting curves of anticanonical degree zero in $\Bl_{p_1,p_2}Y_0$, contradicting Lemma \ref{milano}$(b)$. Thus none of $p_1,p_2$ is contained in $\overline{\Gamma}_2$, and we can find in $S$ a smooth rational curve $C$ such that $p_1,p_2\in C$ and $C\equiv \Gamma_1+\Gamma_2$, hence $-K_{Y_0}\cdot C=4$.

Set $d_j:=L\cdot \Gamma_j$ for $j=1,2$. Note that every $p_i$ is contained both in a fiber of $\pi_1$ and in a fiber of $\pi_2$, hence by Lemma \ref{degree2} we have $d_1\equiv d_2\mod 2$. Then
 $L\cdot C=d_1+d_2$ is even, but this contradicts Lemma \ref{blowup}.
\end{proof}
\begin{lemma}\label{2}
  $Y_0$ is not isomorphic to a linear section of $\Gr(2,5)$.
    \end{lemma}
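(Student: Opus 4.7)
The plan is to argue by contradiction along the same lines as Lemma \ref{1}, using a parity obstruction supplied by Lemma \ref{blowup}. Assume $Y_0$ is a three-dimensional linear section of $\Gr(2,5)$, so that $Y_0$ is the smooth Fano $3$-fold $V_5$ of Picard number $1$, index $2$, and degree $5$, with $\Pic(Y_0)=\Z H$, $-K_{Y_0}=2H$, $H^3=5$, and $-K_{Y_0}^3=40$; by \ref{degree} this forces $r=4$.

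Since $L\in\Pic(Y_0)$, I write $L=dH$ for some $d\in\Z$. Consequently, for any irreducible curve $C\subset Y_0$ with $-K_{Y_0}\cdot C=4$ (a conic in the Pl\"ucker embedding), one has $H\cdot C=2$ and hence $L\cdot C=2d$, which is \emph{always even}. Moreover, since $-K_Y$ is nef (see \ref{weakFano}), any irreducible curve on $Y_0$ passing through $m$ of the blown-up points $p_i$ with multiplicity one satisfies $-K_{Y_0}\cdot C\geq 2m$; in particular, no conic on $Y_0$ contains three or more of the $p_i$, so a conic through any pair $(p_i,p_j)$ automatically avoids the remaining two blown-up points, and Lemma \ref{blowup} applies to it as soon as smoothness is known.

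The core step is then to exhibit a smooth conic $C\subset Y_0$ through two of the points, say $p_1$ and $p_2$. A normal bundle computation shows that any smooth conic $C\subset Y_0$ has $\deg\ma{N}_{C/Y_0}=2$ with $h^0(\ma{N}_{C/Y_0})=4$, so the Hilbert scheme of conics on $Y_0$ is $4$-dimensional, with a $2$-dimensional family through each general point. Hence the scheme of conics through $p_1$ and $p_2$ is non-empty and zero-dimensional, and its reducible members are unions of two lines through $p_1, p_2$ meeting on $Y_0$, a proper finite subset. A smooth member $C$ of this scheme then satisfies the hypotheses of Lemma \ref{blowup}, which forces $L\cdot C$ to be odd, contradicting $L\cdot C=2d$ even.

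The principal obstacle I expect is the rigorous verification that among the finitely many conics through some pair $(p_i,p_j)$ at least one is smooth, given that the $p_i$ are constrained only by weak-Fanoness of $Y$; this will require invoking the classical geometry of the family of conics on $V_5$ together with the nefness bounds above to rule out the fully degenerate configuration in which every conic through every pair of the $p_i$ is reducible.
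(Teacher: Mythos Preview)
Your parity computation is correct and in fact cleaner than the paper's: since $\Pic(Y_0)=\Z H$ and $H\cdot C=2$ for every conic, $L\cdot C$ is automatically even, without needing the irreducibility of the Hilbert scheme of conics or the existence of double lines. The real gap is the step you flag yourself: producing a \emph{smooth} conic through some pair $(p_i,p_j)$. Your dimension heuristic does not give this. The scheme of conics on $V_5$ through two fixed points has expected dimension zero (indeed through two general points of $V_5$ there is a unique conic), so there is no room to avoid reducible members by genericity; and the points $p_1,\dotsc,p_4$ are constrained only by the weak-Fano condition on $Y$, not freely chosen. Your assertion that the reducible conics through $p_1,p_2$ form ``a proper finite subset'' is exactly the claim at issue and is not supported.

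The paper circumvents this difficulty rather than resolving it. It takes \emph{any} conic $C$ through $p_1,p_2$ (existence cited from \cite[Cor.~2.43]{sanna}) and treats both cases. If $C$ is smooth, Lemma~\ref{blowup} forces $L\cdot C$ odd, contradicting evenness --- this is your argument. If instead $C=C_1\cup C_2$ is reducible, then since no line can contain two of the $p_i$ (its transform in $Y$ would have negative $-K_Y$-degree), one has $p_i\in C_i$ with $p_i\neq C_1\cap C_2$ for $i=1,2$; the transforms of $C_1$ and $C_2$ in $\Bl_{p_1,p_2}Y_0$ are then two \emph{intersecting} curves of anticanonical degree zero, contradicting Lemma~\ref{milano}$(b)$, which requires such curves to be pairwise disjoint $(-1,-1)$-curves. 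This second branch is precisely what your argument is missing, and once it is in place the question of smoothness becomes irrelevant.
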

    \begin{proof}
      By contradiction suppose that $Y_0$ is isomorphic to a linear section of $\Gr(2,5)\subset\pr^9$.
The Hilbert scheme of conics in $Y_0$ is studied in detail in \cite[\S 2.3]{sanna}; in particular it is irreducible, and $Y_0$ also contains double lines (\cite[Prop.~2.44]{sanna}), so that if $C\subset Y_0$ is a conic, $L\cdot C$ must be even.
      By \cite[Cor.~2.43]{sanna},  there exists a conic $C\subset Y_0$ containing $p_1$ and $p_2$; we have $-K_{Y_0}\cdot C=4$. By Lemma \ref{blowup} $C$ cannot be smooth.

      The two points $p_1$ and $p_2$ cannot be contained in the same component of $C$, thus $C=C_1\cup C_2$ with $p_i\in C_i$, $p_i\neq C_1\cap C_2$, and $-K_{Y_0}\cdot C_i=2$, for $i=1,2$. Then the transforms of $C_1$ and $C_2$ in $\Bl_{p_1,p_2}Y_0$ are two curves of anticanonical degree zero which intersect, contradicting Lemma \ref{milano}$(b)$.
 \end{proof}
\begin{lemma}\label{3}
   $Y_0$ is not isomorphic to a divisor of degree $(1,2)$ in $\pr^2\times\pr^2$.
  \end{lemma}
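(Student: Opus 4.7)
The plan is to assume, for contradiction, that $Y_0$ is isomorphic to a divisor of degree $(1,2)$ in $\pr^2\times\pr^2$, and to derive a parity obstruction in the same spirit as Lemmas~\ref{1} and~\ref{2}, by combining Lemma~\ref{degree2} applied to the two types of fiber curves on $Y_0$ with Lemma~\ref{degree4} applied to a $(1,2)$-class curve through two of the blown-up points.

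First I set up the numerics. Writing $H_1,H_2\in\Pic(Y_0)$ for the pullbacks of $\ol_{\pr^2}(1)$ via the two projections $\text{pr}_1,\text{pr}_2\colon\pr^2\times\pr^2\to\pr^2$, adjunction yields $-K_{Y_0}=(2H_1+H_2)|_{Y_0}$, hence $-K_{Y_0}^3=30$ (also matching Lemma~\ref{table}). Combined with $r\geq 3$ from~\ref{degree} and $0<-K_Y^3=30-8r$, this forces $r=3$. The map $\text{pr}_2|_{Y_0}$ is a $\pr^1$-bundle with fibers of class $(1,0)$ (in the basis dual to $H_1,H_2$), and $\text{pr}_1|_{Y_0}$ is a conic bundle with general fibers of class $(0,2)$; both fiber classes have $-K_{Y_0}$-degree $2$. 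Writing $L\equiv aH_1+bH_2$ in $\Pic(Y_0)$ with $a,b\in\Z$, the intersection of $L$ with a $\text{pr}_2$-fiber is $a$ and with a $\text{pr}_1$-fiber is $2b$.

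Next, for each $p_i$ the fiber $\Gamma_i$ of $\text{pr}_2$ is always a smooth $\pr^1$ with $L\cdot\Gamma_i=a$, while, under genericity of $p_i$, the fiber $\Gamma_i'$ of $\text{pr}_1$ is a smooth conic with $L\cdot\Gamma_i'=2b$, which is always even. Applying Lemma~\ref{degree2} to $\Gamma_i'$ forces case~$(ii)$, so $\sigma_{\scriptscriptstyle W}$ blows up $w_i$; then Lemma~\ref{degree2} applied to $\Gamma_i$ forces $a$ to be even as well. For any pair $p_i,p_j$ in general position I then construct a smooth rational curve $C_{ij}\subset Y_0$ of class $(1,2)$ (hence with $-K_{Y_0}\cdot C_{ij}=4$ and $L\cdot C_{ij}=a+2b$) passing through $p_i,p_j$ as a residual component of $Y_0\cap(\ell_1\times c_2)$, where $\ell_1:=\overline{\text{pr}_1(p_i)\text{pr}_1(p_j)}$ and $c_2\subset\pr^2$ is a smooth conic through $\text{pr}_2(p_i),\text{pr}_2(p_j)$: the construction uses that for a general line $\ell_1$ in the first $\pr^2$ there are (generically) four points of the second $\pr^2$ whose $\text{pr}_2$-fiber coincides with $\ell_1$ (i.e.\ the degree-$4$ fiber of the map $\pr^2\dasharrow\pr^2$ given by the three quadrics defining $Y_0$), and one picks $c_2$ through $\text{pr}_2(p_i),\text{pr}_2(p_j)$ and three of these four points so that $Y_0\cap(\ell_1\times c_2)$ splits as $C_{ij}$ plus three $\ell_1$-fibers. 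Since $\sigma_{\scriptscriptstyle W}$ blows up both $w_1$ and $w_2$, Lemma~\ref{degree4} puts us in case~$(i)$, forcing $L\cdot C_{12}=a+2b$ to be odd, in contradiction with the parities of $a$ and $2b$ just established.

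The main technical hurdle is to justify the genericity assumptions on the $p_i$'s: that $\text{pr}_1(p_i)$ avoids the quartic discriminant of $\text{pr}_1|_{Y_0}$ (so that $\Gamma_i'$ is a smooth conic and not two meeting lines), that the required conic $c_2$ may be chosen so as to produce a smooth $C_{ij}$ containing $p_i,p_j$ but not the third blown-up point, and that the $\text{pr}_j(p_i)$'s are pairwise distinct. These should all follow from the assumption that $Y=\Bl_{p_1,p_2,p_3}Y_0$ is weak Fano with the special contraction structure of our setting, together with Lemma~\ref{milano}, but rigorously verifying them (and if necessary treating degenerate positions separately by replacing the $\text{pr}_1$-fiber argument with a nearby curve of the same class) is the most delicate step of the proof.
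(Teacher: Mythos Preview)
Your opening (forcing $r=3$, and using the $\pi_1$-fibers with Lemma~\ref{degree2} to conclude that $\sigma_{\scriptscriptstyle W}$ blows up every $w_i$ and that $a$ is even) matches the paper. A small correction: the discriminant of $\pi_1|_{Y_0}$ is a \emph{cubic}, not a quartic, since the symmetric matrix of the fiber conic has entries linear in the first variables. From this point the paper goes a different, shorter way than you do. Rather than Lemma~\ref{degree4}, it exploits the discriminant: the pullback $D\subset Y_0$ of this cubic is covered by components of reducible $\pi_1$-fibers, of $-K_{Y_0}$-degree $1$, and one checks that $\pi_0^*D\subset W_0$ is then covered by curves of $-K_{W_0}$-degree $1$. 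Over the $\pi_2$-fiber $C_2\ni p_1$ one has $\pi_0^{-1}(C_2)\cong\pr^1\times\pr^1$; the horizontal curve $\w{C}_2$ through $w_1$ has $-K_{W_0}\cdot\w{C}_2=2$ and meets $\pi_0^*D$ (since $D\cdot C_2=3$) away from $w_1$. Since $\sigma_{\scriptscriptstyle W}$ blows up $w_1$, the transform of $\w{C}_2$ in $\wi{W}$ is an exceptional line meeting a curve of $-K$-degree $1$, contradicting Lemma~\ref{W}. Only one blown-up point is used.

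Your route via Lemma~\ref{degree4} would also yield the contradiction, but constructing the smooth rational $(1,2)$-curve $C_{ij}$ through $p_i,p_j$ is a genuine gap, not a formality. Since $H_1\cdot C_{ij}=1$, any such curve lies in $V:=\pi_1^{-1}(\ell_1)$ and, when $V$ is smooth (so $V\cong\Bl_{q_1,\dotsc,q_4}\pr^2$), has class $2h-e_a-e_b-e_c$ there. The good news is that the weak Fano hypothesis does much of the work: the $(-1)$-curves of class $h-e_a-e_b$ on $V$ have $-K_{Y_0}$-degree $1$ and so cannot contain any $p_i$, which is exactly what rules out the reducible members of the net $|2h-e_a-e_b-e_c|$ from passing through both $p_i$ and $p_j$; the unique member through them is then irreducible and smooth rational. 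But carrying this out uniformly --- including the cases where $\pi_2(p_i)$ or $\pi_2(p_j)$ coincides with some $q_k$, and verifying that $V$ is smooth for the particular $\ell_1$ determined by $p_i,p_j$ --- is real work you have not done; the paper's discriminant argument bypasses all of it.
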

\begin{proof}
  Assume by contradiction that $Y_0\subset\pr^2\times\pr^2$ is a divisor of degree $(1,2)$. 
  Let 
$\pi_i\colon Y_0\to\pr^2$, for $i=1,2$, be the restrictions of the two projections, and
$C_i\subset Y_0$ a general fiber of $\pi_i$. Note that $\pi_2$ is a $\pr^1$-bundle, while $\pi_1$ is a conic bundle, with discriminant a cubic curve in $\pr^2$. In particular, if $\Gamma_1\subset Y_0$ is a component of a reducible fiber of $\pi_1$, then $C_1\equiv 2\Gamma_1$.

Let $D\subset Y_0$ be the pullback of the discriminant curve of $\pi_1$ in $\pr^2$, so that $D$ is covered by rational curves of anticanonical degree one. Note that $D\in|\pi_1^*\ol_{\pr^2}(3)|$, thus $D\cdot C_2=3$.

We show that also the divisor $\pi_0^{*}(D)$  in $W_0$ is covered by curves of anticanonical degree one.
Indeed let $\Gamma\subset D$ be a component of a general reducible conic. Since $\Gamma$ is general, we have $\Gamma\subset U_Y$
(recall from \ref{facto} and Lemma \ref{smooth} that $U_Y$ is the open subset of $Y$ where $\pi_0$ is smooth, and that $Y\smallsetminus U_Y$ is finite);  consider $S:=\pi_0^{-1}(\Gamma)\subset W_0$ and apply Rem.~\ref{sections}. Note that, since $Y_0$ is Fano, $W_0$ is Fano by Lemma \ref{imageexc},
and $-K_{W_0}\cdot\Gamma^-=1-e$, where $S\cong\mathbb{F}_e$. Hence  we have $e=0$, $S\cong\pr^1\times\pr^1$, and the horizontal curves in $S$ have anticanonical degree one; thus
$\pi_0^{*}(D)$ is  covered by curves of anticanonical degree one.
 
  There exist fibers $C_1$ and $C_2$, of $\pi_1$ and $\pi_2$ respectively, containing $p_1$; note that $C_1$ must be a smooth fiber, otherwise $\Bl_{p_1}Y_0$ would not be weak Fano. We have $L\cdot C_1=2L\cdot\Gamma_1$ even; then Lemma \ref{degree2} implies that $\sigma_{\scriptscriptstyle W}$ blows-up $w_1$, and that $L\cdot C_2$ is even too. 

  Let us consider now $S':=\pi_0^{-1}(C_2)$.
  Since $L\cdot C_2$ is even, by Lemma \ref{degree2} we have again $S'\cong\pr^1\times\pr^1$, and  the horizontal curves have anticanonical degree $2$ in $W_0$  (see Rem.~\ref{sections}). Let $\w{C}_2$ be the horizontal curve containing $w_1\in S'$. Then $\pi_0^{*}(D)\cdot \w{C}_2=D\cdot C_2>0$; on the other hand $w_1\not\in  \pi_0^{*}(D)$ (because $w_1$ cannot be contained in a curve of anticanonical degree one, by Lemma \ref{W}), thus $\w{C}_2$ must intersect $\pi_0^{*}(D)$ in some point different from $w_1$, and $\w{C}_2$ intersects some curve $\Gamma$ with $-K_{W_0}\cdot\Gamma=1$.

Then the transform of $\w{C}_2$ in $\wi{W}$ gives an exceptional line which meets a curve of anticanonical degree one, contradicting  Lemma \ref{W}.
\end{proof}
\begin{lemma}\label{4}
    $Y_0$ is Fano.
    \end{lemma}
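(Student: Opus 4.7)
The plan is to exclude, in Lemma \ref{table}, the two non-Fano candidates N.~4 and N.~5 (namely \cite[2.13(1.iii) and (1.iv)]{JPRII}); combined with Lemmas \ref{1}, \ref{2}, and \ref{3}, this will leave only $Y_0\cong\pr^3$, which is Fano.

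I would argue by contradiction: assume that up to flops $Y_0$ is one of these two weak-but-not-Fano threefolds, so $\rho_{Y_0}=2$, $-K_{Y_0}^3\in\{32,40\}$, $Y_0$ has a non-trivial small anticanonical map, and (by Lemmas \ref{milano}$(a)$ and \ref{blowups}) two distinct elementary contractions of fiber type. The constraint $-K_{Y_0}^3>8r$ from \ref{degree}, combined with $r=\rho_Y-\rho_{Y_0}\geq 3$, forces $r=3$ in case N.~5 and $r\in\{3,4\}$ in case N.~4, so only a handful of points $p_1,\dotsc,p_r$ are blown up, all lying in $Y_0\smallsetminus\Exc(\ph_0)$ by Lemma \ref{milano}$(b)$.

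The strategy then parallels Lemmas \ref{1}, \ref{2}, \ref{3}: using the explicit JPR descriptions of cases (1.iii) and (1.iv) (they are built as blow-ups or specific conic/$\pr^1$-bundles over smooth rational surfaces), I would identify a $\pr^1$-bundle structure on an open part of $Y_0$, and exhibit smooth or reducible rational curves $C\subset Y_0$ of small anticanonical degree (two, three, or four) joining pairs of blown-up points $p_i,p_j$. From such a $C$ one extracts a contradiction in one of three ways: either via the parity obstruction for $L\cdot C$ coming from Lemmas \ref{degree2} and \ref{degree4}; or via Lemma \ref{W}, by exhibiting an exceptional line in $\wi{W}$ meeting a curve of anticanonical degree one, or two exceptional lines intersecting in $\wi{W}$; or by producing two curves of anticanonical degree zero in $\Bl_{p_1,\dotsc,p_r}Y_0$ that meet, contradicting Lemma \ref{milano}$(b)$.

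The main obstacle is to unwind the JPR descriptions concretely enough to produce the required joining curves and to verify that they can be chosen to avoid $\Exc(\ph_0)$ while passing through the correct $p_i$'s; the restricted ranges of $r$ and of the anticanonical degree make this a finite and explicit check. Once the curves are in hand, the numerical arguments already developed in Lemmas \ref{degree2}, \ref{degree4}, \ref{W}, and \ref{milano} apply without modification and yield the contradiction.
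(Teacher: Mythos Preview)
Your overall plan is correct and matches the paper's approach: assume $Y_0$ is one of the two non-Fano cases N.~4 or N.~5 from Lemma \ref{table}, and derive a parity contradiction via Lemma \ref{degree2}. However, your sketch is missing the key mechanism that makes the parity argument work, and your emphasis on curves ``joining pairs of blown-up points $p_i,p_j$'' is slightly misdirected.

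The paper's argument uses only a \emph{single} point $p_1$ and hinges on the flopping curve. Concretely: since $Y_0$ is not Fano, it has a flopping curve $\ell\subset (Y_0)_{\reg}$ with $-K_{Y_0}\cdot\ell=0$, disjoint from all $p_i$ (Lemma \ref{milano}$(b)$). By \ref{properties} one knows that $f^{-1}(k^{-1}(\ell))\cong\mathbb{F}_1$, hence $\pi_0^{-1}(\ell)\cong\mathbb{F}_1$, so by Rem.~\ref{sections} the intersection $L\cdot\ell$ is \emph{odd}. Now, using the explicit JPR/Jahnke--Peternell descriptions (quadric bundles over $\pr^1$ with $\ell$ a section), one produces two smooth rational curves $\overline{C},\overline{\Gamma}\subset Y_0$ through $p_1$, both of anticanonical degree $2$, satisfying $\overline{\Gamma}\equiv\overline{C}+\ell$. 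Then $L\cdot\overline{\Gamma}$ and $L\cdot\overline{C}$ have opposite parities, but Lemma \ref{degree2} forces both to determine the same blow-up behaviour of $\sigma_{\s W}$ at $p_1$ (either both odd or both even), a contradiction. Your proposal does not isolate this step---the oddness of $L\cdot\ell$ and the numerical relation $\overline{\Gamma}\equiv\overline{C}+\ell$---and without it there is no evident source for a parity clash. The bounds on $r$ you compute are correct but play no role in the actual argument.
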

    \begin{proof}
       By Lemma \ref{table}, if $Y_0$ is not Fano, then $\rho_{Y_0}=2$ and up to flops $Y_0$ is isomorphic to one of the weak Fano $3$-folds in  \cite[Th.~3.5(3), Th.~3.6(1)]{jahnkepeternell}. 

   We consider first  \cite[Th.~3.6(1)]{jahnkepeternell}, where
   $Y_0=\pr_{\pr^2}(\ma{F})$ with $\ma{F}$ a rank $2$ vector bundle.
   There is a flop $Y_0\dasharrow Y_0'$ where $Y_0'$ is \cite[Th.~3.5(4)]{jahnkepeternell}, namely
   $Y_0'\subset P':=\pr_{\pr^1}(\ol\oplus\ol(1)^{\oplus 3})$ is a general divisor in the linear system $|2\eta-F|$,  $\eta$ the tautological class and $F$ a fiber of the $\pr^3$-bundle $P'\to\pr^1$. The $\pr^3$-bundle restricts to a quadric bundle $Y_0'\to\pr^1$.
Moreover $P'\to\pr^1$ has a section $\ell'\subset P'$ with normal bundle $\ol_{\pr^1}(-1)^{\oplus 3}$, in fact $\ell'$ is an exceptional line in $P'$, and $\ell'\subset Y_0'$ is the flopping curve. There is a flip $P\dasharrow P'$ where $P=\pr_{\pr^2}(\ol\oplus\ol(1)^{\oplus 2})$ is a Fano $4$-fold containing $Y_0$, and the $\pr^1$-bundle 
$\pi\colon Y_0\to\pr^2$ is given by the restriction of the $\pr^2$-bundle $P\to\pr^2$. 

The fibers $C\subset Y_0$  of $\pi$ are the transforms of the lines  in the fibers of the quadric bundle $Y_0'\to\pr^1$ which intersect $\ell'$.
If $\Gamma\subset Y_0$ is the transform of a general line in a fiber of the quadric bundle, one can check that
$\Gamma\equiv C+\ell$, where $\ell\subset Y_0$ is the flopping curve.

Recall that $p_i\not\in\ell$ for every $i$ (see Lemma \ref{milano}$(b)$), thus $\ell$ is contained in the open subset where $k\colon Y\to Y_0$ is an isomorphism, and $f^{-1}(k^{-1}(\ell))\cong (f_0)^{-1}(\ell)$ (see \eqref{diagram}). By  \ref{properties} we conclude that $f_0^{-1}(\ell)\cong\mathbb{F}_1$
and  $\ell\subset U_Y$ (see \ref{facto} and Lemma \ref{smooth}), thus $L\cdot\ell$ is odd by Rem.~\ref{sections}.
 We denote by $p'_1$ the image of $p_1$ in $Y_0'$.
Consider the fiber $F$ of the quadric bundle through $p_1'$, and 
let $\overline{\Gamma}'\subset F$ be a line through $p_1'$. Then
$\overline{\Gamma}'\cap\ell'=\emptyset$,  otherwise in 
 $\Bl_{p'_1}Y_0'$ these curves would be two intersecting curves of anticanonical degree zero, contradicting Lemma \ref{milano}$(b)$.

 Let $\overline{\Gamma}\subset Y_0$ be the transform of $\overline{\Gamma}'$.
   Consider now the point $p_1\in Y_0$ and let $\overline{C}$ be the fiber of $\pi$ 
containing $p_1$. 
      We have $-K_{Y_0}\cdot \overline{\Gamma}=2$, $p_1\in\overline{\Gamma}$, and $\overline{\Gamma}\equiv \overline{C}+\ell$. In particular we see that $L\cdot\overline{\Gamma}$ and $L\cdot\overline{C}$ have different parity, but this contradicts 
      Lemma \ref{degree2}.

      \medskip

      The case where $Y_0$ is isomorphic to  the weak Fano $3$-fold in \cite[Th.~3.5(3)]{jahnkepeternell} is similar. Now $Y_0\subset \pr_{\pr^1}(\ol^{\oplus 2}\oplus\ol(1)^{\oplus 2})$ in the linear system $|2\eta|$, and $Y_0\to\pr^1$ is a quadric bundle. Moreover, considering the flop $Y_0\dasharrow Y_0'$, we have that $Y_0'$ is of the same type as $Y_0$. In this case $Y_0$ has two  flopping curves $\ell_1$ and $\ell_2$, with $\ell_1\equiv\ell_2$, both sections of the quadric bundle ($\ell_1$ and $\ell_2$ are disjoint for $Y_0$ general, but possibly $\ell_1=\ell_2$ for some special $Y_0$). Similarly as in the previous case, using lines in the fibers of the quadric bundles on $Y_0$ and $Y_0'$, we construct two smooth rational curves $C,\Gamma\subset Y_0$ through $p_1$ with $\Gamma\equiv C+\ell_1$ and $-K_{Y_0}\cdot\Gamma=-K_{Y_0}\cdot C=2$; here $C$ is a line in the fiber of the quadric bundle $Y_0\to\pr^1$ through $p_1$, while $\Gamma$ is a section. In the end we
      obtain a contradiction with the parity of the intersection of $L$ with these curves.
\end{proof}
\begin{prg}\label{last}
  We have $Y_0\cong\pr^3$, $Y\cong\Bl_{r\,\pts}\pr^3$, and $W_0$ is Fano.

  Indeed in Lemmas \ref{1} -- \ref{4}  we have excluded all the other cases of Lemma \ref{table}. Then $W_0$ is Fano by Lemma \ref{imageexc}.
  \end{prg}
  \begin{lemma}\label{P3}
    Let $\ell\subset\pr^3$ be a line. Then $L\cdot \ell$ is odd, and   $\sigma_{\scriptscriptstyle W}$ blows-up either $r$ points or $r$ fibers.
  \end{lemma}
  \begin{proof}
    Indeed if $\sigma_{\scriptscriptstyle W}$ blows-up both points and fibers, since $r\geq 3$ we can find three points $p_i,p_j,p_k$ such that $\sigma_{\scriptscriptstyle W}$ blows-up $w_i,w_j,F_k$ (or conversely $F_i,F_j,w_k$). Then by Lemma \ref{degree4} we have $L\cdot \overline{p_ip_j}$ odd and $L\cdot \overline{p_ip_k}$ even, a contradiction. Therefore $\sigma_{\scriptscriptstyle W}$ blows-up either $r$ points, or $r$ fibers, and $L\cdot \ell$ is odd again 
    by Lemma \ref{degree4}.  
\end{proof}
   \begin{prg}\label{switch}   \emph{Up to switching $E_1$ and $E_2$, from now on we assume that 
     $\sigma_{\scriptscriptstyle W}$ blows-up $r$ points.}

     Indeed, suppose that $\sigma_{\scriptscriptstyle W}$ blows-up $r$ fibers. This means that every $D_{{\s W},i}$ is of type $(3,1)^{\sm}$, so by Lemma \ref{bigdiagram} we are in case $(a)$ and, if we change the factorization of $f_0$ in \ref{facto} by contracting $E_2$ instead of $E_1$, in the new factorization every $D_{{\s W},i}$ is of type $(3,0)^{\sm}$, namely  $\sigma_{\scriptscriptstyle W}$ blows-up $r$ points.
     \end{prg}
      \begin{lemma}\label{psi}
 The morphism  $\pi_0$ has no 2-dimensional fibers.
   \end{lemma}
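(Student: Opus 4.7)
The plan is a proof by contradiction. Suppose $y_0\in T$, so that $F:=\pi_0^{-1}(y_0)$ is $2$-dimensional. By Lemma~\ref{smooth} we have $y_0\notin\{p_1,\dots,p_r\}$, so $\sigma_{\s W}$ is an isomorphism in a neighbourhood of $F$, and the only possible singularities of $W_0$ inside $F$ are finitely many nodes arising from isolated $2$-dimensional fibers of $\alpha_0$ (Th.~\ref{32}).

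The central technical step is a parity/primitivity argument using the relative tautological class. Since $\pi_{0|U_W}\colon U_W\to U_Y$ is a smooth $\pr^1$-bundle, write $U_W=\pr_{U_Y}(\ma{E})$ and let $\xi\in\Pic(U_W)$ be its tautological class, so that $\xi\cdot\Phi=1$ on a general fiber $\Phi$ of $\pi_0$. Because $W_0$ is locally factorial (Lemma~\ref{smooth}) and $W_0\setminus U_W=\pi_0^{-1}(T)$ has codimension $\geq 2$, the restriction map $\Pic(W_0)\to\Pic(U_W)$ is an isomorphism, so $\xi$ extends uniquely to a class $\xi'\in\Pic(W_0)$ still satisfying $\xi'\cdot\Phi=1$. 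On the other hand $\pi_0$ is elementary, so $\NE(\pi_0)$ is a single ray with a primitive integral generator $[\Psi]\in\N(W_0)$, and every irreducible curve in $F$ is contracted by $\pi_0$ and therefore has class in $\Z_{>0}[\Psi]$. Writing $[\Phi]=c[\Psi]$ with $c\in\Z_{>0}$ and pairing with $\xi'$ we get $1=c(\xi'\cdot\Psi)$ with $\xi'\cdot\Psi\in\Z$, which forces $c=1$: the general fiber class $[\Phi]$ is itself primitive in $\NE(\pi_0)$.

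To reach the contradiction, assume for the moment that $W_0$ is smooth along $F$, and invoke Kachi's classification \cite{kachi} (as recalled in the proof of Th.~\ref{2dimfibers}) of $2$-dimensional fibers of elementary $K$-negative contractions from smooth $4$-folds onto smooth $3$-folds. This leaves three possibilities: type A ($F\cong\pr^2$), type B ($F\cong\mathbb{F}_1$), and type C ($F\cong\pr^1\times\pr^1$ with $y_0\in\Delta_{\intr}(\pi_0)$). Type C is excluded because $\pi_{0|U_W}$ is a smooth $\pr^1$-bundle, so $\pi_0$ has empty discriminant outside $T$ and in particular $\Delta_{\intr}(\pi_0)=\emptyset$. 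In types A and B, Kachi's description of $-K_{W_0|F}$ via adjunction and the normal bundle of $F$ produces an irreducible curve $C\subset F$ with $-K_{W_0}\cdot C=1$ (a line in $\pr^2$, or a ruling fiber or the $(-1)$-section in $\mathbb{F}_1$), while $-K_{W_0}\cdot\Phi=2$; hence $[\Phi]=2[C]$ in the fiber ray, contradicting the primitivity of $[\Phi]$ obtained above.

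The main obstacle is the possibility that $F$ contains one of the finitely many nodes of $W_0$, where Kachi's smooth-$4$-fold classification does not apply directly. I would handle this by taking a small resolution $\tau\colon W_0'\to W_0$ of the nodes inside $F$ (which exists since they are locally factorial ordinary double points of a $4$-fold), checking that $\xi'$ pulls back to a class on $W_0'$ with $\tau^*\xi'\cdot\Phi=1$ on a general fiber so that the primitivity computation is preserved, and then invoking Kachi on the smooth $W_0'$ after restricting to a sufficiently small neighbourhood of the strict transform of $F$ so as to keep the relevant contraction elementary.
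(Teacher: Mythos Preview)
Your primitivity argument is correct as far as it goes: there is indeed a line bundle $\xi'\in\Pic(W_0)$ with $\xi'\cdot\Phi=1$ on a general fiber, so $[\Phi]$ is primitive in the extremal ray $\NE(\pi_0)$. The gap is in the next step, where you assert that Kachi's classification forces a curve $C\subset F$ with $-K_{W_0}\cdot C=1$. This is false for the $\pr^2$ case. If $F\cong\pr^2$ is an isolated $2$-dimensional fiber of an elementary $K$-negative contraction from a smooth $4$-fold to a smooth $3$-fold, then the general fibers $\Phi\cong\pr^1$ limit to \emph{lines} in $F$, so a line $\ell\subset F$ satisfies $[\ell]=[\Phi]$ and $-K_{W_0}\cdot\ell=2$, not $1$; there is no curve of anticanonical degree $1$ in $F$, and the primitivity of $[\Phi]$ yields no contradiction. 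Concretely, the Fano $4$-fold $Z_0\subset\pr^2\times\pr^3$ of bidegree $(1,1)$ (see \S\ref{newel} and \cite[Ex.~11.1]{kachi}) has exactly such a $\pr^2$ fiber over a smooth point of $\pr^3$, with $\xi'\cdot\ell=1$ and $-K\cdot\ell=2$ for a line $\ell$; your argument would not exclude it, and indeed it shouldn't, since that fiber genuinely exists. (Your taxonomy of Kachi's types A/B/C is also garbled---$\mathbb{F}_1$ and $\pr^1\times\pr^1$ appear in \cite[Prop.~4.11]{AW} as fibers with $\dim\N(F,X)>1$, not as Kachi's elementary types---but this is secondary to the main error.)

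The paper's proof is not a local analysis of a putative $2$-dimensional fiber at all; it exploits the global geometry established just before. By this point we know $Y_0\cong\pr^3$ (\ref{last}) and $\sigma_{\s W}$ blows up the points $w_i$ (\ref{switch}). The proof blows up $w_1$ and flips to obtain $\pi_1\colon W_1\to Y_1=\Bl_{p_1}\pr^3$, then composes with the $\pr^1$-bundle $\beta\colon Y_1\to\pr^2$. The key is the \emph{other} elementary factorization $W_1\stackrel{\gamma}{\to}A\stackrel{\delta}{\to}\pr^2$: one shows $\gamma$ is of fiber type with $1$-dimensional fibers (using that over the fiber of $\beta$ through $p_2$ we have $\pi_1^{-1}(\beta^{-1}(q))\cong\pr^1\times\pr^1$ by Lemma~\ref{degree2}, ruling out $\gamma$ birational), and then equidimensionality of $\delta$ forces every fiber of $\pi_1$, hence of $\pi_0$, to be $1$-dimensional. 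The argument is thus tied to the specific structure $Y_0\cong\pr^3$ and to the presence of the other blown-up points $p_2,\dots,p_r$; a purely local primitivity argument cannot work, because $2$-dimensional fibers of this type do exist over smooth $3$-folds.
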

   \begin{proof}
    Recall that $F_1$ is a smooth fiber of $\pi_0$ (see Lemma \ref{smooth}). We consider the blow-up $\sigma_1\colon \wi{W}_1\to W_0$ of the first point $w_1\in F_1\subset W_0$; the transform of $F_1$ in $\wi{W}_{1}$ is an exceptional line, and
      we  have a diagram:
      $$\xymatrix{
{W_{1}}\ar[d]_{\pi_{1}}\ar@{-->}[r]&{\wi{W}_{1}}\ar[r]^{\sigma_1}&{W_0}\ar[d]^{\pi_0}\\
{Y_{1}}\ar[rr]^{\Bl_{p_1}}&&{\pr^3}
        }$$
        where $\wi{W}_{1}\dasharrow W_{1}$ flips the transform of $F_1$, which is contained in $(\wi{W}_{1})_{\reg}$ (see Lemma \ref{bigdiagram}).
        The exceptional divisor $D_{{\s W},1}=\Exc(\sigma_1)\subset \wi{W}_{1}$ is isomorphic to $\pr^3$, and its transform $D_1'\subset W_1$ is isomorphic to $\Bl_{\pt}\pr^3$, so that $\pi_{1|D_1'}\colon D_1'\to G_1$ is a $\pr^1$-bundle, where
       $G_1\cong\pr^2\subset Y_1$ is  the exceptional divisor over $p_1$.
        
Since $\pi_{1}$ and $\pi_0$ are isomorphic over $Y_{1}\smallsetminus G_1$ and $\pr^3\smallsetminus\{p_1\}$ respectively, it is enough to show that $\pi_{1}$ has no $2$-dimensional fibers.

We  have $Y_{1}=\Bl_{p_1}\pr^3$ and there is a $\pr^1$-bundle $\beta\colon Y_{1}\to\pr^2$. Moreover $Y_{1}$ is Fano, thus $W_{1}$ is Fano too, because if $W_1$ contained an exceptional line, its image in $Y_1$ would have anticanonical degree zero (see Lemma \ref{imageexc}).

  We note that since $\pi_{1}$ has at most finitely many $2$-dimensional fibers, and it is smooth outside these fibers, the composition $\theta:=\beta\circ\pi_{1}\colon W_{1}\to\pr^2$ has at most finitely many reducible fibers, and is equidimensional.
    Moreover     $\theta$ has a second factorization in elementary contractions: $$\xymatrix{{W_{1}}\ar[dr]^{\theta}\ar[d]_{\pi_{1}}\ar[r]^{\gamma}&A\ar[d]^{\delta}\\
{Y_{1}}\ar[r]^{\beta}&{\pr^2}
}$$
Since $\beta$ has one-dimensional fibers, and $\pi_{1}$ is finite on fibers of $\gamma$, we see that $\gamma$ has fibers of dimension at most $1$.

We show that $\gamma$ is of fiber type. By contradiction, suppose that $\gamma$ is birational. Since $W_{1}$ is Fano with at most isolated, locally factorial, and terminal singularities (see Lemma \ref{smooth}), $\gamma$ must be divisorial by
Th.~\ref{gdn}, thus it is of type $(3,2)$.  Let $E\subset W_{1}$ be the exceptional divisor.
We cannot have $\theta(E)=\{pt\}$, because $\theta$ is equidimensional. If $\theta(E)$ is 
a curve in $\pr^2$, then every fiber of $\theta$ over this curve
 is reducible, again a contradiction. Therefore $\theta(E)=\pr^2$.

 Set $q:=\beta(p_2)\in\pr^2$ and $C:=\beta^{-1}(q)
 \subset Y_{1}$;
   then $C$ is a smooth rational curve with $-K_{Y_1}\cdot C=2$ and $p_2\in C$.
 As in Lemma \ref{degree2} we see that $C$ is contained in the open subset where $\pi_1$ is smooth, and that
 $S:=\pi_{1}^{-1}(C)\cong\pr^1\times\pr^1$, because by \ref{switch} $\sigma_{\scriptscriptstyle W}$ blows-up $w_2$. On the other hand $S=\theta^{-1}(q)$, and
 $\gamma_{|S}$ is a non-trivial birational map,  thus we have a contradiction.

 Therefore
$\gamma$ is of fiber type, and  has fibers of dimension at most $1$, so $\dim A=3$. We note that $A$ is $\Q$-factorial and log Fano, and $\delta\colon A\to\pr^2$ is an elementary contraction, thus it must be equidimensional. Now if $F\subset W_{1}$ is a fiber of $\pi_{1}$, then $\gamma$ is finite on $F$, and $\gamma(F)$ is a fiber of $\delta$. Therefore $\dim F=\dim\gamma(F)=1$, and this concludes the proof.
\end{proof}
\begin{lemma}\label{Wr}
  We have $W_0\cong\pr_{\pr^3}(\ol\oplus\ol(1))\cong \Bl_{q_0}\pr^4$, $\wi{W}\cong\Bl_{q_0,q_1,\dotsc,q_r}\pr^4$,
  and $W$ is smooth and is the Fano model of $\Bl_{r+1\,\pts}\pr^4$ (see Ex.~\ref{Fanomodel}).
\end{lemma}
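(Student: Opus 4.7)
The plan is to classify $W_0$ first, then identify $\wi{W}$, and finally recognize $W$ as a Fano model.

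By Lemmas \ref{smooth} and \ref{psi}, $\pi_0\colon W_0\to Y_0\cong\pr^3$ is a smooth $\pr^1$-bundle, so $W_0\cong\pr_{\pr^3}(\mathcal{E})$ for a rank-$2$ vector bundle $\mathcal{E}$ on $\pr^3$ with $\det\mathcal{E}=L$. By Lemma \ref{imageexc}, $W_0$ is Fano with $\rho_{W_0}=2$. I would appeal to the classification of smooth Fano $4$-folds that are $\pr^1$-bundles over $\pr^3$ (Szurek--Wi\'sniewski), which, up to twist, leaves only two options: $\mathcal{E}=\ol^{\oplus 2}$ (giving $W_0\cong\pr^3\times\pr^1$) or $\mathcal{E}=\ol\oplus\ol(1)$ (giving $W_0\cong\Bl_{q_0}\pr^4$). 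Lemma \ref{P3} forces $L\cdot\ell$ to be odd for a line $\ell\subset\pr^3$, which rules out the first case, so $W_0\cong\Bl_{q_0}\pr^4$ for some $q_0\in\pr^4$.

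Let $\beta\colon W_0\to\pr^4$ denote the blow-down contracting the exceptional divisor $E_0\cong\pr^3$ to $q_0$. By Lemma \ref{bigdiagram} together with \ref{switch}, $\sigma_{\s W}\colon\wi{W}\to W_0$ blows up the $r$ distinct smooth points $w_i=F_i\cap S_0$, $i=1,\ldots,r$. The crucial step is to show that $w_i\notin E_0$ for every $i$. If instead $w_i\in E_0$, I would pick any line $\ell\subset E_0\cong\pr^3$ through $w_i$; writing $-K_{W_0}=5H-3E_0$ with $H$ the pullback of the hyperplane class from $\pr^4$, I compute $H\cdot\ell=0$ and $E_0\cdot\ell=\deg\ma{N}_{E_0/W_0}|_\ell=-1$, so $-K_{W_0}\cdot\ell=3$. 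The strict transform $\tilde\ell\subset\wi{W}$ then satisfies $-K_{\wi{W}}\cdot\tilde\ell=-K_{W_0}\cdot\ell-3=0$. Since $\tilde\ell$ is not an exceptional line (which would have $-K\cdot\tilde\ell=-1$), this violates Lemma \ref{W}. Therefore $w_i\notin E_0$; $\beta$ is an isomorphism near each $w_i$; the points $q_i:=\beta(w_i)\in\pr^4\smallsetminus\{q_0\}$ are pairwise distinct; and commuting blow-ups of disjoint centers yields $\wi{W}\cong\Bl_{q_0,q_1,\ldots,q_r}\pr^4$.

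Finally, Lemma \ref{rays} gives that $W$ is smooth and Fano, while diagram \eqref{diagramW} provides the SQM $\zeta\colon W\dashrightarrow\wi{W}$. Hence $W$ is a smooth Fano variety birational to $\wi{W}\cong\Bl_{q_0,\ldots,q_r}\pr^4$, so $W$ is the Fano model of $\Bl_{r+1\,\pts}\pr^4$ in the sense of Example \ref{Fanomodel}. The main obstacle I anticipate is the verification $w_i\notin E_0$, which as outlined reduces to one intersection-number calculation against Lemma \ref{W}.
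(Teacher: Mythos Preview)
Your argument has a genuine gap in the classification step. The Szurek--Wi\'sniewski list of Fano $\pr^1$-bundles over $\pr^3$ is not just $\pr(\ol^{\oplus 2})$ and $\pr(\ol\oplus\ol(1))$: up to twist one also has $\pr(\ol\oplus\ol(2))$, $\pr(\ol\oplus\ol(3))$, and $\pr(N)$ for $N$ the null-correlation bundle (with $c_1(N)=0$). The parity condition from Lemma~\ref{P3} eliminates the even-degree cases and the null-correlation bundle, but it leaves \emph{both} $b=1$ and $b=3$. So you still need to exclude $W_0\cong\pr_{\pr^3}(\ol\oplus\ol(3))$.

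The paper handles this missing case by a different mechanism: for $b=3$ the negative section $E\cong\pr^3$ is covered by curves of anticanonical degree one (lines in $\pr^3$), and then Prop.~\ref{degreeone} forces $E$ to lie in the locus where $X\dasharrow W_0$ is an isomorphism, so its transform in $X$ is a fixed prime divisor isomorphic to $\pr^3$ with normal bundle $\ol(-3)$, contradicting the list of types in Th.-Def.~\ref{fixed}. Once this is done, the rest of your argument (the check $w_i\notin E_0$ via the degree-$3$ lines in $E_0$ against Lemma~\ref{W}, and the identification of $\wi{W}$ and $W$) matches the paper's approach. One small inaccuracy: Lemma~\ref{rays} only gives that $W$ is Fano with at most nodes, not that $W$ is smooth; smoothness comes afterwards from recognizing $W$ as the Fano model described in Ex.~\ref{Fanomodel}.
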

We denote by $\sigma_{q_0}\colon W_0\to\pr^4$ the blow-up map, and set $q_i:=\sigma_{q_0}(w_i)\in\pr^4$ for $i=1,\dotsc,r$.
\begin{proof}
  Since $\pi_0$ has no $2$-dimensional fiber by Lemma \ref{psi},
we have $T=\emptyset$ and
$W_0=U_W$ is smooth (see \ref{facto} and Lemma \ref{smooth}). Moreover
  $W_0=\pr_{\pr^3}(\ma{E})$ where the vector bundle
   $\ma{E}$ has odd degree by Lemma \ref{P3}.
  
Since $W_0$ is Fano (see \ref{last}),
the possible vector bundles $\ma{E}$ have been classified in \cite[Th.~(2.1)]{szurekwisn}; there is only one case where $\ma{E}$ is not decomposable (the so-called null-correlation bundle) and it has even degree, so it cannot occur here.
 Therefore $\ma{E}$ is decomposable of odd degree,
and  $W_0\cong\pr_{\pr^3}(\ol\oplus\ol(b))$ with  $b\in\{1,3\}$.

If $b=3$, the negative section of $W_0\to\pr^3$ is a fixed prime divisor $E$ covered by curves of anticanonical degree one, corresponding to lines in $\pr^3$. Consider the composite birational map $X\dasharrow W_0$. By Prop.~\ref{degreeone} $E$ must be contained in the open subset where this map is an isomorphism, and the transform $E_X\subset X$ of $E$ is a fixed prime divisor with $E_X\cong\pr^3$ and $\ma{N}_{E_X/E}\cong\ol_{\pr^3}(-3)$, contradicting Th.-Def.~\ref{fixed}.

Thus $b=1$ and $W_0\cong\pr_{\pr^3}(\ol\oplus\ol(1))\cong \Bl_{q_0}\pr^4$. Let $\sigma_{q_0}\colon W_0\to\pr^4$ be the blow-up map.
Note that $\Exc(\sigma_{q_0})$ is covered by curves of anticanonical degree $3$, thus $w_i\not\in\Exc(\sigma_{q_0})$, otherwise the transform in $\wi{W}$ of such a curve containing $w_i$ would have anticanonical degree $0$, contradicting Lemma \ref{W}.  Hence $q_i:=\sigma_{q_0}(w_i) \neq q_0$ for every $i=1,\dotsc,r$,
and $\wi{W}\cong\Bl_{q_0,q_1,\dotsc,q_r}\pr^4$.

Finally $W$ is Fano by Lemma \ref{rays}, and there is a SQM $\zeta\colon W\dasharrow\wi{W}$ (see \eqref{diagramW}), so we get the statement.
\end{proof}
\begin{prg}\label{linear}
  The points $q_0,\dotsc,q_r\in\pr^4$ are in general linear position.

  In fact if a line in $\pr^4$ contains $3$ points among the $q_i$'s, then its transform in $\wi{W}$ has anticanonical degree $-4$, which is impossible by Lemma \ref{W}. Similarly, if $4$ (respectively, $5$) among the $q_i$'s are contained in a plane (respectively, a hyperplane), we consider a conic (respectively, a twisted cubic) containing them, and get again a contradiction.
\end{prg}  
\begin{lemma}\label{bound9}
  We have $r\leq 6$ and $\rho_X\leq 9$.
\end{lemma}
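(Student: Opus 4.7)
The plan is to extract the bound on $r$ by exploiting the Fano condition on $W$: since $W$ is a smooth Fano $4$-fold (by Lemma~\ref{rays} and Lemma~\ref{Wr}), its top anticanonical self-intersection $(-K_W)^4$ must be strictly positive, and I convert this into a numerical inequality in $r$ by transferring the computation to $\wi{W}$.

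First I would use the SQM $\zeta\colon W\dasharrow\wi{W}$ from diagram~\eqref{diagramW} to reduce the calculation. Since $\zeta$ is a composition of flips, it is an isomorphism in codimension one and induces an identification of Picard groups sending $K_W$ to $K_{\wi{W}}$; in particular $(-K_W)^4=(-K_{\wi{W}})^4$. By Lemma~\ref{Wr}, $\wi{W}\cong\Bl_{q_0,q_1,\dotsc,q_r}\pr^4$, so the right-hand side is evaluated on a concrete blow-up of $\pr^4$ at $r+1$ points, where both sides are easy to handle.

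Next I would carry out the intersection-number calculation. Writing $-K_{\wi{W}}=5\pi^*H-3\sum_{i=0}^r E_i$, where $\pi\colon\wi{W}\to\pr^4$ is the blow-up, $H$ the hyperplane class, and $E_i:=\pi^{-1}(q_i)\cong\pr^3$, I use that $\pi^*H\cdot E_i=0$ and that the $E_i$ are pairwise disjoint (hence $E_i^{a}\cdot E_j^{b}=0$ whenever $a,b\geq 1$ and $i\neq j$), so the only surviving terms of the expansion of $(-K_{\wi{W}})^4$ are the ``pure'' ones. Together with the standard identity $E_i^4=-1$ (from $\ol_{E_i}(E_i)\cong\ol_{\pr^3}(-1)$), this yields
$$(-K_{\wi{W}})^4=5^4H^4+3^4\sum_{i=0}^r E_i^4=625-81(r+1)=544-81r.$$

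Requiring $(-K_W)^4>0$ then forces $544-81r>0$, hence $r\leq 6$. Finally, since $\alpha\colon X\to W$ is an elementary divisorial contraction, $\rho_X=\rho_W+1$, and by Lemma~\ref{Wr} $\rho_W=\rho_{\wi{W}}=r+2$, so $\rho_X=r+3\leq 9$. The argument is essentially a direct intersection-number computation, so I do not expect any real obstacle; the one point worth stressing is the invariance of $(-K)^4$ under $\zeta$, which is standard because a SQM is an isomorphism in codimension one.
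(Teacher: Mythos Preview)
Your argument has a genuine gap: the claim that a SQM preserves the top self-intersection $(-K)^4$ is false. Being an isomorphism in codimension one does identify the Picard groups and sends $K_W$ to $K_{\wi W}$, but the intersection form on $\Nu$ depends on the variety, not just on the group, and it changes under flips. Concretely, each $K$-negative flip in the factorization of $\zeta$ (which, by Lemma~\ref{SQMFano}, replaces an exceptional plane by an exceptional line) alters $K^4$ by $1$. You can see this directly by comparing your correct value $(-K_{\wi W})^4=544-81r$ with the actual values of $K_W^4$ listed in Table~\ref{t1}: the discrepancy is exactly the number of flipped curves (e.g.\ $\binom{r+1}{2}$ for $r\le 5$). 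In particular, for $r=7$ you compute $(-K_{\wi W})^4=-23$, but in fact $(-K_W)^4=13>0$, so the positivity of $(-K_W)^4$ does \emph{not} exclude $r=7$. Your argument therefore collapses precisely at the case it is meant to rule out.

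The paper's proof avoids intersection numbers entirely. From Lemma~\ref{Wr}, $W$ is the Fano model of $\Bl_{r+1\,\pts}\pr^4$; by Example~\ref{Fanomodel} such a model exists only for at most $8$ blown-up points, giving $\rho_W\le 9$ (equivalently $r\le 7$). The extra step needed to exclude $r=7$ uses a structural fact from \cite[Prop.~1.7]{vb}: when $\rho_W=9$, the Fano model $W$ admits no non-trivial contraction of fiber type. But by \ref{regular} there \emph{is} a fiber-type contraction $W\to Z$ onto the anticanonical model of $Y$, so $\rho_W\le 8$, whence $\rho_X=\rho_W+1\le 9$ and $r=\rho_X-3\le 6$.
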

\begin{proof}
By Lemma \ref{Wr}, $W$ is the Fano model of $\Bl_{r+1\,\pts}\pr^4$; in particular $\rho_W\leq 9$, see Ex.~\ref{Fanomodel}. Moreover, when $\rho_W=9$, $W$ does not have non-trivial contractions of fiber type by \cite[Prop.~1.7]{vb}.
  On the other hand by Lemma \ref{regular} there is a contraction  of fiber type $W\to Z$ where $Z$ is the anticanonical model of $Y$.
  We conclude that $\rho_W\leq 8$, hence $\rho_X=\rho_W+1\leq 9$ (see Lemma \ref{rays}) and $r=\rho_X-3\leq 6$.  
\end{proof}
Recall cases $(a)$ and $(b)$ from Lemma \ref{aeb} and \ref{Sr}, and that $S_0=\alpha_0(\Exc(\alpha_0))\subset W_0$ (see \ref{facto}). For the reader's convenience, we report here diagram \eqref{diagramW}.
{\small
  \stepcounter{thm}
  \begin{equation}\label{diagramW2}
  \xymatrix{X\ar@{-->}[r]^{\xi}\ar[d]_{\alpha}&{\w{X}}
    \ar[d]^<<<<{\tilde{\alpha}}\ar@{-->}[r]^{\psi}&{\wi{X}}\ar[r]^{\sigma}&{X_0}\ar[d]_{\alpha_0}
    &\\
  {W\supset S}\ar@{-->}[r]^{\xi_{\s W}}
  &{\w{W}\supset S}\ar[d]^{\pi}\ar@{-->}[r]^<<<<<<<<<<{\psi_{\s W}}&{\wi{W}=\Bl_{q_0,\dotsc,q_r}\pr^4\supset S}\ar[r]^>>>>{\sigma_{\scriptscriptstyle W}}&{\Bl_{q_0}\pr^4\supset S_0}\ar[d]_{\pi_0}\ar[r]^>>>>{\sigma_{q_0}}&{\pr^4\supset A}\ar@{-->}[dl]^{\pi_{q_0}}\\
  &{Y=\Bl_{p_1,\dotsc,p_r}\pr^3\supset B}\ar[rr]^k&&{\pr^3\supset B_0}&
}\end{equation}}
\begin{lemma}\label{A}
Set $A:=\sigma_{q_0}(S_0)\subset\pr^4$. Then
$q_0,\dotsc,q_r\in A$, and at these points $A$ is smooth in case $(a)$, has rational double points of type $A_1$ or $A_2$ in case $(b)$.
\end{lemma}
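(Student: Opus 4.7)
The plan is to handle the points $q_1,\dotsc,q_r$ first via a local isomorphism argument, and then to address $q_0$ via a degree-theoretic argument combined with the structure of the birational morphism $\pi_0|_{S_0}$.

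For $i=1,\dotsc,r$, the identity $q_i=\sigma_{q_0}(w_i)$ together with $w_i\in S_0$ immediately gives $q_i\in A$. By \ref{linear}, $q_i\neq q_0$, so $w_i\notin\Exc(\sigma_{q_0})$, and $\sigma_{q_0}$ is a local analytic isomorphism at $w_i$. Thus $(A,q_i)$ and $(S_0,w_i)$ are analytically isomorphic, and the singularity type at $q_i$ follows from \ref{Sr}: smooth in case $(a)$, and an RDP of type $A_1$ or $A_2$ in case $(b)$.

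For $q_0\in A$, I would argue by contradiction. If $q_0\notin A$, then $S_0$ is disjoint from $\Exc(\sigma_{q_0})$, so $\sigma_{q_0}|_{S_0}\colon S_0\to A$ is an isomorphism. Under this identification, the birational morphism $\pi_0|_{S_0}\colon S_0\to B_0$ of \ref{isowi} becomes a finite birational morphism $\pi_{q_0}|_A\colon A\to B_0\subset\pr^3$; since linear projection is defined by $\ol_{\pr^4}(1)$ and $\pi_{q_0}|_A$ has degree $1$, the projection formula gives $\deg A=\deg B_0$, which equals $2$ in case $(a)$ (as $B_0$ is a quadric by Lemma \ref{aeb}). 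Since any surface of degree $2$ in $\pr^4$ is contained in a hyperplane $H\cong\pr^3$, which cannot pass through $q_0$, we get $q_1,\dotsc,q_r\in A\subset H$ while $q_0\notin H$, contradicting the general linear position of $q_0,\dotsc,q_r$ from \ref{linear} as soon as $r\geq 5$. The borderline case $r=4$ in case $(a)$, and the whole of case $(b)$ (where $\deg B_0=4$ so that $A$ can be nondegenerate of degree $4$), require additional Fano-theoretic constraints on the surface $S\subset W$ obtained via Lemma \ref{tautological} together with the hypothesis that $X$ is a Fano $4$-fold.

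For the singularity of $A$ at $q_0$, I would analyze the birational contraction $\sigma_{q_0}|_{S_0}\colon S_0\to A$ near the curve $C:=S_0\cap\Exc(\sigma_{q_0})$. Since $C$ contains none of the $w_i$, by \ref{Sr} the surface $S_0$ is smooth along $C$; the map contracts $C$ to $q_0$, so the singularity type of $A$ at $q_0$ is governed by the intersection configuration of the components of $C$ inside the smooth surface $S_0$, via the standard classification of contractions of curves on surfaces. Computing the class $[S_0]\cdot[\Exc(\sigma_{q_0})]$ in $A^*(W_0)$ for $W_0\cong\pr_{\pr^3}(\ol\oplus\ol(1))$, using the constraint $\pi_{0*}[S_0]=[B_0]$ to fix $[S_0]$, one finds in case $(a)$ that $C$ is a single $(-1)$-curve in $S_0$ and hence $A$ is smooth at $q_0$, while in case $(b)$, $C$ is a $(-2)$-curve or a pair of $(-2)$-curves meeting transversally, yielding an RDP of type $A_1$ or $A_2$ at $q_0$ matching the singularity analysis at the other points. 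The main obstacle is the step $q_0\in A$ in the borderline situations, where the simple degree-and-general-position argument fails and one must invoke finer numerical constraints coming from the hypothesis that $X$ is a Fano $4$-fold not isomorphic to a product of surfaces.
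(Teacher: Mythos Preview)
Your treatment of $q_1,\dotsc,q_r$ is correct and matches the paper's argument.

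For $q_0$, however, there is a genuine gap. Your degree argument for $q_0\in A$ only works in case $(a)$ when $r\geq 5$: for $r=4$ in case $(a)$, having $q_1,\dotsc,q_4$ in a hyperplane does not contradict general linear position of $5$ points in $\pr^4$; and in case $(b)$, where $\deg B_0=4$, a surface of degree $4$ in $\pr^4$ can perfectly well be nondegenerate, so no contradiction arises for any $r$. You acknowledge these borderline cases but defer to unspecified ``Fano-theoretic constraints via Lemma~\ref{tautological}''; this is not a proof, and it is not clear such constraints would close the gap. Likewise, your proposed singularity analysis at $q_0$ via computing $[S_0]\cdot[\Exc(\sigma_{q_0})]$ in $A^*(W_0)$ would give the class of $C$ in $W_0$, not the self-intersection $C^2$ inside $S_0$ that determines the singularity type; extracting that requires additional work you have not indicated.

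The paper's approach avoids all of this with a symmetry argument. One observes that the construction depends on the choice of the point $q_0$ from which one projects. Choosing instead to project from $q_1$ (after checking that $A$ is not a cone with vertex $q_1$, so that its projection from $q_1$ is still a surface), one obtains a \emph{new} special rational contraction $f'_X\colon X\dasharrow Y'$ with $\rho_X-\rho_{Y'}=2$, to which the entire machinery of the section applies. With respect to this new contraction, $q_0$ plays the role of one of the ``other'' points, and the already-established local isomorphism argument (your first paragraph, combined with \ref{Sr}) then gives both $q_0\in A$ and the singularity type at $q_0$. This is the missing idea.
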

\begin{proof}
For $i=1,\dotsc,r$ we have $w_i\in S_0$ (see \ref{isowi}) thus $q_i=\sigma_{q_0}(w_i)\in A$. Moreover, since $q_0\neq q_i$, at $q_i$ the surface $A$ is locally isomorphic to $S_0$ at $w_i$, and we deduce the statement from \ref{Sr}.

We are left to prove the behaviour of $A$ at the point $q_0$. 
The map $\pi\colon\w{W}\to Y$ (see \eqref{diagramW2}) is a $\pr^1$-bundle induced by the projection $\pi_{q_0}\colon \pr^4\dasharrow\pr^3$ from $q_0$.  The projection of $A$  is the surface $B_0\subset\pr^3$ which has degree $2$ or $4$ by Lemma \ref{aeb}; in particular $A$ cannot be a plane.
If $A$ is a cone, up to exchanging $q_1$ and $q_2$ we can assume that $q_1$ is not the vertex.

Let us consider now the projection $\pi_{q_1}\colon \pr^4\dasharrow\pr^3$ from $q_1$, and the corresponding $\pr^1$-bundle $\pi_0'\colon W_0':=\Bl_{q_1}\pr^4\to\pr^3$. If $p_0',p_2',\dotsc,p_r'\in\pr^3$ are the images of $q_0,q_2,\dotsc,q_r\in\pr^4$ via $\pi_{q_1}$, we consider $Y':=\Bl_{p_0',p_2',\dotsc,p_r'}\pr^3$ and the composite maps $\wi{W}=\Bl_{q_0,\dotsc,q_r}\pr^4\dasharrow Y'$
and $f'_{\s X}\colon X\dasharrow Y'$ (compare with diagram \eqref{diagramW2});
$\wi{W}\dasharrow Y'$ is an elementary rational contraction, and
$f_{\scriptscriptstyle X}'$ is a rational contraction with  $\rho_X-\rho_{Y'}=2$.
$$
\xymatrix{X\ar@{-->}[r]\ar@{-->}[dr]_{f_{\scriptscriptstyle X}'}&  {\wi{W}}\ar@{-->}[d]\ar[r]^{\sigma_{\scriptscriptstyle W}'}&{W_0'}\ar[d]_{\pi_0'}\ar[r]&{\pr^4}\ar@{-->}[dl]^{\pi_{q_1}}\\
  &   {Y'}\ar[r]&{\pr^3}&}$$

We show that $f_{\s X}'$ is again special (Def.~\ref{defspecial} and \ref{specialrat}) . If $D\subset X$ is a prime divisor different from $E_1'$, then its transform in $\wi{W}$ is a prime divisor, and its image in $Y'$ is either $Y'$, or a prime divisor.  On the other hand
the image of $E_1'\subset X$ in $Y'$ is the transform, in $Y'$, of the projection of $A$ from $q_1$ in $\pr^3$. Since $A$ is not a cone with vertex $q_1$, this projection is a surface.

Therefore
we can  
   replace $f_{\scriptscriptstyle X}$ with $f_{\scriptscriptstyle X}'$, and get the statement for $q_0$ too.
\end{proof}
Recall from Lemma \ref{W} that 
$S\subset \wi{W}$ is the transform of $S_0\subset\Bl_{q_0}\pr^4$ and hence of $A\subset\pr^4$, and
we still denote by $S$ its transform in $W$ and in $\w{W}$. Moreover $\sigma_{|S}\colon S\to A$ is the blow-up of $q_0,\dotsc,q_r\in A$.
\begin{prg}\label{nodelta2}
  We have $\dim\N(S,\w{W})=\rho_W$.

  Indeed $A$ contains $q_0,\dotsc,q_r$, therefore $S\subset\wi{W}$ meets along a curve every exceptional divisor of the blow-up $\wi{W}\to\pr^4$, thus $\N(S,\wi{W})=\N(\wi{W})$. Moreover $S$ is contained in $\dom(\psi_{\s W})$ (see Lemma \ref{W}), therefore $\dim\N(S,\w{W})=\dim\N(S,\wi{W})=\rho_W$, see \cite[Rem.~3.13(1)]{eff}.
\end{prg}  
\begin{lemma}\label{cubic}
Assume that we are in case $(a)$. Then  $A$ is either a cubic scroll,  or a cone over a twisted cubic; moreover $f$ and $f_0$ have some $2$-dimensional fiber.
\end{lemma}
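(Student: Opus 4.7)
The plan is to use the birational projection formula to determine $\deg A$, apply the Del Pezzo--Bertini classification of surfaces of minimal degree in $\pr^4$ to identify $A$, and exhibit a $2$-dimensional fiber via the ruling line through $q_0$. For the degree computation: the map $\sigma_{q_0|S_0}\colon S_0\to A$ is birational by construction, and by \ref{isowi} the projection $\pi_{0|S_0}\colon S_0\to B_0$ is birational, so the induced projection $\pi_{q_0|A}\colon A\dasharrow B_0$ is birational. In case $(a)$, $A$ is smooth at $q_0$ by Lemma \ref{A}, and $B_0$ is a quadric surface (since $-K_{Y_0}=2B_0$ by Lemma \ref{aeb}$(a)$ and $Y_0\cong\pr^3$). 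On the blow-up $\w{A}=\Bl_{q_0}A$ with exceptional curve $e$, the induced morphism $\phi\colon\w{A}\to B_0$ satisfies $\phi^*H_{\pr^3}=\sigma^*(H|_A)-e$, so birationality of $\phi$ yields
$$\deg B_0=(\phi^*H_{\pr^3})^2=(\sigma^*(H|_A)-e)^2=\deg A-1,$$
whence $\deg A=3$.

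Next, $A$ is non-degenerate: if $A\subset H$ for a hyperplane $H\cong\pr^3\subset\pr^4$, then $q_0\in H$ would force $\pi_{q_0}(A)\subseteq\pr^2\subset\pr^3$, contradicting that $B_0$ is an irreducible quadric surface in $\pr^3$. Since $A$ is irreducible (as $S_0$ is) and reduced, the classical Del Pezzo--Bertini classification of irreducible non-degenerate surfaces of minimal degree in projective space shows that $A$ is either the smooth rational cubic scroll $\mathbb{F}_1\hookrightarrow\pr^4$ (embedded by $|e+2f|$) or the cone over a twisted cubic in $\pr^3\subset\pr^4$.

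For the $2$-dimensional fiber, both surfaces carry a pencil of lines, with a unique ruling line $L_0\subset A$ through $q_0$ (the fiber of $\mathbb{F}_1\to\pr^1$ through $q_0$ for the scroll; for the cone, the generator from the vertex $v$ through $q_0$, where $v\neq q_0$ since $A$ is smooth at $q_0$). As $L_0$ passes through $q_0$, the projection $\pi_{q_0}$ contracts $L_0$ to a single point $y\in B_0$, so the strict transform $\w{L_0}\subset W_0$ lies in $S_0\cap\pi_0^{-1}(y)$; since $\w{L_0}$ and the fiber $F_y=\pi_0^{-1}(y)$ are both smooth rational curves, they coincide, yielding $F_y\subset S_0$. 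In case $(a)$, $S_0$ is smooth (\ref{Sr}) and $\alpha_0$ is the blow-up of $S_0$ (Theorem \ref{32}), so $\Exc(\alpha_0)\to S_0$ is a $\pr^1$-bundle, and $f_0^{-1}(y)=\alpha_0^{-1}(F_y)$ is therefore $2$-dimensional. Since $\dim f_0^{-1}(p_i)=1$ by Lemma \ref{f0}, we have $y\notin\{p_1,\dots,p_r\}$, so $k$ is an isomorphism near $y$; moreover $\psi$ is an isomorphism around $f_0^{-1}(y)$ because $E_1$ contains no exceptional plane (\ref{discrf}), hence $f$ also has a $2$-dimensional fiber over $k^{-1}(y)$. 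The main technical step is the degree computation, which crucially exploits the smoothness of $A$ at $q_0$ in case $(a)$; the rest follows from Del Pezzo--Bertini and the geometry of the ruling.
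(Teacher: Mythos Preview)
Your proof is correct and follows essentially the same strategy as the paper: compute $\deg A=3$ via projection from the smooth point $q_0$, invoke the classification of surfaces of minimal degree, and use a ruling line through $q_0$ to produce a fiber of $\pi_0$ contained in $S_0$. Two minor remarks: (1) your non-degeneracy argument (projection from $q_0\in H$ lands in a $\pr^2$) is actually cleaner than the paper's, which instead appeals to \ref{linear} (the $q_i$ being in general linear position, hence not all in a hyperplane since $r\geq 4$); (2) your citation of \ref{Sr} for ``$S_0$ is smooth'' is inaccurate---\ref{Sr} only gives smoothness at $w_1,\dots,w_r$---but this is harmless, since you only need $F_y\subset S_0=\alpha_0(\Exc(\alpha_0))$ and $\alpha_0$ of type $(3,2)$ to conclude $\dim\alpha_0^{-1}(F_y)\geq 2$, without any smoothness or blow-up structure.
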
  
\begin{proof}
By Lemma \ref{aeb}  in case $(a)$ the surface $B_0\subset\pr^3$ is a quadric, and it is the projection of $A\subset\pr^4$ from the smooth point $q_0$ (see Lemma \ref{A}), thus  $A$ has  degree $3$.
Moreover $A$ cannot be contained in a hyperplane, because the points $q_0,\dotsc,q_r\in A$ are in general linear position (see \ref{linear}). By the classification of projective varieties of minimal degree, we conclude that $A$ is either a cubic scroll, or a cone over a twisted cubic.

In both cases there is a line through $q_0$ contained in $A$, thus $\pi_0$ is not finite on $S_0$, and $f_0$ and $f$ have some $2$-dimensional fiber.
\end{proof}
\begin{prg}
Assume that we are in case $(a)$. If $A$ is a cubic scroll, then $S$ is smooth, because $S\cong\Bl_{q_0,\dotsc,q_r}A$. 
\end{prg}  
\begin{prg}
  Assume that we are in case $(a)$ and that $A$ is a cone over a twisted cubic.  Then by Lemma \ref{A} $q_0,\dotsc,q_r$ are distinct from the vertex $v$ of the cone, and $S\cong\Bl_{q_0,\dotsc,q_r}A$ has one singular point $v$, which is  of type $\frac{1}{3}(1,1)$.   Then $\alpha^{-1}(v)\cong\tilde\alpha^{-1}(v)\cong\pr^2$ by \cite[Th.~on p.~256]{AW}. 
Moreover
   $B_0$ is a quadric cone with vertex $v'\neq p_i$ for $i=1,\dotsc,r$, which gives a node $v'\in B\subset Y$. The fiber $\pi^{-1}(v')$ is the transform of the line $\overline{vq_0}\subset A$, it is contained in $S\subset \w{W}$, and  $f^{-1}(v')$ has two irreducible components, both of dimension $2$, given by $\tilde\alpha^{-1}(v)$ and by the closure of $\tilde\alpha^{-1}(\pi^{-1}(v')\smallsetminus\{v\})$.
\end{prg}
  \begin{lemma}\label{b}
Assume that we are in case $(b)$. Then $\alpha_0$ has no $2$-dimensional fiber, $\alpha$ and $\tilde\alpha$ are of type $(3,2)^{\sm}$,  $S$ is smooth,  $\Sing(S_0)=\{w_1,\dotsc,w_r\}$, and $\Sing(A)=\{q_0,\dotsc,q_r\}$.
\end{lemma}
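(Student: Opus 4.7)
The proof would proceed by a chain of deductions starting from $\alpha_0$. First, I would establish that $\alpha_0$ has no $2$-dimensional fiber. The fiber $\alpha_0^{-1}(w_i)$ is contained in $f_0^{-1}(p_i)$, which is $1$-dimensional by Lemma~\ref{f0}, so there is no $2$-dim fiber at any $w_i$. For a point $w\in S_0\smallsetminus\{w_1,\dotsc,w_r\}$, a $2$-dimensional fiber of $\alpha_0$ at $w$ would give (via the factorization $f_0=\pi_0\circ\alpha_0$) a $2$-dimensional fiber of $f_0$ at $y_0=\pi_0(w)\in B_0\smallsetminus\{p_i\}$, and hence a $2$-dimensional fiber of $f\colon\w{X}\to Y$ at the smooth point $y_0'=k^{-1}(y_0)\in B$. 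Applying Theorem~\ref{2dimfibers}, case $(iii)$ is excluded because $Y$ is smooth (\ref{last}), case $(ii)$ is excluded because $\Delta_{\intr}=\emptyset$ (Lemma~\ref{discriminant}), so one is forced into case $(i)$. By Remark~\ref{fiber}, $\dim\N(F,\w{X})>1$ for $F=f^{-1}(y_0')$, so $F$ falls into the AW classification as one of $\mathbb{F}_1$, $\pr^1\times\pr^1$, $\pr^2\cup\mathbb{F}_1$, or $\pr^2\cup(\pr^1\times\pr^1)$; I would derive a contradiction by checking that the symmetric intersections $E_1\cdot C_{D_i}=E_2\cdot C_{D_i}=1$ from \ref{adjacent} (specific to case $(b)$) together with $[e_1]\neq[\hat e_1]$ in $\N(\w{X})$ are incompatible with any such $F$.

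Next, having no $2$-dim fibers of $\alpha_0$ and since $X_0$ is smooth away from $x_1,\dotsc,x_r$, Theorem~\ref{32} applied locally on $X_0\smallsetminus\{x_1,\dotsc,x_r\}$ forces $S_0$ to be smooth outside $\{w_1,\dotsc,w_r\}$; combined with \ref{Sr} this yields $\Sing(S_0)=\{w_1,\dotsc,w_r\}$. Since $\sigma_{q_0}|_{S_0}\colon S_0\to A$ is an isomorphism outside $\sigma_{q_0}^{-1}(q_0)\cap S_0$ and $A$ has an $A_1$ or $A_2$ singularity at $q_0$ (Lemma~\ref{A}), I obtain $\Sing(A)=\{q_0,q_1,\dotsc,q_r\}$. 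For the smoothness of $S$: in $\wi{W}=\Bl_{q_0,\dotsc,q_r}\pr^4$ the surface $S$ agrees with the strict transform of $A\subset\pr^4$ under the composite blowup, and a direct local computation shows that a single blowup of an $A_1$ or $A_2$ surface singularity in the ambient $\pr^4$ resolves it; hence $S\subset\wi{W}$ is smooth, and by Lemma~\ref{W} the SQMs $\psi_{\s W}^{-1}$ and $\xi_{\s W}^{-1}$ are isomorphisms on $S$, so $S\subset\w{W}$ and $S\subset W$ are smooth too.

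Finally, since $\wi{W}$ is smooth (Lemma~\ref{Wr}) and the flips composing the SQMs $\xi_{\s W}$ and $\psi_{\s W}$ take place in the smooth locus (proof of Lemma~\ref{W}), both $W$ and $\w{W}$ are smooth; together with Lemma~\ref{rays} and the fact that no $2$-dim fiber of $\alpha$ or $\tilde\alpha$ can occur (these would transfer to $2$-dim fibers of $\alpha_0$ via diagram~\eqref{diagramW2}, contradicting the first step), Theorem~\ref{32} yields that $\alpha$ and $\tilde\alpha$ are of type $(3,2)^{\sm}$, blowing up the smooth surface $S$. The main obstacle is the very first step, namely ruling out case $(i)$ of Theorem~\ref{2dimfibers} in our setting: this is equivalent to showing that $A\subset\pr^4$ contains no line through $q_0$, and requires combining the AW classification with the precise intersection data forced by case $(b)$.
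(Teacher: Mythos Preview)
Your first step has a genuine gap, and it is exactly the one you flag as ``the main obstacle''. Reducing to case $(i)$ of Theorem~\ref{2dimfibers} does not yield a contradiction: case $(i)$ only says $Y$ is smooth at $y_0$, which we already know. You then propose to exclude the four AW fiber types using the intersection numbers $E_1\cdot C_{D_i}=E_2\cdot C_{D_i}=1$ specific to case $(b)$, but these concern the curves $C_{D_i}\subset D_i$ lying over the blown-up points $p_i$, not the fiber $F$ over an arbitrary $y_0'\in B\smallsetminus\bigcup G_i$. There is no evident link between those intersection numbers and, say, the existence of an $\mathbb{F}_1$-fiber of $f$ at such a point; equivalently, you are trying to show directly that $A$ contains no line through $q_0$, and the data you invoke do not see that line.

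The paper's argument sidesteps this entirely by exploiting the \emph{symmetry} of case $(b)$ at the level of the factorization of $f_0$. Since in case $(b)$ every $D_{W,i}$ is of type $(3,0)^{\sm}$ regardless of whether one contracts $E_1$ or $E_2$ first (Lemma~\ref{bigdiagram}), Lemma~\ref{psi} applies to \emph{both} factorizations $X_0\to W_0\to\pr^3$ and $X_0\to W_0'\to\pr^3$, so neither $\pi_0$ nor $\pi_0'$ has a $2$-dimensional fiber. Now a $2$-dimensional fiber $F$ of $\alpha_0$ would be mapped finitely by $\alpha_0'$ (since $\alpha_0'$ contracts curves in a different numerical class), and $\alpha_0'(F)$ would be a $2$-dimensional fiber of $\pi_0'$: contradiction. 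This is the missing idea. Once $\alpha_0$ has no $2$-dimensional fibers, the paper transfers this to $\tilde\alpha$ not via the $2$-dimensional-fiber-to-$2$-dimensional-fiber argument you sketch (which is delicate because the maps in diagram~\eqref{diagramW2} are SQM's, not morphisms), but by arguing separately over $S\cap\tilde\alpha(D_i)$ (using Lemma~\ref{onedim}) and over its complement (where $\tilde\alpha$ and $\alpha_0$ are locally isomorphic). Smoothness of $S$ then follows from Theorem~\ref{32}, rather than from an ad~hoc local resolution of $A_1/A_2$ points.
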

\begin{proof}
  Let us consider both factorizations of $f_0$ in elementary contractions: \stepcounter{thm}
\begin{equation}\label{alpha'}  \xymatrix{{X_0}\ar[r]^{\alpha_0'}\ar[d]_{\alpha_0}\ar[dr]^{f_0}&{W_0'}\ar[d]^{\pi_0'}\\
    {W_0}\ar[r]_{\pi_0}&{\pr^3}
  }
  \end{equation}
  By Lemma \ref{bigdiagram} in case $(b)$ the situation is symmetric, and both $\sigma_{\scriptscriptstyle W}$ and $\sigma_{W}'$ blow-up $r$ points. Thus we can apply Lemma \ref{psi} and deduce that both $\pi_0$ and $\pi_0'$ have no $2$-dimensional fibers. In turn this implies that $\alpha_0$ and $\alpha_0'$ 
  do not have $2$-dimensional fibers either, because if $F$ were such a fiber (for instance for $\alpha_0$), then $\alpha_0'$ would be finite on $F$ and $\alpha_0'(F)$ would be a $2$-dimensional fiber of $\pi_0'$.

  By Lemma \ref{onedim} every fiber of $f$ over $G_i$ has dimension one, and $\pi^{-1}(G_i)=\tilde\alpha(D_i)$, therefore every fiber of $\tilde\alpha$ over $S\cap \tilde\alpha(D_i)\subset\w{W}$ has dimension one. Moreover $S$ is contained in $\dom(\psi_{\s W})$ (Lemma \ref{W}), and $\sigma_{\s W}$ is an isomorphism between $\wi{W}\smallsetminus \cup_iD_{{\s W},i}$ and $W_0\smallsetminus \{w_1,\dotsc,w_r\}$, hence $\tilde\alpha$ and $\alpha_0$ are isomorphic over $S\smallsetminus \cup_i\tilde\alpha(D_i)$ and $S_0\smallsetminus \{w_1,\dotsc,w_r\}$ (see diagram \eqref{diagramW2}). We conclude that $\tilde\alpha$ has only one-dimensional fibers, and the same holds for $\alpha$ (see Lemma \ref{rays}).

  Then $\alpha$ and $\tilde\alpha$ are of type $(3,2)^{\sm}$, and $S$ is smooth, by Th.~\ref{32}.
  This also implies that 
 $S_0\smallsetminus\{w_1,\dotsc,w_r\}$ is smooth, and finally $A\smallsetminus \{q_0,\dotsc,q_r\}$ is isomorphic, via $\sigma_{q_0}$, to an open subset of  $S_0\smallsetminus\{w_1,\dotsc,w_r\}$, hence it is smooth too.
\end{proof}
  \begin{lemma}
    Assume that we are in case $(b)$.   Then
$S$ is a smooth K3 surface, $B$ is a nodal K3 surface, and $\pi_{|S}\colon S\to B$ is birational and may contract some smooth fiber of $\pi$ to nodes $b\in B$. This happens if and only if $\dim f^{-1}(b)=2$; in this case $f^{-1}(b)\cong\pr^1\times\pr^1$.
\end{lemma}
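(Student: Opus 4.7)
My plan is to derive the K3 structure on $B$ automatically from $B=-K_Y$, transfer it to $S$ through the birational morphism $\pi|_S$, and use normal-bundle computations to pin down the structure of $2$-dimensional fibers of $f$.

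First, by Lemma~\ref{aeb}(b) we have $B=-K_Y$, so adjunction gives $K_B=(K_Y+B)|_B=0$; combined with Lemma~\ref{conicbdl}, this makes $B$ a surface with trivial canonical sheaf and isolated singularities (confined to the images of $2$-dimensional fibers of $f$). Together with Lemma~\ref{b} (so $S$ is smooth) and \ref{isowi} ($\pi_{0|S_0}$ birational), the map $\pi|_S\colon S\to B$ is thus a birational morphism from a smooth surface to a generalized K3 with isolated singularities.

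Second, any curve $C\subset S$ contracted by $\pi|_S$ lies in a fiber of $\pi$. For a smooth $\pi$-fiber $C\cong\pr^1$ contained in $S$, the normal bundle sequence
\[
0\to N_{C/S}\to N_{C/\w W}\to N_{S/\w W}|_C\to 0
\]
with $N_{C/\w W}\cong\ol_{\pr^1}^{\oplus 3}$ (as $\pi$ is a smooth $\pr^1$-bundle near $C$) forces the rank-$2$ quotient $N_{S/\w W}|_C\cong\ol(a)\oplus\ol(b)$ to be globally generated, with $a,b\geq 0$; hence $C^2_S=-(a+b)$, and via Lemma~\ref{tautological} (identifying $E_1$ as the projectivization of $N_{S/\w W}^\vee$ up to twist), $f^{-1}(\pi(C))=E_1|_C\cong\mathbb{F}_{|a-b|}$. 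By Thm~\ref{2dimfibers} (we are in case (i), by Lemmas~\ref{last} and \ref{discriminant}) together with \cite[Prop.~4.11]{AW}, any irreducible $2$-dimensional fiber $F$ of $f$ belongs to $\{\mathbb{F}_0,\mathbb{F}_1\}$.

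Third, I would exclude $F\cong\mathbb{F}_1$: the negative section of $\mathbb{F}_1$ would yield a curve in $\w{X}$ whose anticanonical degree, computed using Lemma~\ref{tautological} and the property $-K_{\w{X}}\cdot e_1=1$ of $\tilde\alpha$-fibers (Thm~\ref{special}(d)), would contradict the anticanonical positivity propagated to $\w{X}$ from the Fano variety $X$ via the SQM $\xi$ (cf.\ Lemma~\ref{SQMFano}). Hence $a=b$, $C$ is a $(-2)$-curve, and $F\cong\pr^1\times\pr^1$.

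With each contracted curve of $\pi|_S$ thus identified as a single $(-2)$-curve, $S$ becomes the minimal resolution of the nodes of $B$, making $B$ a nodal K3 and $S$ a smooth K3. The final ``iff'' follows: $\pi|_S$ contracts a smooth $\pi$-fiber to a node at $b$ precisely when $\pi^{-1}(b)\subseteq S$, and by Lemma~\ref{b} (only $1$-dimensional $\tilde\alpha$-fibers, rendering every $2$-dimensional fiber of $f$ of the form $\tilde\alpha^{-1}(\pr^1)$ with $\pr^1\subseteq S$) this is equivalent to $\dim f^{-1}(b)=2$. The principal obstacle will be the exclusion of $\mathbb{F}_1$ above: it requires fitting the local normal-bundle analysis on $\w{W}$ with the global anticanonical positivity of the Fano variety $X$, tracked through the SQM.
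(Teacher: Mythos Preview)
Your exclusion of $\mathbb{F}_1$ in step (3) is where the argument breaks. If $N_{S/\w W}|_C\cong\ol\oplus\ol(1)$ (the case $(a,b)=(0,1)$), then for the negative section $\ell\subset F=\tilde\alpha^{-1}(C)\cong\mathbb{F}_1$ one computes directly
\[
-K_{\w X}\cdot\ell=\tilde\alpha^*(-K_{\w W})\cdot\ell-E_1\cdot\ell=-K_{\w W}\cdot C-E_1\cdot\ell=2-1=1,
\]
since $\ell$ is a section of $F\to C$ and $-E_1|_{E_1}$ is the tautological class of $\pr_S(\ma N_{S/\w W}^\vee)$. So the anticanonical degree is positive, and Lemma~\ref{SQMFano} gives no contradiction whatsoever: there is nothing in the Fano/SQM machinery that rules out a curve of anticanonical degree $1$ in $\w X$.

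The paper's argument to exclude $F_0^2=-1$ is genuinely different and uses an ingredient you do not invoke: the \emph{second} elementary factorization $f_0=\pi_0'\circ\alpha_0'$ through $E_2$. In case $(b)$, Lemma~\ref{b} shows (by symmetry) that \emph{both} $\pi_0$ and $\pi_0'$ are free of $2$-dimensional fibers. If $F_0^2=-1$, then $\det\ma N_{S_0/W_0}\cdot F_0=1$ is odd, so $R:=\alpha_0^{-1}(F_0)\cong\mathbb{F}_e$ with $e$ odd by Rem.~\ref{sections}; since $\mathbb{F}_e$ with $e\geq 1$ has a unique ruling, $\alpha_0'|_R$ cannot be a second $\pr^1$-fibration and must be birational, forcing $\alpha_0'(R)$ to be a $2$-dimensional fiber of $\pi_0'$ --- the desired contradiction. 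Your approach, by contrast, never uses $E_2$ or the second factorization, and I do not see how to close the gap without it.

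There is also a smaller gap in step (4): from $a=b$ you only get $C^2=-2a$, not $C^2=-2$. One still has to argue $a=1$ (e.g.\ via $K_S\equiv\sum m_iC_i$, giving $m_i=(1-a_i)/a_i\in\Z$, hence $a_i=1$). The paper avoids this by first using the genus formula with $K_{S_0}\cdot F_0=mF_0^2$ to pin down $F_0^2\in\{-1,-2\}$ before excluding $-1$.
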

\begin{proof}
Recall from \ref{isowi} that $\pi_{0|S_0}\colon S_0\to B_0$ is birational, with exceptional locus the fibers of $\pi_0$ contained in $S_0$, and it is an isomorphism around $w_1,\dotsc,w_r$.
  
  By Lemma \ref{aeb} the surface $B_0\subset\pr^3$ is a quartic with isolated singularities, so that it is a normal K3 surface. 
  If $\pi_0$ is finite on $S_0$, then $S_0\cong B_0$, $\Sing(B_0)=\{p_1,\dotsc,p_r\}$ by Lemma \ref{b}, and similarly $S\cong B$ are smooth K3 surfaces, so we have the statement.

Suppose that
there is a fiber $F_0:=\pi_0^{-1}(b)$ contained in $S_0$; note that $F_0\cong\pr^1$. Then $b\neq p_i$ and $w_i\not\in F_0$ for every $i$, hence 
$F_0\subset (S_0)_{\reg}$ by Lemma \ref{b}. Since $K_{B_0}=0$, we have $K_{S_0}\cdot F_0=mF_0^2$ for some $m\in\Z$, $F_0^2<0$, and by the genus formula $-2=(1+m)F_0^2$, thus $F_0^2\in\{-1,-2\}$.

  We claim that $F_0^2=-2$. By contradiction, if $F_0^2=-1$, then $-K_{S_0}\cdot F_0=1$, while
$-K_{W_0}\cdot F_0=2$, thus
$(\det\ma{N}_{S_0/W_0})\cdot F_0=1$. By Lemma \ref{b} and Th.~\ref{32}, in $W_0\smallsetminus\{w_1,\dotsc,w_r\}$ $\alpha_0$ is just the blow-up of the smooth surface $S_0\smallsetminus\{w_1,\dotsc,w_r\}$, hence
 $R:=\alpha_0^{-1}(F_0)\cong\pr_{F_0}(\ma{N}_{S_0/W_0}^{\vee})_{|F_0}$. By Rem.~\ref{sections} we get $R\cong\mathbb{F}_e$ with $e\in\Z_{>0}$ odd.  Hence $(\alpha'_0)_{|R}$ must be birational  (see diagram \eqref{alpha'} and the proof of Lemma \ref{b}), and $\alpha_0'(R)$ is a $2$-dimensional fiber of $\pi_0'$, a contradiction.
 Therefore $F_0^2=-2$ and $b$ is a node for $B_0$; moreover $m=0$, $f_0^{-1}(b)\cong\pr^1\times\pr^1$, and $K_{S_0}=0$.

 We have a diagram of birational maps:
 $$\xymatrix{S\ar[r]^{\sigma_{{\s W}|S}}\ar[d]_{\pi_{|S}}&{S_0}\ar[d]^{\pi_{0|S}}\ar[r]^{\sigma_{q_0|S_0}}&A\\
B\ar[r]^{k_{|B}}&{B_0}&
}$$
where on the first row $S$ is smooth, 
$S_0$ has rational double points of type $A_1$ or $A_2$ at $w_1,\dotsc,w_r$  (Lemma \ref{b} and \ref{Sr}), and $A$ has rational double points of type $A_1$ or $A_2$ at $q_0,\dotsc,q_r$. Moreover
$B_0$ has rational double points of type $A_1$ or $A_2$ at $p_1,\dotsc,p_r$ and nodes at the points $b_j$ such that $\pi_0^{-1}(b_j)\subset S_0$, and
the map $k_{|B}\colon B\to B_0$ resolves $p_1,\dotsc,p_r$ (\ref{BG}), while it is an isomorphism around the nodes $b_j$. Therefore $B$ can have at most nodes at the inverse images of $b_j$, and $\pi_{|S}\colon S\to B$ is a minimal resolution of singularities. Since $B\in|-K_Y|$ (Lemma \ref{aeb}), $B$ is a nodal K3 surface, and $S$ is a smooth K3 surface.

Conversely, if $b'\in B$ is such that $\dim f^{-1}(b')=2$, then $b'\not\in\Exc(k)$ (Lemma \ref{onedim}) and if $b'':=k(b')$, then $\dim f_0^{-1}(b'')=2$, but both $\pi_0$ and $\alpha_0$ have only one-dimensional fibers (Lemma \ref{b}). Therefore we must have $\pi_0^{-1}(b'')\subset S_0$ and $b''=b_j$ for some $j$.
\end{proof}
 \begin{lemma}\label{final}
 Assume that we are in case $(b)$.   Then $r=4$ and $\rho_X=7$, and
 $A\subset\pr^4$ is a (singular) sextic K3 surface.
 \end{lemma}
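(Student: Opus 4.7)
The statement splits into two tasks: identifying $A$ as a sextic K3 surface, and determining $r=4$. Observe that $\rho_X=\rho_W+1=r+3$ by Lemma \ref{Wr}, so $r=4$ and $\rho_X=7$ are equivalent.

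For the first task, I would compute $\deg A$ via projection from $q_0$. The $\pr^1$-bundle $\pi_0\colon W_0\to\pr^3$ is induced by the linear projection $\pi_{q_0}\colon\pr^4\dasharrow\pr^3$, and by \ref{isowi} the restriction $\pi_{0|S_0}\colon S_0\to B_0$ is birational. Hence $A$ projects birationally from $q_0$ onto $B_0\in|-K_{Y_0}|$, which is a quartic in $\pr^3$ by Lemma \ref{aeb}. Since $q_0$ is a rational double point of $A$ of multiplicity $2$ (Lemma \ref{A}), the projection-from-a-point formula yields $\deg A=\deg B_0+\text{mult}_{q_0}(A)=4+2=6$. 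That $A$ is K3 is then immediate from the previous lemma (which establishes $S$ smooth K3) combined with the Du Val nature of ADE singularities: the minimal resolution $S\to A$ satisfies $K_S=0$, and pulling back $K_A$ gives $K_S$, so $K_A=0$.

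For $r=4$, we already have $r\le 6$ by Lemma \ref{bound9}, so only $r\in\{3,5,6\}$ need be excluded. My approach is intersection-theoretic on $\wi{W}=\Bl_{q_0,\dotsc,q_r}\pr^4$. The class $[S]\in A^2(\wi{W})$ is determined by $\deg A=6$ and the exceptional curves $\Gamma_i=G_i\cap S$, each a conic in $G_i\cong\pr^3$ (smooth for $A_1$, reducible for $A_2$). Applying the K3 adjunction identity $\det\ma{N}_{S/\wi{W}}=-K_{\wi{W}}|_S$ and Noether's formula $\chi(\ol_S)=2$, and computing the self-intersection $c_2(\ma{N}_{S/\wi{W}})=[S]^2$, I would obtain a numerical constraint relating $r$ to the distribution of $A_1$/$A_2$ types. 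Imposing in parallel that $W$ admit the fiber-type contraction $W\to Z$ onto the anticanonical model $Z$ of $Y=\Bl_r\pr^3$ (\ref{regular}), and using the classical fact that a sextic K3 of genus $4$ is contained in a unique quadric hypersurface of $\pr^4$ (via hyperplane sections being canonical genus-$4$ curves), should collapse these constraints to $r=4$.

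The main obstacle is making the intersection-theoretic bookkeeping precise when the singularity types vary across the $q_i$: each $A_2$ point contributes an extra exceptional $(-2)$-curve to $S$, shifts the local contribution to $[S]^2$, and affects the normal-cone geometry in $G_i\cong\pr^3$. The key numerical identity must be robust against these combinatorial variations, and I expect the argument ultimately relies on a clean parity or divisibility constraint derived from the required smoothness of the quadric $Q\supset A$ together with the general-linear-position property of the $q_i$ from \ref{linear}.
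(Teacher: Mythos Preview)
Your first task is handled correctly and matches the paper: the degree of $A$ is computed by projecting from the double point $q_0$ onto the quartic $B_0$, and the K3 property follows from $K_S=0$ on the minimal resolution. One small correction: you do not need to exclude $r=3$. Throughout the proof of Theorem~\ref{B} we work under the standing hypothesis $\rho_X\geq 7$ (see \ref{logFano}), and since $\rho_X=r+3$ this already gives $r\geq 4$; only $r\in\{5,6\}$ remain.

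The second task has a genuine gap. Your intersection-theoretic programme on $\wi{W}$ never produces an actual constraint: you compute $c_2(\ma{N}_{S/\wi{W}})$ and invoke Noether, but these quantities are compatible with any $r$ once you allow the singularity types to vary, and you yourself acknowledge that the ``key numerical identity'' you are hoping for is not in hand. More importantly, your outline never uses the Fano property of $X$; yet the construction in \S\ref{newb} shows that for general $A$ the blow-up $X$ exists and is smooth for every $r\leq 6$, and only ceases to be Fano when $r\geq 5$. Any argument that ignores the ampleness of $-K_X$ cannot distinguish $r=4$ from $r=5,6$.

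The paper's argument is entirely different and concrete. One first observes that the sextic $A$ lies on a unique quadric $Q$ (necessarily smooth at each $q_i$) and hence is the complete intersection $Q\cap M$ with a cubic $M$; since $A$ is singular at each $q_i$ while $Q$ is not, $M$ must be singular at all the $q_i$. Now assume $r\geq 5$ and pick a general point $q\in M$. The rational normal quartic $\Gamma$ through $q_0,\dotsc,q_5,q$ satisfies $\Gamma\cdot M=12$ but meets $M$ with multiplicity $\geq 2$ at six points and $\geq 1$ at $q$, forcing $\Gamma\subset M$. Then $\Gamma\cap Q$ consists of $q_0,\dotsc,q_5$ plus two further points $a,b\in A$. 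The transform $\wi{\Gamma}\subset\wi{W}$ has $-K_{\wi{W}}\cdot\wi{\Gamma}=20-18=2$ and meets $S$ exactly at $a,b$; passing to $W$ can only decrease the degree, and blowing up $S$ drops it by at least $2$, so $-K_X\cdot\Gamma_X\leq 0$, contradicting that $X$ is Fano.
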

 \begin{proof}
   Since $A$ has a  double point at $q_0$ by Lemma \ref{A}, and the projection of $A$ from $q_0$ is a quartic surface $B_0\subset \pr^3$ (Lemma \ref{aeb}),
   $A$ must have degree $6$. Then $A$ is contained in  a unique quadric hypersurface $Q$, it is the complete intersection of $Q$ with a cubic hypersurface $M$ (see for instance \cite[Ex.~VIII.14]{beauvillesurf}), and has trivial canonical class.

   We note that $Q$ must be smooth at $q_i$ for every $i=0,\dotsc,r$.
   Indeed if $Q$ is singular at $q_i$, then it is a cone, and its projection from $q_i$ is a quadric in $\pr^3$. On the other hand $A$ cannot be a cone, because  $\Sing(A)=\{q_0,\dotsc,q_r\}$ (see Lemma \ref{b}).
Then, as observed in the proof of Lemma \ref{A}, the projection $\pi_{q_i}\colon\pr^4\dasharrow\pr^3$ induces a different special rational contraction $X\dasharrow Y^i$ with $\rho_X-\rho_{Y^i}=2$, to which our results apply. In particular the projection of $A$ from $q_i$ must be again a quartic surface in $\pr^3$, but this is impossible 
as $A\subset Q$. Therefore $Q$ must be smooth at $q_i$.

Since $A$ has double points at $q_0,\dotsc,q_r$ (Lemma \ref{A}), in turn the cubic $M$ must have a double point at each $q_i$.

Suppose by contradiction that $r>4$, let $q\in M$ be a general point,
and let $\Gamma\subset\pr^4$ be the rational normal quartic through $q_0,q_1,\dotsc,q_5,q$. Then $\Gamma\cdot M=12$, and $\Gamma$ and $M$ intersect with multiplicity $\geq 1$ in $q$ and $\geq 2$ in $q_0,\dotsc,q_5$, thus $\Gamma\subset M$.
Moreover $\Gamma$ intersects the quadric $Q$ in $q_0,\dotsc,q_5$ plus two additional points $a$ and $b$, that belong to $A$.

Let $\wi{\Gamma}\subset\wi{W}$,  $\Gamma_W\subset W$, and $\Gamma_X\subset X$ be the transforms of $\Gamma$. Then
$-K_{\wi{W}} \cdot \wi{\Gamma} = 20-18=2$, and
$\wi{\Gamma}$ intersects $S$ in two points, corresponding to $a$ and $b$.
Recall that $S\subset\dom(\zeta^{-1})$ (see Lemma \ref{W}), thus $\Gamma_W$ still intersects $S$ in two points. Moreover $-K_W\cdot\Gamma_W\leq -K_{\wi{W}} \cdot \wi{\Gamma} = 2$ (see \cite[Rem.~3.6]{eff}),
 thus $-K_X\cdot\Gamma_X\leq 0$, a contradiction.
\end{proof}
This concludes the proof of Th.~\ref{B}.
\end{proof}

\section{The case $\delta_X=2$ and the elementary case}\label{grenoble}
\noindent In this section we treat Fano $4$-folds with Lefschetz defect $2$, and Fano $4$-folds with an elementary rational contraction onto a $3$-fold.
For the case where $\delta_X=2$, we show the following more refined version of Th.~\ref{delta2}.
\begin{thm}\label{delta2detail}
  Let $X$ be a smooth Fano $4$-fold with $\delta_X=2$. Then $3\leq \rho_X\leq 6$, and one of the following holds:
  \begin{enumerate}[$(i)$]
 \item there is a special rational contraction $X\dasharrow Y$ onto a smooth $3$-fold with $\rho_X-\rho_Y=2$;  
 \item there is a  quasi-elementary contraction  $X\to S$ where either $S\cong\pr^2$ and $\rho_X=4$, or $S\cong\pr^1\times\pr^1$ or $\mathbb{F}_1$   and $\rho_X=5$.
   \end{enumerate}
\end{thm}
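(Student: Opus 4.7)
The plan is to combine the structural dichotomy of \cite{codimtwo} for non-products with the classification provided by Theorem \ref{B}, handling the product case directly via Rem.~\ref{deltaproduct}. The lower bound $\rho_X\geq 3$ is immediate: if $D\subset X$ is a prime divisor realising the maximum in \eqref{defidelta}, then $\codim\N(D,X)=2$ and $\dim\N(D,X)\geq 1$, so $\rho_X\geq 3$.

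If $X\cong S_1\times S_2$, Rem.~\ref{deltaproduct} gives $\delta_{S_i}\leq\delta_X=2$, hence $\rho_{S_i}=\delta_{S_i}+1\leq 3$ and $\rho_X\leq 6$. Up to relabelling $\rho_{S_1}=3$; if $\rho_{S_2}\leq 2$, the projection $X\to S_2$ is a quasi-elementary contraction onto a surface, giving case $(ii)$, while if $\rho_{S_2}=3$ one composes a conic bundle $S_2\to\pr^1$ with $\mathrm{id}_{S_1}$ to produce a special rational contraction $X\to S_1\times\pr^1$ of relative Picard number $2$, giving case $(i)$.

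From now on assume that $X$ is not a product. By \cite{codimtwo}, either (a) $X$ admits a special rational contraction $X\dasharrow Y$ with $\dim Y=3$ and $\rho_X-\rho_Y=2$, or (b) there is a quasi-elementary (in particular flat) contraction $X\to S$ onto a surface. In case (b), the fibers are del Pezzo surfaces and by \cite{fanos} $S$ is smooth and rational; combined with Rem.~\ref{linalg} applied to a divisor realising $\delta_X=2$, one forces $\rho_S\leq 2$, so $S\in\{\pr^2,\pr^1\times\pr^1,\mathbb{F}_1\}$, and the specific values of $\rho_X$ stated in $(ii)$ follow by tracking $\rho_X-\rho_S$ through the structure of elementary contractions of the quasi-elementary $f$.

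In case (a), Theorem \ref{B} immediately gives $\rho_X\leq 9$. The key step is to exclude $\rho_X\geq 7$: assuming this by contradiction, the second part of Theorem \ref{B} describes $X$ as the blow-up of the Fano model $W$ of $\Bl_{\pts}\pr^4$ along the transform of one of three explicit surfaces $A\subset\pr^4$. For each such $X$ I would exhibit a fixed prime divisor $D\subset X$---for instance the transform of an exceptional divisor of the Fano model $W$ over one of the blown-up points in $\pr^4$---with $\codim\N(D,X)\geq 3$, contradicting $\delta_X=2$. Smoothness of $Y$ then follows by showing that singular fibers of type $(iii)$ in Th.~\ref{2dimfibers} cannot occur under the constraint $\delta_X=2$. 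The main obstacle I expect is precisely this explicit computation of $\codim\N(D,X)$ in each of the three families of Theorem \ref{B}, which requires careful bookkeeping of numerical classes of the relevant fixed prime divisors through the flips, flops and blow-ups appearing in diagram \eqref{diagramW2}.
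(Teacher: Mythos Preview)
Your strategy in case (a) has a genuine gap that will not close. You propose, for the families of Theorem~\ref{B} with $\rho_X\geq 7$, to exhibit a prime divisor $D$ with $\codim\N(D,X)\geq 3$, contradicting $\delta_X=2$. But the contradiction runs in the opposite direction: those families (which exist, see Prop.~\ref{newexintro}) satisfy $\delta_X\leq 1$, so no such $D$ exists. What must be shown is that \emph{every} prime divisor has $\codim\N(D,X)\leq 1$; your proposed candidate (the transform of an exceptional divisor of $W$ over a blown-up point $q_i$) will in fact have $\codim\leq 1$, since $S$ meets all such divisors along curves (see \ref{nodelta2}). So the ``explicit computation'' you flag as the main obstacle is not merely laborious---it yields the wrong sign.

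The paper's route is cleaner and avoids the classification entirely. The structural input from \cite[Th.~5.2 and its proof]{codimtwo} is stronger than the bare dichotomy you invoke: in case (1) it also produces a prime divisor $D\subset\w{X}$ with $\codim\N(D,\w{X})=2$ \emph{and} $f(D)=Y$. Since $f_*\colon\N(\w{X})\to\N(Y)$ has $2$-dimensional kernel and $f_*(\N(D,\w{X}))=\N(Y)$, one gets $\N(D,\w{X})\cap\ker f_*=\{0\}$; in particular $\NE(\tilde\alpha)\not\subset\N(D,\w{X})$, so $\tilde\alpha$ is finite on $D$. A short case analysis on whether $D$ meets $E_1$ then forces $\N(S,\w{W})\subsetneq\N(\w{W})$, contradicting the intermediate step \ref{nodelta2} of the proof of Theorem~\ref{B} (not its final statement). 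This is what you are missing: the extra datum $f(D)=Y$ from \cite{codimtwo}, and the use of \ref{nodelta2} rather than the full classification.

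Your handling of case (b) is also too vague. You cannot force $\rho_S\leq 2$ from Rem.~\ref{linalg} alone; the paper uses that \cite{codimtwo} already gives $\rho_X-\rho_S=3$, and then \cite[Th.~1.1]{fanos} forces $X$ to be a product once $\rho_S\geq 3$, which feeds back into the product analysis.
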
  
A contraction $X\to S$ is 
\emph{quasi-elementary} if for every fiber $F\subset X$ we have $\dim\N(F,X)=\rho_X-\rho_S$; we refer the reader to \cite[\S 3]{fanos} for more details.

The bound $\rho_X\leq 6$ improves the previous bound $\rho_X\leq 12$, see \cite[Th.~2.12 and references therein]{blowup}, and is sharp, by the example $(\Bl_{2\pts}\pr^2)^2$ (see Rem.~\ref{deltaproduct}). On the other hand we are not aware of other examples of Fano $4$-folds with $\delta_X=2$ and $\rho_X=6$, while for  $\rho_X=4, 5$ we provide several (known and new) examples in \S\ref{ex_delta2} and \S\ref{other}.   Fano $4$-folds with $\delta_X=2$ and $\rho_X=3$ are classified and studied in \cite{saverio}, there are 28 families.
\begin{proof}[Proof of Th.~\ref{delta2detail}]
  By \cite[Th.~5.2 and its proof]{codimtwo}
one of the following holds:
  \begin{enumerate}[$(1)$]
  \item there exist 
    a SQM $X\dasharrow \w{X}$, a special, $K$-negative contraction $f\colon\w{X}\to Y$,  and
a  prime divisor $D\subset \w{X}$, such that $Y$ is smooth,
 $\dim Y=3$, $\rho_X-\rho_Y=2$,  $\codim\N(D,\w{X})=2$,
 and $f({D})=Y$;
 \item there is a  quasi-elementary contraction   $\psi\colon X\to S$ with $\dim S=2$ and $\rho_X-\rho_S=3$.
   \end{enumerate}

   Assume first that we are in $(1)$, and suppose by contradiction that $\rho_X\geq 7$. Then
    the study made in Section \ref{relrho2} applies to $X\dasharrow Y$; let us
    consider the factorization of $f$ as in Lemma \ref{rays} and \ref{switch}: $$\xymatrix{  {\w{X}} \ar@/^1pc/[rr]^{f}\ar[r]_{\tilde\alpha}&{\w{W}}\ar[r]_{\pi}&{Y}
}$$
where $\tilde\alpha$ is an elementary contraction of type $(3,2)$ with
$E_1=\Exc(\tilde\alpha)\subset\w{X}$ and
$S=\tilde\alpha(E_1)\subset \w{W}$.

We show that $\N(S,\w{W})\subsetneq\N(\w{W})$, which contradicts
\ref{nodelta2}.

Since $f({D})=Y$, we have $f_*(\N({D},\w{X}))=\N(Y)$, but $\dim\N({D},\w{X})=\dim\N(Y)=\rho_X-2$, therefore $\N({D},\w{X})\cap\ker f_*=\{0\}$, in particular $\NE(\tilde\alpha)\not\subset\N({D},\w{X})$. This implies that $\tilde\alpha$ must be finite on ${D}$.

   If ${D}\cap E_1\neq\emptyset$, then ${D}\cdot\NE(\tilde\alpha)>0$, so that ${D}$ meets every non-trivial fiber of $\tilde\alpha$, and the prime divisor
    $\tilde\alpha({D})$ contains $S$. We have
$\N(\tilde\alpha({D}),\w{W})=\tilde\alpha_*(\N({D},\w{X}))$, thus $\dim\N(\tilde\alpha({D}),\w{W})\leq \dim\N({D},\w{X})=\rho_X-2=\rho_W-1$, and  $\N(S,\w{W})\subseteq \N(\tilde\alpha({D}),\w{W})\subsetneq\N(\w{W})$. If instead ${D}\cap E_1=\emptyset$, then $\tilde\alpha({D})\cap S=\emptyset$, thus  $\N(S,\w{W})\subseteq\tilde\alpha({D})^{\perp}\subsetneq\N(\w{W})$ (see Rem.~\ref{disjoint}).

  We conclude that in case $(1)$ we have $\rho_X\leq 6$, therefore we get $(i)$.

\medskip
  
Assume now that we are in $(2)$.
  By \cite[Th.~1.1]{fanos}  $S$ is a smooth del Pezzo surface, so if $\rho_S\leq 2$ we have $(ii)$. If instead 
  $\rho_S\geq 3$, then again by \cite[Th.~1.1]{fanos} we have $X\cong S\times F$ where $F=\Bl_{2\pts}\pr^2$, and $\delta_X=\max\{\rho_S-1,\rho_F-1\}$ (see Rem.~\ref{deltaproduct}). Hence
  $\rho_S-1\leq\delta_X=2$, which gives $\rho_S=3$ and $S\cong F$, and we have again $(i)$ with the contraction $S\times S\to \pr^1\times S$.
\end{proof}
\begin{corollary}\label{poznan}
Let $X$ be a smooth Fano $4$-fold that is not isomorphic to a product of surfaces. If $\delta_X\geq 2$,
then $\rho_X\leq 6$.
\end{corollary}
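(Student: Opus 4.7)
The plan is to dispatch the statement by splitting on the value of $\delta_X$, invoking the three structural results that cover the three possible ranges. Since $X$ is a smooth Fano $4$-fold, the Lefschetz defect is a nonnegative integer, and by hypothesis $\delta_X\geq 2$. I would first reduce to the finite set of cases $\delta_X\in\{2,3\}$ by appealing to Th.~\ref{deltageq4}: if $\delta_X\geq 4$, then $X\cong S\times Z$ with $S$ a del Pezzo surface and $\dim Z=\dim X-2=2$; as $X$ is Fano, $Z$ must itself be a del Pezzo surface, contradicting the assumption that $X$ is not a product of surfaces. So this range is vacuous under our hypotheses.

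For the case $\delta_X=3$, I would simply apply Th.~\ref{delta=3}: it classifies Fano $4$-folds with $\delta_X=3$, stating that either $X$ is a product of del Pezzo surfaces (excluded by assumption), or $\rho_X\in\{5,6\}$. In the latter subcase the desired bound $\rho_X\leq 6$ is immediate.

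For the case $\delta_X=2$, the needed bound is exactly the content of Th.~\ref{delta2} (equivalently, of the finer Th.~\ref{delta2detail} proved just above in this section), which gives $\rho_X\leq 6$ without any assumption on whether $X$ is a product. Combining the three cases yields $\rho_X\leq 6$ in all situations compatible with the hypotheses.

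Since the genuine work has already been done in Th.~\ref{deltageq4}, Th.~\ref{delta=3} and Th.~\ref{delta2}/\ref{delta2detail}, there is no obstacle here: the corollary is a one-line case split whose only substance is bookkeeping the elimination of the $\delta_X\geq 4$ case via the product structure, which in dimension $4$ forces both factors to be surfaces.
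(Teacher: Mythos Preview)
Your proof is correct and follows essentially the same approach as the paper: a case split on $\delta_X$ using Th.~\ref{deltageq4} to exclude $\delta_X\geq 4$, Th.~\ref{delta=3} for $\delta_X=3$, and Th.~\ref{delta2} for $\delta_X=2$. The paper's proof is just the compressed one-line version of your argument.
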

\begin{proof}
 Since $X$ is not a product of surfaces,  by
 Th.~\ref{deltageq4} and \ref{delta=3} we have $\delta_X\leq 3$, and if $\delta_X=3$, then $\rho_X\leq 6$. Moreover if $\delta_X=2$, then again $\rho_X\leq 6$ by  Th.~\ref{delta2}.
 \end{proof}
 
In the case of an elementary rational contraction onto a $3$-fold, we show the following.
\begin{thm}\label{elem3fold}
Let $X$ be a smooth Fano $4$-fold that is not isomorphic to a product of surfaces, and having an elementary rational  contraction
$X\dasharrow Y$  with $\dim Y=3$. 
Then $\rho_X\leq 9$.
\end{thm}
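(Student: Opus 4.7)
The plan is to adapt the framework developed in Section \ref{relrho2}, taking advantage of the simpler structure available when $\rho_X-\rho_Y=1$.

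First I would reduce the setup. By Corollary \ref{poznan} I may assume $\delta_X\leq 1$, since otherwise $\rho_X\leq 6$ already. Using Proposition \ref{factor}, $f_{\s X}$ factors as $X\dasharrow Y'\to Y$ where $X\dasharrow Y'$ is a special rational contraction; since $\rho_{Y'}\geq\rho_Y=\rho_X-1$ and $f_{\s X}$ is elementary, we must have $\rho_{Y'}=\rho_Y$ and $X\dasharrow Y'$ is itself elementary. Thus I may assume $f_{\s X}$ is special to begin with, and fix a $K$-negative resolution $X\stackrel{\xi}{\dasharrow}\w{X}\stackrel{f}{\to} Y$ via Lemma \ref{Kneg}. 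Since $f$ is elementary (so $m=0$ in Theorem \ref{special}), no divisor of $Y$ has reducible pullback, and by Theorem \ref{special}$(a)$ the base $Y$ is log Fano with at most isolated, locally factorial, canonical singularities.

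Next I would show that $Y$ is weak Fano and reduce to a minimal model. The arguments of Lemmas \ref{small} and \ref{divisorial} carry over to this setting: using $\delta_X\leq 1$ in place of $\delta_X\leq 2$, the dimension count in the analog of Lemma \ref{divisorial} goes through even without having to exclude type-$(3,2)$ components (those were ruled out in Section \ref{relrho2} by the presence of the divisors $E_1,E_2$, absent here). Hence every small elementary contraction of $Y$ is $K$-trivial with exceptional locus a disjoint union of smooth $(-1,-1)$-curves contained in $Y_{\reg}$, and every divisorial elementary contraction of $Y$ is the blow-up of a smooth point. Consequently $-K_Y$ is nef and big, so $Y$ is weak Fano. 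The proofs of Lemma \ref{milano} and Lemma \ref{blowups} then produce, up to flops, a blow-up $k\colon Y\to Y_0$ of $r$ distinct smooth points with $Y_0$ weak Fano and $\rho_{Y_0}\leq 2$, and in the case $\rho_{Y_0}=2$ having two distinct elementary rational contractions of fiber type and (if not Fano) a small anticanonical map with exceptional locus in $(Y_0)_{\reg}$.

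The bound on $\rho_X$ then follows immediately from the anticanonical degree. We have $0<-K_Y^3=-K_{Y_0}^3-8r$. If $\rho_{Y_0}=1$, Lemma \ref{bound} gives $-K_{Y_0}^3\leq 64$, so $r\leq 7$ and $\rho_Y\leq 8$. If $\rho_{Y_0}=2$, the hypotheses of Lemma \ref{conti} are satisfied for $Y_0$ by the reduction just described, yielding $-K_{Y_0}^3\leq 54$, hence $r\leq 6$ and again $\rho_Y\leq 8$. In either case $\rho_X=\rho_Y+1\leq 9$. The main obstacle is the verification of the analogs of Lemmas \ref{small} and \ref{divisorial} without the scaffolding provided by the divisors $E_1,E_2$; once this is in place, the rest of the proof is the same MMP/degree-counting argument used in Section \ref{relrho2}, but significantly simplified by the absence of any intrinsic discriminant divisor and by the stronger hypothesis $\delta_X\leq 1$.
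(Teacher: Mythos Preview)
Your proposal is correct and follows essentially the same route as the paper: reduce to small Lefschetz defect, show $Y$ is weak Fano by controlling its small and divisorial contractions, blow down to $Y_0$ with $\rho_{Y_0}\leq 2$, and finish with the degree bounds of Lemmas \ref{bound} and \ref{conti}. The only notable difference is that the paper does not re-derive the elementary-case analogs of Lemmas \ref{small} and \ref{divisorial} but simply quotes \cite[Lemmas 4.5 and 4.6]{eff}, where these are already proved (needing only $\delta_X\leq 2$ and $\rho_X\geq 6$); your remark that the type-$(3,2)$ exclusion in Lemma \ref{divisorial} relied on $E_1,E_2$ is slightly off---it is the dimension count that does the work there, and the same count goes through in the elementary case with $\rho_X-\rho_Y=1$.
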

The bound is sharp, as shown by the Fano model of $\Bl_{8\pts}\pr^4$, see Ex.~\ref{Fanomodel}.
\begin{proof}[Proof of Th.~\ref{elem3fold}]
 Since $X$ is not a product of surfaces,  by
 Th.~\ref{deltageq4} and \ref{delta=3} we have $\delta_X\leq 3$, and if $\delta_X=3$, then $\rho_X\leq 6$. Therefore we can assume that $X$ has Lefschetz defect $\delta_X\leq 2$.

  We follow \cite[\S 4, in particular the proof of Th.~4.4]{eff}; the strategy is the same as the beginning of the proof of Th.~\ref{B}.
We know that $Y$ has at most isolated, locally factorial, canonical singularities. 
  Moreover $Y$ is log Fano, $-K_Y$ is big, and if $g\colon Y\to Y_0$ is an elementary contraction of fiber type, then $-K_Y\cdot\NE(g)>0$. By \cite[Lemma 4.5]{eff}, if  $g\colon Y\to Y_0$ is a small elementary contraction, then $K_Y\cdot\NE(g)=0$.

\bigskip

Assume now that $\rho_X\geq 6$.
Then by \cite[Lemma 4.6]{eff}, if  $g\colon Y\to Y_0$ is a divisorial elementary contraction, then $-K_Y\cdot\NE(g)>0$ and $g$ is the blow-up of a smooth point (note that in \emph{loc.\ cit.} the map $X\dasharrow Y$ is assumed to be non-regular, but this is used only to deduce that $\delta_X\leq 2$, which we already know).
In particular $Y$ is weak Fano, and as in \ref{properties} we show that if $Y$ is not Fano, then its anticanonical map is small.

As in \cite[p.~622]{eff} we consider all divisorial extremal rays of $\NE(Y)$ and get a map
$$k\colon Y\la Y_0$$
which is the blow-up of $r$ distinct smooth points.  Moreover $Y_0$ is weak Fano, has the same singularities as $Y$, $\rho_Y=\rho_{Y_0}+r$, and $(-K_Y)^3=(-K_{Y_0})^3-8r$.

As in the proof of Lemma \ref{blowups}
we show that, up to increasing the number $r$ of blown-up points, and up to replacing $Y$ and $Y_0$ with SQM's, we can reduce to the case where $\rho_{Y_0}\leq 2$ and, if $\rho_{Y_0}=2$, then $Y_0$ has two distinct elementary rational contractions of fiber type. Moreover as in Lemma \ref{milano} we show that, if $Y_0$ is not Fano, then its anticanonical map is small, with exceptional locus contained in $(Y_0)_{\reg}$.

If $\rho_{Y_0}=1$, then $Y_0$ is Fano, and Lemma \ref{bound} yields 
$-K_{Y_0}^3\leq 64$. Thus
$$0<-K_Y^3=-K_{Y_0}^3-8r\leq 64-8r$$
which gives $r\leq 7$, $\rho_Y=r+\rho_{Y_0}\leq 8$, and $\rho_X=\rho_Y+1\leq 9$.
If instead $\rho_{Y_0}=2$, then by Lemma \ref{ou} and Prop.~\ref{sing} we have either $Y_0\cong\pr^2\times\pr^1$, or $Y_0\cong\pr_{\pr^1}(\ol\oplus\ol(1)^{\oplus 2})$, or 
$-K_{Y_0}^3\leq 48$.  The blow-up of $\pr^2\times\pr^1$ or $\pr_{\pr^1}(\ol\oplus\ol(1)^{\oplus 2})$ at a point (not lying on the curve of anticanonical degree zero) has a divisorial elementary contraction of type $(2,1)$, which is excluded as in Lemma \ref{milano}. Therefore $-K_{Y_0}^3\leq 48$,
and as above we get $r\leq 5$, $\rho_Y\leq 7$, and $\rho_X\leq 8$.
  \end{proof}
  As in the proof of Lemma \ref{table}, we also get the following.
  \begin{corollary}\label{tableelem}
Let $X$ be a smooth Fano $4$-fold  and
$X\dasharrow Y$ an elementary rational  contraction with $\dim Y=3$. Assume that $\rho_X\geq 6$,
$\delta_X\leq 2$, and that $Y$ is smooth.

Then $Y$ is weak Fano and, up to flops, $Y\cong\Bl_{r\,\pts}Y_0$ where $Y_0$ is one of the following.
{\em   $$\begin{array}{||c|c|c|c|c||}
\hline\hline
Y_0   & -K_{Y_0}^3&  \rho_{Y_0} & & \\
\hline\hline

       \pr^3 & 64 &  1 &  \text{Fano}  & \rho_X\leq 9, r=\rho_X-2    \\

       \hline

       \pr_{\pr^2}(T_{\pr^2}) & 48 & 2 & \text{Fano}  & \rho_X\leq 8, r=\rho_X-3  \\

  \hline

       \text{\em\cite[Th.~3.6(1)]{jahnkepeternell}} & 40 & 2 &  \text{weak Fano}  & \rho_X\leq 7, r=\rho_X-3 \\

        \hline

       \text{linear section of }G(2,5) & 40 & 1 & \text{Fano}   & \rho_X=6, r=4   \\

 \hline

           \text{\cite[Th.~3.5(3)]{jahnkepeternell}} & 32 & 2 & \text{weak Fano}    & \rho_X=6, r=3   \\

       \hline

       \text{divisor of degree $(1,2)$ in }\pr^2\times\pr^2 & 30 & 2 & \text{Fano} &  \rho_X=6, r=3  \\ 
       
\hline\hline
\end{array}$$}
    \end{corollary}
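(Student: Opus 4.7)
My plan is to combine the structural information on $Y$ already established in the proof of Theorem~\ref{elem3fold} with the classification of weak Fano $3$-folds used in Lemma~\ref{table}, simplified considerably by the assumption that $Y$ is smooth.

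From the proof of Theorem~\ref{elem3fold} we already know that $Y$ is weak Fano and that, up to flops, $Y\cong\Bl_{r\,\pts}Y_0$, where $Y_0$ is weak Fano with the same singularities as $Y$ (hence smooth), $\rho_{Y_0}\leq 2$, and when $\rho_{Y_0}=2$ the $3$-fold $Y_0$ carries two distinct elementary rational contractions of fiber type. A key consequence of \cite[Lemma 4.6]{eff}, used there, is that every divisorial elementary contraction of $Y$ is the blow-up of a smooth point; in particular $Y$ admits no divisorial elementary contraction of type $(2,1)$, and this will be the obstruction used to rule out the unwanted possibilities for $Y_0$. From $\rho_X\geq 6$ and $0<-K_Y^3=-K_{Y_0}^3-8r$ one reads $-K_{Y_0}^3>32$ when $\rho_{Y_0}=1$ and $-K_{Y_0}^3>24$ when $\rho_{Y_0}=2$.

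I then split on $\rho_{Y_0}$. When $\rho_{Y_0}=1$, the $3$-fold $Y_0$ is a smooth Fano $3$-fold of Picard number $1$, and the classical classification (cf.\ Lemma~\ref{bound}) combined with the degree bound leaves only $\pr^3$, the smooth $3$-dimensional quadric, and the linear section $V_5$ of $\Gr(2,5)$. The quadric is excluded exactly as in the proof of Lemma~\ref{table}: the blow-up of a smooth $3$-dimensional quadric at a point carries a divisorial elementary contraction of type $(2,1)$ coming from the family of lines through the blown-up point, and this extremal ray persists after further blow-ups at general points. When $\rho_{Y_0}=2$ and $Y_0$ is Fano, Lemma~\ref{ou} together with $-K_{Y_0}^3>24$ yields $Y_0\in\{\pr^1\times\pr^2,\ \pr_{\pr^2}(T_{\pr^2}),\ \text{divisor of degree }(1,2)\text{ in }\pr^2\times\pr^2\}$, and $\pr^1\times\pr^2$ is ruled out by the same $(2,1)$-contraction argument, exactly as in the proof of Lemma~\ref{ou2}. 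If instead $Y_0$ is not Fano, then the absence of divisorial elementary rational contractions of $Y_0$ forces its anticanonical map to be small (any $K$-trivial ray contraction cannot be divisorial or of fiber type here), so Remark~\ref{smoothwfrho2} applies and, combined with $-K_{Y_0}^3>24$, gives $Y_0\in\{\pr_{\pr^1}(\ol\oplus\ol(1)^{\oplus 2}),\ [\text{2.13(1.iii)}],\ [\text{2.13(1.iv)}]\}$; the first is again eliminated by the $(2,1)$-obstruction, as in Lemma~\ref{table}.

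The final step is purely arithmetic: for each of the six surviving possibilities the inequality $0<-K_Y^3=-K_{Y_0}^3-8r$ gives the stated bound on $r$, and then $\rho_X=r+\rho_{Y_0}+1$ yields the corresponding bound on the Picard number in the table (in the four cases with $-K_{Y_0}^3\in\{40,40,32,30\}$ one sees in addition that the lower bound $\rho_X\geq 6$ forces the unique values $r=4$ or $r=3$ listed). I do not anticipate any real obstacle here, precisely because the smoothness of $Y$ allows us to bypass the delicate rationality/nodal-fiber analysis of Lemmas~\ref{sm}--\ref{table}; the only substantive work is the three exclusions of $Q_3$, $\pr^1\times\pr^2$ and $\pr_{\pr^1}(\ol\oplus\ol(1)^{\oplus 2})$, which are all handled uniformly by the absence of divisorial $(2,1)$-type contractions on $Y$.
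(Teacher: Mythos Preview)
Your proposal is correct and follows essentially the same approach as the paper, whose proof consists of the single sentence ``As in the proof of Lemma~\ref{table}, we also get the following.'' You have accurately unpacked that reference: the structural input from the proof of Theorem~\ref{elem3fold} (including the analogue of Lemma~\ref{milano}), the case split on $\rho_{Y_0}$ and on whether $Y_0$ is Fano, the three exclusions via the $(2,1)$-contraction obstruction, and the final arithmetic from $0<-K_{Y_0}^3-8r$ all match the paper's reasoning.
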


\begin{proof}[Proof of Th.~\ref{general}]
Since $X$ is not a product of surfaces,  by
Cor.~\ref{poznan}
if $\delta_X\geq 2$ then $\rho_X\leq 6$. Therefore we can assume that $X$ has Lefschetz defect $\delta_X\leq 1$.

  By Prop.~\ref{factor} we can assume that the rational 
  contraction $X\dasharrow Y$ is special.
Then  $\rho_X-\rho_Y\in\{1,2\}$ by Lemma \ref{rhodelta}, because
  $\delta_X\leq 1$. Thus the statement follows from Th.~\ref{elem3fold} and \ref{B}.
\end{proof}
\begin{proof}[Proof of Th.~\ref{summary}]
Since $\rho_X\geq 7$ and $X$ is not a product of surfaces,  Cor.~\ref{poznan}  implies that $\delta_X\leq 1$. As in the proof above, we see that there is a special  rational 
  contraction $X\dasharrow Y$ with $\dim Y=3$ and  $\rho_X-\rho_Y\in\{1,2\}$. Then $Y$ is weak Fano with at most isolated, locally factorial, and canonical singularities, by the proofs of Th.~\ref{elem3fold} and \ref{B} (in particular \ref{logFano} and \ref{weakFano}). 
\end{proof}     
\section{New families and examples}\label{examples}
\noindent In this section we construct several new families of Fano $4$-folds, with Picard number between $2$ and $7$, all having a rational contraction onto a $3$-fold.
Our strategy is inspired by the classification result in Th.~\ref{Bintro} and more generally by the study in Section \ref{relrho2} of Fano $4$-folds with a special rational contraction $X\dasharrow Y$ with $\rho_X-\rho_Y=2$.

In \S\ref{newa} and \S\ref{newb} respectively, we show that cases $(i)$ and $(ii)$ of Th.~\ref{Bintro} do happen for $\rho_X\in\{3,\dotsc,7\}$. 
This proves Prop.~\ref{newexintro}, and leaves open case $(iii)$, as follows.
\begin{question}\label{Q1}
Let $r\in\{0,\dotsc,6\}$ and let $W$ be the Fano model of $\Bl_{q_0,\dotsc,q_r}\pr^4$, for $q_i$ general points (see Ex.~\ref{Fanomodel}).
Let $A\subset\pr^4$ be  a general cone over a twisted cubic containing $q_0,\dotsc,q_r$.
Let $S\subset W$ be the transform of $A$, and $X\to W$ the blow-up of $S$.
Is $X$ is a smooth Fano $4$-fold?
 \end{question}  
 \noindent We also leave open the existence of case $(i)$ of Th.~\ref{Bintro} for $\rho_X=8,9$, see Question \ref{Q2}.
 
 We also give other constructions.
 In \S\ref{ex_delta2} we adapt  case $(a)$ from Section \ref{relrho2}  to get examples of Fano $4$-folds with Lefschetz defect $2$ and $\rho\in\{3,4,5\}$. Then in \S\ref{newel} we show that the examples from \S\ref{newa} have a different blow-down to a smooth Fano $4$-fold $Z$ with $\rho_Z\in\{2,\dotsc,6\}$, and we give an explicit description of $Z$ thanks to a result in \cite{langer}. Finally in \S\ref{other} we recall some other known examples.
 
 For all the new families we compute the main numerical invariants,  using Lemma \ref{tautological} and \cite[Lemma 6.25]{vb}, see Tables \ref{t2}, \ref{t3}, \ref{t4}, \ref{t5}.
 
In most cases these new families are obtained as blow-ups of the Fano model of $\Bl_{\pts}\pr^4$ along a surface, so let us start by recalling this example.
\begin{example}[the Fano model of $\Bl_{\pts}\pr^4$]\label{Fanomodel}
  Let $\sigma\colon \wi{W}\to\pr^4$  be the blow-up at $r+1$ general points $q_0,\dotsc,q_r$, with $r\in\{0,\dotsc,7\}$. For $r\geq 1$ the $4$-fold $\wi{W}$ is not Fano, but there is a SQM $\wi{W}\dasharrow W$ such that $W$ is smooth and Fano, with $\rho_W=r+2\in\{2,\dotsc,9\}$; we refer to $W$ as the Fano model of $\Bl_{r+1\,\pts}\pr^4$.

  The blow-up $\wi{W}$ contains exceptional lines given by the transforms of the lines $\overline{q_iq_j}$, and (for $r=6,7$) of the rational normal quartics through $7$ points among $q_0,\dotsc,q_r$; these curves are the indeterminacy locus of the map $\wi{W}\dasharrow W$ (see Lemma \ref{SQMFano}$(a)$).

 We note that $W$ is toric if and only if $r\leq 4$; in the classification of toric Fano $4$-folds  in \cite{bat2}, these are $B_3$, $D_9$, $M_1$, 3.5.8(iii), and 3.5.8(ii). We refer the reader to \cite{fanomodel} for more details on the case $r=6$, and to \cite{vb} for the case $r=7$. 
 For $r\geq 8$ it has been shown by Mukai \cite{mukaiXIV,mukaiTshaped} that $\wi{W}$ is not a Mori dream space, and this is a necessary condition to have a SQM that is smooth and Fano.

 Consider the projection $\pr^4\dasharrow \pr^3$ from  $q_0$ (an analogous construction can be made for the other points $q_j$), let $p_i\in\pr^3$ be the image of $q_i$ for $i=1,\dotsc,r$, and
 let $k\colon Y\to\pr^3$ be the blow-up of $p_1,\dotsc,p_r$.
 Then the composition $W\dasharrow Y$ is an elementary rational contraction.
 More precisely, there is an intermediate SQM $\wi{W}\dasharrow\w{W}$, that flips the transforms of the lines $\overline{q_0q_i}$ for $i=1,\dotsc,r$, such that $\pi\colon\w{W}\to Y$ is a $\pr^1$-bundle (see for instance \cite[Rem.~6.12]{vb}).
 
 For $r=7$ we have $\rho_W=9$, thus this example shows that the bounds on $\rho$ in Th.~\ref{general} and \ref{elem3fold} are sharp.

 We report in Table \ref{t1} the main numerical invariants of $W$.
 \begin{table}[h]
 $\begin{array}{||c|c|c|c|c|c|c|c||}
   \hline\hline
   
r   & \rho_W & K_{W}^4&  K_W^2\cdot c_2(W) & b_4(W)=h^{2,2}(W) & b_3(W)& h^0(W,-K_W) & \chi(T_W)\\
      \hline\hline

         0 & 2 & 544 & 232 & 2 & 0 & 111  & 20\\

        \hline

       1 & 3 & 464 & 212 & 4 &  0&96 &16 \\

       \hline

     2 & 4 & 385 & 190 & 7 & 0&81  &12\\

  \hline

      3 & 5 & 307 & 166 & 11 &0 &66 &  8\\

        \hline

      4  & 6 & 230 & 140 & 16 & 0&51 & 4\\

        \hline

      5  & 7 & 154& 112& 22 & 0& 36& 0\\

        \hline

      6  & 8 & 80&80  &30  & 0&21 & -4\\

        \hline

         7  & 9 & 13 &  34& 45 & 0& 6 & -8\\
      
\hline\hline
  \end{array}$

\bigskip
  
  \caption{Numerical invariants of the Fano model $W$ of $\Bl_{r+1\,\pts}\pr^4$}\label{t1}
\end{table}
  \end{example}
  \subsection{Set-up for blow-ups of $W$}\label{setup}
\noindent  In \S\ref{newa}, \ref{newb}, and \ref{ex_delta2} we are going to  construct families of Fano $4$-folds $X$ as  blow-ups of $W$ along a smooth surface; we set up here a common notation.

  Let  $W$ be the Fano model of $\wi{W}=\Bl_{q_0,\dotsc,q_r}\pr^4$;  we keep the same notation as in Ex.~\ref{Fanomodel}. Let $A\subset\pr^4$ be an irreducible surface and $S\subset\wi{W}$ its transform. In all our examples $S$ will be smooth and  contained in the open subset where the map $\wi{W}\dasharrow W$ is an isomorphism; we still denote by $S\subset W$ its transform.

  Let $\alpha\colon X\to W$ be the blow-up of $S$, and $E\subset X$ the exceptional divisor.

  We denote by $H$ the transform of $\sigma^*\ol_{\pr^4}(1)$ in $W$, and by $H_X$ its pullback in $X$.

  As in  Ex.~\ref{Fanomodel} consider the projection $\pr^4\dasharrow\pr^3$ from $q_0$ and the associated 
  $\pr^1$-bundle $\pi\colon \w{W}\to Y=\Bl_{p_1,\dotsc,p_r}\pr^3$.
  Let $k\colon Y\to\pr^3$ be the blow-up map, $G_i\subset Y$ the exceptional divisor over $p_i$, and $H_Y:=k^*\ol_{\pr^3}(1)$.
 We assume that $A$ is not a cone with vertex $q_0$; 
 let $B_0\subset \pr^3$ be the projection of $A$ (so that $\dim B_0=2$), and $B\subset Y$ its transform.
We still denote by $S\subset \w{W}$ the transform of $A$; then $B=\pi(S)$.

Let $\tilde\alpha\colon\w{X}\to\w{W}$ be the blow-up of $S$, and $f:=\pi\circ\tilde{\alpha}\colon\w{X}\to Y$. The induced map $X\dasharrow \w{X}$ is a SQM.
{\small
  \stepcounter{thm}
\begin{equation}\label{diagrama}
\xymatrix{X\ar[d]_{\alpha}\ar@{-->}[r]&{\w{X}}\ar@/^1pc/[dd]^<<<<<{f}\ar[d]_{\tilde\alpha}&&\\
W  \ar@{-->}[r]&  {\w{W}}\ar@{-->}[r]\ar[d]_{\pi}&{\wi{W}}\ar[r]^{\sigma}&{\pr^4}\ar@{-->}[d]\\
&Y\ar[rr]^k&&{\pr^3}
}\end{equation}}
\begin{remark}\label{GAEL}
  The composition $X\dasharrow Y$ is a special rational contraction with $\rho_X-\rho_Y=2$.
  In particular we deduce from Th.~\ref{B} that  $X$ is not Fano when $\rho_X> 9$ and $r >6$.
  \end{remark}

Let $\wi{D}_i\subset\wi{W}$ be  the exceptional divisor of the blow-up $\sigma\colon\wi{W}\to\pr^4$ over $q_i$, and $D_i\subset{W}$, $\w{D}_i\subset \w{W}$ its transforms.

Let $T:=\pi^{-1}(B)\subset\w{W}$ and $\w{T}\subset \w{X}$ its transform;
since $S$ is a smooth surface contained in $T$, we have $\w{T}\cong T$. Note that $T$ is the transform of the cone in $\pr^4$ over $A$ with vertex $q_0$.
In all our examples, 
 $T$ will be  contained in the open subset where 
the birational map $\w{W}\dasharrow W$ is an isomorphism. Similarly, 
$\w{T}$ is contained in the open subset where the map $\w{X}\dasharrow X$ is an isomorphism; we still denote by $T\subset W$ and $\w{T}\subset X$ the transforms.

Suppose now that $r\leq 4$; then $W$ is toric and $\w{W}=\pr_Y(\ma{E})$ with $$\ma{E}=\ol_Y\oplus\ol_Y\bigl(H_Y-\sum_{i=1}^rG_i\bigr).$$ 
Let moreover $J\subset\w{W}$ be the transform of a hyperplane $J_0\subset\pr^4$ through $q_1,\dotsc,q_r$; then $J$ is the section of $\pi$ corresponding to the projection  $\ma{E}\twoheadrightarrow\ol_Y(H_Y-\sum_{i=1}^rG_i)$, and
$\eta=\ol_{\w{W}}(J)=\ol_{\w{W}}(\w{H}-\sum_{i=1}^r\w{D}_i)$
 is the tautological class, where $\w{H}$ is the transform of $\sigma^*\ol_{\pr^4}(1)$; moreover $\eta_{|J}\cong \ol_Y(H_Y-\sum_{i=1}^rG_i)$.

We also have $T=\pr_B(\ma{E}_{|B})$, with tautological class 
  $\eta_{|T}=\ol_T(J_{|T})$, and we set $\pi_T:=\pi_{|T}\colon T\to B$.

\subsection{New families from case $(a)$}\label{newa}
\noindent We keep the notation as in \S\ref{setup}, and assume that $r\leq 6$ (see Rem.~\ref{GAEL}).
Let $A\subset\pr^4$ be a general cubic rational normal scroll containing
$q_0,\dotsc,q_r$ (such a scroll exists, see for instance \cite[Ex.~A]{izzet});
 recall that $A\cong\mathbb{F}_1$. 
 Then $S\subset\wi{W}$ is a del Pezzo surface with $\rho_S=r+3$, and it is disjoint from the transforms of the lines $\overline{q_iq_j}$ and, for $r=6$, of the rational normal quartic through $q_0,\dotsc,q_6$. 
 We will show the following.
   \begin{proposition}\label{a}
  For $r\in\{0,\dotsc,4\}$, $X$ is Fano with $\rho_X=r+3\in\{3,\dotsc,7\}$.  
\end{proposition}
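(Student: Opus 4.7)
The plan is to verify the two assertions separately: $\rho_X=r+3$, and ampleness of $-K_X$.

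For the Picard number, what is needed is that $S$ is smooth and contained in the smooth locus of $W$, so that $\alpha$ is a standard blow-up giving $\rho_X=\rho_W+1=r+3$ and keeping $X$ smooth. Smoothness of $S\cong\Bl_{r+1\,\pts}\mathbb{F}_1$ is clear since $A$ is smooth and the $q_i$ are smooth points of $A$. Placement in $W_{\reg}$ requires showing that $S$ avoids the exceptional planes produced by the SQM $\wi{W}\dasharrow W$, which correspond to the flipped exceptional lines, namely the transforms in $\wi{W}$ of the secants $\overline{q_iq_j}$. For $r\le 4$ this follows from a dimension count on the irreducible family of cubic rational normal scrolls through $r+1$ general points: the sublocus of scrolls meeting a given secant away from the $q_i$ is proper. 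Thus $S\subset W_{\reg}$, and similarly for $T$, giving the claim on $\rho_X$.

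The main work is showing $-K_X$ is ample. First, by Lemma \ref{tautological}, ampleness of $-K_X{}_{|E}$ reduces to ampleness of $\ma{N}_{S/W}\otimes\ol_S(-K_S)$. I would compute $\ma{N}_{A/\pr^4}$ directly from the scroll geometry (it splits, with positive summands), then track it through the birational morphism $\sigma\colon\wi{W}\to\pr^4$ and the SQM $\wi{W}\dasharrow W$ (which is an isomorphism on a neighborhood of $S$ by the previous paragraph) to obtain $\ma{N}_{S/W}$. The twist by $\ol_S(-K_S)$ on the weak del Pezzo surface $S=\Bl_{r+1\,\pts}\mathbb{F}_1$ then yields an ample rank-two bundle, giving ampleness of $-K_X{}_{|E}$.

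Next, I would verify $-K_X\cdot R>0$ for every remaining $K$-negative extremal ray $R$ of $\NE(X)$. Using the factorization $X\dasharrow\w{X}\stackrel{\tilde\alpha}{\to}\w{W}\dasharrow\wi{W}=\Bl_{q_0,\dots,q_r}\pr^4$, each such ray is represented by the transform of an irreducible curve $C\subset\wi{W}$, and the intersection $-K_X\cdot R$ equals $-K_{\wi{W}}\cdot C-m$, where $m$ is the intersection multiplicity of the corresponding curve in $\pr^4$ with $A$. Since the Mori cone of $\wi{W}$ is generated by explicit classes (fibers in the exceptional divisors $\wi{D}_i$; transforms of lines, conics, twisted cubics, and rational normal quartics passing through subsets of the $q_i$), one reduces to a finite check: for each candidate class $d\ell-\sum m_i e_i$, confront $5d-3\sum m_i$ with the intersection $A\cdot C$ computed in $\pr^4$.

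The main obstacle is this enumeration. The hypothesis $r\le 4$ enters precisely here: for $r\le 4$ general position of $A$ (combined with the fact that $A$ avoids secants $\overline{q_iq_j}$) suffices to ensure positivity in every case. For $r\ge 5$ additional curves --- trisecant conics and quadrisecant twisted cubics through subsets of the $q_i$'s and tangent to $A$ to high order --- would create extremal rays on which $-K_X$ fails to be positive; these are what would need to be ruled out, and cannot be with a generic scroll through five or more points.
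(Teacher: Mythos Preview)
Your treatment of $\rho_X$ and of $-K_{X|E}$ is on the right track and close to what the paper does (the paper computes $\ma{N}_{S/W}$ via the inclusion $S\subset T\subset W$ rather than from $\ma{N}_{A/\pr^4}$, but either route works).

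The gap is in your handling of curves outside $E$. The sentence ``verify $-K_X\cdot R>0$ for every remaining $K$-negative extremal ray'' is circular: a $K$-negative ray already satisfies this by definition, and what you actually need is $-K_X\cdot C>0$ for \emph{every} irreducible curve $C\not\subset E$. More seriously, your proposed reduction does not work. Knowing generators of $\NE(\wi{W})$ gives no control over $\NE(X)$: the blow-up $\alpha$ and the SQM's in between mean that curves in $X$ are not in any simple bijection with curves in $\wi{W}$, and your formula $-K_X\cdot R=-K_{\wi{W}}\cdot C-m$ ignores the SQM corrections (which can be large). There is no finite enumeration available here, because before proving $X$ is Fano you have no reason to believe $\NE(X)$ is polyhedral.

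The paper avoids this entirely by an effective decomposition. For each $i=0,\dotsc,r$ let $T_i\subset W$ be the transform of the quadric cone over $A$ with vertex $q_i$, and $\w{T}_i\subset X$ its transform. A direct computation in $\Pic(X)$ gives
\[
(r+2)(-K_X)=(4-r)H_X+3\sum_{i=0}^r\w{T}_i+(2r+1)E.
\]
One then proves (this is the real content, your Lemma~\ref{Ta}-analogue) that $-K_{X|\w{T}_i}$ is ample for each $i$; together with ampleness of $-K_{X|E}$, this handles all curves contained in $E\cup\bigcup_i\w{T}_i$. For a curve $C$ in none of these, every term on the right is nonnegative on $C$, and $E\cdot C>0$ whenever $C$ meets $E$ (while if $C\cap E=\emptyset$ one uses $-K_W$ directly). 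This shows $-K_X$ is strictly nef; bigness follows from $K_X^4>0$, and ampleness from the base-point-free theorem. The coefficient $4-r$ is exactly where $r\le 4$ enters, which also explains why the paper leaves $r=5,6$ open rather than claiming failure as you do.
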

\begin{question}\label{Q2}
  Is $X$ Fano for $r=5,6$?
\end{question}
The surface $B_0\subset \pr^3$ is a smooth quadric surface
  containing $p_1,\dotsc,p_r$,   $B$ is a smooth del Pezzo surface with $\rho_B=r+2$, and $-K_Y=2B$. Moreover
$\pi_{|S}\colon S\to B$ is the blow-up of a point $y\in B$, with exceptional curve the transform $\Gamma$ of the line in $A$ containing $q_0$. \label{fiberb} Every fiber of $f$ over $Y\smallsetminus\{y\}$ is one-dimensional, while $f^{-1}(y)=\tilde{\alpha}^{-1}(\Gamma)\cong\mathbb{F}_1$.

Note that  $T\subset\w{W}$ is the transform of the quadric cone $Q\subset\pr^4$, containing $A$, with vertex  $q_0$. Then $T$ is disjoint from the transforms of the lines $\overline{q_iq_j}$ for $1\leq i<j\leq r$,  thus it is contained in the open subset where the map $\w{W}\dasharrow W$ is an isomorphism.
\begin{lemma}\label{Ta}
For $r\in\{0,\dotsc,4\}$, $-K_{X|\w{T}}$ is ample.
\end{lemma}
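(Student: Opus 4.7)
The strategy is to identify $-K_X|_{\w{T}}$ as a divisor on $T$ via the isomorphism $\alpha|_{\w{T}}\colon \w{T} \xrightarrow{\sim} T$, and then apply the standard ampleness criterion for divisors on split $\pr^1$-bundles. A local computation in the blow-up $\alpha$ at points of $S \subset T$ (in suitable coordinates, take $T = \{x_2 = 0\}$ and $S = \{x_1 = x_2 = 0\}$) shows that $\alpha^*T = \w{T} + E$ as Cartier divisors near $T$, so that under the identification $\w{T} \cong T$ the divisor $E|_{\w{T}}$ corresponds to $S$ as a divisor on $T$. Combined with $-K_X = \alpha^*(-K_W) - E$, this yields $-K_X|_{\w{T}} = -K_W|_T - S$ on $T$.

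Next, since $T$ lies in the locus where $\w{W}\dasharrow W$ is an isomorphism, we may compute in $\w{W} = \pr_Y(\ma{E})$. Writing $L = H_Y - \sum_{i=1}^r G_i$, so that $\ma{E}|_B = \ol_B \oplus \ol_B(L|_B)$, the anticanonical formula for a $\pr^1$-bundle (cf.\ Remark \ref{sections}) gives $-K_W|_T = 2\eta|_T + \pi_T^*(3H_Y|_B - \sum e_i)$, where $e_i := G_i|_B$. Since $S\cdot F = 1$ for the general fiber $F$ of $\pi_T$, we may write $S = \eta|_T + \pi_T^* N$ for some $N \in \Pic(B)$. To pin down $N$, restrict to the exceptional section $\Sigma_1 := \w{D}_0 \cap T$, on which $\eta$ vanishes: this gives $N = S|_{\Sigma_1}$ as a divisor class on $\Sigma_1 \cong B$. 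Geometrically, $S \cap \w{D}_0$ is the strict transform of $\pr(T_{q_0}A) \subset \wi{D}_0 \cong \pr^3$, a line in the quadric $B_0 \subset \pr^3$ passing through $y_0$; by genericity of the scroll $A$, the tangent space $T_{q_0}A$ avoids the directions of the lines $\overline{q_0q_i}$, so this line misses the $p_i$ and its class on $B$ is a ruling class $h_1$ of $B_0 \cong \pr^1 \times \pr^1$.

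Combining, and writing $H_Y|_B = h_1 + h_2$, we obtain $-K_X|_{\w{T}} = \eta|_T + \pi_T^* M$ with
\[
M = (3H_Y|_B - \textstyle\sum_i e_i) - h_1 = 2h_1 + 3h_2 - \sum_{i=1}^r e_i.
\]
By the standard ampleness criterion for $\pr_B(\ol \oplus \ol(L|_B))$, the class $\eta|_T + \pi_T^*M$ is ample if and only if $\ma{E}|_B \otimes \ol_B(M)$ is an ample vector bundle, equivalently both $M$ and $M + L|_B = 3h_1 + 4h_2 - 2\sum e_i$ are ample on $B$. For $r \in \{0,\dotsc,4\}$ the surface $B$ is a smooth del Pezzo, whose Mori cone is generated by finitely many $(-1)$-curves ($e_i$, $h_j - e_i$, $h_1 + h_2 - e_{i_1} - e_{i_2}$, and so on), so by Nakai--Moishezon ampleness reduces to positivity of the self-intersection and of the intersection with each $(-1)$-curve; this is a direct check yielding strictly positive numbers throughout the range $r \leq 4$.

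The main technical hurdle is the identification of $N$ in the second step: it requires tracking the strict transform $S$ through the SQM $\wi{W} \dashrightarrow \w{W}$ and invoking the genericity of $A$ to ensure no contribution of the $e_i$ appears. Once $M$ is known, the remaining verification is a routine positivity calculation on a del Pezzo surface.
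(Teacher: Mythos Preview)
Your proof is correct in substance and follows the same overall strategy as the paper: identify $-K_X|_{\w T}$ with $-K_{\w W}|_T - S$ on $T$, write this as $\eta|_T + \pi_T^*M$, and verify that the resulting twist of $\ma E|_B$ is ample by checking that both line-bundle summands are ample on $B$. The computations agree: in your $(h_1,h_2,e_i)$-basis the two summands come out as $2h_1+3h_2-\sum e_i$ and $3h_1+4h_2-2\sum e_i$, which translate exactly to the paper's $-K_B+h-e_0$ and $-K_B+2h-e_0-e_2-\cdots-e_r$ in its $(h,e_0,\dots,e_r)$-basis coming from a birational map $B\to\pr^2$.

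The genuine tactical difference is in how the unknown class $N$ with $S\sim\eta|_T+\pi_T^*N$ is pinned down. The paper restricts to the section $T\cap J$ (where $\eta$ restricts to $L|_B$), which forces it to identify $A\cap J_0$ as a twisted cubic inside the quadric surface $Q\cap J_0$ and track its bidegree through the blow-ups of $q_1,\dots,q_r$. Your choice of restricting to the opposite section $\Sigma_1=T\cap\w D_0$ (where $\eta$ is trivial) is cleaner: $S\cap\w D_0$ is just the exceptional curve of $\Bl_{q_0}A$, whose image in $B_0$ is the projectivised tangent plane $\pr(T_{q_0}A)$, visibly a ruling line. This is a nice simplification. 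Conversely, the paper's final ampleness step is slicker: it writes each summand as $-K_B$ plus a manifestly nef class, avoiding any enumeration of extremal curves.

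Two small slips worth correcting. First, your sample $(-1)$-curves are off: $h_1+h_2-e_{i_1}-e_{i_2}$ has self-intersection $0$; the relevant classes on $\Bl_r(\pr^1\times\pr^1)$ are $h_1+h_2-e_{i_1}-e_{i_2}-e_{i_3}$ (appearing only for $r\geq 3$). Second, for $r=0$ the Mori cone of $B=\pr^1\times\pr^1$ is generated by the two rulings, not by $(-1)$-curves. Neither affects the conclusion, since the positivity check against the actual extremal classes goes through for $r\leq 4$.
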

\begin{proof}
  We  treat the case $r>0$, the case $r=0$ being similar and simpler.
  We have  $-K_{X|\w{T}}\cong -K_{\w{X}|\w{T}}$, thus it is enough to show that $-K_{\w{X}|\w{T}}$ is ample. 
  
 Let us fix some notation in $B$. We have $B=\Bl_{p_1,\dotsc,p_r}B_0\cong\Bl_{r+1\,\pts}\pr^2$. We denote by $e_2,\dotsc,e_r\subset B$ the exceptional curves over $p_2,\dotsc,p_r\in B_0$, so that $e_i$ are lines in $G_i\cong\pr^2$. Then we denote by $e_0,e_1\subset B$ the transforms of the two lines through $p_1$ in the quadric $B_0$, and by $C_{G_1}\subset B$ the exceptional curve over $p_1$, again a line in $G_1$. There is a birational map $B\to\pr^2$ contracting $e_0,e_1,\dotsc,e_r$, let $h\in\Pic(B)$ be the pullback of $\ol_{\pr^2}(1)$. Then $C_{G_1}\sim h-e_0-e_1$.

 We have $\ol_{B_0}(1)=\ol_{B_0}(k(e_0)+k(e_1))$,
  $H_{Y|B}=(k_{|B})^*(k(e_0)+k(e_1))=e_0+e_1+2C_{G_1}\sim 2h-e_0-e_1$, and
  $$\Bigl(H_Y-\sum_{i=1}^rG_i\Bigr)_{|B}\sim(2h-e_0-e_1)-(h-e_0-e_1)-e_2-\cdots-e_r=h-e_2-\cdots-e_r.$$
  Thus $\w{T}\cong T=\pr_B(\ma{E}_{|B})$ with $\ma{E}_{|B}=\ol_B\oplus\ol_B(h-e_2-\cdots-e_r)$.
  
  In $T$ we have $S\sim\eta_{|T}+\pi_T^*(M)$ for some $M\in\Pic(B)$.
  Moreover  $-K_T=\pi_T^*(-K_B-h+e_2+\cdots+e_r)+2\eta_{|T}$, and $\tilde\alpha^*(T)=\w{T}+\w{E}$ where $\w{E}=\Exc(\tilde\alpha)\subset\w{X}$. Finally $T=\pi^*(B)$ and $-K_Y=2B$, thus $\ol_Y(B)_{|B}=-K_B$ and $\ol_{\w{W}}(T)_{|T}=\ol_T(\pi_T^*(-K_B))$.
Then
  \begin{align*} \ol_{X}(\w{T})_{|\w{T}}=\ &  \ol_{X}\bigl(\alpha^*(T)-\w{E}\bigr)_{|\w{T}}\cong\ol_{\w{W}}(T)_{|T}\otimes\ol_T(-S),\text{ and hence:}\\
    \ol_{\w{X}}(-K_{\w{X}})_{|\w{T}}=\ & \ol_{\w{T}}(-K_{\w{T}})\otimes \ol_{\w{X}}(\w{T})_{|\w{T}}\cong\ol_T(-K_T-S)\otimes\ol_{\w{W}}(T)_{|T}\\
    \cong\ & \ol_T\bigl(\pi_T^*(-2K_B-h+e_2+\cdots+e_r)+2\eta_{|T}-S\bigr)\\
    =\ & \ol_T\bigl(\pi_T^*(-K_B+2h-e_0-e_1-M)+\eta_{|T}\bigr).
  \end{align*}
  
 Therefore $-K_{\w{X}|\w{T}}$ is isomorphic to the tautological class for
  $$T=\pr_B\bigl(\ol_B(-K_B+2h-e_0-e_1-M)\oplus\ol_B(-2K_B-M)\bigr),$$ and we have to show that both linear summands are ample on $B$.
 
  We compute $M$ by restricting to $T\cap J\cong B$ the relation $S\sim\eta_{|T}+\pi_T^*(M)$ of divisors in $T$. We have
  $$\eta_{|T\cap J}\cong(\eta_{|J})_{|T\cap J}\cong \Bigl(H_Y-\sum_{i=1}^rG_i\Bigr)_{|B}=h-e_2-\cdots-e_r.$$

  Now we need to compute the class of $S\cap J$ in $T\cap J$.
 Since $Q\subset\pr^4$ is a quadric cone with vertex $q_0$, $Q\cap J_0$ is a quadric surface containing $q_1,\dotsc,q_r$, isomorphic to $B_0$ via the projection from $q_0$.
 Moreover $C:=A\cap J_0$ is a twisted cubic in $J_0\cong\pr^3$, and as a curve in $Q\cap J_0\cong\pr^1\times\pr^1$ it has degree $(2,1)$  (see for instance \cite[Ex.~2.16]{harris}). The map $\sigma_{|T\cap J}\colon T\cap J\to Q\cap J_0$ blows-up  $q_1,\dotsc,q_r$. The transform of $C$ in $T\cap J\cong B$ is $S\cap J$, and has class:
\begin{align*}
   & \sigma_{|T\cap J}^*\ol(2,1)-C_{G_1}-e_2-\cdots-e_r\\
   & \sim 2(e_0+C_{G_1}) + (e_1+C_{G_1})-C_{G_1}-e_2-\cdots-e_r=\\
   & =2C_{G_1}+2e_0+e_1-e_2-\cdots-e_r\sim 2h-e_1-\cdots-e_r.
\end{align*}
We conclude that $M\sim  2h-e_1-\cdots-e_r-(h-e_2-\cdots-e_r)=h-e_1$.
Finally
$$-K_B+2h-e_0-e_1-M\sim -K_B+h-e_0\quad\text{and}\quad 
-2K_B-M\sim -K_B+2h -e_0-e_2-\cdots-e_r.$$
Now $h-e_0$ and $2h-e_0-e_2-\cdots-e_r$ are nef (since $r\leq 4$) and $-K_B$ is ample, so both summands are ample.
\end{proof}
\begin{lemma}\label{Ea}
 For $r\in\{0,\dotsc,4\}$, $-K_{X|E}$ is ample.
\end{lemma}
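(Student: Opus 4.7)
The plan is to apply Lemma \ref{tautological}, which reduces the ampleness of $-K_{X|E}$ to showing that the rank-$2$ vector bundle
$$\ma{F}:=\ma{N}_{S/W}^{\vee}\otimes\ol_W(-K_W)_{|S}\cong\ma{N}_{S/W}\otimes\ol_S(-K_S)$$
is ample on the smooth surface $S$. Since the maps $\w{W}\dasharrow W$ and $\w{X}\dasharrow X$ are isomorphisms in neighborhoods of $S$ and $\w{T}$ respectively (see \S\ref{setup}), I would compute $\ma{N}_{S/W}$ by computing $\ma{N}_{S/\w{W}}$, working inside the $\pr^1$-bundle $\pi\colon\w{W}\to Y$. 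Using the filtration $S\subset T\subset\w{W}$, the normal bundle sits in the short exact sequence
\stepcounter{thm}
\begin{equation}\label{NS}
0\la\ma{N}_{S/T}\la\ma{N}_{S/\w{W}}\la(\ma{N}_{T/\w{W}})_{|S}\la 0,
\end{equation}
whose outer terms are already controlled by the calculations in the proof of Lemma \ref{Ta}: from $T=\pi^*B$ and $\ol_Y(B)_{|B}\cong\ol_B(-K_B)$ one gets $(\ma{N}_{T/\w{W}})_{|S}\cong(\pi_{|S})^*\ol_B(-K_B)$, while the class $S\sim \eta_{|T}+\pi_T^*(h-e_1)$ in $T$ yields $\ma{N}_{S/T}=\ol_T(S)_{|S}$ explicitly in terms of $(\pi_{|S})^*\Pic(B)$ and the restriction of $\eta_{|T}$ to $S$. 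Combined with the formula $-K_W{}_{|S}=-K_{\w{W}}{}_{|S}$, this gives an explicit description of $\det\ma{F}\in\Pic(S)$ and of $\ma{F}$ as an extension of two line bundles.

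To prove that $\ma{F}$ is ample on $S$, I would exploit the fact that $S\cong\Bl_{r+1\,\pts}A\cong\Bl_{r+2\,\pts}\pr^2$ is a smooth del Pezzo surface with $\rho_S=r+3\in\{3,\dotsc,7\}$, so $\NE(S)$ is polyhedral and generated by finitely many extremal classes: the ruling of $\pi_{|S}\colon S\to B$ (i.e.\ the exceptional curve $\Gamma$ over $y\in B$), the exceptional curves over $q_0,\dotsc,q_r$, the strict transforms of the rulings of $A\cong\mathbb{F}_1$, and the negative section $\sigma_\infty$ of $A$. A rank-$2$ bundle on a del Pezzo surface with polyhedral Mori cone is ample iff its restriction to every such extremal irreducible curve $C\cong\pr^1$ has both Harder-Narasimhan summands of positive degree. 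For each extremal $C$ I would tabulate the degrees $\deg\ma{N}_{S/T}{}_{|C}\otimes\ol_S(-K_S)_{|C}$ and $\deg(\ma{N}_{T/\w{W}})_{|C}\otimes\ol_S(-K_S)_{|C}$ from \eqref{NS}, verifying both are positive (and if one vanishes, checking that the extension is nevertheless ample on $C$).

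The main obstacle will be the curve $\Gamma\subset S$ contracted by $\pi_{|S}$: on this curve $(\pi_{|S})^*(-K_B)$ has degree $0$, so the quotient in \eqref{NS} is trivial after twisting, and ampleness of $\ma{F}_{|\Gamma}$ depends delicately on whether the extension is non-split and on the exact degree of $\ma{N}_{S/T}_{|\Gamma}\otimes\ol_S(-K_S)_{|\Gamma}$. This forces one to pin down the subbundle $\ma{N}_{S/T}$ precisely, using the explicit embedding $S\hookrightarrow T$ and the description of $T$ as a $\pr^1$-bundle over $B$. All the other extremal curves $C$ satisfy $(\pi_{|S})(C)\neq\{pt\}$, so both pieces of \eqref{NS} contribute strictly positive degrees that can be uniformly bounded below for $r\leq 4$, where the constraint $r\leq 4$ enters through the nefness of auxiliary classes like $2h-e_0-e_2-\cdots-e_r$ on $B$ (exactly as in the proof of Lemma \ref{Ta}). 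Once $\Gamma$ is handled, ampleness of $\ma{F}$ on $S$, and hence of $-K_{X|E}$, follows.
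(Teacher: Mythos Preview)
Your setup matches the paper exactly: reduce via Lemma~\ref{tautological} to ampleness of $\ma{N}_{S/\w W}\otimes\ol_S(-K_S)$, and use the normal bundle sequence for $S\subset T\subset\w W$. The divergence is in how you finish, and there is a gap.

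Your stated criterion --- ``a rank-$2$ bundle on a del Pezzo surface is ample iff its restriction to every extremal curve has both summands of positive degree'' --- is not a valid ampleness test. Restriction to curves detects nefness of a vector bundle, not ampleness; even on a surface with polyhedral Mori cone, positivity of $\ma F_{|C}$ for all extremal $C$ does not force $\ol_{\pr(\ma F)}(1)$ to be ample. So the curve-by-curve plan, as written, does not conclude.

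The paper's route is both simpler and avoids this issue entirely. From the computations you already import from Lemma~\ref{Ta}, one has that the \emph{untwisted} outer terms of your sequence~\eqref{NS} are nef line bundles on $S$: the quotient is $(\pi_{|S})^*\ol_B(-K_B)$, and the sub is $\ol_T(S)_{|S}$, where $S\sim\eta_{|T}+\pi_T^*(h-e_1)$ is the tautological class for $T=\pr_B\bigl(\ol_B(h-e_1)\oplus\ol_B(2h-e_1-\cdots-e_r)\bigr)$, a nef bundle precisely when $r\le 4$. Twisting a nef line bundle by the ample $-K_S$ gives an ample line bundle, so both outer terms of the twisted sequence are ample, and an extension of ample bundles is ample (Hartshorne, or \cite[6.1.13]{lazII}). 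Done.

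This also dissolves your worry about $\Gamma$: since $\Gamma$ is a $(-1)$-curve on the del Pezzo $S$, one has $-K_S\cdot\Gamma=1$, so after twisting the quotient restricts to $\ol_\Gamma(1)$, not to $\ol_\Gamma$. There is no delicate extension analysis needed on $\Gamma$ or anywhere else.
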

\begin{proof}
We show that $\ma{N}_{S/W}\otimes\ol_S(-K_S)$ is ample; this implies the statement by Lemma \ref{tautological}.
Since $S$ is contained in the open subset where the map $W\dasharrow\w{W}$ is an isomorphism, it is equivalent to show that  $\ma{N}_{S/\w{W}}\otimes\ol_S(-K_S)$ is ample.

Let us consider  $S\subset T\subset \w{W}$ and the associated normal bundle sequence:
$$0\la \ma{N}_{S/T}\otimes\ol_S(-K_S)\la\ma{N}_{S/\w{W}}\otimes\ol_S(-K_S)\la
\ma{N}_{T/\w{W}|S}\otimes\ol_S(-K_S)\la 0.$$
Recall from the proof of Lemma \ref{Ta}, whose notation we keep, that
$\ol_{\w{W}}(T)_{|T}=\ol_T(\pi_T^*(-K_B))$. Then
we have $\ma{N}_{T/\w{W}|S}=\ol_{\w{W}}(T)_{|S}=\ol_S(\pi_{|S}^*(-K_B))$ nef in $S$, thus $\ma{N}_{T/\w{W}|S}\otimes\ol_S(-K_S)$
is ample.

Still from the proof of Lemma \ref{Ta} we have that $S\sim\eta_{|T}+\pi_T^*(M)$ is a tautological divisor for
$$T=\pr_B(\ma{E}_{|B}\otimes M)=\pr_B\bigl(\ol_B(h-e_1)\oplus\ol_B(2h-e_1-\cdots-e_r)\bigr).$$
Both $h-e_1$ and $2h-e_1-\cdots-e_r$ are nef in $B$ (since $r\leq 4$), thus  
$S$ is nef in $T$, and $\ma{N}_{S/T}\otimes\ol_S(-K_S)=\ol_T(S)_{|S}\otimes\ol_S(-K_S)$ is ample.

From the exact sequence above we conclude that $\ma{N}_{S/\w{W}}\otimes\ol_S(-K_S)$ is ample.
\end{proof}
\begin{proof}[Proof of Prop.~\ref{a}]
  For $i=0,\dotsc,r$  let us consider the transform $T_i\subset {W}$ of the quadric cone in $\pr^4$ containing $A$ and with vertex $q_i$.
  
  In ${W}$ we have
  $T_i\sim 2H-\sum_{j=0}^rD_j-D_i$, $\sum_{i=0}^rT_i\sim  2(r+1)H-(r+2)\sum_{j=0}^r D_j$,
$-K_W=5H-3\sum_{i=0}^rD_i$,
  and
  $(r+2)(-K_W)=(4-r)H+3\sum_{i=0}^rT_i$.
 We also have $\alpha^*(T_i)= \w{T}_i+E$, where $\w{T}_i\subset X$ is again the transform of $T_i$, and $\alpha^*(-K_W)=-K_X+E$, which gives
 \stepcounter{thm}
 \begin{equation}\label{-K}
   (r+2)(-K_X)=(4-r)H_X+3\sum_{i=0}^r\w{T}_i+(2r+1)E.\end{equation}
 
 We have $-K_{X|E}$ ample by Lemma \ref{Ea}, and in the notation of Lemma \ref{Ta} we have $\w{T}=\w{T}_0$ and $-K_{X|\w{T}_0}$ ample. By considering the projection from $q_i$ instead of $q_0$, again by Lemma \ref{Ta} we get $-K_{X|\w{T}_i}$ ample for every $i=0,\dotsc,r$.

Since $\sigma^*\ol_{\pr^4}(1)$ is nef in $\wi{W}$, for its transform $H$ in $W$ we have $H\cdot\Gamma\geq 0$ for every irreducible curve $\Gamma$ not contained in the indeterminacy locus of $W\dasharrow\wi{W}$.
 
 Let us consider an irreducible curve $C\subset X$. If $C\subset E$, or if $C\subset \w{T}_i$ for some $i$, then $-K_X\cdot C>0$ by what precedes.
 Assume that $C\not\subset E$ and $C\not\subset\w{T}_i$ for every $i$, and let $C_W\subset W$ be the transform of $C$. If $C\cap E=\emptyset$, then $-K_X\cdot C=-K_W\cdot C_W>0$. If $C\cap E\neq\emptyset$, then $C_W\cap S\neq\emptyset$, therefore $C_W$ is not contained in the indeterminacy locus of the map $W\dasharrow\wi{W}$. Then $H_X\cdot C=H\cdot C_W\geq 0$, $\w{T}_i\cdot C\geq 0$ for every $i$, $E\cdot C>0$, and finally $-K_X\cdot C>0$ by \eqref{-K}.

This shows that $-K_X$ is strictly nef. Moreover 
  one can check directly that
 $K_X^4>0$ (see Table \ref{t2}), so that $-K_X$ is big, and it is ample by the base point free theorem. 
\end{proof}
 \begin{table}[h]
 $\begin{array}{||c|c|c|c|c|c|c|c||}
\hline\hline
r   & \rho_X & K_{X}^4&  K_X^2\cdot c_2(X) & b_4(X)=h^{2,2}(X) & b_3(X)&h^0(X,-K_X) &\chi(T_X)\\
    \hline\hline

0 & 3 & 303 & 174 & 5 &0 & 66&4\\
    
      \hline

       1 & 4 & 256 & 160 & 8 &  0 &57 & 2 \\

       \hline

     2 & 5 & 210 & 144 &  12 &  0 &48 & 0 \\

  \hline

      3 & 6 & 165 & 126 &  17 &  0 & 39 &  -2  \\

        \hline

      4  & 7 & 121 & 106 &  23  &  0 & 30 &  -4\\
      
\hline\hline
   \end{array}$

   \bigskip

   \caption{Numerical invariants of the Fano $4$-folds from \S\ref{newa}}\label{t2}
 \end{table}
 \begin{remark}\label{relation}
   For $r=0$, $X_0$ is the blow-up of $W_0:=\Bl_{q_0}{\pr^4}$ along the transform $S_0$ of $A$. Let
   $\w{F}_i\subset X_0$  be  the transform of the line $\overline{q_0q_i}\subset\pr^4$, for $i=1,\dotsc,r$, and let $\wi{X}\to X_0$ be the blow-up of the $r$ curves $\w{F}_1,\dotsc,\w{F}_r$.
   Then $X$ is a SQM of $\wi{X}$, compare diagram \eqref{diagramW}. 
 \end{remark}
   \subsection{New families as in case $(b)$}\label{newb}
\noindent We keep the notation as in \S\ref{setup}.
Let $Q\subset\pr^4$ be a general  quadric hypersurface containing $q_0,\dotsc,q_r$. Moreover
let $M\subset\pr^4$ be a general cubic hypersurface with double points at $q_0,\dotsc,q_r$ and
such that, if 
$A:=Q\cap M$, then $A$ is a reduced and irreducible surface with $\Sing(A)=\{q_0,\dotsc,q_r\}$,
having rational double points of type $A_1$ or $A_2$ in $q_i$ for every $i$; $A$ is a sextic (singular) K3 surface.

Let  $\wi{Q}$ and  $\wi{M}$, $Q_W$ and $M_W$ be the transforms of $Q$ and $M$ in $\wi{W}$ and $W$ respectively; 
  then $\wi{Q}$ is  contained in the open subset where the map $\wi{W}\dasharrow W$ is an isomorphism.

  The transform $S\subset \wi{W}$  of $A$ is a smooth K3 surface, and since $S\subset\wi{Q}$, $S$ is contained in the open subset where the map $\wi{W}\dasharrow W$ is an isomorphism.
  We will show the following.
  \begin{proposition}\label{fanob}
     The $4$-fold $X$ is Fano if and only if $r\in \{0,\dotsc,4\}$; in these cases we have
      $\rho_X=r+3\in\{3,\dotsc,7\}$.  
\end{proposition}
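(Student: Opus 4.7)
The proof mirrors Proposition~\ref{a}, with adaptations specific to the sextic K3 setting.

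For the \emph{only if} direction: when $r=7$, Remark~\ref{GAEL} immediately shows $X$ is not Fano. When $r\in\{5,6\}$, we have $\rho_X=r+3\in\{8,9\}$; since $X$ is the blow-up of the smooth irreducible variety $W$ along an irreducible surface, $X$ is not isomorphic to a product of surfaces. By Remark~\ref{GAEL}, the composition $X\dasharrow Y$ is a special rational contraction with $\rho_X-\rho_Y=2$, so if $X$ were Fano, Theorem~\ref{B} would force it into case $(i)$, $(ii)$, or $(iii)$. Since $A$ is a sextic K3 surface with rational double points of type $A_1$ or $A_2$ at the $q_i$'s, we are in case $(ii)$, which however requires $\rho_X=7$, a contradiction.

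For the \emph{if} direction with $r\in\{0,\ldots,4\}$, the key observation is the numerical identity $-K_W=Q_W+M_W$ in $\Pic(W)$, following from $Q_W\sim 2H-\sum_i D_i$, $M_W\sim 3H-2\sum_i D_i$, and $-K_W\sim 5H-3\sum_i D_i$. Since $S\subset Q_W\cap M_W$ is Cartier in both (a generically transverse complete intersection), pulling back by $\alpha$ yields $\alpha^*(Q_W)=\tilde Q_X+E$ and $\alpha^*(M_W)=\tilde M_X+E$, giving the clean decomposition
$$-K_X=\tilde Q_X+\tilde M_X+E$$
as a sum of effective divisors. Moreover $\tilde Q_X\cap\tilde M_X=\emptyset$ (since $Q_W\cap M_W=S$ is blown up to $E$), and $\tilde Q_X\cong Q_W$, $\tilde M_X\cong M_W$ as $S$ is Cartier in both $Q_W$ (smooth) and $M_W$ (smooth along $S$, as a local chart computation in the blow-up confirms: even for $A_2$ singularities of $A$, blowing up $\pr^4$ at $q_i$ makes the strict transform of $M$ smooth along $S\cap\wi{D}_i$). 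I then verify strict nefness of $-K_X$ by cases on an irreducible curve $C\subset X$: for $C\subset E$, since $K_S=0$, Lemma~\ref{tautological} reduces ampleness of $-K_{X|E}$ to ampleness of $\ma{N}_{S/W}\cong\ol_W(Q_W)|_S\oplus\ol_W(M_W)|_S$ on $S$, checked by Nakai--Moishezon via intersection computations against $H|_S$ and the $(-2)$-curves of the resolutions; for $C\subset\tilde Q_X$, the decomposition combined with $\tilde M_X\cap\tilde Q_X=\emptyset$ gives $-K_X|_{\tilde Q_X}=(2H-\sum D_i)|_{Q_W}$, ample on $Q_W$ for generic $Q$ since no line $\overline{q_iq_j}$ lies on a general smooth quadric through the $q_i$'s; the symmetric case $C\subset\tilde M_X$ requires more care for the lines $\overline{q_iq_j}\subset M$ and the associated flipping curves of $\wi{W}\dasharrow W$; for $C$ disjoint from all three divisors, a numerical analysis in $\wi{W}$ using $\wi Q\sim 2\wi H-\sum\wi D_i$ and $\wi M\sim 3\wi H-2\sum\wi D_i$ shows $\wi Q\cdot\wi C$ and $\wi M\cdot\wi C$ cannot both vanish. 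Bigness of $-K_X$ follows from the direct computation $K_X^4>0$, and the base-point-free theorem yields ampleness.

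The main obstacle will be the ampleness of $\ma{N}_{S/W}$ on the K3 surface $S$: this requires an explicit description of $\Pic(S)$ (including the exceptional $(-2)$-curves from resolving the $A_1$ and $A_2$ singularities at each $q_i$) and a careful Nakai--Moishezon verification for both line bundle summands $(2H-\sum D_i)|_S$ and $(3H-2\sum D_i)|_S$. A secondary delicate point is the case $C\subset\tilde M_X$: the lines $\overline{q_iq_j}$ lie in $M$ (since $M$ has double points at both endpoints, forcing the intersection multiplicity with the line to exceed $\deg M=3$), so they appear as flipping curves of the SQM $\wi W\dasharrow W$; carefully tracking these is required to confirm ampleness of $-K_X|_{\tilde M_X}$.
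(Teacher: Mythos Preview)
Your ``only if'' direction is correct and matches the paper. For ``if'', your approach is genuinely different: the paper does not use the decomposition $-K_X=\tilde Q_X+\tilde M_X+E$, but instead works with the $r+1$ cone divisors $T_i$ (the cone over $A$ with vertex $q_i$), the identity $2(r+2)(-K_X)=2(4-r)H_X+3\sum_i\tilde T_i+(r-1)E$, and Lemma~\ref{Tb} (whose proof passes through an auxiliary sextic K3 surface $A'\subset\pr^4$). Your route, if it works, avoids Lemma~\ref{Tb} entirely and reduces everything to the toric geometry of $W$: one needs $\ol_W(Q_W)|_{Q_W}$ ample (which follows since $\wi Q$ lies in the isomorphism locus of $\wi W\dasharrow W$, so $Q_W$ is nef and disjoint from the flipped planes $L_{ij}$) and $\ol_W(M_W)|_{M_W}$ ample (which follows from the analysis in the proof of Lemma~\ref{normal}: $M_W$ is nef, trivial exactly on the exceptional planes $L_{abc}$, and $M_W\cap L_{abc}=\emptyset$, a mild strengthening of the paper's $S\cap L_{abc}=\emptyset$). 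This is a nice simplification.

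There is, however, a concrete error in your sketch. Your claim that $\wi M$ is smooth along $S\cap\wi D_i$ fails in the $A_2$ case: if $A$ has an $A_2$ point at $q_i$, the tangent cone $Q_i\subset\wi D_i\cong\pr^3$ of $M$ at $q_i$ has rank $\leq 3$, and its vertex lies on $C_i$ (it is precisely the node of the reducible conic $C_i$). So $\wi M$, hence $M_W$, is singular at that point $p\in S$; then $\tilde M_X$ contains the whole fiber $E_p\cong\pr^1$, and $\tilde Q_X\cap\tilde M_X\neq\emptyset$. The repair is easy: for an irreducible curve $C\subset\tilde M_X$ not contained in $E$ or $\tilde Q_X$, write $-K_X\cdot C=\tilde Q_X\cdot C+\alpha^*(M_W)\cdot C=\tilde Q_X\cdot C+M_W\cdot\alpha(C)$; the first term is $\geq 0$ and the second is $>0$ since $\alpha(C)\subset M_W$ is disjoint from every $L_{abc}$. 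With this adjustment your argument goes through.
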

\begin{remark}
For $r=0$ and $\rho_X=3$, this Fano $4$-fold is the same as \cite[K3-50]{BFMT}.
 \end{remark} 
  
 The surface $B_0$ is a quartic surface with double points at $p_1,\dotsc,p_r$, and $B$ is a smooth K3 surface with $B\in |-K_Y|$.  We note that, by generality of $Q$ and $M$, $A$ does not contain lines through $q_0,\dotsc,q_r$, the projection from $q_0$ is finite on $A$, and
$\pi_{|S}\colon S\to B$ is an isomorphism. This also implies that every fiber of $f\colon\w{X}\to Y$ is one-dimensional.

  Set $h:=(\sigma^*\ol_{\pr^4}(1))_{|S}$ nef and big on $S$,  and set $C_i:=\wi{D}_{i|S}$. Note  that $C_i$ is a conic in $\wi{D}_i\cong\pr^3$, smooth if $q_i$ is of type $A_1$ for $A$, reducible if of type $A_2$. Moreover  $C_i$ is a $(-2)$-curve in $S$  if $q_i$ is of type $A_1$, the union of two $(-2)$-curves  if of type $A_2$.
\begin{lemma}\label{normal}
 Let $r\in \{0,\dotsc,4\}$.   We have $\ma{N}_{S/W}\cong \ol_{S}(2h-\sum_{i=0}^rC_i)\oplus\ol_{S}
( 3h-2\sum_{i=0}^rC_i)$, and  $\ma{N}_{S/W}$ is ample.
  \end{lemma}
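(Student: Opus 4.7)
The plan is to exploit the complete intersection structure $S = \wi{Q} \cap \wi{M}$ in $\wi{W}$ to obtain the normal bundle formula, then verify ampleness directly on the smooth K3 surface $S$. Since a generic quadric $Q$ through $q_0,\dotsc,q_r$ is smooth at each $q_i$, the strict transform satisfies $\wi{Q} \sim 2H - \sum_{i=0}^r \wi{D}_i$; since $M$ has ordinary double points at each $q_i$, one has $\wi{M} \sim 3H - 2\sum_{i=0}^r \wi{D}_i$. For generic $Q,M$, the divisors $\wi{Q}$ and $\wi{M}$ are smooth along $S$ and meet transversally there, so $S$ is a smooth complete intersection and the normal bundle splits as $\ma{N}_{S/\wi{W}} \cong \ol_{\wi{W}}(\wi{Q})_{|S} \oplus \ol_{\wi{W}}(\wi{M})_{|S}$. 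Restricting to $S$ and using the definitions $h = H_{|S}$ and $C_i = (\wi{D}_i)_{|S}$ recovers the two summands of the statement. Since $\wi{Q}$ (hence $S$) lies in the open locus where $\wi{W} \dasharrow W$ is an isomorphism, as recorded in \S\ref{setup}, one identifies $\ma{N}_{S/W}$ with $\ma{N}_{S/\wi{W}}$.

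For ampleness of $\ma{N}_{S/W}$, it is equivalent to show that each summand $L_1 := 2h-\sum_{i=0}^r C_i$ and $L_2 := 3h-2\sum_{i=0}^r C_i$ is ample on $S$. Using $h^2 = 6$ (the degree of the sextic $A$), $h\cdot C_i = 0$, $C_i \cdot C_j = 0$ for $i\neq j$, and $C_i^2 = -2$ (in both the $A_1$ case, where $C_i$ is a smooth $(-2)$-curve, and the $A_2$ case, where $C_i$ is a chain of two $(-2)$-curves), direct calculation gives $L_1^2 = 22 - 2r$ and $L_2^2 = 46 - 8r$, both strictly positive for $r \leq 4$. Positivity of $L_j \cdot \Gamma$ for each $(-2)$-curve $\Gamma$ appearing as an irreducible component of some $C_i$ follows from the $A_1$/$A_2$ Dynkin combinatorics. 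By Nakai--Moishezon on a K3 surface, ampleness then reduces to showing $L_j \cdot \Gamma > 0$ for every \emph{other} irreducible curve $\Gamma \subset S$.

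The main obstacle is excluding curves $\Gamma \subset S$ not supported in $\bigcup_i C_i$ that would force non-positivity. Such a $\Gamma$ projects under $\sigma_{|S}\colon S \to A$ to a curve of degree $d := h \cdot \Gamma \geq 1$, and its intersections with the $C_i$ are controlled by the multiplicities $m_i$ of $\sigma_{|S}(\Gamma)$ at the $q_i$, yielding $L_1 \cdot \Gamma = 2d - \sum_i m_i$ and an analogous formula for $L_2$. The only potentially obstructing configuration is a line $\ell \subset A$ containing two of the $q_i$, which would force $L_1 \cdot \tilde\ell = 0$; however for generic $Q$ the line $\overline{q_iq_j}$ is not contained in $Q$, since containment is a proper closed condition on the linear system of quadrics through $q_i,q_j$, so $\overline{q_iq_j} \not\subset A$. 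A similar genericity analysis excludes the remaining borderline cases, such as plane conics through several of the $q_i$, completing the ampleness verification.
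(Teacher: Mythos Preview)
Your derivation of the normal bundle formula via the complete-intersection structure $S = \wi{Q} \cap \wi{M}$ in $\wi{W}$ is correct and matches the paper.

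The ampleness argument, however, has a genuine gap. Applying Nakai--Moishezon on the K3 surface $S$ requires $L_j \cdot \Gamma > 0$ for \emph{every} irreducible curve $\Gamma \subset S$, and your case analysis does not achieve this. For $L_2$ the inequality $3d - 2\sum_i m_i \leq 0$ admits infinitely many numerical types beyond lines and plane conics (twisted cubics through four of the $q_i$, rational quartics with prescribed multiplicities, and so on), and ``a similar genericity analysis'' is not a proof: you would have to exclude, for every $d \geq 1$, the existence of a curve on $A$ with $\sum_i m_i \geq \tfrac{3}{2}d$, which is neither a finite check nor a consequence of genericity of $Q$ and $M$ alone --- for merely general (as opposed to very general) choices one does not control $\Pic(S)$ or the effective cone of $S$. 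The paper sidesteps this entirely by working in the ambient Fano $4$-fold $W$ rather than intrinsically on $S$. For $L_1$ it writes $3L_1 = h + (-K_W)_{|S}$, which is nef plus ample since $W$ is Fano and $h$ is the restriction of a pullback from $\pr^4$. For $L_2$ it exploits that $W$ is \emph{toric} when $r \leq 4$: one checks on the twenty extremal rays of $\NE(W)$ that $M_W$ is nef, defining a small contraction whose exceptional locus is the union of the ten exceptional planes $L_{abc}$ (transforms of the planes $\overline{q_aq_bq_c} \subset \pr^4$), and then verifies by an explicit local computation inside each $\wi{D}_a$ that $S \cap L_{abc} = \emptyset$, so that $(M_W)_{|S}$ is ample. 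This ambient argument requires no enumeration of curves on $S$.
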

  \begin{proof}
   Since $\wi{Q}$ is contained in the open subset where the map $\wi{W}\dasharrow W$ is an isomorphism, we have $S=Q_W\cap M_W$ in $W$, and $\ma{N}_{S/W}\cong\ol_W(Q_W)_{|S}\oplus\ol_W(M_W)_{|S}\cong\ol_{\wi{W}}(\wi{Q})_{|S}\oplus\ol_{\wi{W}}(\wi{M})_{|S}$.

    We note that $M$ has multiplicity $2$ at each $q_i$, therefore
$\wi{Q}_{|S}\sim 2h-\sum_{i=0}^rC_i$ and $\wi{M}_{|S}\sim 3h-2\sum_{i=0}^rC_i$,
which yields the first statement.

We show ampleness of both linear summands of $\ma{N}_{S/W}$.
We have $-K_{\wi{W}}=\sigma^*\ol_{\pr^4}(5)-3\sum_{i=0}^r \wi{D}_i$,  thus
$$(-K_W)_{|S}\cong (-K_{\wi{W}})_{|S}=5h-3\sum_{i=0}^rC_i$$
is ample in $S$, because $W$ is Fano. Hence
$$3\wi{Q}_{|S}\sim 3\Bigl( 2h-\sum_{i=0}^rC_i\Bigr)=h+\Bigl(5h-3\sum_{i=0}^r
C_i\Bigr)$$
is ample in $S$.

For the second summand, we treat the case $r=4$ maximal, the other cases being simpler. 
We recall that $W$ is a toric Fano $4$-fold, thus it has an explicit combinatorial description, see  \cite[3.5.8(iii)]{bat2}. In particular
the cone $\NE(W)$ has 20 extremal rays, all small. The loci of 10 of these rays $R_i$ are the exceptional planes in the indeterminacy locus of the map ${W}\dasharrow \wi{W}$, corresponding to the exceptional lines $\ell_{ij}\subset\wi{W}$ given by the transforms of the lines $\overline{q_iq_j}\subset\pr^4$. The loci $L_{abc}$ of the remaining 10 extremal rays $R_i'$ are exceptional planes given by the transforms of the planes $P_{abc}$ spanned by $q_a,q_b,q_c$ in $\pr^4$. The birational map $L_{abc}\dasharrow P_{abc}$ is a standard Cremona map, and a line in $L_{abc}$ corresponds to a conic in $P_{abc}$ through $q_a,q_b,q_c$. 

Let us consider now $\ol_W(M_W)\in\Pic(W)$.  We have $\wi{M}\cdot\ell_{ij}=-1$ in $\wi{W}$, thus $M_W\cdot R_i>0$.
On the other hand the previous description of $L_{abc}$ implies that $M_W\cdot R_i'=0$, therefore 
$M_W$ if nef in $W$, and it defines a contraction $\beta\colon W\to W_0$
such that $\NE(\beta)$ is generated by the 10 extremal rays $R_i'$, and
$\Exc(\beta)$ contains all the $L_{abc}$. 
We claim that $\beta$ is birational and small, with exceptional locus equal to the union of all the $L_{abc}$. Indeed one can check that $\sum_{j=0}^4 D_j\cdot R_i'=-2$ for every $i$, therefore if $\Gamma\subset W$ is an irreducible curve contracted by $\beta$, we must have $\sum_{j=0}^4 D_j\cdot \Gamma<0$ and hence $\Gamma\subset D_j$ for some $j$. We have ${D}_j\cong\Bl_{4\pts}\pr^3$, and $\beta_{|D_j}$ is precisely the blow-up of $4$ points in $\pr^3$, so that $\Gamma\subset L_{abc}$ with $a,b,c\neq j$.

 Now we show that $S_W\cap L_{abc}=\emptyset$ for every $a,b,c$.  
 Set for simplicity $P:=P_{abc}\subset \pr^4$. Then $P\cap Q$ is a conic  in $P$ through $q_a,q_b,q_c$, which must be smooth because the points are not aligned and, being general, $Q$ does not contain any line $\overline{q_iq_j}$. 
On the other hand $P\cap M=\overline{q_aq_b}\cup\overline{q_aq_c}\cup\overline{q_bq_c}$; in particular set-theoretically $P\cap A=\{q_a,q_b,q_c\}$.

Let $\wi{P}\subset\wi{W}$ be the transform of $P$, and consider the exceptional divisor $\wi{D}_a$. Then $\wi{P}\cap \wi{D}_a=\Gamma$ line in $\wi{D}_a\cong\pr^3$, and $\wi{M}\cap \Gamma=\{x_b,x_c\}$ where $x_i:=\wi{D}_a\cap\ell_{ai}$ for $i=b,c$. Moreover $\wi{Q}\cap\Gamma$  is a point, corresponding to the tangent direction to the conic $P\cap Q$ at $q_a$. For $i=b,c$ this conic contains $q_i$, thus $\overline{q_aq_i}$ is not tangent to $P\cap Q$, and $\wi{Q}\cap\Gamma\neq x_i$. We conclude that $S_{\wi{W}}\cap\Gamma= \wi{Q}\cap\wi{M}\cap \Gamma=\emptyset$, namely $S_{\wi{W}}\cap\wi{P}\cap \wi{D}_a=\emptyset$, and finally that 
 $S_{\wi{W}}\cap \wi{P}=\emptyset$. Since $S$ is contained in the open subset where the map $\wi{W}\dasharrow W$ is an isomorphism, we still have $S_W\cap L_{abc}=\emptyset$ in $W$. We get $S_W\cap\Exc(\beta)=\emptyset$, therefore $(M_W)_{|S_W}$ is ample.
\end{proof}

We note that  $T\subset\w{W}$ is the transform of the cone over $A$ in $\pr^4$ with vertex $q_0$, and it is disjoint from the transforms of the lines $\overline{q_iq_j}$ for $1\leq i<j\leq r$, thus it is contained in the open subset where 
the map $\w{W}\dasharrow W$ is an isomorphism.
\begin{lemma}\label{Tb}
For $r\in \{0,\dotsc,4\}$, $-K_{X|\w{T}}$ is ample.
\end{lemma}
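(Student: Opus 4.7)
The plan is to adapt the strategy of Lemma~\ref{Ta} to case $(b)$, computing $-K_{X|\w{T}}$ as the tautological class of the $\pr^1$-bundle $\w{T}\cong T=\pr_B(\ma{E}_{|B})$ over the K3 surface $B$, and then reducing ampleness to the ampleness of the two line-bundle summands of the underlying rank two bundle.

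First, since $\w{T}$ lies in the open subset where the SQM $X\dasharrow\w{X}$ is an isomorphism, $-K_{X|\w{T}}\cong -K_{\w{X}|\w{T}}$. The morphism $\tilde\alpha_{|\w{T}}\colon\w{T}\to T$ is an isomorphism (since $S$ is a Cartier divisor in $T$), and under this identification $\mathcal{N}_{\w{T}/\w{X}}\cong\pi_T^*(-K_{Y|B})-S$, using $T=\pi^*(B)$ and $B\sim-K_Y$. Combined with the relative canonical formula on $T=\pr_B(\ma{E}_{|B})$, where $\ma{E}_{|B}=\ol_B\oplus\ol_B(d)$ with $d=(H_Y-\sum_{i=1}^r G_i)_{|B}$, which gives $-K_T=2\eta_{|T}-\pi_T^*(d)$ (since $K_B=0$), adjunction yields
$$-K_{\w{X}|\w{T}}=2\eta_{|T}-\pi_T^*(d+K_{Y|B})-S.$$

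Second, write $S\sim\eta_{|T}+\pi_T^*(M)$ in $\Pic(T)$ and determine $M$ by comparing two computations of $\mathcal{N}_{S/T}$: the section formula gives $\mathcal{N}_{S/T}=\eta_{|S}+M$, while the normal bundle exact sequence $0\to\mathcal{N}_{S/T}\to\mathcal{N}_{S/\w{W}}\to\mathcal{N}_{T/\w{W}|S}\to 0$ combined with Lemma~\ref{normal} and $\mathcal{N}_{T/\w{W}|S}=\pi_{|S}^*(-K_{Y|B})$ yields $\mathcal{N}_{S/T}=h+C_0-\sum_{i=1}^r C_i$. Since $\eta_{|S}=h-\sum_{i=1}^r C_i$, we conclude $M=C_0'$, where $C_0'\in\Pic(B)$ corresponds (via the isomorphism $\pi_{|S}$) to the tangent-cone conic $C_0=\wi{D}_0\cap S$. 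Substituting,
$$-K_{\w{X}|\w{T}}\cong\eta_{|T}+\pi_T^*\bigl((3H_Y-\textstyle\sum_{i=1}^r G_i)_{|B}-C_0'\bigr),$$
which is the tautological class of $T\cong\pr_B\bigl(\ol_B(L_1)\oplus\ol_B(L_2)\bigr)$ with $L_1=(3H_Y-\sum G_i)_{|B}-C_0'$ and $L_2=L_1+d=-K_{Y|B}-C_0'$.

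The main obstacle is then to prove that both $L_1$ and $L_2$ are ample on $B$. Direct numerical computation gives $L_1^2=22-2r$ and $L_2^2=46-8r$, both positive for $r\leq 4$, so by Nakai-Moishezon ampleness reduces to positive intersection with every irreducible curve of $B$. The components of the tangent-cone divisors $G_{i|B}$ (for $i=1,\dotsc,r$) and the class $C_0'$ itself are handled by direct computation, using that $C_0'$ and $G_{i|B}$ are disjoint on $B$ for generic $A$. The subtle point is to rule out other irreducible curves on $B$ of low $-K_Y$-degree (strict transforms of lines through two of the $p_i$'s when $r\geq 2$, or of conics through four of them when $r\geq 4$); a careful genericity argument on the choice of $Q$ and $M$ should ensure that no such curve lies on $B_0$, completing the verification of ampleness.
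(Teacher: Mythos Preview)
Your computation of $-K_{X|\w{T}}$ as the tautological class of $T=\pr_B(\ol_B(L_1)\oplus\ol_B(L_2))$ is correct and matches the paper's expression \eqref{treno}; your $C_0'$ is what the paper calls $C_0$ (the tangent-cone conic at $q_0$, transported to $B$), and your summands $L_1,L_2$ coincide with the paper's $3h-\sum_{i=0}^rC_i-3C_0$ and $4h-2\sum_{i=0}^rC_i-3C_0$.

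The gap is in the ampleness step. Appealing to Nakai--Moishezon on a K3 surface requires positivity against \emph{every} irreducible curve, not just the visible exceptional configurations $C_0,\dotsc,C_r$ and a handful of low-degree strict transforms. Your proposed ``genericity argument'' would need (a) a Noether--Lefschetz type statement pinning down $\Pic(B)$ to the lattice generated by $h,C_0,\dotsc,C_r$ (or their components in the $A_2$ case), and then (b) a complete classification of effective $(-2)$-classes in that lattice. Neither step is carried out, and (a) in particular is nontrivial; the construction of $A$ already forces $B_0$ to contain the conic $\Gamma_0$ (the image of $C_0$), so $B$ is automatically in a Noether--Lefschetz locus. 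Ruling out ``lines through two $p_i$'s'' on $B_0$ is neither sufficient nor really the point.

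The paper avoids this entirely by a structural trick that you are missing. It introduces the \emph{complementary} conic $C_0'$ (the residual component of the plane section $\Pi\cap B_0=\Gamma_0+\Gamma_0'$), sets $h':=h-C_0+C_0'$, and shows that $|h'|$ is base-point-free and defines a birational map $B\to\pr^4$ onto a \emph{new} sextic K3 surface $A'$ with isolated double points at $q_0',\dotsc,q_r'$. Under this change of basis one has
\[
L_1\sim 2h'-C_0'-\sum_{i=1}^rC_i,\qquad L_2\sim 3h'-2C_0'-2\sum_{i=1}^rC_i,
\]
which are precisely the two summands of $\ma{N}_{S'/W'}$ in Lemma~\ref{normal}, applied to the new configuration $(A',q_0',\dotsc,q_r')$. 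Since the hypotheses of Lemma~\ref{normal} are verified for this configuration (the paper checks that $q_0',\dotsc,q_r'$ are in general linear position and $\overline{q_i'q_j'}\not\subset A'$), ampleness of $L_1,L_2$ follows from that lemma, whose proof uses the ambient toric geometry of $W'\cong W$ rather than any curve-by-curve analysis on $B$. This reduction is the key idea you are missing.
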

\begin{proof}
  We have  $-K_{X|\w{T}}\cong -K_{\w{X}|\w{T}}$, thus it is enough to show that $-K_{\w{X}|\w{T}}$ is ample.
 
  Since $\pi_{|S}\colon S\to B$ is an isomorphism, with a slight abuse of notation we still denote by $C_i$ the image of $C_i\subset S$ in $B$; for $i=1,\dotsc,r$ we have $C_i=G_{i|B}$.

  The K3 surface $S\cong B$ has a map to $\pr^4$ with image the sextic $A$, and we denote by $h$ the pullback of $\ol_{\pr^4}(1)$, and a map to $\pr^3$ with image the quartic $B_0$, and we denote by $h_0$ the pullback of $\ol_{\pr^3}(1)$; we have
  $h_0=h-C_0=H_{Y|B}$.

  We have  $\eta_{|{S}}\cong\ol_{\w{W}}(\w{H}-\sum_{i=1}^r\w{D}_i)_{|S}\cong\ol_{S}(h-\sum_{i=1}^rC_i)$,
  and if $L:=\ol_{B}(h-\sum_{i=1}^rC_i)\in\Pic(B)$, the section $S$ of $\pi_T\colon T\to B$ corresponds to a surjection $\phi\colon\ma{E}_{|B}\twoheadrightarrow L$. This implies that:
  $$\ol_T(S)_{|S}\cong\ma{N}_{{S}/{T}}\cong\ker(\phi)^{-1}\otimes L=\det(\ma{E}_{|B})^{-1}\otimes L^{\otimes 2},$$
  and also that in ${T}$ we have ${S}\sim\eta_{|T}+\pi_T^*(L-\det\ma{E}_{|B})$.
  
  Similarly to the proof of Lemma \ref{Ta}, we also have:
  \begin{align*} 
    B_{|B}&\sim (-K_Y)_{|B}=4h_0-2\sum_{i=1}^rC_i=4h-4C_0-2\sum_{i=1}^rC_i,\\
{\w{T}}_{|\w{T}}&\cong\pi_T^*(B)_{|B}-S=-\eta_{|T}+\pi_T^*(B_{|B}+\det\ma{E}_{|B}-L),\\
 -K_{{T}}&=\pi_T^*(-\det\ma{E}_{|B})+2\eta_{|T},\\ -K_{\w{X}|\w{T}}&=-K_{\w{T}}+\w{T}_{|\w{T}}\cong\eta_{|T}+\pi_T^*(B_{|B}-L)=\eta_{|T}+\pi_T^*\Bigl(3h-3C_0-\sum_{i=0}^rC_i\Bigr),\text{ and }\\                      \ma{E}_{|B}&\cong\ol_B\oplus\ol_B\bigl(h_0-\sum_{i=1}^rC_i\bigr)=\ol_B\oplus\ol_B\bigl(h-\sum_{i=0}^rC_i\bigr).
    \end{align*}
  We conclude that $-K_{X|\w{T}}$ is isomorphic to the tautological class for
  \stepcounter{thm}
  \begin{equation}\label{treno}
    {T}\cong\pr_B\Bigl(\ol\bigl(3h-\sum_{i=0}^r C_i-3C_0\bigr)\oplus\ol\bigl(4h-2\sum_{i=0}^rC_i-3C_0\bigr)\Bigr).
    \end{equation}

  In $B$ we have $h\cdot C_0=0$ and $C_0^2=-2$, thus $h_0\cdot C_0=(h-C_0)\cdot C_0=2$, and $C_0\subset B$ is the transform of a conic $\Gamma_0\subset B_0\subset\pr^3$ not containing any $p_i$; we have $\Gamma_0\cong C_0$, thus $\Gamma_0$ is reduced, either smooth or reducible.
  Let $\Pi\subset\pr^3$ be the plane containing $\Gamma_0$, so that $\Pi_{|B_0}=\Gamma_0+\Gamma_0'$ where $\Gamma_0'$ is another conic in $\Pi$, and $(\Gamma_0')^2=-2$ in $B_0$, in particular $\Gamma_0'$ cannot be a double line.
  Moreover $p_i\not\in \Gamma_0'$ for every $i$, otherwise we would not have $K_B\sim 0$. Let $C_0'\subset B$ be the transform of $\Gamma_0'$.

  Set $h':=h-C_0+C_0'=h_0+C_0'$ in $B$.
We have $h_0\sim C_0+C_0'\sim h'-C_0'$, $C_0\sim h'-2C_0'$, and
  $$3h-\sum_{i=0}^r C_i-3C_0\sim 2h'-C_0'-\sum_{i=1}^rC_i,\quad
  4h-2\sum_{i=0}^rC_i-3C_0\sim 3h'-2C_0'-2\sum_{i=1}^rC_i.$$
  Therefore  by \eqref{treno} $-K_{X|\w{T}}$ is isomorphic to the tautological class for:
  \stepcounter{thm}
  \begin{equation}\label{roma} {T}\cong\pr_B\Bigl(\ol\bigl(2h'-C_0'-\sum_{i=1}^rC_i)\oplus\ol\bigl(3h'-2C_0'-2\sum_{i=1}^rC_i\bigr)\Bigr).
    \end{equation}

  We have $h'\cdot C_0'=0$ and $(h')^2=6$, thus $h'$ is nef and big, and Riemann Roch yields $h^0(B, h')=5$.
Since $h_0$ is base point free, the linear system $|h'|$ can have base points only along $C_0'$. On the other hand $H^1(B,h_0)=0$ by Kawamata-Viehweg vanishing, thus the restriction $H^0(B,h')\to H^0(C_0',\ol_{C_0'})=\C$ is surjective, and no point on $C_0'$ is a base point.

We conclude that $h'$ is base-point-free; moreover for every irreducible curve $\Gamma\subset B$ we have $h'\cdot \Gamma=(h_0+C_0')\cdot \Gamma=0$ if and only if $\Gamma$ is among $C_0',C_1,\dotsc,C_r$. Finally, since the linear system $|h_0|=|h'-C_0'|$ defines a birational map, $|h'|$ 
defines a birational map $\sigma_B'\colon B\to\pr^4$ with exceptional locus  $C_0',C_1,\dotsc,C_r$, whose image is a sextic surface $A'$ with isolated singularities at $q_0':=\sigma_B'(C_0')$ and $q_i':=
\sigma_B'(C_i)$ for $i=1,\dotsc,r$.
Thus $\sigma'_B$ factors through $k_{|B}$:
$$\xymatrix{
B\ar[r]_{k_{|B}}\ar@/^1pc/[rr]^{\sigma'_B}&{B_0}\ar[r]&{A'}
}$$
where the map $B_0\to A'$ contracts $\Gamma_0'$ to $q_0'$, and its inverse $A'\dasharrow B_0$
is the projection from $q_0'$. We conclude that $A'$ does not contain lines through $q_0'$,
and that  $q_0',\dotsc,q_r'$ are in general linear position in $\pr^4$, because
$p_1,\dotsc,p_r$ are in general linear position in $\pr^3$.
We also note that by generality of $A$, $\overline{p_ip_j}\not\subset B_0$, thus 
$\overline{q_i'q_j'}\not\subset A'$ for every $i,j$ with $1\leq i<j\leq r$.
 
Let us consider now $\wi{W}':=\Bl_{q_0',\dotsc,q_r'}\pr^4$ and its Fano model $W'$. Note that since $r\leq 4$, these are toric varieties, and $\wi{W}'\cong\wi{W}$, $W'\cong W$. The transform $S'\subset \wi{W}'$ is isomorphic to $B$, and it is contained in the open subset where the map $\wi{W}'\dasharrow W'$ is an isomorphism; we still denote by $S'\subset W'$ its transform.
Then as in the proof of Lemma \ref{normal} we see that $\ol_B(2h'-C_0'-\sum_{i=1}^rC_i)$ and $\ol_B(3h'-2C_0'-2\sum_{i=1}^rC_i)$ are both ample on $B$,
and we get the statement by \eqref{roma}.
\end{proof} 
\begin{proof}[Proof of Prop.~\ref{fanob}]
If $X$ is Fano, then $\rho_X\leq 7$ by Rem.~\ref{GAEL} and Th.~\ref{Bintro}$(ii)$.

Conversely, let us assume that $r\leq 4$, and we show that $X$ is Fano.
  We proceed similarly to the proof of Prop.~\ref{a}.
Let $T_i\subset{W}$ be the transform of the cone in $\pr^4$ over $A$ with vertex $q_i$, for $i=0,\dotsc,r$.

  In ${W}$ we have
  $T_i\sim 2(2H-\sum_{j=0}^rD_j-D_i)$, $\sum_{i=0}^rT_i\sim  4(r+1)H-2(r+2)\sum_{j=0}^r D_j$,
$-K_W=5H-3\sum_{i=0}^rD_i$,
  and
  $2(r+2)(-K_W)=2(4-r)H+3\sum_{i=0}^rT_i$.

  Finally let $\w{T}_i\subset X$ be the transform of $T_i$; then $\alpha^*(T_i)= \w{T}_i+E$, and we get
  \stepcounter{thm}
  \begin{equation}\label{-Kb}
  2(r+2)(-K_X)=2(4-r)H_X+3\sum_{i=0}^r\w{T}_i+(r-1)E.\end{equation}
  
  Since $S$ is a K3 surface and $K_S\sim 0$, by Lemmas \ref{normal} and \ref{tautological} we have that $-K_{X|E}$ is ample.
Moreover in the notation of Lemma \ref{Tb} we have $\w{T}=\w{T}_0$ and $-K_{X|\w{T}_0}$ ample. By considering the projection from $q_i$ instead of $q_0$, again by Lemma \ref{Tb} we get that $-K_{X|\w{T}_i}$ is ample for every $i=0,\dotsc,r$.

Let $r\in\{1,\dotsc,4\}$. Then as in the proof of Prop.~\ref{a}, using \eqref{-Kb} we show that 
 $-K_X$ is strictly nef. Moreover 
 $K_X^4>0$ (see Table \ref{t3}), which gives the statement. 
 The case $r=0$ is simpler and is left to the reader. 
\end{proof}
 \vspace{-10pt}
 \begin{table}[h]
   $\begin{array}{||c|c|c|c|c|c|c|c|c||}
\hline\hline
r   & \rho_X & K_{X}^4&  K_X^2\cdot c_2(X) & h^{2,2}(X) & h^{1,3}(X)&b_3(X)&h^0(X,-K_X) &\chi(T_X)\\
\hline\hline

       0 & 3 & 180 & 144 &  22& 1 & 0 & 43 & -18\\

       \hline

         1 & 4 & 150 & 132 & 24 & 1 &0 & 37  & -17\\

       \hline

     2 & 5 & 121 & 118 & 27 & 1 &0 & 31  & -16\\

  \hline

      3 & 6 & 93 & 102 & 31 & 1 & 0 & 25 & -15  \\

        \hline

      4  & 7 & 66 & 84 &  36 & 1 & 0 & 19 & -14 \\
      
\hline\hline
     \end{array}$

     \bigskip

   \caption{Numerical invariants of the Fano $4$-folds from \S\ref{newb}}\label{t3}
 \end{table}
     \subsection{New families with $\delta_X=2$ from case $(a)$}\label{ex_delta2}
\noindent  We keep the notation as in \S\ref{setup}.
Let $A\subset\pr^4$ be a general quadric surface containing
$q_1,\dotsc,q_r$ and not $q_0$, in particular $A$ is contained in a hyperplane not passing through $q_0$;
 recall that $A\cong\pr^1 \times \pr^1$.
 Then $S\subset\wi{W}$ is a del Pezzo surface with $\rho_S=r+2$, and it is disjoint from the transforms of the lines $\overline{q_iq_j}$, thus $S$ is contained in the open subset where the map $\wi{W}\dasharrow W$ is an isomorphism.
  We will show the following.
   \begin{proposition}\label{a_delta2}
    The $4$-fold $X$ is Fano if and only if $r\in \{0,1,2\}$. In these cases we have
     $\rho_X=r+3\in\{3,4,5\}$ and $\delta_X=2$.
\end{proposition}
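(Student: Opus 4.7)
The proof follows the same strategy as Propositions~\ref{a} and \ref{fanob}. To show $X$ is Fano for $r \leq 2$, I would first compute $\ma{N}_{S/W}$ by realizing $S$ as the transverse intersection of the strict transforms of the hyperplane $J_0 \supset A$ and of a quadric hypersurface cutting $A$ out of $J_0$; this exhibits $\ma{N}_{S/W}$ as a sum of two explicit line bundles on $S \cong \Bl_{q_1,\dotsc,q_r}(\pr^1\times\pr^1)$, and one can check that $\ma{N}_{S/W} \otimes \ol_S(-K_S)$ is ample precisely for $r \leq 2$, giving $-K_{X|E}$ ample by Lemma~\ref{tautological}. Next, for each $i = 0, \dotsc, r$, the strict transform $\wi T_i \subset X$ of the quadric cone $T_i \subset \pr^4$ over $A$ with vertex $q_i$ is a suitable blow-up of a $\pr^1$-bundle over the projection of $A$ from $q_i$; arguing as in Lemmas~\ref{Ta}-\ref{Tb} expresses $-K_{X|\wi T_i}$ as a tautological class whose two summands are ample for $r \leq 2$. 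Then writing $(r+2)(-K_X)$ as an effective combination of $H_X$, $E$, and the $\wi T_i$ analogous to \eqref{-K}, a case split over where a test curve sits forces $-K_X$ to be strictly nef; combined with $K_X^4 > 0$ (computed via Lemma~\ref{tautological}), this yields $-K_X$ ample by the base-point-free theorem.

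For $r \geq 3$, Theorem~\ref{B}$(ii)$ rules out $\rho_X \geq 7$ since the quadric $A$ is none of the three listed surfaces; for the remaining $r = 3, 4$, I would exhibit an explicit curve $C \subset E$ with $-K_X \cdot C \leq 0$, arising where the ampleness of $\ma{N}_{S/W} \otimes \ol_S(-K_S)$ just breaks.

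For $\delta_X = 2$, the lower bound comes from the strict transform $D_0 \subset X$ of the exceptional divisor over $q_0$. Since $q_0 \notin A$, $D_0$ is untouched by the blow-up along $S$, and under the SQM $\wi W \dashrightarrow W$ the exceptional $\wi D_0 \cong \pr^3$ is modified to $\Bl_{r\,\pts}\pr^3$, the $r$ points being the tangent directions at $q_0$ of the flipped lines $\overline{q_0 q_i}$, so $\rho_{D_0} = r+1$. A direct computation of the restrictions $H_{X|D_0} = 0$, $E_{|D_0} = 0$, $D_{0|D_0} = -h + \sum g_i$, and $D_{i|D_0} = -g_i$ for $i \geq 1$ shows that $\Nu(X) \to \Nu(D_0)$ is surjective, giving $\dim \N(D_0, X) = r + 1 = \rho_X - 2$ and hence $\codim \N(D_0, X) = 2$. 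For the upper bound $\delta_X \leq 2$: $X$ is visibly not a product (its rational contraction to $Y$ does not factor through one), so $\delta_X \leq 3$ by Theorem~\ref{delta=3}; for $r \in \{0, 1\}$ this is already $\delta_X \leq 2$, since non-product Fano $4$-folds with $\delta_X = 3$ have $\rho_X \in \{5, 6\}$, and for $r = 2$ ($\rho_X = 5$) we would exclude $X$ from the six non-product $\delta_X = 3$ families classified in Theorem~\ref{delta=3} by comparing numerical invariants such as $K_X^4$. The main technical obstacle will be the ampleness check for $-K_{X|\wi T_0}$, due to the interplay between the projection from $q_0$, the SQM, and the flipped lines $\overline{q_0 q_i}$.
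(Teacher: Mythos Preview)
Your proposal has a genuine gap in the ampleness argument. You want to mimic \eqref{-K} by using cones $T_i$ over $A$ with vertex $q_i$ for all $i=0,\dotsc,r$, but here the geometry is asymmetric: $A$ is a quadric surface contained in the hyperplane $J_0$, and $q_1,\dotsc,q_r\in A\subset J_0$ while $q_0\notin J_0$. For $i\geq 1$ the cone over $A$ with vertex $q_i$ is therefore contained in $J_0$ and in fact equals $J_0$ as a set (projecting a smooth quadric surface from a point on it yields a plane). So there is only one genuine quadric cone, namely $T=T_0$, and your symmetric decomposition with $r+1$ cones does not exist. The paper instead exploits the relation $-K_W\sim J'+D_0+2T$ in $W$ (which you can check from $J'\sim H-\sum_{j\geq 1}D_j$, $T\sim 2H-2D_0-\sum_{j\geq 1}D_j$), giving
\[
-K_X\sim \w{J}'+D_0'+2\w{T}+2E,
\]
and then verifies ampleness of $-K_X$ restricted to each of the four divisors $\w{J}',D_0',\w{T},E$ separately. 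The cases $\w{T}$ and $E$ go as you sketch, but the cases $\w{J}'$ and $D_0'$ require their own short arguments (via identifying $\w{J}'\cong Y'$ a flop of $\Bl_{p_1,p_2}\pr^3$, and $D_0'\cong D_0$ disjoint from $S$) that do not fit the pattern of \S\ref{newa}.

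Two smaller points. First, for the ``only if'' direction the paper does not invoke Theorem~\ref{B} at all: Lemma~\ref{Ea_delta2} already shows that $\ma{N}_{S/W}^\vee\otimes\ol_W(-K_W)_{|S}$ fails to be ample for $r\geq 3$ (the class $-K_B+h-e_2-\cdots-e_r$ has zero degree on the transform of the line through the images of $e_2,e_3$), so $-K_{X|E}$ is not ample and $X$ is not Fano. Second, your choice of $D_0$ as a divisor witnessing $\delta_X\geq 2$ is different from the paper's (which uses the transform of $J_0$), and it does work, but your explicit restriction formulas are off: $H_{X|D_0}=\sum g_i$ rather than $0$ (a line $C_{L_{0i}}$ in the flipped plane satisfies $H\cdot C_{L_{0i}}=-1$, as one sees from $-K_W\cdot C_{L_{0i}}=1$ and $D_0\cdot C_{L_{0i}}=D_i\cdot C_{L_{0i}}=-1$), and $D_{i|D_0}=g_i$ rather than $-g_i$. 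The conclusion $\codim\N(D_0,X)=2$ survives, since the kernel of $\Nu(X)\to\Nu(D_0)$ is spanned by $[E]$ and $[H_X-\sum_{j\geq 1}D_j]$. The upper bound $\delta_X\leq 2$ via Theorem~\ref{delta=3} and comparison of invariants matches the paper.
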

\begin{remark}\label{esempiosaverio}
For $r=0$ and $\rho_X=3$, this Fano $4$-fold is the same as \cite[$X^7_{1,2}$]{saverio}.
 \end{remark} 
 
The surface $B_0\cong A$ is a smooth quadric surface through $p_1,\dotsc,p_r$, and $B$ is a smooth del Pezzo surface with $\rho_B=r+2$ and $-K_Y=2B$ (this is the same as in \S\ref{newa}); moreover
$S\cong B$.

We note that  $T\subset\w{W}$ is the transform of the quadric cone $Q\subset\pr^4$, containing $A$, with vertex  $q_0$. Then $T$ is disjoint from the transforms of the lines $\overline{q_iq_j}$ for $1\leq i<j\leq r$, thus it is contained in the open subset where 
the birational map $\w{W}\dasharrow W$ is an isomorphism. 
\begin{lemma}\label{Ta_delta2}
$-K_{X|\w{T}}$ is ample.
\end{lemma}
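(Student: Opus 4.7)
The plan is to adapt the proof of Lemma~\ref{Ta} verbatim, retaining all of its notation; the proof will differ only in one intermediate computation. Since $-K_{X|\w{T}}\cong -K_{\w{X}|\w{T}}$, it suffices to show that $-K_{\w{X}|\w{T}}$ is ample. Exactly as in Lemma~\ref{Ta}, one has $\w{T}\cong T=\pr_B(\ma{E}_{|B})$ with $\ma{E}_{|B}=\ol_B\oplus\ol_B(h-e_2-\cdots-e_r)$, and the same chain of identities gives
\[
-K_{\w{X}|\w{T}}\cong\ol_T\bigl(\pi_T^*(-K_B+2h-e_0-e_1-M)+\eta_{|T}\bigr),
\]
where $M\in\Pic(B)$ is defined by the linear equivalence $S\sim\eta_{|T}+\pi_T^*(M)$.

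The new step is the computation of $M$. I would restrict the relation for $S$ to the section $T\cap J\cong B$ of $\pi_T$, where $J\subset\w{W}$ is the transform of a general hyperplane $J_0\subset\pr^4$ through $q_1,\dots,q_r$; this gives $[S\cap J]_B=\eta_{|T\cap J}+M$ in $\Pic(B)$. Since $q_0\notin A$, the projection from $q_0$ induces an isomorphism $A\to B_0$, and $A\cap J_0$ is a $(1,1)$-curve on $A\cong\pr^1\times\pr^1$ passing through $q_1,\dots,q_r$; its image in $B_0\subset\pr^3$ is a plane section through $p_1,\dots,p_r$, whose strict transform in $B$ has class
\[
(2h-e_0-e_1)-C_{G_1}-e_2-\cdots-e_r=h-e_2-\cdots-e_r.
\]
This coincides with $\eta_{|T\cap J}\cong(H_Y-\sum_{i=1}^rG_i)_{|B}=h-e_2-\cdots-e_r$, so $M\sim 0$ (in contrast to the cubic scroll case, where $M\sim h-e_1$).

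With $M=0$ the formula above realizes $-K_{\w{X}|\w{T}}$ as the tautological class of $\pr_B\bigl(\ol_B(-K_B+2h-e_0-e_1)\oplus\ol_B(-2K_B)\bigr)$, so it remains to verify that both summands are ample on $B$. For $r\in\{0,1,2\}$, the surface $B$ is a del Pezzo of degree $\geq 6$, so $-2K_B$ is ample. The first summand equals $-K_B+H_{Y|B}$, and since $H_{Y|B}=(k_{|B})^*\ol_{B_0}(1)$ is globally generated (hence nef), it is ample as a sum of an ample and a nef class. The case $r=0$, where $B\cong\pr^1\times\pr^1$ and the two line bundles become $\ol_B(3,3)$ and $\ol_B(4,4)$, is immediate. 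The only genuinely delicate point of the proof is pinning down $M\sim 0$; the ampleness verification is then routine, and in fact works in the wider range where $B$ is del Pezzo.
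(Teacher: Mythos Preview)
Your proof is correct and follows essentially the same approach as the paper: both arrive at $M\sim 0$ and the same pair of line bundles $\ol_B(-K_B+2h-e_0-e_1)$ and $\ol_B(-2K_B)$ to check for ampleness. The only difference is that the paper shortcuts your computation of $M$ by choosing $J_0$ to be the hyperplane containing $A$ itself (recall $A$ spans a hyperplane not through $q_0$), so that $S=J\cap T$ in $\w{W}$ and hence $\ol_T(S)=\eta_{|T}$ directly, i.e.\ $M=0$ without needing to restrict to a curve class.
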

\begin{proof}
  We  treat the case $r>0$, the case $r=0$ being already in \cite{saverio}, see Rem.~\ref{esempiosaverio}.
We have  $-K_{X|\w{T}}\cong -K_{\w{X}|\w{T}}$, thus it is enough to show that $-K_{\w{X}|\w{T}}$ is ample.
  
 We follow the same notation for $B$ as in the proof of Lemma \ref{Ta}, that is we have $B=\Bl_{p_1,\dotsc,p_r}B_0\cong\Bl_{r+1\,\pts}\pr^2$ with exceptional divisors $e_0,e_1,\dotsc,e_r$ over $\pr^2$. Let $h\in\Pic(B)$ be the pullback of $\ol_{\pr^2}(1)$; then we have
  $\w{T}\cong T=\pr_B(\ma{E}_{|B})$ with $\ma{E}_{|B}=\ol_B\oplus\ol_B(h-e_2-\cdots-e_r)$.

We can choose the hyperplane $J_0\subset\pr^4$ containing $q_1,\dotsc,q_r$ as the hyperplane containing $A$. Then $S\subset\w{W}$ is the complete intersection of $J$ and $T$, hence
 $\eta_{|T} \cong\ol_T(J_{|T})=\ol_T(S)$. 
By construction and adjunction we have
$${-K_{\w X}}_{|\w T} \sim {-K_{\w W}}_{|T} - S\cong \pi_{|T}^*(-2K_B - (h-e_2-\cdots-e_r)) + \eta_{|T}$$
and $-2K_B - (h-e_2-\cdots-e_r)=-K_B+2h-e_0-e_1$,
thus ${-K_{\w X}}_{|\w T}$ is a tautological divisor of $\w T\cong \pr_B(\ol(-K_B+2h-e_0-e_1)\oplus\ol(-2K_B))$. 
We have that $-K_B$ and $2h-e_0-e_1$ are respectively an ample and a nef divisor on $B$, thus their sum is ample on $B$, and this concludes the proof.
\end{proof}

\begin{lemma}\label{Ea_delta2}
$-K_{X|E}$ is ample if and only if $r\in\{0,1,2\}$.
\end{lemma}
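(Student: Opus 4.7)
My plan is to apply Lemma~\ref{tautological} to reduce the ampleness of $-K_{X|E}$ to the ampleness of the rank two vector bundle $\ma{N}_{S/W}\otimes\ol_S(-K_S)$ on $S$. Since $S$ is contained in the open subset where the map $\wi{W}\dasharrow W$ is an isomorphism, it is equivalent to check ampleness of $\ma{N}_{S/\wi{W}}\otimes\ol_S(-K_S)$, so I will work in $\wi{W}$, where divisor classes are easy to handle.

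Next I will realize $S$ as a transverse complete intersection in $\wi{W}$. Let $J_0\subset\pr^4$ be the hyperplane containing $A$ (which exists since $A$ is a smooth quadric surface), and let $Q\subset\pr^4$ be the quadric cone over $A$ with vertex $q_0$. Then $A=J_0\cap Q$ scheme-theoretically; this intersection is everywhere transverse along $A$ because $q_0\notin A$ and hence $Q$ is smooth along $A$. Passing to $\wi{W}$, I get $S=\wi{J}\cap\wi{Q}$, still a transverse intersection (for $i\geq 1$ we have $q_i\in A$ smooth on $Q$, while $\wi{D}_0\cap\wi{J}=\emptyset$). Then $\ma{N}_{S/\wi{W}}\cong\ol_S(\wi{J}_{|S})\oplus\ol_S(\wi{Q}_{|S})$. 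The divisor class computation is straightforward: writing $\wi{H}=\sigma^*\ol_{\pr^4}(1)$, and noting that $J_0$ passes through $q_1,\dots,q_r$ simply and $Q$ has a double point at $q_0$ and is smooth through the other $q_i$'s, I get $\wi{J}\sim\wi{H}-\sum_{i=1}^r\wi{D}_i$ and $\wi{Q}\sim 2\wi{H}-2\wi{D}_0-\sum_{i=1}^r\wi{D}_i$. Restricting to $S$ (where $\wi{D}_{0|S}=0$), and writing $h=\wi{H}_{|S}$, $C_i=\wi{D}_{i|S}$ for $i=1,\dots,r$, I obtain
\[
\ma{N}_{S/\wi{W}}\cong\ol_S\Bigl(h-\sum_{i=1}^rC_i\Bigr)\oplus\ol_S\Bigl(2h-\sum_{i=1}^rC_i\Bigr).
\]

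Since $A\cong\pr^1\times\pr^1$ with $\ol_A(1)=\ol(1,1)$, we have $-K_A=2h_{|A}$ on $A$; hence $-K_S=2h-\sum_{i=1}^rC_i$ on $S=\Bl_{q_1,\dots,q_r}A$, where $C_i$ are disjoint $(-1)$-curves. Twisting by $-K_S$ gives
\[
\ma{N}_{S/W}\otimes\ol_S(-K_S)\;\cong\;\ol_S\Bigl(3h-2\sum_{i=1}^rC_i\Bigr)\oplus\ol_S\Bigl(4h-2\sum_{i=1}^rC_i\Bigr).
\]
Because the second summand is the first plus the nef divisor $h$, ampleness of the whole bundle reduces to ampleness of $L:=3h-2\sum_{i=1}^rC_i$ on $S$.

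Finally I will test $L$ against all $(-1)$-curves of $S=\Bl_{r\,\pts}(\pr^1\times\pr^1)$ (for $r\leq 7$ general points this is a del Pezzo surface, whose Mori cone is spanned by $(-1)$-curves, so Kleiman's criterion applies). Splitting $h=h_1+h_2$ along the two rulings, the relevant $(-1)$-curves are $C_i$, the transforms $h_j-C_i$, and—when $r\geq 3$—the $(1,1)$-curves through three blown-up points, represented by $h_1+h_2-C_i-C_j-C_k$. A direct intersection computation gives $L\cdot C_i=2$, $L\cdot(h_j-C_i)=1$, which are positive for all $r$; whereas $L\cdot(h_1+h_2-C_i-C_j-C_k)=6-6=0$. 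Thus for $r\in\{0,1,2\}$ all intersection numbers are strictly positive and $L$ is ample, while for $r\geq 3$ the vanishing on a $(1,1)$-curve obstructs ampleness. The main (and only) obstacle is the careful list of $(-1)$-curves on $\Bl_{r}(\pr^1\times\pr^1)$, which is however standard for del Pezzo surfaces and general blown-up points.
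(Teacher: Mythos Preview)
Your proof is correct and follows essentially the same approach as the paper: both compute the normal bundle of $S$ via its presentation as a complete intersection of (the transforms of) the hyperplane $J_0$ and the quadric cone $Q$, and then test ampleness of the resulting direct sum of line bundles on the del Pezzo surface $S$. The only cosmetic differences are that the paper works in $\w{W}$ with the $\Bl_{r+1\,\pts}\pr^2$ coordinates on $S\cong B$ and uses the form $\ma{N}_{S/W}^{\vee}\otimes\ol_W(-K_W)_{|S}$, whereas you work in $\wi{W}$ with the $\Bl_{r\,\pts}(\pr^1\times\pr^1)$ coordinates and use $\ma{N}_{S/W}\otimes\ol_S(-K_S)$; your obstructing curve $h_1+h_2-C_1-C_2-C_3$ is exactly the paper's $h-e_2-e_3$ under the change of basis. (One tiny caveat: for $r=0$ the surface $S\cong\pr^1\times\pr^1$ has no $(-1)$-curves, so the Mori cone is spanned by the rulings $h_1,h_2$ rather than by $(-1)$-curves; but $L=3h$ is obviously ample there.)
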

\begin{proof}
 Again we  treat the case $r>0$, the case $r=0$ being already in \cite{saverio}, see Rem.~\ref{esempiosaverio}.
  We show that $\ma{N}_{S/W}^{\vee}\otimes\ol_{W}(-K_{W})_{|S}$ is ample if and only if $r\in\{0,1,2\}$; this implies the statement by Lemma \ref{tautological}.
Since $S$ is contained in the open subset where the map $W\dasharrow\w{W}$ is an isomorphism, it is equivalent to work with $\ma{N}_{S/{\w W}}^{\vee}\otimes\ol_{\w W}(-K_{\w W})_{|S}$. 

We keep the same notation as in the proof of Lemma \ref{Ta_delta2}. To compute $\ma{N}_{S/{\w W}}$, we note that $S$ is the complete intersection of $J$ and $T$, so that
$$\ma{N}_{S/{\w W}} = \ol_S(J_{|S})\oplus \ol_S(T_{|S}) \cong \ol_B(h-e_2-\cdots-e_r)\oplus \ol_B(-K_B).$$
By adjunction ${-K_{\w W}}_{|S}= -K_S + \det \ma{N}_{S/{\w W}} \cong -2K_B+h-e_2-\cdots-e_r$, so that 
$$\ma{N}_{S/{\w W}}^{\vee}\otimes\ol_{\w W}(-K_{\w W})_{|S} \cong \ol_B(-2K_B)\oplus \ol_B(-K_B+h-e_2-\cdots-e_r).$$

Now $-K_B$ is ample, and both $h$ and $h-e_2$ are nef, thus $\ma{N}_{S/{\w W}}^{\vee}\otimes\ol_{\w W}(-K_{\w W})_{|S}$ is ample
if $r\in\{1,2\}$. 
If instead $r\geq 3$, let $\Gamma\subset B$ be the transform of the line through the images of $e_2$ and $e_3$ in $\pr^2$; then $\Gamma\sim h-e_2-e_3$ and
$(-K_B+h-e_2-\cdots-e_r)\cdot\Gamma=0$, thus $\ma{N}_{S/{\w W}}^{\vee}\otimes\ol_{\w W}(-K_{\w W})_{|S}$ is not ample.
\end{proof}

\begin{proof}[Proof of Prop.~\ref{a_delta2}]
If $X$ is Fano, then $r \leq 2$ by Lemma \ref{Ea_delta2}. 

For the
converse, we set $r=2$, and  show that $X$ is Fano. 
The proof of the ampleness of $-K_X$ for $r \leq 1$ is similar and easier, and we omit it.

We note that $\w{D}_0\subset\w{W}$  is the section of $\pi$ corresponding to the projection of $\ol_Y\oplus\ol_Y(H_Y-G_1-G_2)$ onto $\ol_Y$: then $\w{D}_0\cap J=\emptyset$ and $-K_{\w W} \sim J + \w{D}_0 + 2T$.

Let $\ell_{i,j}\subset \wi W$ be the transform of the line $\overline{q_iq_j}\subset\pr^4$. The map $\wi W \dasharrow \w W$ is the flip of $\ell_{0,1}$ and $\ell_{0,2}$, while the map $\w W \dasharrow W$ is the flip (of the transform) of $\ell_{1,2}$, which is an exceptional line contained in $J$ and disjoint from $S$. Thus $\w{D}_0$ is contained in the open subset where the birational map $\w{W}\dasharrow W$ is an isomorphism. 
We denote by $J'\subset W$ the transform of $J\subset\w{W}$, so that
$J\dasharrow J'$ is the flop of $\ell_{1,2}$, and
$-K_W \sim J' + D_0+ 2T$.

Lastly, $S \subset W$ is the complete intersection of $J'$ and $T$, and so 
\begin{equation}\label{-K_delta2}
-K_X\sim \w{J}'+D'_0+2\w{T} + 2E,
\end{equation}
where $\w{J}',D_0'\subset X$ are the transforms of $J',D_0\subset W$.

We have $-K_{X|\w{T}}$ ample by Lemma \ref{Ta_delta2} and $-K_{X|E}$ ample by Lemma \ref{Ea_delta2}, and we show here below that $-K_{X|D_0'}$ and $-K_{X|\w{J}'}$ are ample as well. Then as in the proof of Prop.~\ref{a}, using \eqref{-K_delta2} and that $W$ is Fano, we show that 
 $-K_X$ is strictly nef. Finally, one can check directly that $K_X^4>0$ (see Table \ref{t4}), which gives the statement.
\begin{prg}
$-K_{X|D_0'}$ is ample.

In fact $D_0$ is disjoint from $J$, thus from $S$, therefore $-K_{X|D_0'} \cong -K_{\w{W}|\w{D}_0}\cong 3H_Y-G_1-G_2$ ample on $Y=\Bl_{p_1,p_2}\pr^3$ (this can be checked directly, as $Y$ is toric, and $\NE(Y)$ is generated by the classes of the lines $C_{G_i}\subset G_i$ for $i=1,2$, and of $\Gamma_{1,2}$ transform of the line $\overline{p_1p_2}$).
\end{prg}
\begin{prg}
$-K_{X|\w{J'}}$ is ample.

We have $\w{J}'\cong J'\cong Y'$ where $Y\dasharrow Y'$ is the flop of $\Gamma_{1,2}$. Again $Y'$ is toric, and $\NE(Y')$ is generated by the classes of the flopping curve $\Gamma'$ and of the transform $F_i$ of a line through $p_i$ in $\pr^3$, for $i=1,2$.

Moreover
$-K_{X|\w{J'}} \cong -K_{W|J'}-S$ is the transform of $-K_{\w W|J}-S\cong 5H_Y-3(G_1+G_2)-B\sim 3H_Y-2(G_1+G_2)$. Since $(3H_Y-2(G_1+G_2))\cdot \Gamma_{1,2}=-1$, this gives $-K_X\cdot F_i=1$ for $i=1,2$
and $-K_X\cdot\Gamma'>0$, hence
$-K_{X|\w{J'}}$ is ample.
\end{prg}
 Lastly, we show that $\delta_X=2$. It is not difficult to see that $\codim\N(\w{J_0},X) =2$, thus $\delta_X\geq 2$.
Moreover $X$ cannot be a product of surfaces, because $W$ is not (see for instance \cite[Lemma 2.10]{eleonora}), thus $\delta_X\leq 3$ by  Th.~\ref{deltageq4}.
If $\delta_X=3$, then $\rho_X \geq 5$ by Th.~\ref{delta=3}, therefore $\rho_X=5$ and $r=2$. By classification  we see that there are no correspondences between the numerical invariants of $X$  and those of Fano $4$-folds with $\delta=3$ and $\rho=5$ (see Table \ref{t4} and \cite[Table 3.4]{delta3_4folds}). Thus $\delta_X= 2$ and this concludes the proof.
\end{proof}
 \begin{table}[h]
$\begin{array}{||c|c|c|c|c|c|c|c||}
\hline\hline
r   & \rho_X & K_{X}^4&  K_X^2\cdot c_2(X) & b_4(X)=h^{2,2}(X) & b_3(X)&h^0(X,-K_X) &\chi(T_X)\\
    \hline\hline

0 & 3 & 350 & 188 & 4 & 0 & 75 & 7 \\
    
      \hline

       1 & 4 & 303 & 174 & 7 & 0 & 66 & 5 \\

       \hline

     2 & 5 & 257 & 158 & 11 & 0 & 57 & 3 \\
      
\hline\hline
   \end{array}$

   \bigskip

   \caption{Numerical invariants of the Fano $4$-folds from \S\ref{ex_delta2}}\label{t4}
 \end{table}
     \subsection{New families with an elementary rational contraction onto a $3$-fold}\label{newel}
     \noindent
Let $Z_0\subset\pr^2\times\pr^3$ be a general divisor of type $(1,1)$, and $\pi_0\colon Z_0\to\pr^3$ the projection. Then $Z_0$ is a Fano $4$-fold with $\rho_{Z_0}=2$, and there is a point $y_0\in\pr^3$ such that $\pi_0$ is smooth with fiber $\pr^1$ over $\pr^3\smallsetminus\{y_0\}$ and $\pi_0^{-1}(y_0)\cong\pr^2$, see \cite[Ex.~11.1]{kachi}.

Let $r\in\{0,\dotsc,7\}$ and let $\w{Z}\to Z_0$ be the blow-up of $r$ general fibers of $\pi_0$; note that $\pi_0$ induces an elementary contraction $\pi\colon \w{Z}\to Y:=\Bl_{r\,\pts}\pr^3$ (in fact $\w Z\subset \pr^2\times Y$), and that there is $y\in Y$ such that  $\pi$ is smooth  with fiber $\pr^1$ over $Y\smallsetminus\{y\}$ and
$\pi^{-1}(y)\cong\pr^2$.
\begin{proposition}\label{fanoel}
  For  $r\in\{0,\dotsc,4\}$ there is a SQM $\w{Z}\dasharrow Z$ such that $Z$ is a smooth Fano $4$-fold with $\rho_Z=r+2\in\{2,\dotsc,6\}$, and $Z\dasharrow Y$ is an elementary contraction.
\end{proposition}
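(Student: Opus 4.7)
The plan is to construct $Z$ not directly from $\w{Z}$, but rather as an alternative divisorial blow-down of the Fano $4$-fold $X$ built in Prop.~\ref{a}. Indeed, by construction in \S\ref{newa}, $X$ has a special rational contraction $f_{\s X}\colon X\dasharrow Y$ with $\rho_X-\rho_Y=2$ and $\dim Y=3$, so the results of Section \ref{relrho2} apply. In particular, by Lemma \ref{rays2} (applied to the $K$-negative model $f\colon\w{X}\to Y$), $\NE(f)$ has two extremal rays, both of type $(3,2)$: one is contracted by $\tilde\alpha$ (with exceptional divisor $E$ over the transform of the cubic scroll $A$), and the other will give the desired contraction. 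Transporting through the SQM $\xi\colon X\dasharrow\w{X}$ and applying Lemma \ref{rays}, I obtain a divisorial elementary contraction $\alpha'\colon X\to Z$ whose exceptional divisor $E'$ is the transform of the second irreducible component of $\pi^{-1}(B)\subset\w{W}$.

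The key properties of $Z$ then follow from those of $X$: since $X$ is Fano for $r\in\{0,\dotsc,4\}$ by Prop.~\ref{a}, $\alpha'$ is automatically $K$-negative, and by Th.~\ref{32} the target $Z$ is locally factorial with at most isolated node-type singularities at images of $2$-dimensional fibers. Using the explicit geometry of $E'$ (which inherits a $\pr^1$-bundle structure from the $\pr^1$-bundle $\pi$ restricted to the component $E_2$, the latter being birational to $B\cong$ del Pezzo surface), I will check by a direct argument that $\alpha'$ has no $2$-dimensional fibers, so that $Z$ is smooth, and that the Fano condition on $Z$ follows by writing $-K_Z$ in terms of $-K_X$ and $E'$. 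This yields $\rho_Z=\rho_X-1=r+2$. The rational contraction $f_{\s X}$ factors through $\alpha'$, producing an elementary rational contraction $Z\dasharrow Y$, which establishes the elementary contraction statement of the proposition.

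To identify $Z$ with a SQM of $\w{Z}\to Z_0$, I will analyze the composition $Z\dasharrow Y\to\pr^3$. Since on $\w{X}$ the morphism $\pi\circ\tilde\alpha$ is a $\pr^1$-bundle over $Y$ with a single $\mathbb{F}_1$-fiber over the point $y\in Y$ coming from the line of the ruling of $A$ through $q_0$, after contracting $E'$ and the $r$ exceptional divisors over $p_1,\dotsc,p_r$ this structure descends to a morphism to $\pr^3$ that is a $\pr^1$-bundle outside a single point, over which the fiber becomes $\pr^2$. The uniqueness of such a model, together with the classification result in \cite{langer}, will identify the minimal such target with the $(1,1)$-divisor $Z_0\subset\pr^2\times\pr^3$; consequently $Z$ must be (a SQM of) the blow-up $\w{Z}$ of $Z_0$ along the $r$ fibers of $\pi_0$ lying over $p_1,\dotsc,p_r$.

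The hardest step is the final identification: one must verify that the abstract smooth Fano $Z$ obtained from the divisorial contraction $\alpha'$ matches the explicit blow-up $\w{Z}$ up to SQM, including the precise location of the $\pr^2$-fiber and the numerical invariants of the blown-up curves. This requires tracing the birational diagram in \eqref{diagrama}, matching the fixed divisors and flopping loci on both sides, and invoking \cite{langer} for the rigidity of the Mori structure. The potential SQM between $\w{Z}$ and $Z$ accounts for curves of anticanonical degree zero in $\w{Z}$, such as strict transforms of lines in $\pr^3$ through two of the blown-up points $p_i$, which must be flopped to realize $Z$ as a Fano model.
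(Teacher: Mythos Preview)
Your overall strategy matches the paper's: realize $Z$ as the target of the \emph{second} divisorial contraction $\alpha'\colon X\to Z$ coming from the other extremal ray of $\NE(f)$, check smoothness via fiber dimensions, and identify the minimal model with $Z_0$ via Langer's classification. There are, however, two genuine gaps.

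First, Lemma \ref{rays} and Lemma \ref{rays2} (and the underlying Th.-Def.~\ref{fixed}) are proved under the running hypothesis $\rho_X\geq 7$ of Section \ref{relrho2}. For $r\leq 3$ you have $\rho_X\leq 6$, so you cannot invoke these results to obtain a regular divisorial contraction $\alpha'$ from $X$, nor to conclude that its target is Fano. The paper deals with this by treating $r=4$ first (where $\rho_X=7$ and the Section \ref{relrho2} machinery applies), computing $K_{W_0'}^4=432$ to pin down $W_0'\cong Z_0$ in Langer's list, and then \emph{deducing} the cases $r\in\{1,2,3\}$ from $r=4$ by factoring $Z_4\dasharrow Z_r$ as a SQM followed by blow-downs of type $(3,1)^{\sm}$.

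Second, your claim that ``the Fano condition on $Z$ follows by writing $-K_Z$ in terms of $-K_X$ and $E'$'' is not a proof: the blow-down of a smooth Fano $4$-fold along a type $(3,2)$ divisor is not automatically Fano. The paper does not argue this way; for $r=4$ the Fano property of $W'=Z$ comes directly from Th.-Def.~\ref{fixed}$(a)$ (via Lemma \ref{rays}), which guarantees that the target of the canonical divisorial contraction of a fixed prime divisor of type $(3,2)$ is Fano. Note also that in the paper $\Exc(\alpha')=\w{T}$, the transform of the quadric cone over $A$ with vertex $q_0$; this explicit identification is what permits the computation of $\ma{N}_{S'/Z}$ and of the invariants in Table \ref{t5}.
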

\begin{question}\label{Q3} Does Prop.~\ref{fanoel} hold for $r=5,6,7$ too?
\end{question}
We note that  Prop.~\ref{fanoel} cannot hold for $r\geq 8$ by Th.~\ref{general}.
\begin{remark}
For $r=2$ and $\rho_Z=4$, the Fano $4$-fold $Z$ has the same numerical invariants as \cite[Fano 4-7]{FTT} (see Table \ref{t5}). However we do not know whether the two $4$-folds belong to the same family.
\end{remark}  
\begin{proof}[Proof of Prop.~\ref{fanoel}]
  Let $X$ be the Fano $4$-fold introduced in \S \ref{newa}, with $r\in\{0,\dotsc,4\}$ and $\rho_{X}=r+3\in\{3,\dotsc,7\}$; we keep the same notation as in \S\ref{newa}. The divisor $\w{T}\subset X$ is
  a $\pr^1$-bundle over $B\cong\Bl_{p_1,\dotsc,p_r}B_0$, and $B_0\cong\pr^1\times\pr^1$.

  Set $r=4$, so that $\rho_X=7$. Let us consider $f\colon\w{X}\to Y$  and its two factorizations in elementary contractions (see diagrams \eqref{diagramrays} and \eqref{diagrama}):
$$\xymatrix{X\ar@{-->}[r]\ar[d]_{\alpha'}&{\w{X}}\ar[r]^{\tilde\alpha}\ar[d]_{\tilde\alpha'}\ar[dr]^{f}&{\w{W}}\ar[d]^{\pi}\\
 {W'}  \ar@{-->}[r]& {\w{W}'}\ar[r]_{\pi'}&{Y}
}$$ 
By Lemma \ref{rays}, $\alpha'$ is an elementary contraction of type $(3,2)$ with $\Exc(\alpha')=
\w{T}$, and $W'$ is Fano. Recall from p.~\pageref{fiberb} that every fiber of $f$ over $Y\smallsetminus\{y\}$ is one-dimensional, while $f^{-1}(y)\cong\mathbb{F}_1$, and $\tilde{\alpha}_{|f^{-1}(y)}$ is a $\pr^1$-bundle. Therefore $\tilde{\alpha}'(f^{-1}(y))\cong\pr^2$, every non-trivial fiber of $\tilde\alpha'$  has dimension one, and the same holds for $\alpha'$. By  Th.~\ref{32}, $W'$ is a smooth Fano $4$-fold with $\rho_{W'}=6$, and $\alpha'$ 
is the blow-up of of a smooth irreducible surface $S'\subset W'$. We have $S'\cong B$, so that $S'$ is a del Pezzo surface with $\rho_{S'}=6$. We also note that $\pi'\colon\w{W}'\to Y$ is smooth with fiber $\pr^1$ over $Y\smallsetminus\{y\}$, and $(\pi')^{-1}(y)\cong\pr^2$. We are going to show that $W'\cong Z$ (for $r=4$).
       
We have a diagram like \eqref{diagramW} (see also Rem.~\ref{relation}):
{\small
$$ \xymatrix{X\ar@{-->}[r]\ar[d]_{\alpha'}&{\w{X}}\ar@/_1pc/[dd]_>>>>>{f}\ar[d]^{\tilde{\alpha}'}\ar@{-->}[r]&{\wi{X}}\ar[r]^{\sigma}&{X_0}\ar[d]_{\alpha_0'}\ar@/^1pc/[dd]^{f_0}\ar[dr]^{\alpha_0}&\\
 {W'}\ar@{-->}[r] &{\w{W}'=\wi{W}'}\ar[d]^{\pi'}\ar[rr]^{\sigma_{W'}}&&{W'_0}\ar[d]_{\pi_0'}&{W_0=\Bl_{q_0}\pr^4}\ar[dl]^{\pi_0}\ar[r]&{\pr^4}\\
  &Y\ar[rr]^k&&{\pr^3}&&
}$$}
where $\sigma_{W'}$ blows-up the $4$ smooth fibers $F_i:=\pi_0^{-1}(p_i)$, $i=1,\dotsc,4$, $\alpha_0'$ is the blow-up of a surface $S_0'\cong B_0\cong\pr^1\times\pr^1$, and $\sigma$ blows-up the transforms of $F_i$, $i=1,\dotsc,4$ (see Lemma \ref{bigdiagram}; we are in case $(a)$, with $E_1,E_2$ being $E,\w{T}$).

We note that $W_0'$ is smooth, $\rho_{W_0'}=2$, and $W_0'$ is Fano by Lemma \ref{imageexc}. Moreover $\pi_0'\colon W_0'\to\pr^3$ is a scroll in the sense of adjunction theory, namely there exists $L\in\Pic(W_0')$ such that $L\cdot F=1$ for a general fiber $F$ of $\pi_0'$.
Indeed, let us consider the transform $D_0'\subset W_0'$ of the exceptional divisor $D_0\subset W_0$ of the blow-up $W_0\to\pr^4$. Since the general fiber of $\pi_0$ is contained in the open subset where both $\alpha_0$ and $\alpha_0'$ are isomorphism, we still have $D_0'\cdot F=1$ in $W_0'$, and we set $L:=\ol_{W_0'}(D_0')$.

Smooth Fano $4$-folds with $\rho=2$ and with a scroll structure over $\pr^3$ have been classified in \cite{langer},  there are 18 possibilities. To identify $W_0'$, we compute its numerical invariants, and we get $K_{W_0'}^4=432$ (see Table \ref{t5}). A simple computation
of the anticanonical degrees of the 18 Fano $4$-folds from  \cite{langer} shows that there is only one case with $K^4=432$, and it is precisely $Z_0\subset\pr^2\times\pr^3$ a general divisor of type $(1,1)$ (see \cite[\S 9.1.2, case 5]{langer}). We conclude that $\w{Z}=\w{W}'$, so it has a SQM to a smooth Fano $4$-fold $Z=W'$.
This shows the statement for $r=4$ (and $r=0$).

For smaller $r$ the statement follows from the case $r=4$. Indeed set $Z=Z_r$. For any $r\in\{1,2,3\}$ there is a birational map $Z_4\dasharrow Z_r$ given by a SQM followed by the blow-up of $4-r$ smooth curves (transforms of fibers of $\pi_0$), where $Z_r$ is a smooth Fano $4$-fold which is a SQM of $\w{Z}_r=\w{W}'_r$.

We conclude that, for any $r\in\{0,\dotsc,4\}$,  $\w{T}\subset X$ is
  the exceptional divisor of the blow-up $\alpha'\colon X\to Z$ of a smooth irreducible surface $S'\subset Z$, where  $S'$ is a  del Pezzo surface with $\rho_{S'}=r+2$, and   $S'\cong \pr^1\times\pr^1$ for $r=0$.

We use $\alpha'$ to
 compute the invariants of $Z$ from those of $X$ (note that for $r=0$ we have $Z=Z_0=W_0'$). By the proof of Lemma \ref{Ta} we have that $$\ol_{X}(-\w{T})_{|\w{T}}\cong\ol_{\w{X}}(-\w{T})_{|\w{T}}\cong\ol_T\bigl(\eta_T+\pi_T^*(K_B+M)\bigr)$$
  is the tautological class for
  $$\w{T}=\pr_B\bigl(\ol_B(-2h+e_0+e_2+\cdots+e_r)\oplus\ol_B(-h+e_0)\bigr).$$
  We conclude that $\ma{N}_{S'/Z}\cong\ol_B(2h-e_0-e_2-\cdots-e_r)\oplus\ol_B(h-e_0)$, from which we compute $c_2(\ma{N}_{S'/Z})=1$, $(K_{Z|S'})^2=30-4r$, $K_{S'}\cdot K_{Z|S'}=16-2r$, and finally using Lemma \ref{tautological} and \cite[Lemma 6.25]{vb} the invariants in the table below.
\end{proof}
\begin{table}[h]
     $$\begin{array}{||c|c|c|c|c|c|c|c||}
\hline\hline
r   & \rho_{Z} & K_{Z}^4&  K_{Z}^2\cdot c_2(Z) & b_4(Z)=h^{2,2}(Z) & b_3(Z)&h^0(Z,-K_{Z}) &\chi(T_{Z})\\
    \hline\hline

0 & 2 & 432 & 204 &3  &0 & 90 & 12\\
    
      \hline

       1 & 3 & 368 & 188 & 5 & 0 &78 & 9 \\

       \hline

     2 & 4 & 305 & 170 & 8  &  0 & 66 &6 \\
       
  \hline

      3 & 5 & 243 & 150 & 12 &  0 & 54 &    3\\

        \hline

      4  & 6 & 182 & 128 &  17 &  0 & 42 &0 \\
      
\hline\hline
   \end{array}$$

 \bigskip

   \caption{Numerical invariants of the Fano $4$-folds from \S\ref{newel}}\label{t5}
 \end{table}
\subsection{Other examples}\label{other}
\begin{example}[Toric Fano $4$-folds]
We recall that toric Fano $4$-folds are classified, see \cite{bat2,sato}. In the range $\rho\leq 5$, there are several examples with $\delta=2$, or with a  rational contraction onto a $3$-fold.  For $\rho=6$ the only example with $\delta=2$ is $S\times S$, $S=\Bl_{2\pts}\pr^2$.
\end{example}
\begin{example}[Products with $\pr^1$]
There are 13 Fano $3$-folds $Y$ with $\rho_Y=4$ (see \cite[\S 12.5]{fanoEMS} and \cite{morimukaierratum}), among which 4 are toric; they have Lefschetz defect $\delta_Y=2$ (see \cite[Lemma 13]{rendiconti}). For each such $Y$, $X=\pr^1\times Y$ is a Fano $4$-fold with $\rho_X=5$ and $\delta_X=2$ (see Rem.~\ref{deltaproduct}).
\end{example}
\begin{example}
  Some other examples of Fano $4$-folds with $\delta=2$ and $\rho\in\{4,5\}$ can be obtained as follows.
  Consider the classification of
Fano $4$-folds with $\delta=3$ (Th.~\ref{delta=3}, \cite[Prop.~1.5]{delta3}). Excluding the toric ones, and products, there are four families, with $\rho\in\{5,6\}$. These families are obtained by blowing-up a $\pr^2$-bundle $Z\to T$, where $T$ is $\pr^2$, $\pr^1\times\pr^1$, or $\mathbb{F}_1$, along three pairwise disjoint smooth irreducible surfaces $S_i$ for $i=1,2,3$, where $S_2$ and $S_3$ are sections of the $\pr^2$-bundle, and $S_1$ is a double cover of the base $T$. By blowing-up $Z$ only along $S_1$ and $S_i$, $i\in\{2,3\}$, we get a  (non-toric) Fano $4$-fold $X$ with $\rho_X=\rho_T+3\in\{4,5\}$ and $\delta_X=2$.
\end{example}

\small

\noindent{\bf Acknowledgements.} C.C.\ has been partially supported by PRIN 2022L34E7W ``Moduli spaces and birational geometry'', and both authors are members of GNSAGA, INdAM.

\providecommand{\noop}[1]{}
\providecommand{\bysame}{\leavevmode\hbox to3em{\hrulefill}\thinspace}
\providecommand{\MR}{\relax\ifhmode\unskip\space\fi MR }
\providecommand{\MRhref}[2]{%
  \href{http://www.ams.org/mathscinet-getitem?mr=#1}{#2}
}
\providecommand{\href}[2]{#2}

\end{document}